%\UseRawInputEncoding
\documentclass[reqno]{amsart}
\usepackage{amssymb}
\usepackage[usenames, dvipsnames]{color}
\usepackage{txfonts}
\usepackage{hyperref}
\usepackage{verbatim}
\usepackage[pagewise]{lineno}
\usepackage{marginnote}
\usepackage{todonotes}
\usepackage[normalem]{ulem}
\usepackage{cancel}
\usepackage{soul}

\theoremstyle{plain}
\newtheorem{theorem}{Theorem}[section]
\newtheorem{lemma}[theorem]{Lemma}
\newtheorem{corollary}[theorem]{Corollary}
\newtheorem{proposition}[theorem]{Proposition}

\theoremstyle{definition}
\newtheorem{definition}[theorem]{Definition}

\theoremstyle{remark}

\newcommand{\bR}{{\mathbb R}}
\newcommand{\bN}{{\mathbb N}}

\newcommand{\pf}{\noindent {\bf Proof. \hspace{2mm}}}
\def\ve{\varepsilon}
\def\lam{\lambda}

\def\t{\tilde}
\def\th{\theta}

\def\G{\Gamma}
\def\dl{\delta}
\def\Dl{\Delta}
\def\lt{\left}
\def\rt{\right}

\def\i{\infty}
\def\e{\epsilon}

\def\sgn{\text{sgn }}
\def \ls{\lesssim}
\def\p{\partial}
\def\f{\frac}
\def\na{\nabla}
\def\al{\alpha}

\def\O{\Omega}
\def\o{\omega}

\def\q{\quad}
\def\qq{\qquad}

\def\blue{\color{blue}}
\def\be{\begin{equation}}
\def\ee{\end{equation}}
\def\bes{\begin{equation*}}
\def\ees{\end{equation*}}
\def\bali{\begin{aligned}}
\def\eali{\end{aligned}}
\def\lab{\label}
\def\bS{\bf S}

\def\2O{\underline{\O}}
\def\mO{\mathcal{O}}
\def\mC{\mathcal{C}}

%\allowdisplaybreaks

\numberwithin{equation}{section}

\makeatletter
\def\dashint{\operatorname%
{\,\,\text{\bf--}\kern-.98em\DOTSI\intop\ilimits@\!\!}}
\makeatother

\begin{document}

%\linenumbers

\title[Review on axially symmetric NS]{A Review of results on  axially symmetric Navier-Stokes equations, with addendum by X. Pan and Q. S. Zhang}

%\author[X. Pan]{Xinghong Pan}
%\address[X. Pan]{Department of Mathematics, Nanjing University of Aeronautics and Astronautics, Nanjing 211106, China}

\thanks{Xinghong Pan is supported by Natural Science Foundation of Jiangsu Province (No. BK20180414), Double Innovation Scheme of Jiangsu Province and National Natural Science Foundation of China (No. 11801268).}

\author[Q. S. Zhang]{Qi S. Zhang}

\address[Xinghong Pan]{Department of Mathematics, Nanjing University of Aeronautics and Astronautics, Nanjing 211106, China.}

\email{xinghong\_87@nuaa.edu.cn}

\address[Q. S. Zhang]{Department of mathematics, University of California, Riverside, CA 92521, USA}

\email{qizhang@math.ucr.edu}

\subjclass[2020]{35Q30, 76N10}

\keywords{Regularity, Liouville theorem; ancient solutions, D-solutions; Axially symmetric Navier-Stokes equations. }

\begin{abstract}
In this paper, we give a brief survey of recent results on axially symmetric Navier-Stokes equations (ASNS) in the following categories: regularity criterion, Liouville property for ancient solutions, decay and vanishing of stationary solutions. Some discussions also touch on the full 3 dimensional equations.
Two results, closing of the scaling gap for ASNS and vanishing of homogeneous D solutions in 3
 dimensional slabs will be described in more detail.

 In the addendum, two new results in the 3rd category will also be presented, which are generalizations of recently published results by the author and coauthors.

\end{abstract}
%\today
\maketitle

\tableofcontents

\section{Introduction}

The Cauchy problem of  Navier-Stokes equations (NS) describing the motion of viscous incompressible fluids in $\bR^3$ is
\be
\lab{nse}
\begin{cases}
&\mu \Delta v -  {\blue ( v\cdot \nabla)} v - \nabla P -\partial_t v =0, \quad \text{on} \quad \bR^3 \times (0, \infty)\\
&div \, v=0,  v(x, 0)= v_0(x).
\end{cases}
\ee Here $v$ is the velocity field,  $P$ is the pressure, both of which are the unknowns;
$v_0$ is the given initial velocity; $\mu>0$ is the viscosity constant, which will be taken as $1$ unless stated otherwise. One can also add a forcing term on the righthand side, then it becomes a nonhomogeneous problem.

Thanks to Leray's work \cite{Le2} in 1934, we know the above problem has a weak solution $v \in L^\infty((0, \infty), L^2(\bR^3))$ such that $|\nabla v| \in L^2((0, \infty), L^2(\bR^3))$  provided that the initial condition has finite kinetic energy. Moreover, $\Vert v(\cdot, t) -  v_0(\cdot) \Vert_{L^2(\bR^3)} \to 0$ as
$t \to 0$ and  $\forall \,  T>0$,
\be
\lab{enineq}
\int |v(x, T)|^2 dx + 2 \int^T_0\int |\nabla v(x, t)|^2 dxdt \le \int |v_0(x)|^2 dx < \infty.
\ee See Theorem 3.10 in Tsai's book \cite{Tsai} for a modern and concise proof e.g.
Solutions satisfying \eqref{enineq} are often referred to as Leray-Hopf solutions, in order to distinguish them from even weaker solutions.
In general, one does not know if a Leray-Hopf solution is smooth, except for a few special cases, usually as a perturbation of a special smooth solution. Stability of NS under small perturbation is well studied. A general result of such kind can be found in \cite{RRST} e.g.
Over the years several sufficient conditions under which Leray-Hopf solutions are smooth   have been obtained. For example the Ladyzhenskaya-Prodi-Serrin condition: $|v| \in L^{p, q}_{x, t}$ with $\frac{3}{p}+\frac{2}{q} \le 1$ and $3<p<\infty$ and the end point result $p=3, q=\infty$ by Escauriaza, Seregin and Sverak \cite{ESS}. See also \cite{DoWa} by Dong and Wang in higher dimensional cases, including both interior and boundary regularity. Here and later, a measurable function $f=f(x, t)$ is said to be in $L^{p, q}_{x, t}$ if $\Vert f \Vert_{L^{p, q}_{x, t}} \equiv \left(\int^\infty_0\left(\int_{\bR^3} |f|^p dx\right)^{q/p} dt \right)^{1/q} <\infty$.
If $\frac{3}{p}+\frac{2}{q} = 1$, these conditions are scaling invariant or critical under the natural scaling of the Navier Stokes equations: for $\lam>0$, if $(v, P)$ solves \eqref{nse}, then $(v_\lam, P_\lam)$ defined by
\be
\lab{nsscal}
v_{\lambda}(x, t) \equiv \lambda v(\lam x, \lam^2 t), \qquad P_\lam(x, t) \equiv \lam^2 P(\lam x, \lam^2 t)
\ee also solves \eqref{nse}. It is easy to see that $\Vert v \Vert_{L^{p, q}_{x, t}} =
\Vert v_\lam \Vert_{L^{p, q}_{x, t}}$ for the above $p, q$.
 Sometimes these conditions can be improved logarithmically, even for endpoint cases. See the articles X.H. Pan \cite{Panx}, T. Tao \cite{Tao}, Barker and Prange \cite{BaPr}, e.g.
A partial regularity result for the so-called "suitable weak solutions" was found by Caffarelli, Kohn and Nirenberg \cite{CKN:1}, building on earlier work of Scheffer \cite{Schef1, Schef2}. These solutions are Leray-Hopf solutions with an extra integrability condition on the pressure term $P$. It is proven that the singular set of suitable weak solutions, if exists, has one dimensional parabolic Hausdorff measure $0$.
The proof utilizes a blow up argument to deduce an  $\epsilon$ regularity result: smallness of certain scaling invariant integral quantities involving the velocity or vorticity  implies boundedness of solutions. Then the size estimate of the possible singular set follows from a covering argument.  See also the papers of F. H. Lin \cite{Linf}, A. Vasseur \cite{Vass}, J. Wolf \cite{Wfj} for similar results and shorter proofs, some of which employ a De Giogi type (refined energy) method instead of blow up method.
In \cite{Wfj}, using the decomposition ${\blue ( v\cdot \nabla)v}  = \frac{1}{2} \nabla |v|^2 - (\nabla \times v) \times v$, it is observed that one can also use a scaling invariant quantity involving $(\nabla \times v) \times \frac{v}{|v|}$. One consequence is that small perturbations of Beltrami flows are regular. Recall that a flow or a vector field $v$ is called a Beltrami flow if the vorticity $\nabla \times v$ is parallel to $v$.
These conditional regularity results are consistent with the standard linear theory for second order parabolic equations with lower order coefficients, coming from the De Giorgi-Nash-Moser theory. As far as the regularity conditions are concerned, the nonlinearity of the NSE only induces a marginal improvement over the linear case.  One can see the difficulty in proving regularity by observing that the energy inequality \eqref{enineq} only tells us, after using Sobolev inequality and interpolation, that
$v \in L^{10/3, 10/3}_{x, t}$. In a local space time domain, this a priori bound is much weaker than the
regularity conditions mentioned above, such as $L^{5, 5}_{x, t}$.
If we hope to prove regularity of the solution, we need to study the behavior of $v$ at micro or infinitesimal scale. This is amount to studying the behavior of $v_\lam$ at a fixed scale while letting $\lam \to 0$. Notice that
\[
\Vert v_\lam \Vert_{L^{10/3, 10/3}_{x, t}} = \lam^{-1/2} \Vert v \Vert_{L^{10/3, 10/3}_{x, t}} \to \infty, \qquad \lam \to 0.
\]So the energy inequality does not furnish any information on micro scale. For this reason NSE is considered as a super critical equation, i.e. a priori estimate is weaker than regularity condition at small scale. Another way to see the super criticality is to consider the vorticity $\o=\nabla \times v$ which satisfies the equation
\be
\lab{eqvor}
\Delta \o - {\blue ( v\cdot \nabla)} \o + {\blue ( \o\cdot \nabla)} v - \p_t \o=0.
\ee The energy inequality \eqref{enineq} tells us $| \nabla v | \in L^{2, 2}_{x, t}$. This again is much weaker than the regularity condition of $L^{p, q}_{x, t}$ with $ \frac{3}{p}+\frac{2}{q} < 2$,  if $\nabla v$ are regarded as  potential functions for the vorticity equation. We note that the drift term $v \nabla \o$, can be handled even locally by an integral argument since $ div \, v=0$, c.f. \cite{Zcmp04}.
So the most dangerous and mysterious term in the vorticity equation is the so-called vortex stretching term $\o \nabla v$. An avenue of attack on the regularity problem is to exploit the structure on the sets where the vorticity has high value, such as angles, intermittency  and sparseness. See \cite{CoFe}, \cite{DVB}, \cite{GrZh} and a recent \cite{BFG} e.g. In \cite{BFG}  Bradshaw, Farhat and Grujic  reduce the scaling gap by using sparseness of the super level sets of the positive and negative parts of the vorticity components at a scale comparable to the sup of $1/|\o|$ nearby. There are also many activities on
one component regularity conditions and regularity conditions on directional derivatives of the velocity:
 \cite{NeNoPe}, \cite{Zhoyo}, \cite{ChemZh},  \cite{ChZZ}, \cite{HLLZ}; \cite{PePo}, \cite{KuZi}, \cite{CaTi} e.g. One-sided conditions can also be imposed on the eigenvalues of $\nabla v$ or the middle eigenvalue of the
 strain tensor $[\nabla v + (\nabla v)^T]/2$. See \cite{Zhang2006} and \cite{Mill} e.g.
However, unless something dramatic happens, such as the discovery of a critical a priori estimate, a magic cancellation, or an
ingenious construction of a blow up solution, the regularity problem for the 3 dimensional NSE
 \eqref{nse} will remain open. Even if some Leray-Hopf solutions are found to blow up in finite time, it is still interesting to characterize the set
of initial values that give rise to Leray-Hopf solutions that stay smooth all time, which is nonempty.

In this paper, we will focus on a special case of \eqref{nse}, namely when $v$ and $P$ are independent of the angle in a cylindrical coordinate system $(r,\,\th,\,x_3)$. That is, for $x=(x_1,\,x_2,\,x_3) \in \bR^3$,
$
r=\sqrt{x_1^2+x_2^2}, \quad
\th=\arctan (x_2/x_1),
$ and the basis vectors $e_r,e_\th,e_3$:
\[
e_r=(x_1/r, x_2/r, 0),\quad e_\th=(-x_2/r, x_1/r, 0),\quad e_3=(0,0,1);
\]
 and  solutions are given by
\[
v=v^r(r,x_3, t)e_r+v^{\th}(r, x_3, t)e_{\th}+v^3(r, x_3, t)e_3.
\]

By direct computation, $v^r$, $v^3$ and $v^\theta$ satisfy the axially symmetric Navier-Stokes equations
\be
\begin{aligned}
\lab{eqasns}
\begin{cases}
   \big (\Delta-\frac{1}{r^2} \big )
v^r-(v^r \p_r + v^3 \p_{x_3})v^r+\frac{(v^{\theta})^2}{r}-\partial_r
P-\p_t  v^r=0,\\
   \big   (\Delta-\frac{1}{r^2}  \big
)v^{\theta}-(v^r \p_r + v^3 \p_{x_3} )v^{\theta}-\frac{v^{\theta} v^r}{r}-
\partial_t v^{\theta}=0,\\
 \Delta v^3-(v^r \p_r + v^3 \p_{x_3})v^3-\p_{x_3} P-\p_t v^3=0,\\
 \frac{1}{r} \p_r (rv^r) +\p_{x_3}
v^3=0,
\end{cases}
\end{aligned}
\ee which will be abbreviated as ASNS. It looks more complicated than the full 3 dimensional equation.
 But simplifications happen in the 2nd equation where the pressure term disappeared. A tip in carrying out vector calculations under the cylindrical system
is to use tensor notations. For example
\[
\bali
\nabla v &= \p_r v \otimes e_r +\frac{1}{r} \p_\th v \otimes e_\th + \p_{x_3} v \otimes e_3\\
&=(\p_r v^r  e_r + \p_r v^\th e_\th + \p_r v^3 e_3 )\otimes e_r + \frac{1}{r} ( v^r e_\th -
v^\th e_r) \otimes e_\th \\
&\qquad \qquad + (\p_{x_3} v^r  e_r + \p_{x_3} v^\th e_\th + \p_{x_3} v^3 e_3 )\otimes e_3.
\eali
\]Taking the inner product with the second entry,  the convection terms become
\[
\bali
&-{\blue ( v\cdot \nabla)} v = -(v^r e_r+v^{\th}e_{\th}+v^3 e_3) \cdot (\p_r v^r  e_r + \p_r v^\th e_\th + \p_r v^3 e_3 )\otimes e_r\\
&\qquad - (v^r e_r+v^{\th}e_{\th}+v^3 e_3) \cdot \frac{1}{r} ( v^r e_\th -
v^\th e_r) \otimes e_\th\\
&\qquad - (v^r e_r+v^{\th}e_{\th}+v^3 e_3) \cdot (\p_{x_3} v^r  e_r + \p_{x_3} v^\th e_\th + \p_{x_3} v^3 e_3 )\otimes e_3\\
&=[-(v^r \p_r + v^3 \p_{x_3})v^r+\frac{(v^{\theta})^2}{r}] e_r
-[(v^r \p_r + v^3 \p_{x_3} )v^{\theta}+\frac{v^{\theta} v^r}{r}] e_\th
-(v^r \p_r + v^3 \p_{x_3})v^3 e_3.
\eali
\]This gives rise to the most complicated terms in \eqref{eqasns}.

In 2015, it was observed  by Lei-Zhang \cite{LZ17} that ASNS is essentially critical under the standard scaling. So the aforementioned scaling gap is $0$. This observation has the effect of making ASNS looks less formidable than the full 3 dimensional case which has a positive scaling gap. Nevertheless all major open problems for the latter are still open for the former. In the next three sections, we will describe some recent research results on the following topics: regularity conditions (Section 2), ancient solutions (Section 3)  and stationary solutions (Section 4). These topics are closely related.
The study of possible singularity of solutions leads to the study of ancient solutions, i.e. solutions whose existence time extends to $-\infty$.
Stationary solutions are ancient solutions but are even more special. The aforementioned topics have an apparently decreasing order in terms of level of difficulties:

{\it regularity problem $>$ classification of ancient solutions $>$ classification of homogeneous stationary solutions $>$
classification of homogeneous D solutions.}

The last type of solutions are defined as stationary solutions to \eqref{nse} with finite Dirichlet energy:
$\int_{\bR^3} |\nabla v|^2 dx <\infty$, which also vanish at infinity. See Definition \ref{deDS}. They were studied in Leray's first paper
\cite{Le1}.
However as we shall see, even our understanding of stationary solutions are still very primitive. For example, we still do not know if homogeneous D solutions in $\bR^3$ are zero, even in the axially symmetric case.

Due to the large number of papers in the literature, we need to make a selection on what to present. This selection only reflects  personal interest and knowledge. Some important results may be missed.
For example, we will not address any papers on boundary value problems seriously, which as well known, have their own
complications and complexity. Nor will we touch non-uniquess of very weak solutions. Over the decades, in conjunction with the development of research on NS, many books have been written, and it is safe to say that the trend will continue. Let us list a few of them  for comprehensive information and the history of the field: \cite{Lady, Tem, CoFo, MaBe, KrLo, Can, Soh, Galdi2011, Leri1, Leri2, BoFa, Sereboo, Trie, RRS, Tsai}.

 Any suggestions on missing information or improvement are very welcome.

We end the introduction by listing a number of notations and conventions to be used throughout, which are more or less standard.
The velocity field is usually called $v$ and the vorticity $\nabla \times v$ is called $\o$. We use superscripts to denote their components in coordinates. Given a point $x=(x_1, x_2, x_3) \in \bR^3$,
we write $x'=(x_1, x_2, 0)$, $x_h=(x_1, x_2)$, $r =(x^2_1+x^2_2)^{1/2}$ and $\th=\arctan (x_2/x_1)$. $L^p(D)$, $p \ge 1$, denotes the usual Lebesgue space on a domain D which may be a spatial, temporal or space-time domain. Let $X$ be a Banach space defined for functions on $D \subset \bR^3$. $L^p(0, T; X)$ is the Banach space of space-time functions $f$ on the space time domain $D \times [0, T]$ with the norm $\left(\int^T_0 \Vert f(\cdot, t) \Vert^p_X dt\right)^{1/p}$. If no confusion arises, we will also use $L^p X$ to abbreviate $L^p(0, T; X)$. Sometimes we will also use $L^p_xL^q_t$ or $L^q_tL^p_x$ to denote the mixed $p, q$ norm in space time. Let $D \subset \bR^3$ be an open domain, then $H^1(D)=W^{1, 2}(D)=\{f \, | \,  f, |\nabla f| \in L^2(D) \}$ and $H^2(D)=W^{2, 2}(D)=\{f \, |\,  f, |\nabla f|,  |\nabla^2 f|\in L^2(D) \}$, the standard Sobolev spaces on D. Also, interchangeable notations $div \, v = \nabla \cdot v$, $v \nabla v =\sum v_i \p_{x^i} v = v \cdot \nabla v$ will be used. If there is no confusion, the vertical variable $x_3$ may be replaced with $z$. Also $B_r(x)$ denotes the ball of radius $r$ centered at $x$ in a Euclidean space. If $s$ is a number, then $s^-$ means any number which is close but strictly less than $s$.

\section{Regularity  Criterion }

\subsection{Critical and slightly super critical regularity conditions}

If the swirl $v^\theta=0$, then it is
known for long time ( O. A. Ladyzhenskaya
\cite{L}, M. R. Uchoviskii and B. I. Yudovich \cite{UY}), that finite energy solutions
to (\ref{eqasns}) are smooth for all time.
See also the paper by S.
Leonardi, J. Malek, J. Necas,  and  M. Pokorny \cite{LMNP}. By finite energy, we mean
\eqref{enineq} holds, i.e., the solution is a Leray-Hopf solution.

In the presence of swirl, it is still not known in general if finite energy solutions
blow up in finite time.

  However a lower bound for the possible blow up rate is known by
the results of
C.-C. Chen, R. M. Strain, T.-P.Tsai,  and  H.-T. Yau in \cite{CSTY1},
\cite{CSTY2},  G. Koch, N. Nadirashvili, G. Seregin,  and  V.
Sverak in \cite{KNSS}, which appeared around 2008. See also the work by G. Seregin  and
V. Sverak \cite{SS} for a localized version. These authors prove that if
\be
\lab{v<1/r}
|v(x, t)| \le
\frac{C}{r},
\ee then solutions are smooth for all time. Here $C$ is any positive constant.

The proof is based on the fact that the scaling invariant quantity
$\Gamma= r v^\theta$: satisfies the equation
\be
\lab{eqvth}
\Delta \Gamma - {\blue (b\cdot \nabla)} \Gamma- \frac{2}{r} \p_r
\Gamma-\p_t \Gamma=0,
\ee where $b=v^r e_r + v^3 e_3$. The bound \eqref{v<1/r} says that the equation is essentially scaling invariant and the classical linear regularity theory can be applied after some nontrivial modification.
In \cite{CSTY2}, the authors use this approach to prove that $\Gamma$ is H\"older continuous first. This implies $|v^\theta|$ is bounded by $r^{-1+\alpha}$ near the $z$ axis, which makes $v^\theta$ subcritical under the standard scaling. Here $\alpha$ is a small positive constant. Thus $v^\th$ is small in micro scale. The smallness enters into the equation for $\o^\theta$ in \eqref{eqvort} after a scaling argument. The authors then manage to prove that $\o^\th$ is bounded which in turn proves the whole velocity field $v$ is bounded by the Biot-Savart law.
In contrast, a blow up method is used in \cite{KNSS}. The first step is to show that if a solution of the ASNS blows up in finite time, then after a suitable scaling and limiting procedure, one obtains a nonzero bounded, mild solution of the ASNS, which exists in the time interval $(-\infty, 0]$. Such a solution, denoted by $v_\infty$,  is called a mild ancient solution.  Here the word "mild" means that a solution,
in addition to being a pointwise or weak solution of the Navier-Stokes equation, must also satisfy an integral equation involving the Stokes kernel. See \eqref{demildso} or \cite{KNSS} for a precise definition e.g. The purpose is to rule out solutions of the form $a(t) \nabla h(x)$ where $a=a(t)$ is a differentiable function and $h$ is a harmonic function.  The equation for $\Gamma=r v^\th_\infty$ again plays an essential role. Using the bound \eqref{v<1/r}, which survives the scaling,
and an integration argument involving the maximum principle for equation \eqref{eqvth}, it is shown that $v^\th_\infty=\Gamma/r=0$. Therefore $v_\infty$ is a swirl free, bounded, mild, ancient solution. Now the equation for $\o^\th/r$ (see \eqref{eqjoo}) satisfies the maximum principle.
Using the aforementioned integration argument for the equation of $\o^\th/r$, exploiting the interplay between the velocity and vorticity, one can show that $v_\infty$ is a constant. The bound \eqref{v<1/r} then forces $v_\infty=0$. But we already know that $v_\infty \neq 0$ from the  first step.  This contradiction shows that the original solution can not blow up.

 Solutions satisfying the bound \eqref{v<1/r} are
often referred to as type I solutions.
One reason for this name is that the bound is scaling invariant so one can study the solution at micro scales using available linear theory.
The above result can be summarized as: type I solutions of ASNS are regular.

Two years later, in the paper \cite{LZ11} by Lei and Zhang, it is proven that  if
$v^r, v^3$ are in the space of $L^\infty([0, \infty),  BMO^{-1}({\bR}^3))$ and $r v^\th(\cdot, 0) \in L^\infty$, then the solution is regular. Here $BMO$ is the space of functions with bounded mean oscillation, c.f. \cite{JoNi}, and $BMO^{-1}$ is the space of tempered distributions which can be written as partial derivatives of BMO functions. Well-posedness and other properties of solutions to NS have been studied by Koch-Tataru \cite{KoTa}, Miura \cite{Miu} and Germain-Pavlovic-Staffilani
\cite{GPS}.
Note the function $C/r$ is contained in $BMO^{-1}$. Hence this result extends the one described in the previous paragraph. See also \cite{Seregin2012}, \cite{WaZh12}. At the first glance, it seems that the main improvement is just the relaxation of a pointwise condition to an integral type condition. However, there is an additional feature in that one only needs to impose a condition on the vertical velocity $v^3$ to gain regularity. See Theorem \ref{thasnsvz} below. The precise statement is:

\begin{theorem} (\cite{LZ11})
\label{thregcon}
Let $v=v(x, t)$ be a Leray-Hopf solution to (\ref{eqasns}) in
the space time region ${\bR}^3 \times [0, T]$. Assume  that the
initial value satisfies, $|r
v^\theta(x, 0)|<C$. Suppose also $v(\cdot, t) = \nabla \times
B(\cdot, t)$ with $\sup_{0 < t < T}\|B(\cdot, t)\|_{{\rm BMO}}
\leq C_\ast$. Then $v$ is smooth in ${\bR}^3 \times (0, T]$.
Here $C$ and $C_\ast$ are arbitrary positive constants.
\end{theorem}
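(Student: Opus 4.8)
The conditions in the theorem are both scaling-invariant, so it suffices to rule out a blow-up by establishing a scale-invariant a priori interior bound on $v$. The plan is to build this bound on two pillars: a maximum principle for the swirl quantity $\Gamma = rv^\theta$, which propagates the initial bound $|\Gamma(\cdot,0)| < C$ to all times, and the Carleson-measure meaning of the hypothesis $B(\cdot,t)\in\mathrm{BMO}$, which places $v(\cdot,t)$ uniformly in the Koch--Tataru data space $\mathrm{BMO}^{-1}$ and thereby controls the local space-time energy of $v$ at every scale.

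First I would make Step~1 rigorous. Equation \eqref{eqvth}, namely $\Delta\Gamma - b\cdot\nabla\Gamma - \frac{2}{r}\partial_r\Gamma - \partial_t\Gamma = 0$ with $b = v^r e_r + v^3 e_3$, has a divergence-free drift and a favorable first-order term $-\frac{2}{r}\partial_r\Gamma$; since $\Gamma$ vanishes on the axis, testing the equation against $(\Gamma - C)_+$ (an energy/De Giorgi form of the maximum principle that tolerates the merely Leray--Hopf regularity of $b$) yields $\sup_{0<t<T}\|rv^\theta(\cdot,t)\|_{L^\infty}\le C$. The virtue of this bound is that it tames the only genuinely dangerous coupling in the axially symmetric system: writing $\Omega=\omega^\theta/r$, the quantity $\Omega$ solves a drift-diffusion equation of the form $\partial_t\Omega + b\cdot\nabla\Omega - (\Delta + \frac{2}{r}\partial_r)\Omega = \partial_z\big((v^\theta)^2/r^2\big)$, whose source is controlled, modulo the singular weight, by $\|\Gamma\|_{L^\infty}$. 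Recovering $b$ from $\Omega$ by the axisymmetric Biot--Savart law then closes the system in $(\Gamma,\Omega)$.

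For Step~2 I would convert the hypothesis on $B$ into usable analytic information. By John--Nirenberg \cite{JoNi} and the heat-semigroup characterization of $\mathrm{BMO}^{-1}$ \cite{KoTa}, the bound $\sup_t\|B(\cdot,t)\|_{\mathrm{BMO}}\le C_\ast$ gives, via the local energy inequality, a scale-invariant bound of Morrey type $\sup_{x_0,t_0,R}\,R^{-3}\iint_{Q_R(x_0,t_0)}|v|^2 \lesssim C_\ast^2$. Feeding this, together with $\|\Gamma\|_{L^\infty}\le C$ and the $\Omega$-equation, into a local regularity iteration (De Giorgi/Moser on the vorticity equation, or an $\varepsilon$-regularity argument in the spirit of \cite{CKN:1}) should produce $v\in L^\infty_{loc}$; standard parabolic Schauder theory then bootstraps to full smoothness on $\mathbb{R}^3\times(0,T]$. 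A clean alternative to the direct iteration is a blow-up/rescaling argument: if $v$ first loses regularity at $(x_\ast,t_\ast)$, rescale to obtain a nontrivial mild ancient solution that inherits both the bound $\|r v_\infty^\theta\|_\infty\le C$ and the $\mathrm{BMO}^{-1}$ bound, and then show $v_\infty\equiv 0$, a contradiction.

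The crux is the criticality. The $\mathrm{BMO}^{-1}$ norm is only bounded, not small, and it is scale-invariant, so neither Koch--Tataru small-data theory nor a bare $\varepsilon$-regularity theorem applies directly; the decisive gain must come from the axisymmetric structure, i.e.\ from the swirl bound of Step~1, which is what makes the borderline case closeable. The secondary technical obstacle is the singular weight in the source $\partial_z(\Gamma^2/r^4)$ of the $\Omega$-equation near the axis $r=0$: controlling this term and the Biot--Savart recovery of $b$ uniformly up to the axis, while staying inside the critical scaling, is where I expect the main effort to be spent.
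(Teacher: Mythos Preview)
This review paper does not supply a proof of Theorem~\ref{thregcon}; it quotes the result from \cite{LZ11}. However, the surrounding discussion (the paragraph on \cite{CSTY2} preceding the theorem, and the description of Pan's extension \cite{Panx} afterward) makes the intended mechanism clear: the $BMO^{-1}$ hypothesis on $v$ is a condition on the \emph{drift} $b=v^r e_r+v^3 e_3$ in the equation \eqref{eqvth} for $\Gamma=rv^\theta$, and its role is to run a De~Giorgi--Nash--Moser argument on \eqref{eqvth} to obtain a \emph{modulus of continuity} for $\Gamma$ at the axis, i.e.\ $|\Gamma(r,x_3,t)|\le Cr^\alpha$ for some $\alpha>0$. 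This upgrades $|v^\theta|\le C/r$ to $|v^\theta|\le C/r^{1-\alpha}$, which is subcritical; the vortex-stretching term in the $\omega^\theta$ (or $\Omega$) equation then becomes harmless and regularity follows as in \cite{CSTY2}.

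Your outline has the ingredients but assembles them in the wrong order, and this creates a genuine gap. In your Step~1 you only extract the $L^\infty$ bound $\|\Gamma\|_\infty\le C$ from the maximum principle; you then propose to close on the $\Omega$-equation with source $\partial_{x_3}(\Gamma^2/r^4)$. With merely $|\Gamma|\le C$ that source scales exactly like $r^{-4}$, which is critical, and the $BMO^{-1}$ Morrey bound on $v$ is also only critical---so neither input gives the small parameter needed to absorb the stretching term, and you yourself flag this as the unresolved ``main effort.'' The point you are missing is that the $BMO^{-1}$ hypothesis should be spent \emph{on the $\Gamma$-equation}, not on the $\Omega$-equation: a divergence-free drift in $BMO^{-1}$ is exactly the borderline at which Nash--Moser still yields H\"older continuity for \eqref{eqvth}, and once $\Gamma$ is H\"older at $r=0$ the source in the $\Omega$-equation becomes subcritical and the rest is routine. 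Your blow-up alternative has the same defect: the limiting ancient solution inherits only $\|\Gamma_\infty\|_\infty\le C$ and a $BMO^{-1}$ bound, and no Liouville theorem in the paper (Theorems~\ref{3Dswirl}, \ref{thzhouqi}, \ref{thgudai2}) covers that case.
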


In the original paper, the pertinent solutions are stated as suitable weak solutions explained in Section 1.
However, no such restriction is actually needed in the proof. Another regularity condition proposed in \cite{LZ11} only involves a region outside a paraboloid with the vertex at a given space-time point.
This suggests that regularity of a solution at a space-time point only depends on the behavior of the solution in a small part of a space-time cube with vertex at the same point. Such a phenomenon was later
proven for the full 3 dimensional NS by Neustupa in \cite{Neu}.

Recently Seregin and Zhou \cite{SeZh} have relaxed the $L^\infty BMO^{-1}$ assumption further to $L^\infty \dot{B}^{-1}_{\infty, \infty}$ assumption.
Let us recall $\dot{B}^{-1}_{\infty, \infty}$ is the Besov space consisted of tempered distributions $f$ such that the norm
\[
\Vert f \Vert_{\dot{B}^{-1}_{\infty, \infty}} = \sup_{t>0} t^{1/2} \sup_{x} \left| \int_{\bR^3} G(x, t, y) f(y) dy \right|
\]is finite. Here $G(x, t, y)=(1/(4\pi t)^{3/2}) \exp(-|x-y|^2/(4t))$ is the standard heat kernel on $\bR^3$.

\begin{theorem} (\cite{SeZh})
 Any axially symmetric suitable weak solution of \eqref{eqasns}, belonging to $L^\infty \dot{B}^{-1}_{\infty, \infty}$, is smooth.
\end{theorem}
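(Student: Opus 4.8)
\section*{Proof proposal}

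The plan is to argue by contradiction through a blow-up (rescaling) procedure that reduces the regularity question to a Liouville-type statement for bounded mild ancient solutions, in the spirit of the arguments of Koch--Nadirashvili--Seregin--\v Sver\'ak \cite{KNSS} and Lei--Zhang \cite{LZ11} described above. The one structural gain to exploit is that the homogeneous Besov norm $\|\cdot\|_{\dot B^{-1}_{\infty,\infty}}$ is invariant under the natural scaling \eqref{nsscal}, exactly as the pointwise bound \eqref{v<1/r} is. Crucially, the defining identity in the excerpt says $\|f\|_{\dot B^{-1}_{\infty,\infty}}=\sup_{s>0}s^{1/2}\|e^{s\Delta}f\|_{L^\infty}$, so membership in this space is precisely a $t^{-1/2}$ bound on the linear heat (Stokes) part. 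The difference from \cite{LZ11} is that $\dot B^{-1}_{\infty,\infty}$ is strictly larger than $BMO^{-1}$, so every estimate that there relied on the $BMO$ structure must be replaced by one that survives under this weaker control.

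Suppose $v$ were singular at some point $z_0=(x_0,t_0)$; since for axially symmetric flows singularities can occur only on the symmetry axis (regularity away from the axis is known, c.f. \cite{CSTY2}), we may take $x_0$ on the axis. First I would recall from the Caffarelli--Kohn--Nirenberg theory \cite{CKN:1} that at a genuine singular point the scale-invariant quantity $\rho^{-2}\int_{Q_\rho(z_0)}|v|^3\,dz$ stays bounded away from $0$ as $\rho\to0$. The rescalings $v^{(\rho)}(x,t)=\rho\,v(x_0+\rho x,\,t_0+\rho^2 t)$ are suitable weak solutions of \eqref{eqasns} whose $L^\infty\dot B^{-1}_{\infty,\infty}$ norm is uniformly bounded by scale invariance. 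The central analytic point is then to upgrade this critical bound to uniform local boundedness: the Besov control tames the Stokes part of the mild formulation at unit scale, and combined with the local energy inequality of suitable weak solutions and $\epsilon$-regularity this should yield uniform interior bounds on $v^{(\rho)}$ and its derivatives on compact subsets of $(-\infty,0]$. Passing to a subsequence, I would obtain a nonzero, bounded, smooth, axially symmetric mild ancient solution $u$ on $\bR^3\times(-\infty,0]$, still lying in $L^\infty\dot B^{-1}_{\infty,\infty}$; it is nonzero because the scale-invariant lower bound persists in the limit.

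It then remains to prove $u\equiv0$, following the two-step scheme of \cite{KNSS}. The first step is to kill the swirl: the quantity $\Gamma=r\,u^\theta$ satisfies \eqref{eqvth}, and since $u$ is bounded, smooth, and carries the decay coming from the scale-invariant bound, a maximum-principle and integration argument on \eqref{eqvth} should force $\Gamma\equiv0$, so that $u$ is swirl-free. Then $u$ is a bounded swirl-free mild ancient solution, for which the equation governing $\omega^\theta/r$ obeys a maximum principle; the same integration argument, exploiting the interplay between velocity and vorticity via the Biot--Savart law, shows $u$ is constant, and the decay forces $u\equiv0$, contradicting $u\neq0$. This contradiction shows $v$ cannot be singular, proving smoothness.

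The hard part is the passage from the critical $\dot B^{-1}_{\infty,\infty}$ bound to genuine compactness and uniform local bounds on the rescaled solutions. Unlike $BMO^{-1}$, membership in $\dot B^{-1}_{\infty,\infty}$ gives no local integrability of $v$ on its own, so the argument must lean heavily on the suitable-weak-solution structure together with the fact that the Besov control is exactly strong enough to handle the Stokes kernel. A secondary difficulty is checking that the $\Gamma$-argument still closes: in the type-I setting \eqref{v<1/r} one has $|\Gamma|\le C$ outright, whereas here one must extract enough decay of $u^\theta$ both near and away from the axis from the scale-invariant bound alone to justify the maximum principle on \eqref{eqvth}. I expect this swirl step, rather than the final vorticity step, to be where the weaker hypothesis is genuinely felt.
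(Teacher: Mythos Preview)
Your blow-up-to-Liouville outline is not the route the paper (following Seregin--Zhou) takes, and the place where you flag difficulty is exactly where your argument has a genuine gap. The paper's mechanism is quite different: one uses the interpolation inequality
\[
\|v(\cdot,t)\|_{L^4(\bR^3)}^2 \;\le\; C\,\|v(\cdot,t)\|_{\dot B^{-1}_{\infty,\infty}}\,\|v(\cdot,t)\|_{\dot H^1(\bR^3)}
\]
together with the \emph{localized energy inequality} of suitable weak solutions. Since $\|\nabla v\|_{L^2}$ is controlled by the energy and $\|v\|_{\dot B^{-1}_{\infty,\infty}}$ is assumed bounded in time, this feeds directly into the local energy estimates and yields uniform bounds on the standard scale-invariant quantities (local $L^3$ of $v$, $L^{3/2}$ of $P$, local energy, etc.). That is precisely the statement that any possible blow-up is of type~I. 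One then quotes the already-established exclusion of type~I singularities for axially symmetric suitable weak solutions (Seregin \cite{Seregin2012}, itself built on \cite{KNSS,SS}) and concludes.

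In your proposal, the step ``upgrade this critical bound to uniform local boundedness'' is where the argument stalls: the heat-semigroup definition of $\dot B^{-1}_{\infty,\infty}$ by itself gives you nothing about the nonlinear term at unit scale, and without the $L^4$--$\dot H^1$ interpolation you have no handle on $\int |v|^3$ or on the pressure needed for $\epsilon$-regularity and compactness. Your downstream worries are also real: absent a type~I bound you do not get $\Gamma=r u^\theta\in L^\infty$ for the limiting ancient solution, so the maximum-principle argument on \eqref{eqvth} cannot be launched. The paper sidesteps both problems by first proving type~I via interpolation and then invoking the known no-type-I result, rather than redoing the Liouville step from scratch.
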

Using localized energy inequalities coupled with interpolation of $L^4$ between
$\dot{B}^{-1}_{\infty, \infty}$ and the homogeneous Sobolev space $\dot{H}^1(\bR^3)$, they prove the following. If $v$ is a suitable weak solution to the three dimensional NS from the space $L^\infty(0,T; \dot{B}^{-1}_{\infty, \infty})$, then a number of scaled energy quantities of $v$ are bounded. Consequently only type I blow up can occur. In the axially symmetric case, this has been ruled out in \cite{Seregin2012}. Therefore these solutions are smooth. We mention that although $BMO^{-1} \subset \dot{B}^{-1}_{\infty, \infty}$, the result in \cite{LZ11} goes beyond suitable weak solutions since there is no need for the local energy inequality.

Without knowing if blow up happens in general, it is desirable to find an upper
bound for the growth of velocity. It is expected that the solutions are
smooth away from the axis, with  certain  growing bound
when approaching the axis. The next theorem confirms this
intuitive idea. Although it did not give the bound
(\ref{v<1/r}) which is required for smoothness, it
reveals the exact gap between what we have and what we need.

 This seems to be the first pointwise bound for the speed
(velocity) for the axially symmetric Navier-Stokes equation.
  We mention that
a less accurate a priori upper bound for the vorticity has been found in Burke-Zhang \cite{BuZh} a few years earlier. Also,  in the paper  \cite{LZ11-2}, Lei-Zhang proved that if the scaling invariant quantity $r |v(x, t)|$ is sufficiently large at a point $(x_0, t_0)$, then the solution is close, in $C^{2, 1}$ sense, to a nonzero constant vector after a suitable scaling.

\begin{theorem} (Lei-Navas-Zhang \cite{LNZ})
 \label{a priori bound}  (velocity bound).
 Suppose  $v$ is a smooth,
axially symmetric solution of the three-dimensional Navier-Stokes
equations in ${\bR}^3\times(-T,0)$ with initial data
$v_0=v(\cdot,-T)\in L^2({\bR}^3)$. Assume further
$rv_0^{\theta}\in L^{\infty}({\bR}^3)$ and let $R= \min\{1,
\sqrt{T/2} \}$.

 Then for all $(x, t) \in {\bR}^3 \times (-R^2, 0)$, it holds

\[
|v^r(x, t)| + |v^3(x, t)| \le \frac{C \sqrt{|\ln r|}}{r^2},  \qquad 0< r
\le \min \{1/2, R \}.
\]Here  $r$ is the distance
from $x$ to the $z$ axis, and $C$ is a constant depending only on the initial data.

\end{theorem}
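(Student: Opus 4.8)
The plan is to control $b:=v^re_r+v^3e_3$ pointwise by reducing it, via the axially symmetric Biot-Savart law, to a bound on the angular vorticity $\o^\th=\p_{x_3}v^r-\p_rv^3$, and to feed that bound from the one scaling-critical estimate genuinely available: the boundedness of $\Gamma=rv^\th$. For the latter, since $\Gamma$ solves \eqref{eqvth}, an equation with no zeroth-order term and only the drift $b$ together with the first-order term $-\f2r\p_r$, the parabolic maximum principle applies on any region avoiding the axis; combined with $\Gamma\equiv0$ on the axis this gives $|\Gamma(x,t)|\le\|rv_0^\th\|_{L^\i}=:C_0$ for all $t$, hence $|v^\th|\le C_0/r$. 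Everything downstream is measured against this single critical bound.

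Next I would set up weighted energy estimates. The energy inequality \eqref{enineq} for $v_0\in L^2$ yields $v\in L^\i_tL^2_x\cap L^2_t\dot H^1_x$ and hence $\o\in L^2_{x,t}$. The scalar $\Omega:=\o^\th/r$ satisfies a clean drift--diffusion equation $\p_t\Omega+b\cdot\na\Omega-(\Dl+\f2r\p_r)\Omega=\f1{r^2}\p_{x_3}(v^\th)^2$, whose right-hand side equals $\f1{r^4}\p_{x_3}(\Gamma^2)$ and is therefore controlled once $\Gamma$ is bounded. Multiplying by suitable powers of $r$ and localizing in space-time produces energy bounds for $\o^\th$ near the axis. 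The structural key is that the drift $b$---which is exactly the quantity we are trying to bound, hence \emph{not} available as a bounded coefficient---enters only through the divergence-free field $b$, so that $\int b\cdot\na\Omega\,\Omega\,dx=0$ after integration by parts and the dangerous term disappears from the energy identity.

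The heart of the matter is then a parabolic De Giorgi--Nash--Moser iteration on cylinders adapted to the distance to the axis. Fixing $(x_0,t_0)$ with $r_0=\mathrm{dist}(x_0,z\text{-axis})\le\min\{1/2,R\}$ and rescaling by $r_0$ places the axis at unit distance; the swirl bound $|\Gamma|\le C_0$ is scale-invariant and survives, while the localized vorticity energy transforms in a controlled way. Running the iteration on the $\Omega$ equation---drift handled by its divergence-free structure, source by the $\Gamma$-bound---upgrades the local $L^2$ control of $\o^\th$ to a local $L^\i$ bound, and the axially symmetric Biot-Savart representation of $b$ in terms of $\o^\th$ converts this to the pointwise estimate. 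The power $r_0^{-2}$ comes out of the scaling balance between the energy estimate and the singular Biot-Savart kernel near the axis, and the factor $\s{|\ln r_0|}$ is the residue of summing the borderline, logarithmically divergent contributions of the dyadic scales between $r_0$ and $O(1)$ and taking a square root via Cauchy--Schwarz. The time window $t\in(-R^2,0)$ with $R=\min\{1,\s{T/2}\}$ is precisely what lets the parabolic smoothing from the $L^2$ data drive these local estimates.

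The main obstacle is the unbounded drift $b$ in the iteration step: it cannot be treated as a lower-order coefficient of bounded size, since its size is the very conclusion sought. The resolution relies on using only the divergence-free structure of $b$, so its contribution cancels in every energy identity, together with the space-time $L^2$ control inherited from \eqref{enineq}, and never any pointwise information about $b$. The second delicate point is the exact bookkeeping of powers of $r$ across the weighted operator $\Dl+\f2r\p_r$, the weighted source $\f1{r^4}\p_{x_3}(\Gamma^2)$, and the Biot-Savart kernel; it is this borderline accounting that decides whether one lands on the sharp exponent $r^{-2}$ with only an extra $\s{|\ln r|}$ rather than a larger power of $1/r$.
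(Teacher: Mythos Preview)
This survey paper does not itself contain a proof of the theorem; it merely quotes the result from \cite{LNZ}. Still, your outline has two concrete gaps that would block the argument as written.

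The first is the drift term. You assert that $b$ ``enters only through the divergence-free field $b$, so that $\int b\cdot\nabla\Omega\,\Omega\,dx=0$'' and hence ``its contribution cancels in every energy identity''. This is only true for global, unlocalized integrals. In any localized Caccioppoli or Moser step one tests with $|\Omega|^{2p-1}\operatorname{sgn}(\Omega)\,\phi^2$, and after integration by parts the drift leaves behind $\frac{1}{p}\int|\Omega|^{2p}\,b\cdot\nabla\phi\,\phi$. This term does \emph{not} vanish, and controlling it requires quantitative information on $b$ beyond $div\,b=0$. After rescaling to unit distance from the axis the available norms of $b$ (e.g.\ $L^\infty_tL^2_x$ or $L^2_t\dot H^1_x$ from \eqref{enineq}) blow up like $r_0^{-1/2}$, which is precisely the supercritical obstruction you are trying to beat. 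So the iteration does not close on the drift side the way you describe.

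The second is the source term. You write it as $\frac{1}{r^4}\partial_{x_3}(\Gamma^2)$ and claim it is ``controlled once $\Gamma$ is bounded''. Boundedness of $\Gamma$ controls $|\Gamma|$, not $|\partial_{x_3}\Gamma^2|=2|\Gamma||\partial_{x_3}\Gamma|$; and $\partial_{x_3}\Gamma=-r\omega^r=-r^2 J$. In other words the source is exactly the coupling $-\frac{2v^\th}{r}J$ to the companion quantity $J=\omega^r/r$, and the $\Omega$-equation cannot be closed by itself. This is the whole point of the paper's discussion around \eqref{eqjoo} and \eqref{jowz}: the actual mechanism in \cite{LNZ} (and in the related \cite{LZ17}, \cite{Weid}) is a \emph{coupled} energy estimate on $(J,\Omega)$, in which the cross term $\int\frac{2v^\th}{r}J\Omega$ is absorbed using $|v^\th|\le C_0/r$ together with the axially symmetric Hardy inequality. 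The logarithmic defect in that Hardy inequality is exactly what produces the $\sqrt{|\ln r|}$ factor. The passage from the resulting weighted $L^2$ control to a pointwise velocity bound then goes through the angular stream function $L^\th$ (the paper states the companion bound $|L^\th|\le C|\ln r|^{1/2}r^{-1/2}$ immediately after this theorem), rather than through a Moser $L^\infty$ bound on $\omega^\th$ followed by Biot--Savart.
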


In the same paper, the following results are also proven.

\begin{theorem}
Under the same assumption as the previous theorem, there exists a constant $C$,
depending only on the initial data, such that,
\[
|L^\theta(x, t)| \le \frac{C |\ln r |^{1/2}}{r^{1/2}}, \qquad r \le \min \{1/2, R
\}.
\]
\end{theorem}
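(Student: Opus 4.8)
The plan is to identify $L^\theta$ with the swirl component $v^\th$ of the velocity and to deduce its bound from a sharp vanishing rate, near the axis, of the scaling invariant angular momentum $\G = r v^\th$. Under the hypotheses of the previous theorem, $\G$ solves the drift diffusion equation \eqref{eqvth} with drift $b = v^r e_r + v^3 e_3$; since $|rv^\th(\cdot,0)| \le C$ and the operator in \eqref{eqvth} satisfies the maximum principle, $\G$ remains uniformly bounded, $|\G(x,t)| \le C$, and it vanishes on the axis $\{r=0\}$ because the solution is smooth. The only elementary bound on the swirl is therefore $|v^\th| = |\G|/r \le C/r$, and since $r^{-1/2}|\ln r|^{1/2} \ll r^{-1}$ as $r\to 0$, the asserted estimate is precisely the statement that $\G$ decays like $r^{1/2}$, up to a logarithm, as $r \to 0$. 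As a sanity check, this is exactly the borderline rate for which $\int |v^\th|^2 r\,dr$ converges, in agreement with the finite energy \eqref{enineq}.

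The mechanism producing the exponents is a radial Cauchy--Schwarz estimate anchored at the axis. Writing, for fixed $(z,t)$, $\G(r,z,t) = \int_0^r \p_{r'}\G(r',z,t)\,dr'$ and applying Cauchy--Schwarz gives $|\G(r,z,t)|^2 \le r \int_0^r |\p_{r'}\G(r',z,t)|^2\,dr'$, hence $|v^\th(r,z,t)| = |\G|/r \le r^{-1/2}\big(\int_0^r |\p_{r'}\G|^2 dr'\big)^{1/2}$. The factor $r^{-1/2}$ is now built in, and the entire theorem is reduced to the single quantitative claim that the radial Dirichlet integral grows at most logarithmically, $\int_0^r |\p_{r'}\G(r',z,t)|^2\,dr' \le C|\ln r|$, uniformly in $z$ and in $t$ on the given interval.

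To establish this logarithmic growth I would run a weighted energy estimate for $\G$ localized in dyadic cylindrical shells $\{r \sim 2^{-k}\}$ accumulating toward the axis. Testing \eqref{eqvth} against $\G$ with suitable cutoffs, the dangerous convection is handled not by its size but through the divergence free structure: $\int b\cdot\na\G\,\G = \tfrac12\int b\cdot\na(\G^2) = -\tfrac12\int (\Div b)\,\G^2 = 0$ up to cutoff terms, so the super critical drift cancels from the principal balance. The surviving lower order and cutoff contributions are then controlled by the drift bound $|b| \le C|\ln r|^{1/2} r^{-2}$ of the previous theorem together with $|\G| \le C$ and the finite energy \eqref{enineq}. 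The logarithm emerges as the borderline, scaling critical factor of this estimate at the axis, the same critical balance that produced the $|\ln r|^{1/2}$ in the previous theorem. Combined with the reduction above, this yields $|L^\theta| = |v^\th| \le C|\ln r|^{1/2} r^{-1/2}$.

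The main obstacle is the super criticality of the drift: $|b| \sim r^{-2}$ is more singular than the scaling critical rate $r^{-1}$, so any bound that estimates $b$ by its pointwise size — for instance a naive Moser iteration for $v^\th$, whose constants would blow up like $e^{c/r^2}$ — is hopeless. The argument must exploit the convection only in divergence form, where $\Div b = 0$ eliminates it, assisted by the favorable sign of the $-\frac{2}{r}\p_r\G$ term that forces $\G$ toward zero at the axis. The delicate point is to arrange the weights, cutoffs, and the summation over dyadic shells so that the reintroduction of $b$ through the cutoff gradients is absorbed by the gained smallness at each scale, and so that the accumulated constant degrades by no more than the single power of $|\ln r|^{1/2}$ that the theorem allows.
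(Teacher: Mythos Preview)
Your proposal rests on a misidentification of the object $L^\theta$. In this paper $L^\theta$ is \emph{not} the swirl $v^\theta$; it is the angular component of the stream function, defined a few lines after the theorem by the relations
\[
v^3 = \frac{1}{r}\,\partial_r(rL^\theta), \qquad v^r = -\,\partial_{x_3} L^\theta,
\]
so that $L^\theta$ encodes the swirl-free part $b = v^r e_r + v^3 e_3$ of the velocity and satisfies $(\Delta - r^{-2})L^\theta = -\omega^\theta$. Everything you wrote---the equation \eqref{eqvth}, the quantity $\Gamma = r v^\theta$, the maximum principle giving $|\Gamma|\le C$, the radial Cauchy--Schwarz on $\Gamma$, the dyadic energy argument for $\partial_r\Gamma$---is aimed at bounding $v^\theta$, which is a different (and in fact easier) object: the crude bound $|v^\theta|\le C/r$ already follows from the maximum principle for $\Gamma$, and refining it to $C|\ln r|^{1/2}/r^{1/2}$ is a separate statement not asserted here.

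Once the identification is corrected, the natural route is through the radial relation $\partial_r(rL^\theta) = r v^3$, exactly as in the proof of Theorem~\ref{thasnsvz} just below: integrating from the axis gives $rL^\theta(r,z,t) = \int_0^r \rho\, v^3(\rho,z,t)\,d\rho$. Feeding in the bound $|v^3|\le C|\ln\rho|^{1/2}\rho^{-2}$ from the previous theorem is too singular to integrate directly, so one must combine this pointwise bound with the finite-energy control of $v^3$ (or of $\omega^\theta$) near the axis to extract the stated $r^{1/2}|\ln r|^{1/2}$ growth of $rL^\theta$. Your Cauchy--Schwarz/dyadic-shell machinery is the right \emph{kind} of tool, but it has to be applied to $rL^\theta$ and $v^3$ (equivalently, to the elliptic equation linking $L^\theta$ and $\omega^\theta$), not to $\Gamma$ and $v^\theta$.
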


Here $L^\theta$ is the angular part of the stream function, which gives rise to $v^r$ and
$v^3$ by the following relations
\[
v^3 = \frac{1}{r} \partial_r ( r L^\theta), \quad v^r ={\blue -} \partial_{x_3} L^\theta.
\]So it is the most important component of the stream function (vector).

\begin{theorem}
\lab{thasnsvz}
Let $v$ be a Leray-Hopf solution to (\ref{eqasns}) in ${\bR}^3 \times (0, \infty)$ such
that
$r v^{\theta}(\cdot, 0) \in L^\infty({\bR}^3)$.
Suppose, for a given constant $C>0$,
and all $x \in {\bR}^3$ and $t \ge 0$,
\[
|v^3(x, t)| \le \frac{C}{r}.
\]Then $v$ is regular for all time.
\proof
\end{theorem}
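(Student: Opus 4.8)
The plan is to deduce regularity from the criterion of Lei--Zhang \cite{LZ11} recalled above (Theorem \ref{thregcon}): a Leray--Hopf solution of \eqref{eqasns} is smooth once $r v^\theta(\cdot,0)\in L^\infty$ and the meridional components $v^r, v^3$ lie in $L^\infty(0,\infty; BMO^{-1}(\bR^3))$. The initial swirl hypothesis is one of our assumptions, so everything reduces to extracting the space-time $BMO^{-1}$ bound for $v^r$ and $v^3$ from the single pointwise bound $|v^3|\le C/r$. One should not expect to reach the pointwise type I bound $|v|\le C/r$ (which would invoke \cite{CSTY2}, \cite{KNSS} directly), since $v^r$ is not controlled pointwise; the $BMO^{-1}$ criterion is the correct target. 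The guiding observation is that, although only the vertical component is controlled, the meridional field $(v^r, v^3)$ is generated by a single scalar potential, so controlling $v^3$ controls that potential and with it both components at once.

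Concretely, I would first record that the swirl stays controlled: since $\Gamma=r v^\theta$ satisfies the drift-diffusion equation \eqref{eqvth} with no zeroth order term, the maximum principle gives $\|r v^\theta(\cdot,t)\|_{L^\infty}\le \|r v^\theta(\cdot,0)\|_{L^\infty}$ for all $t\ge 0$ (the singular drift $-\frac{2}{r}\partial_r\Gamma$ being harmless because $\Gamma$ vanishes on the axis); this merely feeds the criterion. The heart of the argument is the potential estimate. Writing $L^\theta$ for the angular stream function, so that $v^3=\frac{1}{r}\partial_r(rL^\theta)$ and $v^r=\partial_{x_3}L^\theta$ as in the excerpt, we have $\partial_r(rL^\theta)=r v^3$, and integrating in $r$ from the axis (where $rL^\theta=0$),
\[
|r L^\theta(r,x_3,t)| = \Big|\int_0^r s\, v^3(s,x_3,t)\,ds\Big| \le \int_0^r |s\,v^3|\,ds \le C r,
\]
since $|s\,v^3|\le C$. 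Hence $\|L^\theta(\cdot,t)\|_{L^\infty(\bR^3)}\le C$ uniformly in $t$. Because the meridional velocity $v^r e_r + v^3 e_3$ equals the curl of $L^\theta e_\theta$ (up to the sign convention of the excerpt), and $L^\theta e_\theta$ has Cartesian components bounded by $|L^\theta|$, it is a uniformly bounded, hence $L^\infty(0,\infty;BMO)$, vector field; its curl $(v^r, v^3)$ therefore lies in $L^\infty(0,\infty;BMO^{-1})$ with a bound depending only on $C$. With both hypotheses of the criterion in hand, $v$ is smooth.

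The main obstacle is not the reduction but the rigorous justification of the potential step for a merely Leray--Hopf solution: one must make sense of $L^\theta$ and its integral representation, verify the vanishing $rL^\theta\to 0$ as $r\to 0$, and check that $L^\theta e_\theta$ --- whose moving frame $e_\theta$ is singular on the axis --- is genuinely in $BMO(\bR^3)$ rather than merely bounded on each meridian half-plane. Here boundedness of $L^\theta$ does the essential work, since $L^\infty\subset BMO$, but the axis must be handled with care, for instance by first establishing smoothness away from the axis (as in \cite{CSTY2}, \cite{KNSS}) and arguing the representation there. The conceptual point worth emphasizing is that the coupling of $v^r$ and $v^3$ through the single scalar $L^\theta$ is exactly what lets a one-component condition produce a two-component $BMO^{-1}$ bound, and hence full regularity.
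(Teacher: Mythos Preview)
Your proposal is correct and follows essentially the same route as the paper: integrate the relation $\partial_r(rL^\theta)=rv^3$ from the axis, use $|rv^3|\le C$ to conclude $|L^\theta|\le C$ uniformly in space-time, and then invoke the Lei--Zhang criterion \cite{LZ11} (Theorem~\ref{thregcon}) since the meridional field is the curl of the bounded vector $L^\theta e_\theta$. The extra maximum-principle step for $\Gamma$ you include is harmless but unnecessary, as the criterion only requires the initial bound on $rv^\theta$; and your worries about the axis are not an issue once $L^\theta\in L^\infty(\bR^3)$ is established, since $L^\infty\subset BMO$ and the axis has measure zero.
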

From the relation $ v^3 = \frac{1}{r} \partial_r (
r L^\theta)$, using the notation $|x'|=r=\sqrt{x^2_1+x^2_2}$, we have,
\[
\bali
\left| |x'| L^\theta(x, t) \right|
&= \left| \int^{|x'|}_0 \partial_r ( r L^\theta) dr  \right| \le
 \int^{|x'|}_0 \left| r v^3 \right|  dr \le C |x'|.
\eali
\]Here we just used the  assumption on $v^3$.  Therefore $L^\theta$ is a bounded function.
Then from the main result in \cite{LZ11}, we know that $v$ is regular for all time.
\qed

Comparing with the previously mentioned results of Chen, Strain, Tsai and Yau\cite{CSTY2} and Koch, Nadirashvilli, Sverak and Seregin \cite{KNSS},  there is no
restriction
on $v^r$ in our case. See also the paper by Chen, Fang, T.Zhang \cite{CFZ}.

There are also regularity condition on one component of the
velocity and/or vorticity.
 J. Neustupa  and
M. Pokorny \cite{NP} proved that the regularity of one component
(either $v^r$ or $v^{\theta}$) implies regularity of the other
components of the solution. See more refined results in  P. Zhang and T. Zhang \cite{ZZ}. Also proving regularity is the work of
Q. Jiu  and  Z. Xin \cite{JX} under an assumption of sufficiently small
zero-dimension scaled norms.  D. Chae  and  J. Lee \cite{CL}  also proved
regularity results assuming finiteness of another
zero-dimensional integral. A pointwise critical blow up criterion: $|\o^\th|\leq \f{C}{r^2}$ was also given in Z. Li and X. Pan \cite{LP:2019CPAA}.

As mentioned earlier, X. H. Pan \cite{Panx} recently  obtained a $loglog$ improvement of the main
result in Chen-Strain-Tsai-Yau and Koch, Nadirashvili,  Seregin, and V. Sverak.  Although it looks like a small improvement, it is a slightly super critical result based on the argument in
\cite{LZ11}. The proof replies on the robustness of De Giorgi-Nash-Moser method to prove the function
$\Gamma = r v^\th$ has a modulus of continuity at the $z$ axis, under the slightly supercritical condition on $b=v^r e_r + v^3 e_3$. The vector $b$ controls the drift term in the equation for $\Gamma$: \eqref{eqvth}.

There are also global regularity result in special cases.
G. Tian  and  Z.
Xin \cite{TX}  constructed a family of singular axially
symmetric solutions with singular initial data.
T. Hou  and  C. Li \cite{HL} found a special class of global smooth
solutions. See also a recent extension: T. Hou, Z. Lei  and  C. Li
\cite{HLL}.

\subsection{Criticality of ASNS and closing of the scaling gap}

Despite these efforts, there is still a finite scaling gap between the regularity
condition and
 a priori bounds.   In almost all the literature, the regularity conditions
are critical and hence are scaling invariant under
standard scaling. Improvements are at most logarithmic in nature, except for resorting to further requirements such as sparseness of the sets where the vorticity is high. We have mentioned the Ladyzhenskaya-Prodi-Serrin condition
for regularity requires
the velocity to be bounded in suitable function space whose norm is invariant under
standard scaling, such as $L^{p, q}$ with $\frac{3}{p}+\frac{2}{q}=1$.
However the energy bound scales as
$-1/2$. So there is a finite gap which makes the equation supercritical.

However in a recent paper \cite{LZ17},  Lei and Zhang made the following observation.

{\it The vortex stretching term of the ASNS is critical.}

Previously it was believed to be super-critical, which means in micro scales the equation becomes chaotic and intractable by current method. Critical equations are still very difficult but more tools are available to study them.  In the next few pages we describe the result in more details.

Let $\o=\nabla \times v=\o^r e_r + \o^\theta e_\theta + \o^3 e_3$ be the vorticity.
Define
\be
\lab{defJO}
J= \frac{\omega^r}{r},  \quad \Omega=\frac{\omega^{\theta}}{r}.
\ee Then the triple $J, \O, \o^3$
 satisfy the system: for $b=v^r e_r + v^3 e_3$,
\begin{equation}
\label{eqjoo}
\begin{cases}
\Delta J  -(b\cdot\nabla) J +\frac{2}{r}\p_r J +
 (\o^r \p_r + \o^3 \p_{x_3}) \frac{v^r}{r} - \p_t J
=0,\\
\Delta \Omega -(b\cdot\nabla)\Omega+\frac{2}{r}\p_r
\Omega - \frac{2v^{\theta}}{r} J -\p_t \Omega=0,\\
\Delta \o^3-(b\cdot\nabla) \o^3+ \o^{r}\p_r v^3 + \o^3 \p_{x_3}
v^3 -\p_t \o^3=0.
\end{cases}
\end{equation}
These follow from direct computation based on the vorticity equation
\begin{align}
\lab{eqvort}
\begin{cases}
  \big (\Delta-\frac{1}{r^2} \big
)\omega^r-(b\cdot\nabla)\omega^r+\omega^r
\p_r v^r +\omega^3\p_{x_3} v^r -\p_t
\omega^r
=0,\\
   \big  (\Delta-\frac{1}{r^2}  \big
)\omega^{\theta}-(b\cdot\nabla)\omega^{\theta}+2\frac{v^{\theta}}
{r}\p_{x_3} v^{\theta}+\omega^{\theta}\frac{v^r}{r}-\p_t
\omega^{\theta}=0,\\
 \Delta\omega^3-(b\cdot\nabla)\omega^3+\omega^3\p_{x_3}
v^3+\omega^{r}\p_r v^3 -\p_t \omega^3=0,
\end{cases}
\end{align} and the relations
\be
\lab{w-v}
\o^r= -\p_{x_3} v^\theta, \quad \o^\theta= \p_{x_3} v^r-\p_r v^3, \quad \o^3 =
\p_r v^\theta +\frac{v^\theta}{r}.
\ee

We mention that the function $J$ was introduced in the  recent paper by H. Chen-D.Y.Fang-T. Zhang
\cite{CFZ}. By carrying out an energy estimate on the first two equations, they
proved the following result: if
\[
  |v^\theta(x, t)| \le \frac{C}{r^{1-\e}},
\] for all $x$ and $t>0$,
then solutions are regular everywhere.  Here $\e>0$ and $C$ are positive
constants.  This result gives a hint that the ASNS is a little super-critical.
The reason is that  $v^\theta$ has the well known
a priori bound
\be
\lab{vthjie}
|v^\theta(x, t) | \le \frac{1}{r}  \Vert r v^\theta(\cdot, 0) \Vert_\infty,
\ee which comes from equation \eqref{eqvth} via the maximum principle.

%Let us mention that \eqref{vthjie} follows from the equation for
%$\Gamma= r v^\theta$:
%\be
%\lab{eqvth}
%\Delta \Gamma - b \nabla \Gamma- \frac{2}{r} \p_r
%\Gamma-\p_t \Gamma=0,
%\ee and the maximum principle.

Now we  observe that the vortex stretching terms in all three equations in
\eqref{eqjoo} are critical
 when viewed in a suitable way. The key is to treat \eqref{eqjoo} as a closed system.
 Therefore the vorticity equation of
 3 dimensional axially symmetric Navier-Stokes equations are
critical instead of supercritical as commonly believed.

Here are the details.  From its a priori bound, we know that
$v^\theta$ at worst scales as $-1$ power of the distance. Using the relation
\eqref{w-v}, we see that $\o^r$ and $\o^3$ in
the vortex stretching terms in \eqref{eqjoo} at worst scale as $-2$.
The key observation is to treat $\o^r$ and $\o^3$ as potential functions
rather than unknowns.  It is well known that in a second order reaction diffusion equation, potentials
which scale as $-2$ power of the distance are critical instead of supercritical.

But how to treat the other terms $\p_r \frac{v^r}{r}$, $\p_{x_3} \frac{v^r}{r}$,
$\p_r v^3$ and $\p_{x_3} v^3$? It turns out that they can all be converted to
$J$, $\O$ and $\o^3$ which are treated as unknown functions in \eqref{eqjoo}.

In fact one has the following inequalities
\be
\lab{vr/rtoOm}
\Vert \nabla \frac{v^r}{r} \Vert_2 \le \Vert \O \Vert_2, \qquad
\Vert \nabla^2 \frac{v^r}{r} \Vert_2 \le \Vert \partial_3 \O \Vert_2.
\ee These can be seen from the identities
\[
v^r = -\p_{x_3} L_\theta,  \quad (\Delta + \frac{2}{r} \p_r ) \frac{L_\theta}{r}
=-\O
\]which imply
\[
(\Delta + \frac{2}{r} \p_r ) \frac{v^r}{r}
=\p_{x_3} \O.
\]Then one can use $\frac{v^r}{r}$ and $\Delta \frac{v^r}{r}$ as test
functions respectively to deduce \eqref{vr/rtoOm}.

Moreover from the relation
\[
\Delta \p_i v = - \nabla \times \p_i \o
\]and integration by parts, we know that
\[
\Vert \nabla \p_r v^3 \Vert_2 + \Vert \nabla \p_{x_3} v^3 \Vert_2
\le C \Vert \nabla \o \Vert_2.
\]By direct computation, we also have the pointwise relation
\[
|\nabla \o |^2 \le 2 r^2 (|\nabla J|^2 + |\nabla \O|^2) +
|\nabla \o^3|^2 + 2 (J^2 + \O^2).
\]
Therefore, even though $\o^r$ and $\o^3$ are viewed as potential
functions generated by $v^\theta$, the system \eqref{eqjoo} is still
a closed system of $J$, $\O$ and $\o^3$.

Note that in carrying out energy bound for equation \eqref{eqjoo} the drift terms (first order terms)
can be integrated out if functions decay sufficiently fast near infinity.  One can
also carry out a localized argument to take care of the drift term as in the paper
 \cite{Zcmp04}. We take the liberty to correct one misstatement in the text on p246 in that paper, where it was stated that weak solutions are Lipschitz in the spatial direction. It should have been "weak solutions are locally bounded".

Now let us introduce the main result in \cite{LZ17}.

\begin{definition}\label{FBC}
We say that the angular velocity $v^\theta(r, z, t)$ is in $(\delta_\ast, C_\ast)$-critical
class if
\begin{equation}\label{FBC-1}
\int \frac{|v^\theta|}{r}|f|^2dx \leq C_\ast\int|\partial_rf|^2dx + C_0\int_{r \geq
r_0}|f|^2dx,
\end{equation}
\begin{equation}\label{FBC-2}
\int |v^\theta|^2|f|^2dx \leq \delta_\ast\int|\partial_rf|^2dx + C_0\int_{r \geq
r_0}|f|^2dx,
\end{equation}
holds for some $r_0 > 0$, some $C_0 > 0$ and for all $t \geq 0$ and all axially symmetric
scalar and vector functions $f \in H^1$.
\end{definition}

Clearly, under the natural scaling of the Navier-Stokes equations:
$$v_\lambda(t, x) = \lambda v(\lambda^2t, \lambda x),\quad p_\lambda(t, x) = \lambda^2
p(\lambda^2t, \lambda x),$$
the above definition  is invariant:
$(v_\lambda)^\theta$ also satisfies \eqref{FBC-1}-\eqref{FBC-2}
 if $v^\theta$ does.

\begin{theorem}\label{thm2}
For arbitrary $C_\ast > 1$,  there exists a constant $\delta_\ast > 0$ such that
for all Leray-Hopf solutions $v$ to the axially symmetric Navier-Stokes equations with initial
 value $v_0$ satisfying $\|v_0\|_{H^{\frac{1}{2}}} < \infty$ and $\|r v^\th_0\|_{L^\infty} < \infty$, the
 following conclusion is true.
 If the angular velocity field $v^\theta$ is in $(\delta_\ast, C_\ast)$-critical class,
 i.e. $v^\theta$ satisfies the critical Form Boundedness Condition in
 \eqref{FBC-1}-\eqref{FBC-2},
then $v$ is regular globally in time.
\end{theorem}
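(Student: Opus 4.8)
The plan is to prove global regularity by producing an a priori bound, uniform on every finite interval $[0,T]$, for a subcritical energy of the vorticity — concretely the enstrophy $\|\o(\cdot,t)\|_{L^2}^2$, which (since $\o^r=rJ$ and $\o^\theta=r\O$) is a weighted $L^2$ norm of the triple $(J,\O,\o^3)$. By the instantaneous parabolic smoothing of Leray--Hopf solutions with critical data $v_0\in H^{1/2}$, this enstrophy is finite for small positive times; hence a finite-interval bound precludes blow up (finite enstrophy being a regularity-controlling quantity for NS), and the local smooth solution extends globally. Throughout, the a priori bound \eqref{vthjie}, valid because $\|rv^\theta_0\|_{L^\infty}<\infty$, guarantees that $v^\theta$ decays at worst like $1/r$; this is exactly the borderline scaling that makes every vortex-stretching term critical.

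The heart of the argument is an energy estimate treating \eqref{eqjoo} as a \emph{closed} parabolic system for $(J,\O,\o^3)$. First I would test the three equations against $J$, $\O$, $\o^3$ (with the weights needed to reproduce the enstrophy), integrate over $\bR^3$, and dispose of the benign terms: the diffusion $\Delta$ together with the first-order terms $\f{2}{r}\p_r$ combine into a coercive weighted Dirichlet form, while the drift $(b\cdot\na)$ integrates away by $\na\cdot b=0$, or is absorbed by a localized argument in the spirit of \cite{Zcmp04}. This isolates the vortex-stretching and reaction terms as the only obstruction. I would then convert every stretching term back into the system's own unknowns, so that the ``potentials'' $\o^r,\o^3$ never cost more than one power of the Dirichlet energy: the terms $\o^3\p_{x_3}\f{v^r}{r}$ and $\o^r\p_r\f{v^r}{r}$ in the $J$-equation are handled by the elliptic identities preceding \eqref{vr/rtoOm}, which bound $\|\na\f{v^r}{r}\|_2$ and $\|\na^2\f{v^r}{r}\|_2$ by $\|\O\|_2$ and $\|\p_{x_3}\O\|_2$; the terms $\o^r\p_r v^3+\o^3\p_{x_3}v^3$ in the $\o^3$-equation are controlled by $\|\na\o\|_2$ via $\Delta\p_i v=-\na\times\p_i\o$ together with the pointwise bound $|\na\o|^2\le 2r^2(|\na J|^2+|\na\O|^2)+|\na\o^3|^2+2(J^2+\O^2)$ recorded in the excerpt.

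The genuinely delicate term is the swirl coupling $-\f{2v^\theta}{r}J$ in the $\O$-equation. Testing against $\O$ yields $\int \f{2v^\theta}{r}J\O\,dx$, which I would split by Cauchy--Schwarz into $\int\f{|v^\theta|}{r}(J^2+\O^2)\,dx$ and feed into \eqref{FBC-1}, absorbing it against $C_\ast\int|\p_r(\cdot)|^2\,dx$ up to the harmless far-field remainder $C_0\int_{r\ge r_0}|\cdot|^2\,dx$. The places where a bare factor $|v^\theta|^2$ survives — after integrating by parts in the mixed stretching terms, where $v^\theta$ becomes paired with a single gradient rather than with $1/r$ — are precisely where \eqref{FBC-2} with the \emph{small} constant $\delta_\ast$ must be invoked. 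In effect the swirl is being shown to act as a form-bounded perturbation of the swirl-free system, which is itself globally regular.

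The main obstacle, and the reason the theorem is subtle, is the simultaneous absorption. One must arrange that after all Cauchy--Schwarz splittings and integrations by parts, each bad term lands either in a form governed by \eqref{FBC-1} (whose large constant $C_\ast$ can be tolerated because it is charged against only a fixed fraction of the dissipation) or in a form governed by \eqref{FBC-2} (which genuinely demands smallness). Choosing $\delta_\ast=\delta_\ast(C_\ast)$ small enough then forces the total of the bad terms to be strictly dominated by the coercive Dirichlet energy, yielding a Gr\"onwall inequality $\f{d}{dt}\mathcal E(t)+c\,\mathcal D(t)\le C(C_0,r_0)\,\mathcal E(t)$ for the enstrophy $\mathcal E$ and dissipation $\mathcal D$; integrating over $[0,T]$ gives the required finite bound. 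The hardest bookkeeping is calibrating $\delta_\ast$ against $C_\ast$ so that the large-$C_\ast$ channel and the small-$\delta_\ast$ channel do not reinforce each other — that is, verifying that the two form-boundedness conditions together exactly cover every term generated by the swirl.
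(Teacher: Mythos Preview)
Your proposal captures the paper's approach correctly in its essentials: the closed $(J,\Omega,\o^3)$ system, the energy estimate, the identification of the swirl coupling $-\frac{2v^\theta}{r}J\Omega$ as the main enemy (handled by \eqref{FBC-1}), and the appearance of bare $|v^\theta|^2$ factors after integrating by parts in the $T_2$-type stretching terms (handled by \eqref{FBC-2} with $\delta_\ast$ small depending on $C_\ast$).

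One technical point is worth correcting. The paper (following \cite{LZ17}, and as made explicit in the sketch of Wei's refinement around \eqref{jowz}) does \emph{not} aim at the enstrophy $\|\o\|_{L^2}^2$ and does not test against $r^2J$, $r^2\Omega$. It tests the first two equations of \eqref{eqjoo} against $J$ and $\Omega$ \emph{unweighted}, and it drops the $\o^3$ equation entirely. The reason is structural: the operator $\Delta+\frac{2}{r}\partial_r$ is the five-dimensional Laplacian in disguise, and its coercivity in the form $\int|\nabla J|^2$ emerges only with the unweighted pairing; inserting the $r^2$ weight you propose destroys the good sign of the $\frac{2}{r}\partial_r$ term. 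Moreover, the form-boundedness hypotheses \eqref{FBC-1}--\eqref{FBC-2} are calibrated to produce exactly $\int|\partial_r f|^2$ on the right, which matches the unweighted dissipation. Once you have $J,\Omega\in L^\infty_tL^2_x$ and $\nabla J,\nabla\Omega\in L^2_tL^2_x$ locally near the axis, the regularity conclusion follows by Sobolev embedding and bootstrapping (as the paper remarks after \eqref{jowz}); there is no need to close an enstrophy bound or to bring $\o^3$ into the estimate. So keep your outline but drop the weights and the third equation --- the quantity to propagate is $\|(J,\Omega)\|_{L^2}^2$, not $\|\o\|_{L^2}^2$.
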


An immediate corollary of the theorem is:
\begin{corollary}\label{cor}
Let $\delta_0 \in (0, \frac{1}{2})$ and $C_1 > 1$. Let $v$ be a Leray-Hopf solution to
the axially symmetric Navier-Stokes equations with initial data $v_0  \in H^{1/2}$
and $\|r v^\th_0\|_{L^\infty} < \infty$.
If
\begin{equation}\label{CD}
\sup_{0 \leq t < T}|r v^\theta (r, x_3, t)| \leq C_1|\ln r|^{- 2},\ \ r \leq \delta_0,
\end{equation}
then $v$ is regular globally in time.
\end{corollary}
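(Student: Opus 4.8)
The statement is presented as an immediate corollary, so the plan is simply to check that the pointwise bound \eqref{CD} forces $v^\theta$ into the $(\delta_\ast,C_\ast)$-critical class of Definition \ref{FBC}, after which Theorem \ref{thm2} applies verbatim. The only analytic input I need is the two-dimensional critical (logarithmic) Hardy inequality: for an axially symmetric $f\in H^1(\bR^3)$ and $0<\delta<e^{-1}$,
\[
\int_{\{r\le\delta\}}\frac{|f|^2}{r^2(\ln r)^2}\,dx \le C\int_{\bR^3}|\partial_r f|^2\,dx + C(\delta)\int_{\{r\ge\delta/2\}}|f|^2\,dx,
\]
with $C$ an absolute constant. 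I would obtain this by freezing $x_3=z$, applying the sharp planar inequality $\int_{B_\delta}\frac{|g|^2}{|x_h|^2(\ln(e\delta/|x_h|))^2}\,dx_h\le 4\int_{B_\delta}|\nabla_h g|^2\,dx_h$ to $g=f(\cdot,z)$ (inserting a radial cutoff equal to $1$ on $\{r\le\delta/2\}$ and supported in $\{r\le\delta\}$ to handle the boundary, whose commutator and error terms are supported in $\{r\ge\delta/2\}$), using $|\nabla_h f|=|\partial_r f|$ for axisymmetric $f$, integrating in $z$, and noting that $\ln(e\delta/r)$ and $|\ln r|$ are comparable on $\{r\le\delta\}$.

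To verify \eqref{FBC-1}, I fix $\delta=\min\{\delta_0,e^{-1}\}$ and split $\bR^3$ into $\{r\le\delta\}$ and $\{r>\delta\}$. On the first region \eqref{CD} gives $\frac{|v^\theta|}{r}\le\frac{C_1}{r^2(\ln r)^2}$, so the Hardy inequality above bounds $\int_{\{r\le\delta\}}\frac{|v^\theta|}{r}|f|^2\,dx$ by $CC_1\int|\partial_r f|^2\,dx$ plus a term over $\{r\ge\delta/2\}$. On the second region the a priori bound \eqref{vthjie} yields $\frac{|v^\theta|}{r}\le\frac{\|rv^\theta_0\|_\infty}{\delta^2}$, a bounded potential whose contribution is absorbed into $C_0\int_{\{r\ge r_0\}}|f|^2\,dx$ with $r_0=\delta/2$. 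Hence \eqref{FBC-1} holds with $C_\ast=\max\{2,CC_1\}$ and a suitable $C_0$, uniformly in $t$. For \eqref{FBC-2} the extra logarithmic decay is what produces the required smallness: on $\{r\le\delta\}$ one has $|v^\theta|^2\le\frac{C_1^2}{r^2(\ln r)^4}$, and since $(\ln r)^{-4}\le(\ln\delta)^{-2}(\ln r)^{-2}$ there, the same Hardy inequality gives $\int_{\{r\le\delta\}}|v^\theta|^2|f|^2\,dx\le\frac{CC_1^2}{(\ln\delta)^2}\int|\partial_r f|^2\,dx+\cdots$, while $\{r>\delta\}$ again contributes only a bounded potential. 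The prefactor $\frac{CC_1^2}{(\ln\delta)^2}\to 0$ as $\delta\to0$. Thus, given the $\delta_\ast$ that Theorem \ref{thm2} attaches to $C_\ast=\max\{2,CC_1\}$, I would fix $\delta\le\delta_0$ small enough that $\frac{CC_1^2}{(\ln\delta)^2}\le\delta_\ast$; with this choice both \eqref{FBC-1} and \eqref{FBC-2} hold, $v^\theta$ lies in the $(\delta_\ast,C_\ast)$-critical class, and global regularity follows from Theorem \ref{thm2}.

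The heart of the matter, and the step I expect to require the most care, is the critical Hardy inequality and its slicewise application. The exponent $-2$ on the logarithm in \eqref{CD} is sharp for this scheme: a bare $r^{-2}$ potential (i.e.\ \eqref{CD} with no logarithm) is exactly the borderline that is \emph{not} form bounded in the two-dimensional slices, so the logarithm cannot be dropped. The power $-2$ is precisely enough to make $\frac{|v^\theta|}{r}$ form bounded for \eqref{FBC-1}, and the resulting $(\ln r)^{-4}$ in $|v^\theta|^2$ supplies the vanishing factor $(\ln\delta)^{-2}$ needed to make the form bound in \eqref{FBC-2} arbitrarily small. The remaining work is purely bookkeeping: ensuring that the cutoff errors and the far region $\{r>\delta\}$ land cleanly in the remainder term $C_0\int_{\{r\ge r_0\}}|f|^2\,dx$ permitted by Definition \ref{FBC}.
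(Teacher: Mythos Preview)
Your proposal is correct and follows exactly the route the paper indicates: the paper's entire argument is the sentence ``the point is that if \eqref{CD} is satisfied, then \eqref{FBC-1}--\eqref{FBC-2} is true,'' with the logarithmic Hardy inequality displayed a few paragraphs later as the mechanism. You have supplied precisely the details that are left implicit---splitting at $r=\delta$, invoking the critical two-dimensional Hardy inequality slicewise with a cutoff, and observing that the extra factor $(\ln r)^{-2}$ in $|v^\theta|^2$ furnishes the smallness $\delta_\ast$---and your bookkeeping (fixing $C_\ast$ independently of $\delta$, then shrinking $\delta$) is sound.
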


We mention that $C_\ast$ in the theorem  and $C_1$ in the Corollary
\ref{cor} are independent of neither the profile nor the norm of the given initial data.
 The point is that if \eqref{CD} is satisfied,
then \eqref{FBC-1}-\eqref{FBC-2} is true. Therefore one can apply the
 theorem  to get the desired conclusion.

After \cite{LZ17} was posted on the arxiv, in the paper by Dongyi Wei \cite{Weid}, the power in the log term has
been improved
to $-3/2$. Namely, he proved
\begin{theorem} (\cite{Weid})
\lab{thWeid}
Let $v$ be a Leray-Hopf solution to
the axially symmetric Navier-Stokes equations with initial data $v_0  \in H^{2}$
and $\|r v^\th_0\|_{L^\infty} < \infty$. If, for some $\delta_0 \in (0, 1/2)$,
\begin{equation}\label{CDwei}
\sup_{0 \leq t < T}|r v^\theta (r, x_3, t)| \leq C_1|\ln r|^{- 3/2},\ \ r \leq \delta_0,
\end{equation} then $v$ is regular.
\end{theorem}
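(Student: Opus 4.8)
Since Theorem~\ref{thm2} already reduces global regularity to the validity of the critical Form Boundedness Condition \eqref{FBC-1}--\eqref{FBC-2}, the first thing I would check is whether the pointwise bound \eqref{CDwei} alone can feed that machine. Condition \eqref{FBC-2}, being quadratic in $v^\theta$, is harmless: with $|v^\theta|\le C_1|\ln r|^{-3/2}/r$ near the axis the potential $|v^\theta|^2$ is bounded by $C_1^2|\ln r|^{-3}/r^2$, which the sharp critical two dimensional Hardy inequality $\int |f|^2/(r^2|\ln r|^2)\,dx\lesssim \int|\partial_r f|^2\,dx$ controls with a \emph{small} constant, the extra factor $|\ln r|^{-1}$ supplying the smallness $\delta_\ast$ on a neighborhood of the axis. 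The genuine obstruction is \eqref{FBC-1}, which is \emph{linear} in $v^\theta$: its potential $|v^\theta|/r\sim|\ln r|^{-3/2}/r^2$ beats the borderline Hardy weight $1/(r^2|\ln r|^2)$ by a factor $|\ln r|^{1/2}\to\infty$, and a one dimensional test $f\sim|\ln r|^\alpha$ with $\alpha\in[\frac14,\frac12)$ shows that no finite $C_\ast$ can work once the exponent drops below $2$. Hence the exponent $2$ of Corollary~\ref{cor} is a hard ceiling for the black box, and reaching $3/2$ forces one to reopen the energy estimate for the coupled system \eqref{eqjoo} and treat its swirl coupling term by hand.

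The plan, then, is a higher order energy argument on \eqref{eqjoo}, which is precisely why the hypothesis is strengthened from $H^{1/2}$ to $H^2$. The only place where the forbidden linear condition \eqref{FBC-1} is actually needed is the coupling term $\int \frac{2v^\theta}{r}J\Omega\,dx$ produced by the $-\frac{2v^\theta}{r}J$ term of the $\Omega$ equation. I would exploit the relation $\omega^r=-\partial_{x_3}v^\theta$ from \eqref{w-v}, i.e.\ $rJ=-\partial_{x_3}v^\theta$, to rewrite $\frac{2v^\theta}{r}J=-\frac{1}{r^2}\partial_{x_3}(v^\theta)^2$, so that after integrating by parts in $x_3$ the coupling term becomes $\int \frac{(v^\theta)^2}{r^2}\,\partial_{x_3}\Omega\,dx$, which is now \emph{quadratic} in $v^\theta$ and therefore sees the forgiving weight of \eqref{FBC-2} rather than \eqref{FBC-1}. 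This algebraic conversion is the mechanism that, in principle, unlocks exponents below $2$.

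The crux is then to absorb $\int \frac{(v^\theta)^2}{r^2}\,\partial_{x_3}\Omega\,dx$ into the good terms without overspending the logarithmic budget. The naive splits fail here: a plain Young inequality leaves $\int (v^\theta)^4/r^4\,dx$, which is non-integrable, and the un-weighted two dimensional Hardy inequality $\int f^2/r^2\,dx\lesssim\int|\nabla f|^2\,dx$ is simply false, so first order energy alone can never close. The resolution I would pursue is a refined, $\log$-weighted critical Hardy inequality obtained by \emph{interpolating} the singular factor between the first order Dirichlet energy and a second order energy quantity --- the $H^2$ input, controlled by propagating $\int|\nabla\omega|^2$ through \eqref{vr/rtoOm} together with the identity $\Delta\partial_i v=-\nabla\times\partial_i\omega$ already recorded in the excerpt. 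The extra half power $|\ln r|^{1/2}$ needed to pass from $2$ to $3/2$ should emerge as the interpolation exponent acting on the logarithm. I expect this borderline bookkeeping --- tracking the exact logarithmic powers through the interpolation so that they close a Gronwall inequality for the combined second order energy of $(J,\Omega,\omega^3)$ --- to be the main technical obstacle; everything upstream (the a priori bound \eqref{vthjie}, and the drift terms, which integrate out because $b$ is divergence free) is comparatively routine. A viable alternative, closer to Pan's approach mentioned above, would be to bypass the vorticity system and run a De Giorgi--Nash--Moser argument directly on the scalar equation \eqref{eqvth} for $\Gamma=rv^\theta$, upgrading \eqref{CDwei} to a quantitative modulus of continuity that renders $v^\theta$ effectively subcritical; but the second order energy route seems better matched to the stated $H^2$ hypothesis.
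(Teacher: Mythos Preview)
Your diagnosis is accurate: the black-box route through Theorem~\ref{thm2} stalls at exponent $2$ because the linear condition \eqref{FBC-1} genuinely fails under \eqref{CDwei}, and one must reopen the energy estimate on $(J,\Omega)$. Your algebraic identity $\tfrac{2v^\theta}{r}J=-\tfrac{1}{r^2}\partial_{x_3}(v^\theta)^2$ is also correct. But the argument the paper summarizes (Wei's) proceeds quite differently from that point on, and your reading of the $H^2$ hypothesis is off.

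Wei does \emph{not} convert the coupling to a quadratic-in-$v^\theta$ term, and he invokes no second-order energy at all. He stays at first order: after Cauchy--Schwarz the task is to bound $\int\tfrac{|v^\theta|}{r}J^2$ and $\int\tfrac{|v^\theta|}{r}\Omega^2$ by the left side of the basic energy identity \eqref{jowz}. The new ingredient is a \emph{time-dependent spatial splitting}: one chooses a threshold $r_1=r_1(t)$ adapted to the solution through $a(t)=\bigl\|\,r^{-1}\!\int_0^r|v^\theta(\rho,\cdot,t)|\,d\rho\,\bigr\|_{L^\infty}$, and cuts the integral at $r=r_1(t)$. On $\{r\le r_1\}$ the smallness of $\Gamma$ supplied by \eqref{CDwei} lets an integration by parts in $r$ absorb the inner piece into $\epsilon^{-1/3}\!\int|\partial_r J|^2$; on $\{r\ge r_1/2\}$ one simply pays $C r_1^{-2}\!\int_{r\ge r_1/2}J^2$. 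Substituting back turns \eqref{jowz} into an ordinary differential inequality for $\int(J^2+\Omega^2)$ whose coefficient, under \eqref{CDwei}, is integrable in time --- and this is precisely where the exponent $3/2$ emerges as the threshold. Gronwall then keeps $\omega^\theta/r,\,\omega^r/r\in L^\infty_tL^2_{\mathrm{loc}}$ and regularity follows. The $H^2$ initial data is there only so that $J(\cdot,0),\,\Omega(\cdot,0)\in L^2$; it does not signal a higher-order energy scheme.

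Your proposed alternative --- close via a log-weighted Hardy inequality interpolated against a second-order energy --- is not obviously wrong, but you have left the decisive estimate as a hope (``I expect this borderline bookkeeping\dots''). The naive splits you rule out are indeed dead ends, and you have not specified any concrete inequality that manufactures the missing half-power of $|\ln r|$; at minimum you would have to propagate $\|\nabla J\|_{L^2}^2+\|\nabla\Omega\|_{L^2}^2$, which spawns its own vortex-stretching commutators to be closed. Wei's dynamic splitting sidesteps all of this and remains entirely at first order.
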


The improvement is achieved by decomposing the space time in a dynamic way when carrying out the energy estimate for the system of $\O$ and $J$ in \eqref{eqjoo}. More specifically, in \eqref{eqjoo}, one multiplies the first equation by $J$ and the second one by $\O$ and integrate in space time. One can justify the integration by using a cut off function $\phi^2$ before a potential singular time $t$. After integration by parts, one deduces
\be
\lab{jowz}
\aligned
&\frac{1}{2} \int \left(J^2+ \O^2 \right) \phi^2 dy \bigg |^t_0
+   \int^t_0 \int \left( | \nabla J|^2 +|\nabla \O|^2 \right) \phi^2 dyds\\
&\le  \underbrace{- \int^t_0 \int \frac{ 2 v^\theta}{r} J \O \phi^2 dyds }_{T_1} + \underbrace{\int^t_0 \int   (\o^r \p_r \frac{v^r}{r} + \o^3 \p_{x_3} \frac{v^r}{r})
J \phi^2 dy ds}_{T_2} + \text{less singular terms}.
\endaligned
\ee If we can absorb $T_1$ and $T_2$ by the left hand side, then, we would know that
$\nabla J$ and $\nabla \O$ are $L^2_{loc}$ in space time. Since $J=\o^r/r$ and $\O=\o^\th/r$, we then know that $\nabla \o^\th$ and $\nabla \o^r$ are $L^2_{loc}$ in space time around the $x_3$ axis.
One can also argue, using the first term on the left hand side that $\o^\th$ and $\o^r$ are in the space $L^\infty(0, t; L^2_{loc})$. With these information, it is well known by Sobolev imbedding and bootstrapping that regularity of solutions follow.
The term $T_1$ is the most singular one on the right hand side. The term $T_2$, after using \eqref{w-v},
\eqref{vr/rtoOm} and integration by parts, can be shown to be logarithmically less singular than $T_1$.
So the main task is to control $T_1$. Using Cauchy-Schwarz inequality, it is sufficient to use the left hand side of \eqref{jowz} to bound the terms
\[
\int^t_0 \int \frac{ |v^\theta|}{r} J^2 \phi^2 dyds, \qquad \int^t_0 \int \frac{ |v^\theta|}{r} \O^2 \phi^2 dyds
\]Let us chose a positive function $r_1=r_1(t) \le \epsilon K(\epsilon) a(t)^{-1}$, where $K$ is some exponential function and $\epsilon$ is a small number to be chosen suitably; $a(t)= \Vert  r^{-1} \int^r_0 |v^\th(\rho, z, t)| d\rho \Vert_{L^\infty}$.
Splitting the spatial domain along $r=r_1(t)$ and using cut-off and integration by parts, one shows that
\be
\lab{jiej2}
 \int \frac{ |v^\theta|}{r} J^2 \phi^2 dy \le \epsilon^{-1/3} \int |\p_r J|^2 dy + C r^{-2}_1 (\Vert \Gamma \Vert_{L^\infty} + \epsilon^{-1/3}) \int_{r \ge r_1/2} J^2 dy,
\ee provided that $\Vert \Gamma \Vert_{L^\infty(r \le r_1)}<\epsilon<1$. The same bound holds when $J$ is replaced by $\O$. Now using the extra condition \eqref{CDwei}, one can substitute \eqref{jiej2} into \eqref{jowz} and turn it into an ordinary differential inequality. Then the claimed bound for $\o^\th$ and $\o^r$ in  $L^\infty(0, t; L^2_{loc})$ space follows, giving us regularity.

The appearance of the log term is due to the special property of the axially symmetric Hardy's inequality : for all $\psi \in C^2_0(\bR^2)$, there is one positive constant $C$ such that
\[
\int \int \frac{1}{r^2 |\ln r|^2} \psi^2(r, x_3) r drdx_3 \le C \int \int |\p_r \psi|^2(r, x_3) r drdx_3
\] It would be interesting if one can lower the power on $|\ln r|$ even further in the regularity criteria.
However the drift term, which is almost harmless in the vortex equation so far, is the main
obstacle. For instance, there is a dimension expansion trick in removing the log term in the Hardy inequality. However the drift term no longer has the divergence free structure viewing
in high dimensions.

\section{Ancient Solutions}

Next we talk about another common way to study the Navier-Stokes equations and many other nonlinear
equations: blow up analysis.

Let $v$ be a Leray-Hopf solution to the NS.
Suppose a singularity happens in finite time $T$, we would like to know what is it?  So we blow down the solution $v$ or blow up the space time
near maximal points of $|v|$ in the time interval $[0, t_i] \subset [0, T)$ where $t_i$ is a sequence times approaching the singular time (like using a microscope). More specifically, let $\lambda_i =
\sup_{t \in [0, t_i]} |v|$ and pick points $(x_i, s_i)$ with $s_i \le t_i$ such that
$
|v(x_i, s_i)| \ge \lambda_i/2.
$ Consider the sequence of functions
\[
v_i(x, t) \equiv \lambda^{-1}_i v(\lam^{-1}_i x + x_i, \lam^{-2}_i t+t_i), \qquad P_i(x, t) \equiv \lam^{-2}_i P(\lam^{-1}_i x + x_i, \lam^{-2}_i t+t_i).
\] They are bounded solutions of the NS in a increasingly larger time interval. By standard regularity theory, $v_i$ sub-converges in $C^{2, 1}_{loc}$ topology to a limit function $v_\infty$.
The resulting function is still
a solution of NS. But it is a bounded solution existing on the time interval $(-\infty, 0)$.
We call such solutions ancient solutions. In general there is no reason that ancient solutions are bounded, even when they arise as blow up limits from possible finite time singularities of Leray-Hopf solutions. Of course we do not know if such singularities exist for NS. But this is still the case for other parabolic equations where singularities occur in finite time. The reason is that one can choose a different set of blow up points $(z_i, s_i)$ where $|v(z_i, s_i)|$ is large but is not comparable to the maximum of $|v|$ before $s_i$. However, since they have existed for a long time, they must be special. In other words, ancient solutions are rigid.  Even if it turns out that no finite time blow up occurs, ancient solutions still serve as approximation of the behavior of solutions in regions of high velocity.

Can one classify all ancient solutions?

The answer is not so easy in general without further assumptions, even for positive solutions to the heat equation.
For example $v= e^{x+t}$ is a nonconstant, positive ancient solution to the 1 dimensional heat equation in $\bR^1$. It is also not hard to see that $v=(0, e^{x_1+t})$ is an ancient solution to the
2 dimensional NS. Note this example shows a difference between stationary and non-stationary Liouville property since it is well known that positive solutions to the Laplacian, namely, positive harmonic functions in $\bR^n$ are constants.
However there are also similarities between the two. It is well known that harmonic functions on $\bR^n$ of sublinear growth are constants. The same conclusion was proven for ancient solutions to the heat equation  in Souplet-Zhang \cite{SoZh} in 2006, which can be extended to some noncompact manifold cases. For the NS, there is an additional twist. For any harmonic function
$h=h(x)$ on $\bR^3$ and $a=a(t)$ a $C^1$ function of time, the function $v=a(t) \nabla h(x)$ is an ancient solution of NS. To rule out this kind of solutions, we usually consider the so-called mild solutions only.
For simplicity, we confine ourselves  to solutions with locally finite energy,  although the notation of mild solutions can be defined for other, more singular solutions.

\begin{definition}
\lab{demildso}
A function $v \in L^\infty_{loc}(0, T; L^2_{loc}(\bR^3)) \cap L^2_{loc}(0, T; H^1_{loc}(\bR^3))$ is called a mild solution to the 3 dimensional NS if
\[
v(x, t) = \int_{\bR^3} G(x, t, y)  v_0(y) dy + \int^t_0 \int_{\bR^3} K(x, t-s, y) v \nabla v(y, s) dyds,
\]where $G=G(x, t, y)$ is the standard heat kernel on $\bR^3$ and $K=K(x, t-s, y)$ is the Stokes heat kernel on $\bR^3$.
\end{definition}
By direct calculation, it can be shown that a mild solution in $L^\infty(0, T; \bR^3)$ is H\"older continuous in $\bR^3 \times [\delta, T]$ for any $\delta>0$. This fact is useful in proving convergence results involving bounded mild solutions.

See \cite{Solo} for an earlier treatise and Chapter 5 of \cite{Tsai} for a recent discussion of the Stokes heat kernel and mild solutions. The latter is often called the Oseen kernel \cite{Os}.
The explicit formula is $K(x, t, y)=K(x-y, t, 0) \equiv (K_{ij}(z, t, 0))$ with $z=x-y$, $i, j=1, 2, 3$ and
\be
\lab{stokeshe}
K_{ij}(z, t, 0)= G(z, t, 0) \delta_{ij} + \frac{1}{4 \pi} \p_{x_i} \p_{x_j} \int_{\bR^3} \frac{G(w, t,0)}{|z-w|} dw.
\ee

So, a more reasonable question to ask would be:

\be
\lab{queliu}
\text{ Are sublinear, mild ancient solutions of 3 dimensional NS constants?}
\ee

We comment that this question is quite challenging without further restriction on the growth or decay of the ancient solutions. See a brief discussion at the end of the section.
If the answer to the above question is yes for certain class of ancient solutions, then we say that the Liouville property or theorem holds for that class.

Of course for the NS, there is the result of \cite{KNSS}, which we have seen a little in the previous section. Their results can be summarized as the following theorem. Note that a different proof for statement (b) in the stationary case was found in \cite{KPR15} later.

\begin{theorem}(\cite{KNSS} Koch-Nadirashvili-Seregin-Sverak)

(a). If $v$ is a bounded, mild ancient solutions of the 2 dimensional NS, then $v$ is constant.

(b). If $v$ is a bounded, mild ancient solutions of the 3 dimensional  ASNS, then $v$ is constant if also $v^\th=0$.

(c). If $v$ is a bounded, mild ancient solutions of the 3 dimensional  ASNS, then $v$ is $0$ if also $|v(x)| \le C/r$.
\end{theorem}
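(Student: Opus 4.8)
The plan is to reduce each statement to a Liouville property for the \emph{vorticity} rather than the velocity, since the vorticity obeys drift--diffusion equations whose zeroth-order structure can be controlled, whereas the velocity admits the parasitic solutions $a(t)\nabla h(x)$. First I would record a common preliminary: a bounded mild ancient solution is, by the interior regularity of mild solutions noted after Definition \ref{demildso} together with a scaling/interior-estimate argument on $(-\infty,0]$, smooth with all spatial derivatives bounded; in particular $\omega=\nabla\times v$ is bounded. For (a), in two dimensions the vorticity is the scalar $\omega=\partial_1 v^2-\partial_2 v^1$ and satisfies the pure advection--diffusion equation $\partial_t\omega + v\cdot\nabla\omega-\Delta\omega=0$, with \emph{no} vortex-stretching term. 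Since $v$ is bounded, this is uniformly parabolic with bounded drift, so by the parabolic Harnack inequality (oscillation decay over cylinders extending backward to $-\infty$) any bounded ancient solution is constant. A nonzero constant vorticity is incompatible with a bounded velocity, so $\omega\equiv 0$; hence $v(\cdot,t)$ is a bounded harmonic field, i.e.\ a spatial constant $c(t)$, and the mild (Stokes-kernel) representation forces $c(t)$ to be constant in time. Thus $v$ is constant.

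For (b), with $v^\theta=0$ the only surviving vorticity component is $\omega^\theta$, and the natural unknown is $\Omega=\omega^\theta/r$ from \eqref{defJO}. Setting $v^\theta=0$ in the second equation of \eqref{eqjoo} kills the coupling term $-\tfrac{2v^\theta}{r}J$, leaving
\[
\Delta\Omega-(b\cdot\nabla)\Omega+\tfrac{2}{r}\partial_r\Omega-\partial_t\Omega=0,
\]
a drift--diffusion equation with bounded drift $b$ and no reaction or stretching term. The operator $\Delta+\tfrac{2}{r}\partial_r$ is exactly the five-dimensional Laplacian acting on functions cylindrically symmetric in four of the variables, so after this dimensional lift the equation is genuinely (non-degenerate) parabolic with coefficients smooth across the axis. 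Smoothness of $v$ forces $\omega^\theta=O(r)$ near the axis, so $\Omega$ is bounded, while $\Omega=\omega^\theta/r\to 0$ as $r\to\infty$. The Harnack/Liouville argument of (a) then gives $\Omega$ constant, and the decay forces $\Omega\equiv 0$; thus $\omega\equiv 0$ and, as before, $v$ is constant.

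For (c) I would proceed in two stages. First, kill the swirl: $\Gamma=rv^\theta$ satisfies \eqref{eqvth}, and the hypothesis \eqref{v<1/r} gives both $|\Gamma|\le C$ and the scaling-critical drift bound $|b|\le C/r$. The goal is to show that this bounded ancient $\Gamma$, which vanishes on the axis, is identically zero; here the delicate maximum-principle/integration argument enters, since one must absorb the singular drift $b$ and the singular first-order term $\tfrac{2}{r}\partial_r$ as $r\to 0$. Once $v^\theta\equiv 0$, the solution is swirl-free with the extra decay $|v|\le C/r$, so the argument of (b) applies to $\Omega$ and yields $\omega\equiv 0$, whence $v$ is a spatial constant; finally $|v|\le C/r\to 0$ forces this constant to be $0$, i.e.\ $v\equiv 0$.

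The main obstacle is precisely the Liouville step for the singular equations \eqref{eqvth} and the $\Omega$-equation under only the critical bound $|b|\le C/r$: the drift is not bounded, so the clean Harnack argument used in (a)--(b) fails near the axis, and one must instead exploit the sign and structure of the $\tfrac{2}{r}\partial_r$ term together with a weighted integration-by-parts (energy) argument to extract the vanishing. Controlling this axis singularity, rather than any global feature of the flow, is the technical heart of (c).
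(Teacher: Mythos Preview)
Your outline is correct and matches the strategy of \cite{KNSS} as summarized in the text: reduce to scalar drift--diffusion equations for $\omega$ (2D), $\Omega=\omega^\theta/r$ (swirl-free), and $\Gamma=rv^\theta$, run a parabolic Liouville/maximum-principle argument on each, and for (c) kill the swirl via $\Gamma$ first, then invoke (b), then use $|v|\le C/r$ to force the resulting constant to be $0$.

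One clarification on where the difficulty in (c) actually sits. Since $v$ is assumed \emph{bounded}, the drift $b=v^re_r+v^3e_3$ is bounded everywhere; it is not singular near the axis. The hypothesis $|v|\le C/r$ is what makes $\Gamma=rv^\theta$ globally bounded (and what pins the final constant to zero), not a device for controlling $b$. The genuinely singular coefficient in \eqref{eqvth} is the first-order term $-\tfrac{2}{r}\partial_r$, and its sign is \emph{opposite} to that in the $\Omega$-equation, so the five-dimensional-Laplacian lift you used for (b) is unavailable for $\Gamma$; this sign discrepancy is exactly why the $\Gamma$ step needs its own argument (the ``integration argument involving the maximum principle'' alluded to in the paper). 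Once $v^\theta\equiv 0$, though, the $\Omega$-equation has bounded drift and the good $+\tfrac{2}{r}\partial_r$ sign, so (b) applies verbatim---your worry about a ``critical $\Omega$-equation'' in (c) is unfounded.
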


In the same paper,  the authors also  made the following conjecture:

 {\it  The Liouville property is true for bounded, mild ancient solutions of the 3 dimensional  ASNS}.

 Next we describe further results from a recent paper
by Lei, Zhang and Zhao \cite{LZZ}.
It is proven, in 2D and the 3D axially symmetric swirl free case, that the  Liouville property holds for mild ancient solutions  if the velocity fields are sublinear with respect to the spatial variable and the vorticity fields satisfy certain decay condition (see Theorem \ref{2Dresult} and Theorem \ref{3Dnoswirl}). We remark that, unlike the Liouville theorems in \cite{KNSS}, there is no need for the condition that solutions are bounded. Moreover,  counterexamples are given when the velocity fields are linear with respect to the spatial variable. This shows that, under the condition that solutions are sublinear with respect to the spatial variable, the Liouville theorems are sharp.

The other main result in that paper is a Liouville property, under an extra decaying assumption, for bounded ancient solutions of the ASNS with general nontrivial swirl (see Theorem \ref{3Dswirl}). Let $v$ be the bounded ancient mild solutions of the axially symmetric Navier-Stokes equations with $v^\theta \neq 0$ and let $\Gamma=rv^\theta$. We prove that if $\Gamma\in L^\infty_tL^p_x$ where $1\leq p<\infty$, then $v$ must be constants.

Actually, in the 3D axially symmetric case, on the above conjecture, one can add the extra condition that $\Gamma\in L^\infty_tL^\infty_x$ without losing much generality. The reason is that  $\Gamma$ is scaling invariant and it also satisfies the maximum principle. So if the initial value of a solution satisfies the bound, then it will persist over time. Therefore, any ancient solution from blow up process will still satisfy this bound. As mentioned, when $v^\theta\neq 0$, the Liouville property was proved in \cite{KNSS} under the condition $|v|\leq \frac{C}{r}$. In comparison, in \cite{LZZ}, one only needs the condition on one component $v^\theta$ of the velocity $v$ while no additional conditions are added on the other two components. Moreover, even though they haven't totally proved the conjecture in \cite{KNSS}, the result can still be considered as a step forward in understanding the conjecture.   That is because the condition $\Gamma\in L^\infty_tL^p_x$ with $p$ being any finite number seems not too far from $\Gamma \in L^\infty$.

 The following are the main results in \cite{LZZ}. We mention that solution $v$ in the next two theorems are also assumed to be locally bounded in space. Namely, if $x$ is in a compact set, then $|v(x, t)|$ is uniformly bounded for all $t$. This assumption was not stated in the corresponding theorems in \cite{LZZ} although it was implicitly stated in the text. See also a related result by Pan and Li \cite{PanLi} where $v$ is allowed to grow at $(-t)^{0.5^-}$ rate near $-\infty$.

\begin{theorem}
\label{2Dresult}
  Let $v$ be a smooth, locally bounded ancient solution of the 2D incompressible Navier-Stokes equations and let $\o =\nabla\times v$ be the vorticity. If  $
   \lim\limits_{|x|\rightarrow +\infty}|\o (x,t)|=0,$
  uniformly for all $t\in(-\infty,0)$, then $ \o \equiv 0$ and $v$ is harmonic.

  If, in addition, $v$ satisfies
  \begin{equation}\label{sublinear}
  \lim\limits_{|x|\rightarrow +\infty}|v(x,t)|/|x|=0,
 \end{equation}
 uniformly for all $t\in(-\infty,0)$, then $v$ must be a constant.
\end{theorem}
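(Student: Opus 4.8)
The plan is to exploit the special structure of two dimensions, where the vorticity $\o=\na\times v$ is a scalar and, crucially, the vortex stretching term is absent. Taking the curl of the momentum equation, $\o$ satisfies the \emph{linear} drift--diffusion equation
\[
\Delta \o - (v\cdot\na)\o - \p_t \o = 0 ,
\]
with divergence free drift $b=v$. First I would upgrade regularity: since $v$ is smooth and locally bounded uniformly in $t$, interior parabolic estimates control $\na v$, hence $\o$, on every compact spatial set uniformly in $t$; combined with the hypothesis that $\o\to 0$ as $|x|\to\infty$ uniformly in $t$, this yields $\o\in L^\infty(\bR^2\times(-\infty,0))$.

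Next, to prove $\o\equiv 0$ I would run a maximum principle argument adapted to ancient solutions. Set $M=\sup_{\bR^2\times(-\infty,0)}\o$ and suppose $M>0$. Choosing $(x_k,t_k)$ with $\o(x_k,t_k)\to M$, the uniform spatial decay forces $|x_k|$ to stay bounded, so $x_k\to x_\infty$ along a subsequence. The delicate point is that $t_k$ may tend to $-\infty$; to handle this I would translate in time, setting $\o_k(x,t):=\o(x,t+t_k)$ and $v_k(x,t):=v(x,t+t_k)$. The uniform local bounds and interior estimates give, after passing to a subsequence, $C^{2,1}_{loc}$ convergence $\o_k\to\o_\infty$ and $v_k\to v_\infty$, where $\o_\infty$ solves the same drift--diffusion equation (now possibly on all of $\bR^2\times\bR$), still satisfies $\o_\infty\le M$ everywhere together with the uniform spatial decay, and attains the value $M$ at the interior point $(x_\infty,0)$. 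The strong parabolic maximum principle, valid because $v_\infty$ is locally bounded, then forces $\o_\infty\equiv M$ on $\bR^2\times(-\infty,0]$, contradicting the decay of $\o_\infty$ at spatial infinity. Hence $M\le 0$; the same reasoning applied to $-\o$ gives $\o\equiv 0$. Then $v$ is simultaneously curl free and divergence free, so by $\Delta v=\na(\na\cdot v)-\na\times(\na\times v)=0$ each time slice $v(\cdot,t)$ is harmonic.

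For the second assertion I would invoke the classical Liouville theorem for harmonic functions on $\bR^2$: a harmonic function of sublinear growth is constant. By \eqref{sublinear} each $v(\cdot,t)$ is sublinear, so $v(x,t)=c(t)$, and the uniform-in-$t$ control forces $c$ to be bounded. It remains to show $c$ is independent of $t$. Substituting $v=c(t)$ into the momentum equation annihilates both the diffusion and the convection terms and leaves $c'(t)=-\na P$, so $\na P$ is spatially constant. Recovering the pressure from the velocity through $\Delta P=-\p_i\p_j(v^iv^j)$, whose right-hand side vanishes since $v$ is spatially constant, and using the normalization that excludes pressures growing linearly in $x$, one obtains $\na P\equiv 0$ and hence $c'(t)\equiv 0$. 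Thus $v$ is constant.

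The main obstacle is the maximum principle step for an ancient solution whose extremum may only be approached as $t\to-\infty$, compounded by the drift $v$ being a priori merely locally bounded and possibly unbounded at spatial infinity. The time-translation together with $C^{2,1}_{loc}$ compactness is exactly what converts an ``extremum at $-\infty$'' into an honest interior maximum of a limiting (eternal) solution, after which the strong maximum principle applies on the compact sets where the drift is controlled. A secondary subtlety is the pressure normalization needed to pin down $c(t)$; this is precisely the mechanism (excluding the trivial solutions $a(t)\na h$) that the mild-solution framework is designed to enforce.
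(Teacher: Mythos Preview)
The paper is a survey and does not supply its own proof of this theorem; it is quoted from \cite{LZZ} and only the sharpness counterexamples are discussed in the text. So there is no ``paper's proof'' to compare against line by line, and I will assess your argument on its own merits.

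Your treatment of the first assertion is sound and is the natural approach. The scalar vorticity equation $\Delta\o - v\cdot\nabla\o - \p_t\o = 0$ with divergence-free drift is the right object; the combination of uniform spatial decay of $\o$ with the uniform-in-$t$ local boundedness of $v$ (which, via the local representation formula for NS recalled later in the paper, yields uniform local bounds on all derivatives of $v$) gives exactly the $C^{2,1}_{loc}$ compactness you need after time translation. The passage of the uniform spatial decay of $\o$ to the limit $\o_\infty$ is legitimate precisely because the decay is assumed uniform in $t$, and the strong maximum principle applies since $v_\infty$ is locally bounded and one may work on arbitrarily large parabolic cylinders. This yields $\o\equiv 0$ and hence harmonicity of each time slice.

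For the second assertion you correctly reach $v(x,t)=c(t)$ via the sublinear Liouville theorem for harmonic functions, and you correctly flag the genuine issue: without a pressure normalization, $v=c(t)$ with $P=-c'(t)\cdot x$ is a legitimate smooth, locally bounded ancient solution for \emph{any} bounded $C^1$ function $c$. The theorem as stated in the survey does not include a mildness hypothesis, but the surrounding text makes clear (see the discussion just before Definition~\ref{demildso}) that solutions of the form $a(t)\nabla h(x)$ are meant to be excluded, and this is exactly what the mild-solution framework enforces. Your appeal to that framework is therefore the right resolution; just be aware that it is an additional hypothesis rather than a consequence of what is literally written in the statement.
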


\begin{theorem}
\label{3Dnoswirl}
  Let $v$ be a smooth, locally bounded ancient solution of the 3D axially symmetric Navier-Stokes equations without swirl and let $\o=\nabla\times v=\o^{\theta}e_{\theta}$ be the vorticity. Define $\Omega=\frac{\o^\theta}{r}$, if \begin{equation}\label{w2}
   \lim\limits_{r\rightarrow +\infty}|\Omega|=0,
  \end{equation}
  uniformly for all $t\in(-\infty,0)$, then $\o^\theta\equiv 0$ and $v$ is harmonic.

  If, in addition, $v$ satisfies
  \begin{equation}\label{sublinear2}
   \lim\limits_{|x|\rightarrow +\infty} |v(x,t)|/|x|=0,
  \end{equation}
  uniformly for all $t\in(-\infty,0)$, then $v$ must be a constant.
\end{theorem}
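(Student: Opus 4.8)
The plan is to exploit the fact that, in the swirl-free class, the quantity $\Omega=\omega^\theta/r$ obeys a drift--diffusion equation with \emph{no} vortex-stretching or reaction term. Setting $v^\theta=0$ in \eqref{w-v} gives $\omega^r=-\partial_{x_3}v^\theta=0$ and $\omega^3=\partial_r v^\theta+v^\theta/r=0$, so that $J=0$ and the cross term $-\tfrac{2v^\theta}{r}J$ in the second equation of \eqref{eqjoo} vanishes. Hence $\Omega$ solves
\begin{equation*}
\Delta \Omega + \frac{2}{r}\partial_r \Omega - (b\cdot\nabla)\Omega - \partial_t \Omega = 0, \qquad b = v^r e_r + v^3 e_3,
\end{equation*}
with $b$ divergence-free by the last equation of \eqref{eqasns}. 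The principal operator $\Delta+\tfrac{2}{r}\partial_r$ is exactly the Laplacian acting on axially symmetric functions in $\bR^5$, so the singular coefficient at the axis is harmless and the equation is uniformly parabolic in the lifted picture.

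For the first assertion I would run a maximum-principle/compactness argument to force $\Omega\equiv 0$. First, the smoothness and local boundedness of $v$, together with interior parabolic estimates (uniform in $t$ since the equation is autonomous), bound $\Omega$ on compact sets uniformly in time; combined with the hypothesis $\Omega\to 0$ as $r\to\infty$ this makes $\Omega$ globally bounded. Let $A=\sup\Omega$ and suppose $A>0$. Pick $(x_k,t_k)$ with $\Omega(x_k,t_k)\to A$; the uniform decay in $r$ confines the distance $r(x_k)$ to a bounded set, while $z_k,t_k$ may escape to infinity. Translating only in $z$ and $t$ (the axis forbids horizontal translation) and passing to a $C^{2,1}_{loc}$ limit---legitimate because the translated drifts $b_k$ stay bounded in $C^{2,1}_{loc}$---yields a solution $\Omega_\infty$ of an equation of the same form that attains its supremum $A>0$ at an interior point while still satisfying $\Omega_\infty\to 0$ as $r\to\infty$. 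The strong maximum principle (no zeroth-order term) then forces $\Omega_\infty\equiv A$ on its whole past, contradicting the decay. Hence $A\le 0$, and the same argument applied to $-\Omega$ gives $\Omega\equiv 0$, i.e. $\omega^\theta\equiv 0$. Since $\omega=\omega^\theta e_\theta$, the field $v$ is then both curl-free and divergence-free, so $\Delta v=\nabla(\nabla\cdot v)-\nabla\times\omega=0$ and $v(\cdot,t)$ is harmonic.

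For the second assertion the added sublinear bound \eqref{sublinear2} combines with spatial harmonicity through the classical Liouville theorem for harmonic functions: the interior gradient estimate $|\nabla v(x,t)|\le \tfrac{C}{R}\sup_{B_R(x)}|v(\cdot,t)|$ and $|v|=o(|x|)$ force $\nabla v(\cdot,t)\equiv 0$, so $v(\cdot,t)$ is a constant vector for each $t$. Imposing axial symmetry together with $v^\theta=0$ eliminates the $e_1,e_2$ directions (a nonzero $e_1$ or $e_2$ component is angle-dependent and carries swirl), leaving $v=v^3(t)\,e_3$. Substituting into \eqref{eqasns} gives $\partial_r P=0$ and $\partial_{x_3}P=-(v^3)'(t)$, so the amplitude $v^3(t)$ is tied to a pressure affine in $x_3$; the residual time-dependence is then removed in the final, essentially bookkeeping, step using the admissibility/normalization built into the ancient (mild) solution class, producing a genuine constant.

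I expect the maximum-principle step for the first part to be the main obstacle: one must, when the near-maximal points run off to $t=-\infty$ or $|z|=\infty$, extract a nontrivial limiting solution together with a limiting drift regular enough to apply the strong maximum principle, all while $b$ is a priori only locally bounded and possibly unbounded at infinity. The clean cancellation that deletes the stretching term is precisely what makes this feasible; everything downstream is comparatively soft.
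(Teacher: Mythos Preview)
Your approach to the first assertion is essentially the one the paper points to: the survey does not write out a proof of Theorem~\ref{3Dnoswirl} but refers to \cite{LZZ} and, in its discussion of \cite{KNSS}, singles out exactly the mechanism you exploit---that in the swirl-free class the equation for $\Omega=\omega^\theta/r$ in \eqref{eqjoo} loses its stretching term and ``satisfies the maximum principle''. Your compactness-plus-strong-maximum-principle argument (translating in $x_3$ and $t$, lifting $\Delta+\tfrac{2}{r}\partial_r$ to the five-dimensional Laplacian, and using the uniform-in-$t$ decay of $\Omega$ to trap the near-maximal radii) is the standard way to upgrade the maximum principle to a Liouville statement on ancient solutions, and is in the same spirit as the ``integration argument'' the paper alludes to. One point to state more carefully: the $C^{2,1}_{loc}$ compactness of the shifted drifts $b_k$ needs the uniform-in-$t$ local boundedness of $v$ (the extra hypothesis the paper inserts just before the theorem) together with interior NS regularity; you should make that dependence explicit rather than leave it implicit.

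For the second assertion your reduction to $v=v^3(t)\,e_3$ via the harmonic Liouville theorem and axial symmetry is fine. The final step, however, is not ``bookkeeping'': as you half-acknowledge, the hypothesis you actually invoke---mildness---is \emph{not} in the theorem as stated. Without it the conclusion is simply false: $v=c(t)\,e_3$ with any bounded smooth $c$ and $P=-c'(t)\,x_3$ satisfies every hypothesis (smooth, locally bounded, swirl-free, $\omega\equiv 0$, sublinear). The paper's own prose resolves this: just above the theorems it describes the results of \cite{LZZ} as being for ``mild ancient solutions'', and mildness is precisely what kills the parasitic family $c(t)\,e_3$ (the Duhamel formula with $v\cdot\nabla v=0$ forces $c(t)=c(0)$). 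So your instinct is right, but you should flag that the formal statement is missing the word ``mild'' rather than pretend the step is routine under the stated hypotheses.
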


\begin{theorem}\label{3Dswirl}
  Let $v$ be a bounded ancient mild solution of the 3D axially symmetric Navier-Stokes equations with $v^\theta\neq 0$ and let $\Gamma=rv^\theta$. If $\Gamma\in L^p_x L^\infty_t(\mathbb{R}^3\times(-\infty,0))$ where $1\leq p<\infty$, then $v$ must be a constant.
\end{theorem}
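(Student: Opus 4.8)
The plan is to reduce the whole statement to the swirl-free Liouville theorem. Concretely, I would first show that the hypothesis $\Gamma=rv^\theta\in L^p_xL^\infty_t$ forces $\Gamma\equiv 0$, i.e. $v^\theta\equiv 0$; once the solution is swirl free, it is a bounded mild ancient solution of the ASNS with $v^\theta=0$, and the swirl-free case (part (b)) of the Liouville theorem from \cite{KNSS} immediately yields that $v$ is constant. Since a bounded mild solution is H\"older continuous and hence smooth on $(-\infty,0)$, all the manipulations below are justified in the interior.

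The heart of the matter is therefore to prove $\Gamma\equiv 0$. The key structural facts are that $\Gamma$ solves the \emph{linear} drift--diffusion equation \eqref{eqvth}, namely $\partial_t\Gamma=\Delta\Gamma-\frac{2}{r}\partial_r\Gamma-b\cdot\nabla\Gamma$, that the drift $b=v^re_r+v^3e_3$ is bounded (because $v$ is) and divergence free by \eqref{eqasns}, and that the hypothesis furnishes a \emph{time-independent} dominating function $g(x):=\sup_t|\Gamma(x,t)|\in L^p(\bR^3)$. First I would test this equation with $|\Gamma|^{p-2}\Gamma$ against a spatial cut-off $\phi_R$ and let $R\to\infty$. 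Because $\operatorname{div}b=0$, the drift term integrates to $-\frac1p\int|\Gamma|^pb\cdot\nabla\phi_R\,dx=O\big(R^{-1}\int_{|x|\ge R}g^p\big)\to 0$; the singular term $-\frac2r\partial_r\Gamma$ produces only a boundary contribution that vanishes because $\Gamma=rv^\theta$ vanishes on the axis and decays at infinity; the diffusion term is the good term. This gives the monotonicity and dissipation identity
\[
\tfrac1p\,\tfrac{d}{dt}\,\|\Gamma(\cdot,t)\|_{L^p}^p=-\tfrac{4(p-1)}{p^2}\,\big\|\nabla|\Gamma(\cdot,t)|^{p/2}\big\|_{L^2}^2\le 0 .
\]

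Integrating over $(-\infty,0)$ and using $\|\Gamma(\cdot,t)\|_{L^p}\le\|g\|_{L^p}<\infty$ shows the total dissipation is finite, so there is a sequence $t_k\to-\infty$ with $\big\|\nabla|\Gamma(\cdot,t_k)|^{p/2}\big\|_{L^2}\to 0$. By the Sobolev embedding $\dot H^1(\bR^3)\hookrightarrow L^6$ this gives $\|\Gamma(\cdot,t_k)\|_{L^{3p}}\to 0$. I would then upgrade this to $\|\Gamma(\cdot,t_k)\|_{L^p}\to 0$ by a tightness argument: the tails $\int_{|x|>R}|\Gamma(\cdot,t_k)|^p\le\int_{|x|>R}g^p$ are small uniformly in $k$, while on each fixed ball $\int_{|x|\le R}|\Gamma(\cdot,t_k)|^p\le|B_R|^{2/3}\|\Gamma(\cdot,t_k)\|_{L^{3p}}^p\to 0$ by H\"older. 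Since $\|\Gamma(\cdot,t)\|_{L^p}$ is nonincreasing, its supremum equals its limit as $t\to-\infty$, which is now $0$; hence $\Gamma\equiv 0$ and $v^\theta\equiv 0$, after which \cite{KNSS} closes the argument.

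The main obstacle I expect is precisely this last passage --- controlling $\Gamma$ at spatial infinity. The energy method only delivers monotone decay of the $L^p$ norm, not its vanishing, and an $L^p$ bound alone would permit mass to escape to infinity as $t\to-\infty$. What rescues the argument is the exact ordering in the hypothesis $\Gamma\in L^p_xL^\infty_t$: the supremum in time sits \emph{inside} the spatial $L^p$ norm, producing the single dominating function $g\in L^p$ that makes the family $\{\Gamma(\cdot,t_k)\}$ tight and simultaneously controls the cut-off errors in the energy identity. Two secondary points would need care: justifying the integration by parts near the axis $r=0$ (harmless, since $\Gamma$ vanishes quadratically there), and the endpoint range $1\le p<2$ where the dissipation degenerates; I would handle the latter by first using interior parabolic estimates together with $g\in L^p$ to show that $\Gamma$ is globally bounded, so that $\Gamma\in L^q_xL^\infty_t$ for every $q\ge p$ and the scheme above can be run with, say, $q=2$.
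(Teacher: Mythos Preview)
The paper is a survey and does not give a proof of this theorem; it only states the result and attributes it to \cite{LZZ}. Your two-step plan --- first show $\Gamma\equiv 0$ by an $L^p$ energy/monotonicity argument on the $\Gamma$-equation \eqref{eqvth}, then invoke the swirl-free case of \cite{KNSS} --- is precisely the strategy the paper outlines in the surrounding discussion, and your execution is correct.

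A couple of minor remarks. First, the singular term $-\tfrac{2}{r}\partial_r\Gamma$, after multiplying by $|\Gamma|^{p-2}\Gamma\phi_R$ and integrating in cylindrical coordinates, gives $-\tfrac{2}{p}\int_{\bR^3}\tfrac{1}{r}|\Gamma|^p\partial_r\phi_R\,dx$, which has a \emph{favorable} sign (since $\partial_r\phi_R\le 0$) and is in fact $O(R^{-2}\|g\|_{L^p}^p)\to 0$; so your energy identity holds exactly, not merely as an inequality. Second, your bootstrap for $p=1$ is fine once you note that near the axis $|\Gamma|\le r\|v\|_{L^\infty}$ is automatically bounded, while for $r\ge 2$ the coefficients in \eqref{eqvth} are bounded and the standard local parabolic $L^1\!\to\!L^\infty$ estimate on unit cubes gives $|\Gamma(x_0,t_0)|\le C\int_{B_1(x_0)}g\,dx\le C\|g\|_{L^1}$. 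With $\Gamma\in L^\infty\cap L^1_xL^\infty_t$ you indeed get $g\in L^q$ for every $q\ge 1$ and can run the monotonicity argument at $q=2$.
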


The condition $v$ being sublinear (the condition \eqref{sublinear}) in Theorem \ref{2Dresult} can not be removed, even when $\o \equiv 0$.  Hence, the above 2 dimensional Liouville theorem is sharp.
Here is a counterexample. Let
$$v=(x_1,-x_2),\qquad p=-\frac{1}{2}x_1^2-\frac{1}{2}x_2^2,$$
then $\o=\partial_1u_2-\partial_2u_1=0$, and $(v,p)$ satisfies
 the 2D  stationary Navier-Stokes equations. However, $v$ is not a constant solution.

The first conclusion in  Theorem \ref{3Dnoswirl},  the axially symmetric, swirl free case, shows that if
$\o^\theta$ is sublinear with respect to $r$, then $\o^\theta \equiv 0$.
 Here is a counterexample to show that the conclusion will be wrong if $\o^\theta$ is linear with respect to $r$, and consequently one cannot prove $v$ is harmonic. This infers that condition \eqref{w2} is also important. Let
$$v=(-x_1x_3,-x_2x_3,x_3^2), \qquad p=-\frac 1 2 x_3^4+2x_3,$$
then $v^\theta=v\cdot e_\theta=0$ and $(v,p)$ satisfies  the stationary ASNS
However $\o^\theta=-r\not\equiv0$, $\Delta v=(0,0,2)\neq0$.

In addition, the condition $v$ being sublinear to $x$ ( condition \eqref{sublinear2}) is also necessary, even when $\o^\theta=0$. Moreover, one can give a counterexample to show that if $v$ is linear in the spatial variable, then there exists a nontrivial ancient solution of ASNS without swirl. It then follows that Theorem \ref{3Dnoswirl} is sharp. For example, let
$$v=(-\frac{1}{2}x_1,-\frac{1}{2}x_2,x_3), \qquad p=\frac{1}{8}x_1^2+\frac{1}{8}x_2^2+\frac{1}{2}x_3^2,$$
then we have
$$v^{\theta}=v\cdot e_{\theta}=0,\qquad v^{r}=v\cdot e_{r}=-\frac{1}{2}r, \qquad v^3=x_3.$$

These imply that $\o^{\theta}=\p_{x_3} v^r - \p_r v^3=0$ and $(v, p)$ satisfies
 the stationary ASNS equations without swirl. However, $v$ is not a constant solution.

So the remaining case for the Liouville property, which is also the most difficult one, is
when $\Gamma= r v^\theta(x, t)$ does not decay near infinity. There are some partial results in the paper \cite{LRZ} by Lei-Ren-Zhang.

\begin{theorem}
\label{thzhouqi}
Let $v=v^\theta e_\theta +v^r e_r + v^3 e_3 $ be a bounded mild ancient solution to the ASNS such that $\Gamma= r v^\theta$ is bounded. Suppose  $v$ is periodic in the $x_3$ variable. Then $v= c e_3$ where $c$ is a constant.
\end{theorem}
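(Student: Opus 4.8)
The plan is to split the argument into two parts: first show that the swirl vanishes, i.e.\ $v^\theta\equiv 0$, and then invoke the swirl-free Liouville theorem already available. The second part is essentially free. Since $v$ is a bounded mild solution, the regularity theory for bounded mild solutions recalled after Definition \ref{demildso} upgrades it to a smooth solution, and interior parabolic estimates on unit space-time cylinders give a uniform bound for $\nabla v$ on $\bR^3\times(-\infty,0)$. Consequently $\omega=\nabla\times v$ is uniformly bounded, so $\Omega=\omega^\theta/r\le C/r\to 0$ as $r\to\infty$ uniformly in $t$, and $|v(x,t)|/|x|\to 0$ uniformly in $t$. These are precisely the hypotheses \eqref{w2} and \eqref{sublinear2} of Theorem \ref{3Dnoswirl}. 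Hence, once $v^\theta\equiv 0$ is known, Theorem \ref{3Dnoswirl} forces $v$ to be a constant vector; and a constant vector field that is axially symmetric must be parallel to $e_3$, so $v=ce_3$. Thus the whole content of the theorem lies in the vanishing of the swirl, and this is where periodicity in $x_3$ must be exploited.

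To show $\Gamma=rv^\theta\equiv 0$ I would work with equation \eqref{eqvth} and a weighted energy with weight $1/r^2$, which is natural because $\Gamma$ vanishes on the axis while $v^\theta=\Gamma/r$ decays like $1/r$. Multiplying \eqref{eqvth} by $\Gamma/r^2$, integrating over one vertical period $\mathbb{T}$ times the horizontal plane (against a radial cutoff to be removed later), and integrating by parts, the diffusion term and the singular drift $-\frac{2}{r}\p_r\Gamma$ combine with an exact cancellation,
\[
\int \frac{\Gamma}{r^2}\Big(\Delta-\frac{2}{r}\p_r\Big)\Gamma\, dx=-\int\frac{|\nabla\Gamma|^2}{r^2}\,dx,
\]
the boundary contributions at $r=0$ and $r=\infty$ vanishing because $\Gamma|_{r=0}=0$ and $\Gamma$ is bounded. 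Using $\nabla\cdot b=0$ for the transport term, this yields the identity
\[
\frac{1}{2}\frac{d}{dt}\int\frac{\Gamma^2}{r^2}\,dx+\int\frac{|\nabla\Gamma|^2}{r^2}\,dx=-\int\frac{\Gamma^2 v^r}{r^3}\,dx .
\]
Here periodicity is essential: it makes the vertical direction compact, so the only remaining non-compact direction is the radial one, and it supplies a Poincar\'e inequality controlling the zero-$x_3$-average part of $\Gamma$ by $\int |\p_{x_3}\Gamma|^2/r^2$. The ancient structure is then used in the standard way: one averages the identity over $t\in(-T,0)$ and lets $T\to\infty$, which annihilates the time-derivative term since $\int \Gamma^2/r^2$ is bounded on each period.

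The main obstacle is the sign-indefinite drift term $-\int \Gamma^2 v^r/r^3\,dx$ together with the non-compact radial direction; this is the exact analogue of the two-dimensional difficulty, since $v^r,v^3$ are only bounded and do not decay as $r\to\infty$. I would attack it in two complementary ways. First, using $v^r=\p_{x_3}L^\theta$ (see the relations stated after Theorem \ref{thasnsvz}) one can integrate the drift term by parts in $x_3$, trading it, via $\omega^r=-\p_{x_3}v^\theta$ from \eqref{w-v}, for a term of the form $\int \frac{v^\theta\omega^r}{r}L^\theta\,dx$, which after Cauchy--Schwarz should be absorbable into the dissipation $\int|\nabla\Gamma|^2/r^2$ plus a lower-order remainder. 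Second, and more robustly, I would run a strong-maximum-principle argument: $\Gamma$ solves the drift-diffusion equation \eqref{eqvth}, which obeys the maximum principle, so $N(t)=\sup_x\Gamma(\cdot,t)$ is monotone; extracting an $x_3$- and $t$-translation limit of $\Gamma$ along times approaching $-\infty$ (using the uniform parabolic estimates and periodicity for compactness) produces a bounded ancient solution attaining an interior maximum, which the strong maximum principle forces to be constant, contradicting $\Gamma|_{r=0}=0$ unless $\sup\Gamma\le 0$; the symmetric argument gives $\inf\Gamma\ge 0$, hence $\Gamma\equiv 0$. The delicate point, and the step I expect to be hardest, is to prove that $\Gamma(\cdot,t)\to 0$ as $r\to\infty$ uniformly, so that the maximum in the limiting procedure is attained at a finite radius; this is precisely the decay that periodicity, through the weighted energy bound above and a one-dimensional argument in $r$ for the vertical average $\bar\Gamma(r,t)$, is meant to furnish.
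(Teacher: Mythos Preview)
Your overall plan---first show $\Gamma\equiv 0$, then invoke the swirl-free Liouville theorem---matches the paper's, and your treatment of the second step is fine (once $v^\theta=0$, part (b) of the Koch--Nadirashvili--Seregin--Sverak theorem applies directly to bounded mild ancient solutions, so one does not even need Theorem~\ref{3Dnoswirl}). The gap lies in the first step.

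Your weighted energy approach fails at the outset: the quantity $\int \Gamma^2 r^{-2}\,dx=\int (v^\theta)^2\,dx$, taken over one $x_3$-period, is not known to be finite, since $v^\theta$ is only bounded and has no a priori decay as $r\to\infty$. Hence the identity you derive is formal and cannot be time-averaged as you describe. The drift estimate you sketch via the stream function also does not close: after rewriting the right-hand side as $\int 2v^\theta\omega^r L^\theta r^{-1}\,dx$ and applying Cauchy--Schwarz, the companion factor to the dissipation is again $\int(v^\theta)^2\,dx$ or a similarly divergent quantity. Your maximum-principle approach honestly isolates the real difficulty---uniform decay of $\Gamma$ as $r\to\infty$---but does not resolve it, and the suggestion that the weighted energy bound will supply this decay is circular in view of the preceding issue.

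The paper proceeds differently. It proves $\Gamma\equiv 0$ by a De Giorgi--Nash--Moser oscillation argument on equation~\eqref{eqvth} carried out at \emph{all} scales. This is a local method, so no global integrability of $v^\theta$ is needed. The obstruction is that the drift $b=v^r e_r+v^3 e_3$ is merely bounded, which is supercritical at large scales. The key point---which you yourself nearly reached---is that periodicity in $x_3$, together with $\nabla\cdot b=0$, gives $v^r=-\p_{x_3}\big(L^\theta(r,x_3)-L^\theta(r,0)\big)$ with the bracketed stream function bounded; in other words $v^r\in(L^\infty)^{-1}$, which is critical. With this structure (and the fact that $v^3$ is harmless in the $x_3$-periodic setting, since the domain $\bR^2\times S^1$ is effectively two-dimensional at large scales), the Nash--Moser iteration yields a scale-invariant H\"older estimate for $\Gamma$; for a bounded ancient solution one then lets the outer scale tend to infinity and concludes that $\Gamma$ is constant, hence zero since $\Gamma|_{r=0}=0$. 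Your ingredients were pointing in the right direction, but the tool the paper uses to exploit $v^r\in(L^\infty)^{-1}$ is De Giorgi--Nash--Moser iteration rather than a global weighted $L^2$ estimate.
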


Let us describe the general idea of the proof of this theorem.  One will prove, by the De Giorgi-Nash-Moser method that $\Gamma$ satisfies a partially scaling invariant H\"older estimate which forces $\Gamma \equiv 0$.
Then the problem is reduced to the swirl free case that is solved in \cite{KNSS}.
In general this method will break down in large scale, unless one imposes scaling invariant decay conditions on $v^r$ and $v^3$.  Although no decay conditions on $v^r$ or $v^3$ are assumed in the theorem, one can demonstrate that the classical Nash-Moser iteration method can be carefully adapted to this situation. The key observation is the following: the incompressibility condition $\nabla \cdot b=0$ with $b=v^r e_r + v^3 e_3$, along with the periodicity in $x_3$ gives one extra information on $v^r$. In fact one will essentially use  $v^r(r,\theta, x_3) = - \partial_{x_3} (L^\theta(r,\theta, x_3)-L^\theta(r,\theta,0)) \in (L^\infty)^{-1}$, where $L^\theta$ is the angular stream function. Another helpful factor is that the spatial domain
$\bR^2 \times S^1$ behaves like a 2 dimensional Euclidean space in large scale, even though it really behaves 3 dimensionally near the axis. This shows that $v^r$ scales by the critical order $-1$, which is quite helpful. But the same can not be said for $v^3$. Fortunately,  the role of $v^3$ is not as important as
$v^r$ in the $x_3$ periodic case.

In contrast to the absence of nontrivial partially periodic ancient solutions for ASNS, in another 3 dimensional parabolic flow, the Ricci flow, such a solution exists and represents a typical singularity:
$(\mathbb{S}^2 \times \bR) \times (-\infty, 0)$. This important fact is proven by Perelman \cite{Per}.

Next, we present a theorem  that deals with non-periodic case, under an extra condition that $\Gamma$ converges to its maximum at certain speed. Even though the result cannot yet reach  the full conjecture in \cite{KNSS}, its proof utilizes a method of constructing a weight function by solving an adapted PDE, which is then used in an energy estimate. It may be of independent value and use elsewhere. This result and the ones in \cite{LRZ} all were posted
in the preprint \cite{LRZ0}. In a review process, it was suggested by a reviewer to split that paper.

Let
\be
\lab{limsupgam}
\lim\sup_{r \to \infty} \Gamma = \lim\sup_{r \to \infty} \sup_{x_3, t} \Gamma(r, x_3, t).
\ee  It can be shown that if $v$ is any bounded ancient solution such that $\Gamma$ is bounded, then $\lim\sup_{r \to \infty} \Gamma = \sup \Gamma$.

\begin{theorem} (\cite{LRZ2})
\lab{thgudai2} Let $v=v^\theta e_\theta +v^r e_r + v^3 e_3 $ be a bounded mild ancient solution to the ASNS such that $\Gamma= r v^\theta$ is bounded. There exists a small number $\e_0 \in (0, 1)$, depending only on $\Vert v \Vert_\infty$,
 such that if
\be
\lab{giginfty2}
| \Gamma^2(r, x_3, t) - \lim\sup_{r \to \infty} \Gamma^2| \le \frac{\e_0}{r} \lim\sup_{r \to \infty} \Gamma^2
\ee
holds uniformly for $x_3$, $t$, and large $r$, then $v= c e_3$ where $c$ is a constant.
\end{theorem}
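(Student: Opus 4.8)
The plan is to reduce everything to showing that the swirl vanishes, $\Gamma=rv^\theta\equiv0$. Once this is done, $v$ is a bounded mild ancient solution of the ASNS (hence smooth, by the remark after Definition~\ref{demildso}) with $v^\theta=0$, so part (b) of the Koch--Nadirashvili--Seregin--Sverak theorem \cite{KNSS} gives that $v$ is constant; axial symmetry then forces this constant to be a multiple of $e_3$, which is exactly $v=ce_3$. Throughout set $M^2=\sup\Gamma^2=\limsup_{r\to\infty}\Gamma^2$, the two being equal by the remark following \eqref{limsupgam}, and argue by contradiction by assuming $M>0$.

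The starting point is the linear drift--diffusion equation \eqref{eqvth}, which I write as $\mathcal{L}\Gamma=0$ with $\mathcal{L}=\Delta-b\cdot\nabla-\frac{2}{r}\p_r-\p_t$ and $b=v^re_r+v^3e_3$. Because $\mathcal{L}$ carries no zeroth order term, a direct computation gives $\mathcal{L}(\Gamma^2)=2|\nabla\Gamma|^2\ge0$, so the nonnegative function $W:=M^2-\Gamma^2$ is a supersolution, $\mathcal{L}W=-2|\nabla\Gamma|^2\le0$. In this notation the hypothesis \eqref{giginfty2} is precisely the quantitative decay $0\le W(r,x_3,t)\le \frac{\e_0}{r}M^2$ for all large $r$, uniformly in $x_3$ and $t$. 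The target is to prove $\int\!\!\int|\nabla\Gamma|^2\phi=0$ for a suitable positive weight $\phi$, which yields $\nabla\Gamma\equiv0$.

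The core is a weighted energy estimate built to exploit this decay. I would multiply $\mathcal{L}W=-2|\nabla\Gamma|^2$ by a positive weight $\phi$ and integrate over the cylinder $Q_{R,T}=\{r\le R\}\times(-T,0)$, transferring all derivatives onto $\phi$:
\begin{equation*}
2\int\!\!\int_{Q_{R,T}}\phi\,|\nabla\Gamma|^2=-\int\!\!\int_{Q_{R,T}}W\,\mathcal{L}^\ast\phi+\mathcal{B}_{R,T},
\end{equation*}
where, using $\nabla\cdot b=0$ and the fact that $\frac{2}{r}e_r$ is itself divergence free, the formal adjoint is $\mathcal{L}^\ast=\Delta+b\cdot\nabla+\frac{2}{r}\p_r+\p_t$, and $\mathcal{B}_{R,T}$ collects the boundary integrals on $\{r=R\}$ and on the two time slices (those on the axis $r=0$ being harmless by smoothness and axial symmetry). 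The obstruction is the drift $b\cdot\nabla$: since $v$ is only bounded, $b$ neither decays nor is small, so it cannot be absorbed perturbatively. Following the strategy announced in the text, I would neutralize it by constructing $\phi>0$ from an adapted adjoint problem, either solving $\mathcal{L}^\ast\phi=0$ exactly or, more robustly, arranging $\mathcal{L}^\ast\phi\ge0$ so that the bulk term $-\int\!\!\int W\,\mathcal{L}^\ast\phi$ is nonpositive. Either way one is left with $2\int\!\!\int_{Q_{R,T}}\phi\,|\nabla\Gamma|^2\le\mathcal{B}_{R,T}$.

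It then remains to estimate $\mathcal{B}_{R,T}$ together with any residual interior contribution. The lateral part lives on $\{r=R\}$, where $W\le \e_0 M^2/R$, and the residual term---most naturally controlled through the axially symmetric Hardy-type inequality, whose $1/(r^2|\ln r|^2)$ weight appears elsewhere in this survey---carries a factor proportional to $\e_0$. Choosing the threshold $\e_0=\e_0(\|v\|_\infty)$ small, its size dictated by the magnitude of the drift (hence by $\|v\|_\infty$) that $\phi$ must carry, one absorbs these into a small fraction of $\int\!\!\int\phi\,|\nabla\Gamma|^2$, with the rest tending to $0$ along a sequence $R,T\to\infty$. This forces $\int\!\!\int\phi\,|\nabla\Gamma|^2=0$, hence $\nabla\Gamma\equiv0$. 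Then $\Gamma$ depends on $t$ only, and $\mathcal{L}\Gamma=0$ gives $\p_t\Gamma=0$, so $\Gamma$ is constant; since $\Gamma=rv^\theta=O(r^2)$ as $r\to0$ by smoothness on the axis, that constant is $0$, contradicting $M>0$. Hence $M=0$, i.e.\ $v^\theta\equiv0$, and \cite{KNSS} closes the argument. The step I expect to be genuinely hard is the construction of $\phi$: one must solve the adjoint equation over the whole ancient interval with growth slow enough that $\mathcal{B}_{R,T}$ is truly governed by the $1/r$ decay of $W$ and absorbable with a threshold depending only on $\|v\|_\infty$, yet compatible with the non-decaying drift $b$---it is here that the precise $1/r$ rate in \eqref{giginfty2} and the structure of $b$ must be reconciled.
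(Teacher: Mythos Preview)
Your proposal is correct in outline and follows the same approach the survey itself sketches: reduce to $\Gamma\equiv 0$ via part (b) of \cite{KNSS}, pass to $W=M^2-\Gamma^2$ which satisfies $\mathcal{L}W=-2|\nabla\Gamma|^2\le 0$, construct a positive weight $\phi$ by solving the adapted adjoint equation $\mathcal{L}^*\phi\ge 0$, and use the $1/r$ decay hypothesis \eqref{giginfty2} together with the smallness of $\e_0$ to kill the boundary and residual terms. The survey does not reproduce the full argument from \cite{LRZ2} but describes it precisely as ``constructing a weight function by solving an adapted PDE, which is then used in an energy estimate,'' and you have identified the same structure, including the correct hard step (building $\phi$ with growth controlled by $\|v\|_\infty$ so that $\mathcal{B}_{R,T}$ is governed by the $1/r$ decay of $W$).
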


Let us mention that to study the equation of $\Gamma$ in an isolated manner is likely to fail. For example
without the divergence free condition on the vector field $b$. There is no Liouville property for the equation
\[
\Delta f - \frac{2}{r} \p_r f - {\blue (b \cdot \nabla)} f=0, f(0)=0,\,  \lim_{r \to \infty} f =c  \quad \text{in} \quad \bR^3,
\]even when $f=f(r)$ is a one variable function. One can just take $b=a(r) e_r$ with $a(r) \le 0$, $ r \exp(\int a(r) dr)$ being integrable on $[0, \infty)$.  Then solve the ODE
$f'' - (\frac{1}{r} + a(r) ) f'=0$ .

We remark that Question \eqref{queliu} is very difficult even in the bounded, axially symmetric case, especially without the extra assumption $\Gamma = r v^\th \in L^\infty$. The reason is that it contains another long standing question for the stationary NS as a special case. Namely, is a homogeneous D solution zero?  We will discuss the latter in detail in the following section.

At the end of this section, we introduce a local representation formula for smooth solutions of NS, which
is a useful tool in proving convergence results in NS during and after blow up or scaling, including ancient solutions.
It is largely due to {\blue O'Leary} \cite{Ole} and has been used in several papers including \cite{Zhang2006}.
However, as pointed out by H. J. Dong, there is a missing term in that formula. A corrected one is given
in the addendum of \cite{Zhang2006}, which is given below. As one application, we  can prove that for a local smooth solution $v$ of NS, $v$ and $\nabla v$ is controlled by a local integral of $v$ or $|\nabla v|$ without any assumption on the pressure. The smoothness assumption can be relaxed. We refer to \cite{Zhang2006} addendum for details.

Given $r>0$ and $(x, t)$ in the space time. Let $Q_r(x, t) = \{(y, s) \, | \, |x-y|<r,  t-r^2<s<t \}$ be a
standard parabolic cube.
Denote by $\Gamma=\Gamma(x, y)=1/(4 \pi |x-y|)$ the Green's function on $\bR^3$,  $G=G(x, t, y)$ the standard heat kernel, $K=K(x, t, y)$ the Oseen kernel given in \eqref{stokeshe}.
Fixing $(x, t)$, we construct a standard
cut-off function $\eta$ such that $\eta(y, s)=1$ in $Q_{r/2}(x,
t)$, $\eta(y, s)=0$ outside of $Q_r(x, t)$, $0 \le \eta \le 1$. Denote by $K_j$ the $j-th$ column of the Oseen kernel. The following vector was introduced in \cite{Ole}.
\[
\bali
\Phi_j = \Phi_j(x, t-s, y) &= \eta(y, s) K_j(x, t-s, y) + \frac{1}{4
\pi} \nabla \eta(y, s) \times \int_{{\bf R}^3} \frac{ curl K_j(x,
t-s, z)}{|z-y|} dz\\
&\equiv \eta K_j(x, t-s, y) + \overrightarrow{Z}_j(x, t-s, y).
\eali
\]

\begin{proposition}[\cite{Zhang2006}, Addendum]
 Suppose $v=(v_1, v_2, v_3)$ is a smooth solution of the NS \eqref{nse} in $Q_r(x, t)$ and $K_j$ be the $j-th$ column of the Oseen kernel $K$. Then
\be
\lab{vgongshi}
\aligned
&v_j(x, t)
= \int_{Q_r(x, t)} v(y, s) \cdot \left[ K_j(x, t-s, y) (\Delta \eta +
\partial_s \eta)(y, s) +
2 \nabla \eta(y, s) \nabla_y K_j(x, t-s, y) \right]dyds \\
&\quad + \int_{Q_r(x, t)} v(y, s) \cdot \left[(\Delta_y \overrightarrow{Z}_j +
\partial_s \overrightarrow{Z}_j)(x, t-s, y)  +   v(y, s) \nabla_y \Phi_j(x, t-s, y) \right] dyds\\
&\qquad +\underbrace{ \int_{B_r(x)} \left[(\nabla \eta \cdot \nabla_y \Gamma) v_j -
\partial_{y_j} \Gamma  (\nabla \eta \cdot v)
-\partial_{y_j} \eta  (\nabla_y \Gamma \cdot v )
 \right](y, t) dy.}
\endaligned
\ee
\end{proposition}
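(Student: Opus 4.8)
The plan is to treat the Oseen kernel $K_j$ as the fundamental solution of the (backward) nonstationary Stokes system based at the vertex $(x,t)$, and to extract $v_j(x,t)$ by pairing the Navier--Stokes equation against $K_j$ after localizing with the cut-off $\eta$. The single feature that makes this work with no hypothesis on $P$ is that each column $K_j$ is divergence free in the spatial variable: one checks directly from \eqref{stokeshe} that $\p_{z_i}K_{ij}(z,t,0)=0$, because the diagonal heat-kernel part contributes $\p_{z_j}G$ while the second-derivative correction, whose Laplacian is $-G$, contributes exactly $-\p_{z_j}G$. Consequently, when the pressure gradient $\nabla P$ is paired against the solenoidal kernel and integrated by parts in $y$, it produces $-\int(\nabla_y\cdot K_j)P\,dy$ up to boundary terms; the former vanishes by the divergence-free property and the latter vanishes because $\eta$ and its derivatives vanish on $\p Q_r(x,t)$. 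This is precisely why one uses the Oseen kernel rather than the scalar heat kernel $G$.

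First I would localize. Since $\eta$ is supported in $Q_r(x,t)$ while the singularity of $K_j$ sits at the top vertex $(x,t)$, applying the backward Stokes operator to the product and integrating against $v$ over $Q_r$ via Duhamel's formula reproduces $v_j(x,t)$ up to commutator terms in which the spatial and temporal derivatives have landed on $\eta$. These commutators are exactly the first group of terms in \eqref{vgongshi}, namely $K_j(\Dl\eta+\p_s\eta)+2\nabla\eta\,\nabla_y K_j$, just as for the heat equation; the compact support of $\eta$ annihilates all genuine boundary contributions on the parabolic sides and at the bottom time slice.

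The genuine difficulty is that $w:=\eta v$ is not divergence free, since $\nabla_y\cdot w=\nabla\eta\cdot v$, whereas the Oseen kernel only represents the solenoidal component. This is the role of the Green's function $\Gamma$ and of the auxiliary field $\overrightarrow{Z}_j=\nabla\eta\times(\Gamma*\mathrm{curl}\,K_j)$, which is manufactured so that $\Phi_j=\eta K_j+\overrightarrow{Z}_j$ is divergence free in $y$. Indeed, using $\nabla_y\cdot(\nabla\eta\times A)=-\nabla\eta\cdot(\nabla_y\times A)$ with $A=\Gamma*\mathrm{curl}\,K_j$, together with $\nabla_y\times(\Gamma*\mathrm{curl}\,K_j)=\Gamma*(-\Dl K_j)=K_j$ (from $\nabla\cdot K_j=0$ and $-\Dl\Gamma=\delta$), one finds $\nabla_y\cdot\overrightarrow{Z}_j=-\nabla\eta\cdot K_j$, which cancels $\nabla_y\cdot(\eta K_j)=\nabla\eta\cdot K_j$. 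With $\Phi_j$ solenoidal, the convection term $v\cdot\nabla v=\nabla\cdot(v\otimes v)$ integrates by parts cleanly and is represented through $v\,\nabla_y\Phi_j$, while the Stokes operator applied to the correction produces $(\Dl_y\overrightarrow{Z}_j+\p_s\overrightarrow{Z}_j)$; these constitute the second group of terms.

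The main obstacle, and the source of the term missing from the original formula in \cite{Ole} as pointed out by H. J. Dong, is the careful accounting of the purely elliptic, equal-time contribution coming from the divergence correction. The irrotational part of $w$ is governed at each instant by the Laplacian, $\Dl_y\phi=\nabla\eta\cdot v=\Div(\eta v)$, and is recovered instantaneously (with no heat-kernel delay) through $\Gamma$; at the evaluation time $s=t$ this collapses to a spatial integral over $B_r(x)$. Reducing $\p_{y_j}\phi$ to a symmetric, manifestly local form requires integrating by parts and distributing derivatives among $\eta$, $\Gamma$ and $v$, which produces precisely the last line, $\int_{B_r(x)}\big[(\nabla\eta\cdot\nabla_y\Gamma)v_j-\p_{y_j}\Gamma(\nabla\eta\cdot v)-\p_{y_j}\eta(\nabla_y\Gamma\cdot v)\big](y,t)\,dy$. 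The delicate point is that this equal-time piece is easy to overlook because it lives on a lower-dimensional slice and is unaccompanied by any kernel; the bookkeeping that must be done with full care is tracking every interaction of $\nabla\eta$ with $\nabla_y\Gamma$ during the integration by parts, so that no such boundary term is dropped. Collecting the three groups then yields \eqref{vgongshi}.
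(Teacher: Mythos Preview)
The paper does not actually give a proof of this proposition; it merely states the formula and attributes it to \cite{Zhang2006}, Addendum (building on O'Leary's original argument). So there is no in-paper proof to compare against, only the cited source.

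That said, your outline is essentially the correct one and matches the strategy behind the cited result. The three key mechanisms you identify are exactly right: (i) $\nabla_y\cdot K_j=0$ kills the pressure; (ii) $\overrightarrow{Z}_j$ is designed so that $\nabla_y\cdot\overrightarrow{Z}_j=-\nabla\eta\cdot K_j$, making $\Phi_j$ solenoidal and allowing the convection term to integrate by parts cleanly; (iii) the last line arises from the top time slice $s\to t^-$. Your heuristic for (iii) via the Helmholtz splitting of $\eta v$ is legitimate, since $K_j(x,0^+,y)$ is the kernel of the Leray projector, so $\int v\cdot\eta K_j|_{s=t}=v_j(x,t)-\p_{x_j}\phi(x,t)$ with $\Delta\phi=\nabla\eta\cdot v$; one must also add the $s=t$ limit of $\int v\cdot\overrightarrow{Z}_j$, which by the BAC--CAB identity equals $\int[\p_{y_j}\eta\,(v\cdot\nabla_y\Gamma)-(\nabla\eta\cdot\nabla_y\Gamma)v_j]\,dy$. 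Combining these two contributions yields exactly the third line of \eqref{vgongshi}. If you wanted to make the proposal airtight, the only thing to spell out more carefully is this computation of $\overrightarrow{Z}_j(x,0^+,y)=\nabla\eta\times(\nabla_y\Gamma\times e_j)$, since your paragraph attributes the entire third line to the $\nabla\phi$ piece alone, whereas in fact two of its three summands come from $\overrightarrow{Z}_j$.
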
 In the above, if $X, Y$ are two vector fields. Then $X \nabla Y \equiv \sum_j X_j \partial_j Y$. The last term in \eqref{vgongshi} was the missing one.
Since all integrands vanish on the lateral boundary of $Q_r(x, t)$, one can apply $\nabla_x$  freely into the integrals on the right sides of
\eqref{vgongshi}. Also it is not hard to see that the formula also works if one replaces $(x, t)$ by
any points in $Q_{r/2}(x, t)$ while keeping the cube $Q_r(x, t)$ unchanged.

Using the explicit formula above, one deduces

\begin{proposition}[mean value inequalities for (NS), \cite{Zhang2006}, Addendum]

 Let $v$ be a smooth solution of the NS \eqref{nse} in $Q_{2r}(x, t)$. Then there exists an absolute
constant $\lambda>0$ such that

(a). \[
\aligned
 |v(x, t)| &\le  \frac{\lambda}{r^5} \int_{Q_r(x,
t)-Q_{r/2}(x, t)} |v(y, s)| dyds + \frac{\lambda}{r^3} \int_{B_r(x)-B_{r/2}(x)} |v(y, t)| dy\\
&\qquad + \lambda \int_{Q_r(x, t)} K_1(x, t; y, s)  \
|v(y, s)|^2 dyds, \endaligned
\] where $K_1(x, t; y, s) =1/(|x-y|+\sqrt{t-s})^4$, if $t \ge s$ and $0$ if $t<s$.

(b).
\[
\aligned
&|\nabla v(x, t)| \le  \frac{\lambda}{r^5} \int_{Q_r(x,
t)-Q_{r/2}(x, t)} | \nabla v(y, s)|  dyds
 + \frac{\lambda}{r^6}
\int_{Q_r(x, t)-Q_{r/2}(x, t)} |v(y, s)| dyds \\
&\quad + \frac{\lambda}{r^4} \int_{B_r(x)-B_{r/2}(x)} |v(y, t)| dy
 + \lambda \int_{Q_r(x, t)} K_1(x, t; y, s) |v(y, s)| \
|\nabla v(y, s)| dyds. \endaligned
\]
\end{proposition}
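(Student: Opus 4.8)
The plan is to feed the explicit representation formula \eqref{vgongshi} into the left-hand sides and estimate every kernel by its size, exploiting that the cut-off $\eta$ localizes the ``far'' terms to annular regions. Before anything else I would record the kernel bounds I intend to use: the Oseen kernel enjoys the same Gaussian-type decay as the heat kernel, so that $|K_j(x,t-s,y)| \lesssim (|x-y|+\sqrt{t-s})^{-3}$ and $|\nabla_y K_j(x,t-s,y)| \lesssim (|x-y|+\sqrt{t-s})^{-4}=K_1(x,t;y,s)$, while the Newtonian potential obeys $|\nabla_y\Gamma(x,y)|\lesssim|x-y|^{-2}$; each additional derivative in $x$, $y$ or $s$ costs one extra factor of $(|x-y|+\sqrt{t-s})^{-1}$. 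I would also note the elementary but crucial fact that on all of $Q_r(x,t)$ one has $K_1 \ge c\,r^{-4}$, which lets any weighted annular term $r^{-4}\int_{Q_r-Q_{r/2}}|v|^2$ be reabsorbed into $\lambda\int_{Q_r}K_1|v|^2$.

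For part (a) I would split \eqref{vgongshi} into its three natural groups. The first space-time integral carries the factors $\Delta\eta+\partial_s\eta$ and $\nabla\eta$, which are supported in $Q_r-Q_{r/2}$ and of size $r^{-2}$ and $r^{-1}$; paired with $|K_j|\lesssim r^{-3}$ and $|\nabla_y K_j|\lesssim r^{-4}$ on that annulus, both pieces yield $\lambda r^{-5}\int_{Q_r-Q_{r/2}}|v|$. The fixed-time integral over $B_r$ only involves derivatives of $\eta$ (again supported in $B_r-B_{r/2}$) times $\nabla_y\Gamma\lesssim r^{-2}$, producing $\lambda r^{-3}\int_{B_r-B_{r/2}}|v|$. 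The genuinely nonlinear contribution $\int_{Q_r}(v\nabla_y\Phi_j)\cdot v$ is estimated pointwise by $\int_{Q_r}|v|^2|\nabla_y\Phi_j|$; writing $\nabla_y\Phi_j=\eta\,\nabla_y K_j+(\nabla_y\eta)K_j+\nabla_y\overrightarrow{Z}_j$, the first term is bounded by $K_1$ everywhere and gives $\lambda\int_{Q_r}K_1|v|^2$, while the remaining two are supported on the annulus and are reabsorbed using $K_1\ge c\,r^{-4}$. The $\overrightarrow{Z}_j$ pieces in the linear group are handled the same way, since $\overrightarrow{Z}_j$ carries a $\nabla\eta$ factor and is therefore annular.

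For part (b) I would differentiate \eqref{vgongshi} in $x$; this is legitimate because every integrand vanishes on the lateral boundary of $Q_r(x,t)$, so $\nabla_x$ passes under the integral sign onto the kernels $K_j,\overrightarrow{Z}_j,\Phi_j,\Gamma$. On the annular (linear) terms each $\nabla_x$ costs one power of $r^{-1}$, turning the $r^{-5}$ and $r^{-3}$ weights into the $r^{-6}\int_{Q_r-Q_{r/2}}|v|$ and $r^{-4}\int_{B_r-B_{r/2}}|v|$ contributions. The remaining $r^{-5}\int_{Q_r-Q_{r/2}}|\nabla v|$ term is produced by integrating by parts in $y$ in the most singular annular piece, moving a $\nabla_y$ off $\nabla_x\nabla_y K_j$ onto $v$ (yielding $\nabla v$) and onto $\nabla\eta$. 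The nonlinear term is where incompressibility is essential: using $\mathrm{div}_y\, v=0$ I would write $v\nabla_y(\nabla_x\Phi_j)=\nabla_y\cdot\big(v\otimes\nabla_x\Phi_j\big)$ and integrate by parts in $y$ (no boundary term, since $\Phi_j$ is supported in $Q_r$), obtaining $\lesssim\int_{Q_r}|\nabla_x\Phi_j|\,|v|\,|\nabla v|$; since $|\nabla_x\Phi_j|\lesssim|\nabla_x K_j|\lesssim K_1$ away from the annular $\overrightarrow{Z}_j$ correction, this is exactly $\lambda\int_{Q_r}K_1|v||\nabla v|$.

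The main obstacle, and the part deserving care, is the nonlinear term in (b): a naive differentiation would place two derivatives on the Oseen kernel, $\nabla_x\nabla_y K_j\sim(|x-y|+\sqrt{t-s})^{-5}$, whose singularity is not locally integrable against $|v|^2$. The divergence-free integration by parts is precisely the device that trades this fifth-order singularity for a fourth-order one ($K_1$) at the cost of one derivative landing on $v$, and verifying that the boundary terms vanish and that the nonlocal $\overrightarrow{Z}_j$ corrections remain annular (hence harmless) is the only genuinely nontrivial bookkeeping. Everything else reduces to the kernel size estimates above together with the observation that $K_1$ is bounded below by $c\,r^{-4}$ on $Q_r$.
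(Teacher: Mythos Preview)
Your proposal is correct and follows exactly the route the paper intends: the paper's own proof is the single sentence ``Using the explicit formula above, one deduces,'' and you have supplied the kernel estimates, the annular localization via $\eta$, and the integration-by-parts trick (exploiting $\mathrm{div}\,v=0$) for the nonlinear term in (b) that this sentence summarizes. Your identification of the fifth-order singularity $\nabla_x\nabla_yK_j$ as the only genuine obstacle, and its resolution by moving one $y$-derivative onto $v$, is precisely the point.
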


Using an iteration, one can also remove the kernel function $K_1$ from the above mean value inequality, assuming $v$ is in a parabolic type Kato class. The latter contains the standard $L^p_xL^q_t$ regularity class alluded in Section 1 for $q<\infty$.

\begin{definition}[\cite{Zhang2006}]  A vector valued function $b=b(x,t)$ in $L^1_{loc}(\bR^{n+1})$ is in class $K_1$ if it satisfies the following condition:
\[
\lim_{h \to 0} \sup_{(x,t) \in \bR^{n+1}} \int^t_{t-h} \int_{\bR^n} [ K_1(x,t;y,s) +K_1(x,s;y,t-h)]|b(y,s)|dyds= 0.
\]
\end{definition}
The parabolic Kato norm of $b$ on a time interval $[t_1, t_2]$ and scale $h$ is
\[
B(b, t_1, t_2, h) \equiv \sup_{(x, t) \in \bR^n \times [t_1, t_2]} \int^t_{t-h} \int_{\bR^n} [ K_1(x,t;y,s) +K_1(x,s;y,t-h)]|b(y,s)|dyds.
\]

\begin{proposition}[ \cite{Zhang2006}, Addendum]
Let $v$ be a local solution of the NS \eqref{nse} in $Q_{4r}(x, t) \subset \bR^3 \times \bR$, satisfying the energy inequality \eqref{enineq} localized in $Q_{4r}(x, t)$.
Suppose also that $v |_{Q_{4r}(x, t)}$ is in class $K_1$. Then both $v$ and $|\nabla v|$ are bounded functions
in $Q_{2r}=Q_{2r}(x, t)$.

Moreover, for some positive constants $C=C(v)$ and $r_0$,
depending only on the size of the Kato norm of $v$ in the time interval $[t-16 r^2_0, t]$ with scale $16 r^2_0$ such that the following hold. When
$0<r<r_0$,
\[
\aligned
 |v(x, t)| &\le  \frac{C}{r^3} \sup_{s \in [t-(2 r)^2, t]} \int_{B_{r}(x)} |v(y,
s)| dy,\\
|\nabla v(x, t)| &\le \frac{C}{r^5} \Vert \nabla
v \Vert_{L^1(Q_{2r})}
 + \frac{C}{r^4} \sup_{s \in [t-(2 r)^2, t]} \int_{B_{r}(x)} |v(y,
s)| dy.
\endaligned
\]
\end{proposition}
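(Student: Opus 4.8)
The plan is to treat the kernel term in the two mean value inequalities of the previous proposition as a perturbation that is \emph{small at small scales} because of the class $K_1$ hypothesis, and then to eliminate it by a hole-filling iteration of Giaquinta type. Throughout I would first assume $v$ is smooth on $Q_{4r}(x,t)$, so that the mean value inequalities (a) and (b) apply and the quantities $\sup_{Q_\rho}|v|$ and $\sup_{Q_\rho}|\nabla v|$ are a priori finite; the general case of a local energy solution then follows by approximating $v$ by smooth solutions and passing to the limit, since the bounds I will produce depend only on the $L^1$ norms of $v$ and $\nabla v$ and on the Kato norm, all of which are stable under such an approximation. Since $v|_{Q_{4r}}$ lies in class $K_1$, the defining limit lets me fix $r_0>0$ so small that $\lambda\,B(v,t-16r_0^2,t,16r_0^2)\le\theta$ for a fixed $\theta\in(0,1)$, where $\lambda$ is the absolute constant of the mean value inequalities. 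The consequence is that for $0<r<r_0$, any $(x_0,t_0)$ and any subcube $Q_\rho(x_0,t_0)\subset Q_r(x,t)$, the nonlinear term in (a) obeys
\[
\lambda\int_{Q_\rho(x_0,t_0)}K_1(x_0,t_0;y,s)\,|v(y,s)|^2\,dyds\le\lambda\Big(\sup_{Q_\rho(x_0,t_0)}|v|\Big)\int_{Q_\rho(x_0,t_0)}K_1\,|v|\,dyds\le\theta\sup_{Q_\rho(x_0,t_0)}|v|,
\]
so one factor of $|v|$ is absorbed into the kernel to generate the small Kato norm while the other is carried as a supremum.

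Next I would run the iteration for $v$. For radii $\tfrac r2\le\rho<\rho'\le r$ I apply (a) at each point of $Q_\rho(x,t)$ over a subcube of radius $\rho'-\rho$, estimate the two linear terms by $L^1$ integrals over $Q_{\rho'}\subset Q_r$ and $B_{\rho'}\subset B_r$, and use the display above for the nonlinear term. Taking the supremum over $Q_\rho(x,t)$ gives, with $M(\rho)=\sup_{Q_\rho(x,t)}|v|$,
\[
M(\rho)\le\theta\,M(\rho')+\frac{\lambda}{(\rho'-\rho)^5}\,\|v\|_{L^1(Q_r)}+\frac{\lambda}{(\rho'-\rho)^3}\,\sup_s\|v(\cdot,s)\|_{L^1(B_r)}.
\]
Because $\theta<1$ and $M$ is finite, the standard iteration lemma (choosing the intermediate radii as a geometric sequence so that $\theta$ times the scaling factor stays below $1$) removes the term $\theta M(\rho')$ and yields $M(\tfrac r2)\le C\big[r^{-5}\|v\|_{L^1(Q_r)}+r^{-3}\sup_s\|v\|_{L^1(B_r)}\big]$. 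Folding the space-time norm into the spatial one via $\int_{Q_r}|v|\le r^2\sup_s\int_{B_r}|v|$ produces both the asserted bound $|v(x,t)|\le C r^{-3}\sup_s\int_{B_r(x)}|v(\cdot,s)|dy$ and, by applying the same argument at every point while using the extra room up to $Q_{4r}$, the local boundedness of $v$ on $Q_{2r}$.

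The gradient bound follows in the same fashion once $v$ is known to be bounded. In (b) the nonlinear term is $\lambda\int_{Q_\rho}K_1\,|v|\,|\nabla v|$, which I would estimate by $\lambda\big(\sup_{Q_\rho}|\nabla v|\big)\int_{Q_\rho}K_1|v|\le\theta\sup_{Q_\rho}|\nabla v|$; that is, the small Kato norm of $v$ now absorbs the supremum of $|\nabla v|$. Running the identical iteration with $M(\rho)$ replaced by $\sup_{Q_\rho(x,t)}|\nabla v|$ and carrying the three $v$-dependent terms of (b) as data yields the stated inequality for $|\nabla v|$, after once more collapsing $\int_{Q_r}|v|\le r^2\sup_s\int_{B_r}|v|$ to merge the $r^{-6}$ space-time term into the $r^{-4}$ spatial term.

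The step I expect to be the genuine obstacle is the first absorption: making rigorous that the nonlinear term is dominated by (small Kato norm)$\times$(sup of $|v|$ or $|\nabla v|$) while \emph{simultaneously} guaranteeing that this supremum is a priori finite, so that the iteration lemma may legitimately be invoked. For smooth $v$ the finiteness is automatic and the argument is clean; the real subtlety is in relaxing smoothness, where one must approximate $v$ by smooth solutions and verify that the representation-formula-based mean value inequalities, the localized energy inequality \eqref{enineq}, and the smallness of the Kato norm all survive the limit. A secondary bookkeeping point is tracking the powers of the scale so that the final estimates emerge in the scale-invariant form with the spatial $L^1$ norm over $B_r$ rather than a space-time norm.
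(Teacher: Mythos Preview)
Your proposal is correct and follows precisely the approach the paper indicates: the paper gives no detailed proof here but says only ``Using an iteration, one can also remove the kernel function $K_1$ from the above mean value inequality, assuming $v$ is in a parabolic type Kato class,'' and what you have written is exactly a careful implementation of that iteration---pull one factor of $|v|$ (respectively $|\nabla v|$) out of the nonlinear term as a supremum, use the Kato smallness to make the remaining kernel integral a factor $\theta<1$, and run the standard Giaquinta hole-filling lemma. The bookkeeping with scales and the collapse of the space-time $L^1$ term into the spatial one are handled correctly.
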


\section{Stationary Solutions}

\subsection {D solutions}

In this section we discuss the stationary Navier-Stokes equations
\begin{equation}
\label{SNS}
\left\{
\begin{aligned}
&{\blue (v \cdot \nabla)} v+\nabla p-\Delta v=f, \qquad \text{in} \quad \O \subset \bR^3, \\
&\nabla\cdot v=0.
\end{aligned}
\right.
\end{equation} Here $\O$ is an open domain and $f$ is a forcing term. Boundary conditions will vary case by case.

  We will focus on the decay and vanishing properties of the so-called homogeneous D solutions to  \eqref{SNS} in certain unbounded domain $\O \subset \bR^3$ with various boundary conditions and requirements of the behavior of $v$  at infinity. Here the name ``D solutions'' arises from the condition that solutions $v$ have  finite Dirichlet integrals (energy)
\be\label{edcondition}
\int_{\O}|\nabla v(x)|^2 dx<+\infty.
\ee

\begin{definition}
\lab{deDS}
A smooth solution to \eqref{SNS} is called a D solution on $\O$ if \eqref{edcondition} holds. It is referred to as DS.

A homogenous D solution on $\O$ is a D solution $v$ on $\O$ such that $v=0$ on $\partial \O$, $f=0$ and
$|v(x)| \to 0$ as $|x| \to \infty$. It is referred to as HDS.

An axially symmetric homogenous D solution on $\O$ is referred to as ASHDS.
\end{definition}

If no confusion arises, we will drop the reference to the domain $\O$ from D solutions.

 Existence of D solutions with several boundary conditions were studied in the pioneer work of Leray \cite{Le1} (p24) by variational method.
The following uniqueness problem has been open since then:

  \centerline{\it Is a homogeneous D solution equal to  $0$ ?}

 This  is also part of the very difficult uniqueness problem for the steady Navier-Stokes equation, which states that if the right hand of $\eqref{SNS}_1$ is a nontrivial smooth function, decaying sufficiently fast, do we have the uniqueness of D solutions. For axially symmetric compact domains with a hole at the axis, the uniqueness fails, as pointed out by Yudovich \cite{Yud} over 50 years ago. However for general domains, the problem is wide open. Let us explain Yudovich's construction of 2 solutions.
 Let $\O$ be the above compact domain. For a divergence free vector field $A$, find a nontrivial solution
 $\phi$ of the following "eigenvalue problem" for vector fields.
\be
\label{eqYudo}
\left\{
\begin{aligned}
&\Delta \phi - {\blue (A \cdot \nabla)} \phi - {\blue (\phi\cdot \nabla)} A-\nabla \tau= 0, \qquad \text{in} \quad \O, \\
&\nabla\cdot \phi=0 \quad \text{in} \quad \O; \quad \phi =0, \quad \text{on} \quad \p \O.
\end{aligned}
\right.
\end{equation}
Then $A+\phi$ and $A-\phi$ will be two distinct solutions of the stationary NS with the same forcing term, the same boundary value, but different pressure terms. But there is a problem. How to find a nontrivial solution to \eqref{eqYudo}. Yudovich found a special $A$ so that \eqref{eqYudo} has a variational structure and found a nontrivial solution by solving a  constrained maximum problem. His choice of $A$ is
\[
A= - r^{-3} e_\theta
\]in the polar coordinates.

Now let us focus on the decay and vanishing problems for DS. In the 2 dimensions, the corresponding vanishing property in the full space case is solved by Gilbarg and Weinberger \cite{GW:1}, using the line integral method.

 \begin{theorem} (\cite{GW:1} Let velocity $v$ and pressure $p$ be a solution of the Navier-Stokes equation defined over the entire $\bR^2$ and assume $\int_{\bR^2}|\nabla v(x)|^2 dx<+\infty.$
Then $v$ and $p$ are constant.
\end{theorem}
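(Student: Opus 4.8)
The plan is to reduce everything to the scalar vorticity $\omega = \partial_1 v_2 - \partial_2 v_1$ and to exploit that, in two dimensions, it solves an equation admitting a maximum principle. First I would record that in the whole-plane stationary setting the vorticity satisfies the drift--diffusion equation $\Delta \omega = v\cdot\nabla\omega$, which has no zeroth-order term, and that the Dirichlet bound $\int_{\bR^2}|\nabla v|^2<\infty$ gives $\omega\in L^2(\bR^2)$, since $\omega$ is a linear combination of first derivatives of $v$.

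The second and technically central step is an asymptotic growth bound on $v$ extracted from the finite Dirichlet integral by averaging over circles. Writing $g(r)=\int_0^{2\pi}|\partial_\theta v(r,\theta)|^2\,d\theta$, the pointwise inequality $|\nabla v|^2\ge r^{-2}|\partial_\theta v|^2$, integrated in polar coordinates, shows $\int_0^\infty g(r)\,dr/r<\infty$; in particular there is a sequence $r_n\to\infty$ along which the oscillation of $v$ on $\{|x|=r_n\}$, controlled by $g(r_n)^{1/2}$ through the Wirtinger (Poincar\'e) inequality on the circle, tends to $0$. A Cauchy--Schwarz estimate for the radial mean $\bar v(r)=\frac1{2\pi}\int_0^{2\pi}v\,d\theta$ against the radial energy then yields $|\bar v(R)|\le C(\log R)^{1/2}$, so altogether $|v(x)|=O\big((\log|x|)^{1/2}\big)$. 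This slow growth is precisely what is needed so that the drift $v\cdot\nabla\omega$ does not overwhelm the Laplacian at large scales.

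With the growth bound in hand I would show $\omega(x)\to 0$ as $|x|\to\infty$: local De Giorgi--Nash--Moser estimates for $\Delta\omega=v\cdot\nabla\omega$ bound $|\omega(x)|$ by the $L^2$-norm of $\omega$ on a unit ball around $x$, the Moser constant depending only on the local size of the drift, which grows no faster than $(\log|x|)^{1/2}$, and this local mass tends to $0$ because $\omega\in L^2(\bR^2)$. Once $\omega$ vanishes at infinity, the operator $\Delta-v\cdot\nabla$ has no constant term, so the maximum principle forbids an interior positive maximum or negative minimum; hence $\omega\equiv 0$. Equivalently, one may run the energy identity: multiplying by $\omega$ and integrating, the drift contributes nothing because $\nabla\cdot v=0$, so $\int|\nabla\omega|^2$ equals boundary terms at infinity that vanish by the decay just obtained, forcing $\omega$ constant and then zero by $L^2$-integrability.

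Finally, $\omega\equiv 0$ together with $\nabla\cdot v=0$ makes each component of $v$ harmonic, and a harmonic vector field with $\int_{\bR^2}|\nabla v|^2<\infty$ is necessarily constant, since its gradient is harmonic and $L^2$ on the plane, hence $0$. Returning to \eqref{SNS} with $f=0$ then gives $\nabla p=0$, so $p$ is constant as well. I expect the real difficulty to lie in the asymptotic analysis of the second and third steps --- turning the single scalar bound coming from $\int_{\bR^2}|\nabla v|^2<\infty$ into a genuine decay statement for $\omega$ strong enough to justify the maximum-principle (or boundary-term) argument; the algebraic reductions and the concluding harmonic Liouville step are routine by comparison.
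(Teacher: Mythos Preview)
The paper does not give its own proof of this theorem; it simply attributes the result to Gilbarg--Weinberger \cite{GW:1} and describes their argument as ``the line integral method.'' Your proposal is precisely that method: the circle-averaging in your Step~2 (bounding the angular energy $g(r)$ via $\int g(r)\,dr/r<\infty$ and controlling the radial mean by Cauchy--Schwarz to get $|\bar v(R)|\lesssim(\log R)^{1/2}$) is what ``line integral'' refers to, and the subsequent energy identity on $\omega$ with cutoff, using $\nabla\cdot v=0$ to kill the drift, is how Gilbarg--Weinberger close the argument. So your outline matches the cited approach.

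One technical point worth flagging, which you correctly anticipate as the real difficulty: the growth bound $|v(x)|=O((\log|x|)^{1/2})$ does not follow for \emph{all} $x$ from your Step~2 as written, only along circles where $g(r)$ is small together with the radial-mean bound. In the energy estimate $\int\phi^2|\nabla\omega|^2\lesssim R^{-2}\int_{R\le|x|\le 2R}\omega^2+R^{-1}\int_{R\le|x|\le 2R}\omega^2|v|$, the second integral needs $|v|$ controlled on the whole annulus, not just on a sparse set of radii. One handles this by writing $|v(x)|\le|\bar v(|x|)|+Cg(|x|)^{1/2}$ pointwise (via the fundamental theorem on the circle), so the troublesome piece becomes $\int_R^{2R} g(r)^{1/2}\big(\int_0^{2\pi}\omega^2\,d\theta\big)r\,dr$, which one then bounds using $\omega\in L^2$ and $\int g(r)/r<\infty$ after a further Cauchy--Schwarz. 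This is the step Gilbarg--Weinberger actually carry out carefully, and it is where most of the work lies.
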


They also proved a number of asymptotic properties of 2 dimensional D solutions in general. For example, they proved that the pressure $p$ from Leray's D solution in 2 D exterior domains with $f=0$ must converge to a constant at infinity.

However, for the 3 dimensional problem, it is not even known if a general D-solution has any definite decay rate comparing with the distance function near infinity, even when the domain is $\bR^3$. The situation is akin to the regularity problem.  This time the scaling gap happens at infinity. Namely, to prove vanishing, one always needs to impose a condition that the solution decays
in some sense near infinity sufficiently fast. However, no decay rate has been proven a priori for HDS (homogeneous D solutions) in $\bR^3$, making the situation looks bleaker than the regularity problem. Another eerie similarity with the regularity problem
occurs in that some partial a priori decay can be proven quickly outside a small set.
In fact
R. Finn \cite{Finn} (page229), already observed the following partial decay property for any 3 dimensional vector field $v$  having a finite Dirichlet integral and tending to a constant vector $v_\i$ as $|x|\rightarrow \i$. For any $\dl>0$, there exists a measurable set $E_\dl\subset\bS^2$, such that $|E_\dl|\leq \dl$ and
\be\label{rf1}
|v-v_\i|\leq \f{C}{\dl^{1/2}|x|^{1/2}}, \q \ \forall x=|x|\o,\ \o\in {\bS^2} \backslash E_\dl.
\ee So the current situation is, in order to prove vanishing, one needs to establish a priori decay of sufficiently fast order, such as $|v(x)| \le c/|x|^{2/3}$, c.f. Theorem \ref{thKTW}. However the only a priori decay is a slow and partial one, except for solutions which are small in suitable sense.

Perhaps it is also not a surprise that just the decay and convergence property itself for general D solutions is an intricate matter in both 2 and 3 dimensions. In \cite{Finn59, Finn}, Finn proved that any D solution in 3 dimensional exterior domains converges pointwise uniformly  to a constant vector $v_\infty$ at infinity. Furthermore,  in  case $v_\infty \neq 0$, he showed that if $|v(x)-v_\infty| \le C|x|^{-\alpha}$ for some $\alpha >1/2$ as $x \to \infty$, then $\alpha$ can be replaced by $1$. He also introduced a concept called "physically reasonable (PR) solutions" to the stationary NS in 3 dimensional exterior domains, which are those satisfying $v(x)=O(|x|^{-1})$ if $v_\infty =0$ or $|v(x)-v_\infty| =O(|x|^{-\alpha})$ for some $\alpha >1/2$, if $v_\infty \neq 0$. Finn \cite{Finn65} then proved the existence and uniqueness of a PR solution in a 3 dimensional exterior domain when the boundary data are small enough. It is straight forward to show a PR solution is a D solution.  However, in case $v_\infty=0$, whether the converse is true has remained open till now. As mentioned, if $v_\infty=0$, we can not prove any decay rate at infinity for $v$ so far, not to mention the expected decay rate of $|x|^{-1}$. One exception is when the solution is small in a suitable sense. Then
the decay can be proven as a linear Stokes problem in exterior domains. See Galdi \cite{Gal93} where the viscosity constant $\nu$ is large and therefore a fixed Dirichlet integral is relatively small.
Note that one will not run into the Stokes paradox which happens when $v_\infty \neq 0$ and $v=0$ on the boundary of the exterior domain $\O$.
In the case of $v_\infty \neq 0$,  Babenko \cite{Bbk}  showed that every D solution is a PR solution if the forcing term is of bounded support.

One would be tempted to think that the 2 dimensional situation is simpler, which is the case for the regularity problem. But this is not true. Finiteness of the Dirichlet integral \eqref{edcondition} does not entail any decay of $v$ in 2 dimensions.  Only recently, Korobkov, Pileckas, and Russo \cite{KPR19} managed to prove the following convergence result.

\begin{theorem}
Let $v$ be a D-solution to NS in an exterior domain $\O \subset \bR^2$ without forcing term. Then $v$ converges pointwise uniformly at infinity to a constant vector $v_\infty \in \bR^2$.
\end{theorem}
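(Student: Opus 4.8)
The plan is to localise the analysis to the far field: fix $R_0$ large enough that $\{|x|>R_0\}\subset\O$ and work only on this punctured exterior region, since the behaviour on the fixed compact part is irrelevant to the asymptotics. I would exploit the finite Dirichlet integral through the scalar vorticity $\o=\p_1 v_2-\p_2 v_1$. Because $\o^2\le 2|\na v|^2$, we have $\o\in L^2(\O)$ and the tail energy $D(r):=\int_{|x|>r}|\na v|^2\,dx\to 0$ as $r\to\i$. Taking the curl of \eqref{SNS} with $f=0$ gives $\Dl\o=v\cdot\na\o$, a drift-diffusion equation with no zeroth-order term, so $\o$ obeys the maximum principle. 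First I would reproduce the Gilbarg--Weinberger a priori analysis in this setting: combining $\o\in L^2$, the equation for $\o$, and interior/mean-value estimates with $D(r)\to0$, one obtains the growth bound $|v(x)|=o(\s{\log|x|})$, the uniform decay $\o(x)\to 0$ as $|x|\to\i$, and the existence of limits at infinity for the circular mean $\bar v(r)=\f{1}{2\pi}\int_0^{2\pi}v(r,\th)\,d\th\to v_\i$ and for the pressure $p(x)\to p_\i$.

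The crux is that the energy bound alone is too weak to control $v$ pointwise: it yields only $o(\s{\log|x|})$ growth and forces the circular oscillation $\mathrm{osc}_{|x|=r}\,v$ to be small merely along a sequence $r_n\to\i$, not for every $r$. To close this gap I would bring in the Bernoulli total head pressure $\Phi=p+\f12|v|^2$. A direct computation from \eqref{SNS}, using $\na\Phi=\na^\perp\o-\o\,v^\perp$, shows that $\Phi$ satisfies the \emph{same} operator as the vorticity,
\be
\Dl\Phi=v\cdot\na\Phi=\Div(\Phi\, v),
\ee
so $\Phi$ too obeys the maximum principle on every annulus $\{r_1<|x|<r_2\}$. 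The aim is to upgrade the convergence of means into genuine uniform convergence of $\Phi$ at infinity: since $p\to p_\i$ uniformly, controlling the oscillation of $\Phi$ on large circles is equivalent to controlling the oscillation of $|v|^2$, which together with $\o\to 0$ and $\bar v(r)\to v_\i$ pins down $v$ itself.

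The main obstacle, and the technical heart of the argument, is excluding a residual logarithmic oscillation of $\Phi$ that is compatible with finite energy. Here I would follow a level-set strategy: apply the Morse--Sard theorem for Sobolev functions to $\Phi$ so that for almost every value $t$ the level set $\{\Phi=t\}$ is a finite union of $C^1$ curves, use the coarea formula to relate the distribution of these values to $\int|\na\Phi|$ on annuli, and analyse the flux and circulation of $v$ across such level curves. The maximum principle for $\Phi$ forbids interior extrema inside an annulus, so any failure of $\Phi$ to converge would propagate level curves out to infinity; estimating the flux integrals along these curves against the vanishing tail energy $D(r)\to0$ should yield a contradiction. This forces $\Phi\to\Phi_\i$ uniformly, whence $|v|^2=2(\Phi-p)\to 2(\Phi_\i-p_\i)$ uniformly; combined with $\o(x)\to 0$ and the convergence of the circular mean to $v_\i$, a final oscillation estimate upgrades these to $v(x)\to v_\i$ uniformly as $|x|\to\i$. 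I expect the level-set and coarea bookkeeping, making rigorous the claim that $\Phi$ cannot oscillate without violating the energy bound, to be by far the most delicate step, precisely where the finite-Dirichlet-energy geometry is genuinely two-dimensional and the usual linear-theory pointwise estimates are unavailable.
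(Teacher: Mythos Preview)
The paper does not give its own proof of this theorem; it merely cites \cite{KPR19} and remarks that ``the proof builds on earlier work in Gilbarg and Weinberger \cite{GW:1}, Amick \cite{Ami88, Ami91}. Fine analysis of the level sets of the vorticity plays an important role.'' Your outline is consistent with that description and with the actual Korobkov--Pileckas--Russo strategy: the Gilbarg--Weinberger preliminaries, the Amick-type use of the total head pressure $\Phi$, and then a Morse--Sard/coarea level-set argument to rule out residual oscillation.

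Two corrections. First, the identity $\Delta\Phi=v\cdot\nabla\Phi$ is not quite right: in 2D one has
\[
\Delta\Phi - v\cdot\nabla\Phi = \omega^2 \ge 0,
\]
so $\Phi$ is only a \emph{subsolution} of the drift--Laplacian. This still gives the maximum principle you invoke (no interior maxima), but not a minimum principle, and the extra $\o^2$ term matters in the energy/flux estimates along level curves. Second, your sketch places the level-set analysis on $\Phi$; the paper's summary (and the actual \cite{KPR19} argument) leans more on the interplay between level sets of the \emph{stream function} $\psi$ and of $\Phi$ via the Bernoulli-type relation $\nabla\Phi=-\o\,\nabla^{\perp}\psi+\nabla^{\perp}\o$, together with the topological structure of the level sets of $\o$. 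The distinction is not cosmetic: it is the coupling of $\psi$-level curves (along which $v$ is tangent) with the decay of $\o$ that drives the contradiction, rather than $\Phi$-level curves alone. Your plan is on the right track, but the bookkeeping you flag as ``most delicate'' will require bringing $\psi$ explicitly into the picture.
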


The proof builds on earlier work in Gilbarg and Weinberger \cite{GW:1}, Amick \cite{Ami88, Ami91}. Fine analysis of the level sets of the vorticity plays an important role. These are curves in $\bR^2$.
There is still a problem on whether $v_\infty$ agrees with the one from Leray's original construction
\cite{Le1}. The interested reader may consult the introduction of \cite{KPR19} for a detailed account
of related and expanded results.

Let's  recall some of vanishing results with extra integral or decay assumptions for the solution $v$ in 3 dimensions.  If the domain $\O=\bR^3$, Galdi \cite{Galdi2011} Theorem X.9.5 proved that if $v \in L^{9/2}(\bR^3)$ is a homogeneous D-solution, then $v=0$. A log factor improvement was shown in Chae and Wolf \cite{Chae2016}. In \cite{Chae2014}, Chae proved that homogeneous D solutions are $0$ by assuming $\Delta v \in L^{6/5}(\bR^3)$, which scales the same as $\Vert \nabla v \Vert_2$.

\begin{theorem}(\cite{Chae2014})
Let $v$ be a homogeneous D solution in $\bR^3$. Suppose $\Vert \Delta v \Vert_{L^{6/5}(\bR^3)}<\infty$,  then $v=0$.
\end{theorem}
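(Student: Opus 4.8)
The plan is to run a cut-off energy (Caccioppoli) argument, using the hypothesis $\|\Delta v\|_{L^{6/5}}<\infty$ to pin down exactly the integrability of $v$ and $p$ and then to show that all far-field remainder terms vanish. First I would collect the integrability consequences of the two standing assumptions. Since $\nabla\cdot v=0$ one has $-\Delta v=\nabla\times\omega$, and writing $v$ as the Newtonian potential of $-\Delta v\in L^{6/5}(\bR^3)$ (the harmonic part is killed by the decay $v\to 0$), the Hardy--Littlewood--Sobolev inequality gives $v\in L^6(\bR^3)$, while Calder\'on--Zygmund gives $\nabla^2 v\in L^{6/5}(\bR^3)$. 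This is consistent with the given $\nabla v\in L^2(\bR^3)$; indeed $\|\Delta v\|_{6/5}$ and $\|\nabla v\|_2$ share the same scaling, which is the structural reason the argument can close. For the pressure, taking the divergence of the momentum equation and using $\nabla\cdot v=0$ yields $\Delta p=-\partial_i\partial_j(v_iv_j)$, so $p=\mathcal R_i\mathcal R_j(v_iv_j)$ up to a harmonic function, where $\mathcal R_i$ are Riesz transforms; since $v\in L^6$ we get $v_iv_j\in L^3$ and hence $p\in L^3(\bR^3)$. Consequently the Bernoulli head $Q=\frac{1}{2}|v|^2+p$ lies in $L^3(\bR^3)$.

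Next I would fix a cut-off $\phi_R$ equal to $1$ on $B_R$, vanishing outside $B_{2R}$, with $|\nabla\phi_R|\le C/R$, and test the momentum equation against $v\phi_R$. Integrating by parts and repeatedly using $\nabla\cdot v=0$ collapses the nonlinear and pressure contributions into a single divergence-form remainder, giving an identity of the shape
\be
\int|\nabla v|^2\phi_R\,dx = -\int\nabla\phi_R\cdot\nabla\frac{|v|^2}{2}\,dx+\int Q\,v\cdot\nabla\phi_R\,dx.
\ee
The left side tends to $\int_{\bR^3}|\nabla v|^2$ as $R\to\infty$. The first term on the right is supported in the annulus $A_R=B_{2R}\setminus B_R$, and since $|\nabla\frac{|v|^2}{2}|\le |v||\nabla v|$, H\"older with the volume factor $|A_R|^{1/3}\sim R$ bounds it by $C\|v\|_{L^6(A_R)}\|\nabla v\|_{L^2(A_R)}$; because $v\in L^6$ and $\nabla v\in L^2$ the tails vanish, so this term goes to $0$.

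The hard part will be the far-field term $\int_{A_R}Q\,v\cdot\nabla\phi_R\,dx$. Plain H\"older only gives a bound $CR^{1/2}\|Qv\|_{L^2(A_R)}$, and since $Q\in L^3$, $v\in L^6$ yield $Qv\in L^2$ but \emph{not} $L^1$ globally, the growing factor $R^{1/2}$ cannot be absorbed by $L^p$-tail smallness alone. This is precisely the pressure/flux obstruction familiar from D-solution Liouville theorems, and it is where the uniform decay $|v(x)|\to0$ must enter: on $A_R$ one has $|v|\le\varepsilon_R\to0$, which I would use to trade one power of $v$ for the smallness $\varepsilon_R$ while preserving the critical $L^6$/$L^3$ balance, and then select a good sequence of radii (a pigeonhole over dyadic shells for the borderline-integrable quantity) so that $R_k^{1/2}\|Qv\|_{L^2(A_{R_k})}\to0$. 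Making this quantitative is the crux, and the scaling-criticality forced by $\Delta v\in L^{6/5}$ is exactly what prevents a net positive power of $R$ from surviving once the uniform decay is inserted.

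Finally, letting $R_k\to\infty$ along such a sequence in the identity forces $\int_{\bR^3}|\nabla v|^2\,dx=0$, so $v$ is constant; the hypothesis $v(x)\to0$ as $|x|\to\infty$ then gives $v\equiv0$, completing the argument.
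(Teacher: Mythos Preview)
Your setup and the integrability conclusions ($v\in L^6$, $p\in L^3$, $Q\in L^3$) are correct, and so is the energy identity with the cut-off. The gap is exactly where you flag it: the far-field term $\int_{A_R} Q\,v\cdot\nabla\phi_R$ is bounded only by $C R^{1/2}\|Qv\|_{L^2(A_R)}$, and neither device you propose closes this. Pigeonhole fails because $Qv\in L^2(\bR^3)$ gives $\sum_k \|Qv\|_{L^2(A_{2^k})}^2<\infty$, which forces $\|Qv\|_{L^2(A_{2^k})}\to 0$ but says \emph{nothing} about $2^{k/2}\|Qv\|_{L^2(A_{2^k})}$; take $a_k=1/k^2$ to see that $\sum a_k<\infty$ is compatible with $2^k a_k\to\infty$. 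Inserting the uniform decay $|v|\le\varepsilon_R$ trades integrability the wrong way: pulling one $|v|$ out in sup costs you a full volume factor, e.g.\ $\frac{1}{R}\int_{A_R}|v|^3\le \frac{\varepsilon_R}{R}\|v\|_{L^6(A_R)}^2 |A_R|^{2/3}\sim \varepsilon_R R\,\|v\|_{L^6(A_R)}^2$, which is worse. This is not an accident: your argument is precisely Galdi's cut-off scheme, which closes at $v\in L^{9/2}$ (then $|v|^3,|p||v|\in L^{3/2}$ and $R^{-1}|A_R|^{1/3}\sim 1$) but is genuinely stuck at $v\in L^6$; the hypothesis $\Delta v\in L^{6/5}$ yields exactly $v\in L^6$ and no more, so there is no slack to exploit.

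Chae's proof bypasses this obstruction by a different mechanism: the head pressure satisfies $-\Delta Q + v\cdot\nabla Q = -|\omega|^2\le 0$, and since $Q\to 0$ at infinity (here $p\in L^3$ plus local regularity is used to get $p\to 0$), the maximum principle gives $Q\le 0$ on all of $\bR^3$. He then tests the momentum equation with $v\,(Q-\epsilon)_-^\delta$ rather than $v\phi_R$; the sign information $Q\le 0$ turns the dangerous flux term into one with a favorable sign (or makes it drop out), and sending $\epsilon,\delta\to 0$ yields $\int_{\bR^3}|\nabla v|^2=0$. The point is that the extra input is not better integrability of $v$ or $p$, but the qualitative inequality $Q\le 0$, which your cut-off argument never uses.
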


Chae's proof uses the property that the head pressure $Q=\frac{1}{2} |v|^2 + p$ is non-positive if $p(x) \to 0$ at infinity. This fact follows from the maximum principle. Then he uses $v (Q-\epsilon)^\delta_{-}$ as a test function in the stationary NS. Letting $\epsilon, \delta \to 0$, one concludes that $\int_{\bR^3} |\nabla v|^2 dx =0$, and hence $v=0$.

 Seregin \cite{Seregin2016} proved vanishing of homogeneous D solutions under the condition that $v \in L^6(\bR^3) \cap BMO^{-1}$. In a recent paper \cite{Kozono2017}, Kozono-Terasawa-Wakasugi showed vanishing of homogeneous D solutions if either the vorticity $\o=\o(x)$ decays faster than $c/|x|^{5/3}$ at infinity, or the velocity $v$ decays like $c/|x|^{2/3}$ with $c$ being a small number. i.e. They proved

\begin{theorem}(\cite{Kozono2017})
\lab{thKTW}
Let $v$ be a homogeneous D solution in $\bR^3$ and $\o =\nabla \times v$ be the vorticity.
If either $|\o(x)| = o( |x|^{-5/3})$ or $|v(x)| \le c |x|^{-2/3}$ for a small positive constant and all large $|x|$, then $v=0$.
\end{theorem}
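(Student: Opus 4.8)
The plan is to reduce both hypotheses to the single pointwise condition $|v(x)|\le c|x|^{-2/3}$ with $c$ small, and then to run a localized energy identity in which every term decays except one scale‑invariant term, which is controlled by the smallness of $c$ together with the sign of the head pressure. The first and essential ingredient is the Bernoulli/head‑pressure structure. Writing $Q=\f12|v|^2+p$ and using $v\na v=\na(\f12|v|^2)-v\times\o$, the stationary system \eqref{SNS} with $f=0$ gives $\na Q=v\times\o-\na\times\o$, from which a direct computation yields
\be
\lab{prop:headpr}
\Delta Q-v\na Q=|\o|^2\ge 0.
\ee
Since $v\to 0$ and $p$ is normalized to vanish at infinity, $Q\to 0$, so the maximum principle for the drift–diffusion operator $\Delta-v\na$ forces $Q\le 0$ on $\bR^3$, i.e. $p\le-\f12|v|^2\le 0$. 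This is exactly Chae's device, available here with no extra hypothesis.

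I would next dispose of the vorticity condition by reducing it to the velocity one. Since the only harmonic field vanishing at infinity is $0$, the field $v$ is recovered from $\o$ by Biot–Savart, $v(x)=\f{1}{4\pi}\int\f{(x-y)\times\o(y)}{|x-y|^3}\,dy$. Splitting this integral into the regions $|y|<|x|/2$, $|y|\sim|x|$ and $|y|>2|x|$ and inserting $|\o(y)|=o(|y|^{-5/3})$ shows that each region contributes $\les|x|^{-2/3}$, with a constant that tends to $0$ as $|x|\to\i$ because of the $o(\cdot)$. Hence the vorticity hypothesis implies the velocity hypothesis with an \emph{arbitrarily small} constant for large $|x|$; this is precisely why no separate smallness needs to be assumed in that case, and it reduces everything to the velocity case.

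For the velocity case I would test the momentum equation against $v\phi_R^2$, where $\phi_R$ is a radial cutoff equal to $1$ on $B_R$ and supported in $B_{2R}$. Using $\na\cdot v=0$ and the definition of $Q$, the convection and pressure terms collapse and one obtains
\be
\lab{prop:enid}
\int|\na v|^2\phi_R^2\,dx=\int Q\,v\cdot\na\phi_R^2\,dx+\f12\int|v|^2\Delta\phi_R^2\,dx.
\ee
The last term is $O(c^2R^{-1/3})$ by the bound on $v$, hence negligible. The term $\int Q\,v\cdot\na\phi_R^2$ lives in the shell $B_{2R}\setminus B_R$; since the pressure inherits the decay $|p(x)|\les c^2|x|^{-4/3}$ from its singular‑integral representation $p=R_iR_j(v_iv_j)$ (and $|Q|\le|p|$ by \eqref{prop:headpr}), this term is scale invariant at the rate $|x|^{-2/3}$ and is only bounded by $Cc^3$. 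This borderline invariance is the crux: the quantity $\int\f{|p|\,|v|}{|x|}\,dx$ diverges only logarithmically, so no choice of cutoff extracts smallness from a good annulus, and one must instead use the smallness of $c$. Concretely I would combine the Calderón–Zygmund bound $\|p\|_{L^3}\les\|v\|_{L^6}^2\les\|\na v\|_{L^2}^2$ with the pointwise decay so that the dangerous term is absorbed as a $Cc$‑multiple of the Dirichlet energy on the left of \eqref{prop:enid}; equivalently, one may run Chae's head‑pressure test function $v\,\Psi(Q)$ with $\Psi\ge 0$, whose bulk part is $\int|\na v|^2\Psi$ and whose boundary contributions are again of the borderline type just estimated. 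Either route gives $\int_{\bR^3}|\na v|^2=0$ once $c$ is small.

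The main obstacle, as indicated above, is exactly the scale‑invariant head‑pressure term in \eqref{prop:enid}: at the threshold decay $|x|^{-2/3}$ it neither decays under localization nor is summable over dyadic shells, so the sign $Q\le 0$ alone is not enough and the smallness of $c$ must enter through an absorption argument. Once $\na v\equiv 0$ is established, $v$ is constant, and $|v(x)|\to 0$ as $|x|\to\i$ forces $v\equiv 0$; by the reduction of the second paragraph this also settles the vorticity hypothesis.
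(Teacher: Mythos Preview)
The paper does not prove this theorem; it merely quotes the result from \cite{Kozono2017} and describes in the surrounding text only Chae's head–pressure device (for the neighboring Theorem on $\Delta v\in L^{6/5}$), so there is no in–paper proof to compare against.  Evaluated on its own, your architecture is the right one: the reduction of the vorticity hypothesis to the velocity one via Biot--Savart is correct, the head–pressure inequality $\Delta Q - v\cdot\nabla Q = |\o|^2\ge 0$ and $Q\le 0$ are exactly the right tools, and the localized identity \eqref{prop:enid} is correct.

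The gap is in your ``absorption'' step.  The route you actually write out --- pairing the pointwise bound $|v|\le cR^{-2/3}$ on the shell with $\|p\|_{L^3}\les\|\nabla v\|_{L^2}^2$ --- does \emph{not} close: a direct H\"older gives
\[
\Big|\int Q\,v\cdot\nabla\phi_R^2\Big|\ \les\ cR^{-5/3}\int_{R\le|x|\le 2R}|p|\ \les\ cR^{-5/3}\,\|p\|_{L^3}\,R^{2}\ =\ c\,R^{1/3}\,\|p\|_{L^3},
\]
which diverges as $R\to\infty$, so it is not a $Cc$–multiple of the Dirichlet energy.  Likewise the alternative you suggest, Chae's test function $v\Psi(Q)$, requires $\Delta v\in L^{6/5}$ (equivalently $v\in L^3$), which fails at the borderline decay $|x|^{-2/3}$.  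The argument in \cite{Kozono2017} avoids both pitfalls by working in Lorentz spaces: one uses that $\|\nabla\phi_R\|_{L^{3,1}}$ is bounded uniformly in $R$, that the Riesz transforms are bounded on $L^{9/4,\infty}$ so $\|p\|_{L^{9/4,\infty}}\les\|v\|_{L^{9/2,\infty}}^2$, and that the Sobolev embedding improves to $\|v\|_{L^{6,2}}\les\|\nabla v\|_{L^2}$; combining these via the generalized H\"older inequality for Lorentz spaces yields the genuine absorption $\|\nabla v\|_{L^2}^2\le C\|v\|_{L^{9/2,\infty}}\|\nabla v\|_{L^2}^2$, from which $\nabla v\equiv 0$ when $c$ is small.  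Your sketch is therefore correct in spirit but missing precisely this Lorentz–space refinement, without which the scale–invariant term cannot be absorbed.
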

Afterwards, W. D. Wang \cite{Wwdn} and N. Zhao \cite{Zhaona} proved a similar result for the axially symmetric D solutions independently. Note that no decay condition is imposed in the $e_3$ direction on one hand, but no improvement on the decay exponent is made on the other. However the result is still encouraging since we now have a priori estimates on $v$ and $\o$, c.f. Theorems \ref{thCJWe} and \ref{thfullwdecay}.

\begin{theorem}(\cite{Wwdn} and  \cite{Zhaona})
Let $v$ be an axially symmetric homogeneous D solution in $\bR^3$ and $\o =\nabla \times v$ be the vorticity.
If either $|\o(x)| = o( r^{-5/3})$ or $|v(x)| = o(r^{-2/3})$ for  all large $|x|$, then $v=0$.
\end{theorem}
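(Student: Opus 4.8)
The plan is to reduce the whole statement to the single assertion that the Dirichlet energy vanishes, $\int_{\bR^3}|\nabla v|^2\,dx=0$; since $v\to 0$ as $|x|\to\infty$ for a homogeneous D solution, this forces $v\equiv 0$. Note also that for divergence free, decaying fields $\int|\nabla v|^2=\int|\nabla\times v|^2$, so the two hypotheses are matched under the scaling $\omega\sim\nabla v\sim v/r$, and it is enough to carry out the argument for the velocity hypothesis and then recover the vorticity hypothesis (either by Biot--Savart or intrinsically, see below). I would obtain the working identity by testing the stationary equation $-\Delta v+v\cdot\nabla v+\nabla p=0$ against $v\,\phi_R^2$, with $\phi_R=\phi_R(|x|)$ the usual radial cut-off equal to $1$ on $B_R$, supported in $B_{2R}$, and $|\nabla\phi_R|\lesssim R^{-1}$. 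Integrating by parts and using $\mathrm{div}\,v=0$ gives
\[
\int|\nabla v|^2\phi_R^2\,dx=\frac12\int|v|^2\,\Delta(\phi_R^2)\,dx+\int\Big(\tfrac12|v|^2+p\Big)\,v\cdot\nabla(\phi_R^2)\,dx,
\]
so everything reduces to showing that the right-hand side, supported on the annulus $A_R=\{R\le|x|\le 2R\}$, tends to $0$ along a sequence $R\to\infty$.

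The decisive point, and the reason decay in the cylindrical radius $r$ rather than in $|x|$ already suffices, is the following annular estimate. Under $|v|=o(r^{-2/3})$ one has $|v|^2=o(r^{-4/3})$, and the weight $r^{-4/3}$ is integrable near the axis in three dimensions since $\int_0 r^{-4/3}\,r\,dr<\infty$. A slab computation in cylindrical coordinates then yields $\int_{A_R}|v|^2\,dx=o(R^{5/3})$: for each fixed $x_3$ the radial integral of $r^{-1/3}$ is $O(R^{2/3})$, and the vertical range contributing to $A_R$ has length $O(R)$. Multiplying by $|\Delta(\phi_R^2)|\lesssim R^{-2}$ produces $o(R^{-1/3})\to0$, so the first term is harmless. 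Crucially no decay in $x_3$ is used: although $v$ need not decay along the axis, $A_R$ meets the axis only in two bounded segments, and the near-axis contribution is absorbed by the integrability of $r^{-4/3}$. This is exactly where the axial symmetry does the work, and it is why the full-space theorem of Kozono--Terasawa--Wakasugi (Theorem \ref{thKTW}) can be weakened from $|x|$-decay to $r$-decay.

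The main obstacle is the head-pressure term $\int Q\,v\cdot\nabla(\phi_R^2)$ with $Q=\tfrac12|v|^2+p$, because the hypotheses give no direct decay of $p$, and the non-decay of $v$ in $x_3$ rules out the global Calder\'on--Zygmund bound $\|p\|_{L^q}\lesssim\||v|^2\|_{L^q}$ (the relevant integrals diverge in the vertical variable). The cubic piece $\int_{A_R}|v|^3\,R^{-1}$ is itself only logarithmically subcritical, so a crude termwise estimate fails. Here I would exploit the sign of the head pressure: taking the divergence of the momentum equation gives $\Delta Q-v\cdot\nabla Q=|\omega|^2\ge 0$, so $Q$ is a subsolution of a drift Laplacian, and after normalizing $p\to 0$ at infinity the maximum principle forces $Q\le 0$ on all of $\bR^3$. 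In the spirit of Chae's argument one then absorbs the pressure contribution rather than estimating it termwise; the technical heart is to upgrade $Q\le 0$ into an annular smallness statement, e.g.\ by a localized elliptic estimate for $-\Delta p=\partial_i\partial_j(v_iv_j)$ on $B_{3R}$ with the source split into near and far parts, using $|v|^2=o(r^{-4/3})$ and integrability near the axis. This is the step I expect to be delicate.

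Finally, for the vorticity formulation I would first dispose of the swirl: $\Gamma=rv^\theta$ satisfies the maximum-principle equation $\Delta\Gamma-b\cdot\nabla\Gamma-\tfrac2r\partial_r\Gamma=0$, whose cut-off energy estimate (using $\mathrm{div}\,b=0$) forces $\Gamma\equiv 0$ under the decay, leaving the swirl-free case $\omega=\omega^\theta e_\theta$. There $\Omega=\omega^\theta/r$ solves $\Delta\Omega+\tfrac2r\partial_r\Omega-b\cdot\nabla\Omega=0$, and the operator $\Delta+\tfrac2r\partial_r$ is, on axisymmetric functions, the five-dimensional Laplacian $\partial_{rr}+\tfrac3r\partial_r+\partial_{33}$; since $\int|\omega|^2\,dx=\int\Omega^2\,r^3\,dr\,dx_3$, finite Dirichlet energy means precisely $\Omega\in L^2(\bR^5)$ and the problem becomes a cut-off energy estimate for a drift-diffusion equation for an $L^2$ function, to which the annular argument above applies after converting $|\omega|=o(r^{-5/3})$ into decay of $\Omega$. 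The remaining book-keeping is to check that the exponent $2/3$ (equivalently $5/3$ for $\omega$) is exactly the threshold keeping the leading annular term subcritical, which is why the little-$o$ in the hypothesis cannot be replaced by $O$ and the result is sharp in the exponent.
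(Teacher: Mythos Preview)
The paper does not itself prove this theorem; it is stated as a citation from \cite{Wwdn} and \cite{Zhaona} without reproducing their arguments, so there is no in-paper proof to compare against line by line. Your overall scheme---test with $v\phi_R^2$, reduce to annular flux terms, exploit the integrability of $r^{-4/3}$ near the axis---is the correct framework and matches the spirit of those papers and of Kozono--Terasawa--Wakasugi. Your treatment of the quadratic term $\int_{A_R}|v|^2\Delta(\phi_R^2)$ is essentially right (though the phrase ``two bounded segments'' is inaccurate: the axis segments in $A_R$ have length $\sim R$, not $O(1)$; what saves you, as you then correctly note, is the integrability of $r^{-1/3}$ near the axis).

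The genuine gap is the head-pressure term, which you rightly flag as the crux but do not close. Two concrete problems. First, the sign $Q\le 0$ does not by itself control $\int_{A_R}Q\,v\cdot\nabla(\phi_R^2)\,dx$, because $v\cdot\nabla(\phi_R^2)$ has no sign; Chae's argument in \cite{Chae2014} exploits $Q\le 0$ through the test function $v(Q-\epsilon)_-^\delta$, not through a spatial cutoff, and that mechanism is not the one you wrote down. Second, your fallback---a localized elliptic estimate for $p$ on $B_{3R}$ with near/far splitting of the source $\partial_i\partial_j(v_iv_j)$---runs into the fact that $|v|^2$ has \emph{no decay in $x_3$} and hence lies in no global $L^q$; the ``far'' part is uncontrolled. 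Even the purely cubic piece is worse than ``logarithmically subcritical'': in the intermediate zone $1\lesssim r\lesssim R$ the bound $|v|^3\le\epsilon^3 r^{-2}$ gives
\[
R^{-1}\int_{A_R\cap\{1<r<R\}}|v|^3\,dx \;\lesssim\; R^{-1}\cdot \epsilon^3\cdot R\int_1^{R} r^{-1}\,dr \;\sim\; \epsilon^3\ln R,
\]
and neither the $o(r^{-2/3})$ hypothesis nor the $L^6$ bound coming from finite Dirichlet energy beats this logarithm by naive H\"older interpolation. The proofs in \cite{Wwdn} and \cite{Zhaona} handle the pressure differently: the typical device is a \emph{local} bound on the pressure oscillation---via Bogovski\u{\i}'s map or a localized Calder\'on--Zygmund estimate on the annulus, exactly as in Lemma~\ref{L4.1L} and Proposition~\ref{P4.1} of the addendum here---paired with the $L^6$ control on $v$ from $\nabla v\in L^2$, thereby avoiding any global representation of $p$. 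Your vorticity sketch (the $\Gamma$ maximum principle and the five-dimensional reformulation of the $\Omega$ equation) is a sound outline and has the advantage of being pressure-free, but you do not actually carry out the drift-term estimate there either, and the velocity case to which you defer remains open in your write-up.
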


The results in \cite{Kozono2017} has also been extended to D solutions in some Lebesgue and Morrey spaces in \cite{CJLR}.

 Under certain smallness assumption, vanishing result for homogeneous 3 dimensional solutions in a slab $\bR^2\times[0,1]$ was also obtained in the book \cite{Galdi2011}, Chapter XII.

As mentioned, D solutions in general noncompact domains have been studied in \cite{Le1}. Besides the whole space, the next simplest noncompact domains are the half space and slabs.
 Let us present a vanishing result for D solutions in a slab in $\bR^3$.

\begin{theorem}[\cite{CPZ2018-1} vanishing of HDS in a slab]
 \lab{ths2}
Let $v$ be a smooth solution to the problem
\be
\lab{3deqdiri}
\left\{
\begin{aligned}
&(v\cdot\nabla)v+\nabla p-\Delta v=0,  \quad \nabla\cdot v=0 \quad \text{in} \quad  \bR^2\times[0,1],\\
& v(x)|_{x_3=0}=v(x)|_{x_3=1}=0,
\end{aligned}
\rt.
\ee
such that the Dirichlet integral satisfies the condition:
\be\lab{d3d}
\int^1_{0}\int_{\bR^2}|\nabla v(x)|^2 dx<\infty.
\ee
Then, $v \equiv 0$.
\end{theorem}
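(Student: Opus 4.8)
The plan is to close an $L^2$ energy estimate with a horizontal cut-off and to show that every error term produced at spatial infinity tends to zero, so that $\int|\na v|^2\,dx=0$; the no-slip condition in \eqref{3deqdiri} then forces the constant vector to be zero. The structural feature that makes the slab tractable, in contrast to the whole space $\bR^3$, is that the domain is bounded in $x_3$ with $v$ vanishing on both faces. Hence the one-dimensional Poincaré inequality $\int_0^1|v|^2\,dx_3\le C\int_0^1|\p_3 v|^2\,dx_3$ holds on each vertical fibre, and integrating in $x_h=(x_1,x_2)$ against \eqref{d3d} yields $v\in L^2(\bR^2\times[0,1])$ \emph{with no hypothesis on the behaviour of $v$ at infinity}. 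This automatic $L^2$ bound is what replaces the missing a priori decay that obstructs the whole-space problem.

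The second step is to upgrade $v$ to $L^3$, which is what the convective term will require. Set $G(x_h)=\big(\int_0^1|v(x_h,x_3)|^2\,dx_3\big)^{1/2}$; differentiating under the integral gives $|\na_{x_h}G|\le \|\na_{x_h}v(x_h,\cdot)\|_{L^2_{x_3}}$, so $G$ and $\na_{x_h}G$ lie in $L^2(\bR^2)$ by \eqref{d3d}, whence $G\in H^1(\bR^2)\hookrightarrow L^q(\bR^2)$ for every $q<\infty$. Since $v(x_h,\cdot)$ vanishes at an endpoint, $\sup_{x_3}|v|^2\le 2\big(\int_0^1|v|^2\big)^{1/2}\big(\int_0^1|\p_3 v|^2\big)^{1/2}$, so that $\int_0^1|v|^3\,dx_3\le C\,G^{5/2}\big(\int_0^1|\p_3 v|^2\big)^{1/4}$. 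Integrating in $x_h$ and applying Hölder with the pair $(4/3,4)$ bounds $\int|v|^3$ by $C\|G\|_{L^{10/3}(\bR^2)}^{5/2}\big(\int|\na v|^2\big)^{1/4}$, both factors finite. Thus $v\in L^3(\bR^2\times[0,1])$, and in particular $\int_{A_R}|v|^3\to0$ as $R\to\infty$, where $A_R=(B_{2R}\setminus B_R)\times[0,1]$.

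For the energy identity I would fix a cut-off $\phi=\phi(|x_h|)$ with $\phi\equiv1$ on $B_R$, $\phi\equiv0$ off $B_{2R}$ and $|\na\phi|\le C/R$, test $\eqref{3deqdiri}_1$ against $v\phi^2$, and integrate over the slab. Since $v=0$ on the faces the genuine boundary terms drop, and using $\na\cdot v=0$ one finds
\[
\int|\na v|^2\phi^2\,dx = -\int \na v:(v\otimes\na\phi^2)\,dx + \frac12\int|v|^2\,v\cdot\na\phi^2\,dx + \int p\,v\cdot\na\phi^2\,dx.
\]
The first term is bounded by $\frac{C}{R}\|\na v\|_{L^2(A_R)}\|v\|_{L^2(A_R)}\to0$, using $v,\na v\in L^2$; the second by $\frac{C}{R}\int_{A_R}|v|^3\to0$, by the previous step. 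As $\phi^2\uparrow1$, sending $R\to\infty$ would give $\int|\na v|^2=0$, provided the pressure term also vanishes.

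The pressure term is the crux and the step I expect to dominate the argument. A constant shift $p\mapsto p+c$ is harmless, because $\na\phi^2$ is horizontal and the vertically averaged horizontal flow $\bar v_h=\int_0^1 v_h\,dx_3$ is two-dimensionally divergence free (integrate $\na\cdot v=0$ in $x_3$ and use $v_3|_{x_3=0,1}=0$). Estimating by Cauchy--Schwarz in the scaling-favourable $L^2\times L^2$ way gives $\big|\int p\,v\cdot\na\phi^2\big|\le \frac{C}{R}\|p\|_{L^2(A_R)}\|v\|_{L^2(A_R)}$, and since $\mathrm{vol}(A_R)\sim R^2$ this tends to zero as soon as $p$ (suitably normalized) is bounded, for then $\frac{C}{R}\|p\|_{L^2(A_R)}\le C$ and $\|v\|_{L^2(A_R)}\to0$. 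The genuine difficulty is therefore to prove such a bound on $p$ at infinity. The fluctuation $\tilde p=p-\bar p$, where $\bar p=\int_0^1 p\,dx_3$, is controllable from the pressure Poisson equation $-\Delta p=\p_i\p_j(v_iv_j)$ together with $v\in L^3$ and fibrewise Poincaré decay; but the vertical average $\bar p$ satisfies a two-dimensional problem whose solution behaves like a planar pressure, which by the Gilbarg--Weinberger phenomenon \cite{GW:1} need only converge to a constant at infinity rather than decay or lie in a Lebesgue space. The technical heart is thus to establish boundedness (or at least $o(R)$ growth of $\|p\|_{L^2(A_R)}$) of this non-decaying average by analyzing the averaged momentum equation, whose forcing is quadratic in $v$ and hence does decay; once this is in hand, all three error terms vanish, $\na v\equiv0$, and the boundary condition yields $v\equiv0$.
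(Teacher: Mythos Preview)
Your overall architecture is exactly that of the paper: Poincar\'e on vertical fibres gives $v\in L^2$, one bootstraps to $v\in L^p$ for $p>2$, then one tests the equation with $v\phi_R^2$ and shows that the viscous, convective, and pressure boundary terms on the dyadic annulus all vanish as $R\to\infty$. Your identification of the pressure term as the only genuinely nontrivial step is also correct.

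Where you diverge from the paper is precisely at that crux. You propose to attack $p$ via the pressure Poisson equation, splitting $p=\tilde p+\bar p$ into a vertical fluctuation and a vertical average and then running a Gilbarg--Weinberger-type two-dimensional argument on $\bar p$. This is plausible but, as you admit, incomplete and rather delicate (boundary conditions for $p$ in a slab are Neumann, not Dirichlet, and the 2D averaged equation is a div--grad system rather than a scalar Laplace equation). The paper avoids all of this with a single stroke: on each dyadic shell $\mO_R=\{R\le|x_h|\le 2R\}\times[0,1]$ it invokes the Bogovski\u{\i} operator (Lemma~\ref{L4.1L}/Proposition~\ref{P4.1}) to solve $\nabla\cdot V=p-p_R$ with $V\in W^{1,2}_0(\mO_R)$ and $\|\nabla V\|_{L^2}\lesssim R\,\|p-p_R\|_{L^2}$, then tests the Navier--Stokes equation against $V$ and integrates by parts to obtain
\[
\|p-p_R\|_{L^2(\mO_R)}\ \lesssim\ R\Big(\|\nabla v\|_{L^2(\mO_R)}+\|v\|_{L^4(\mO_R)}^2\Big)\ =\ o(R).
\]
This is exactly the $o(R)$ bound you said would suffice, and together with $\|v\|_{L^2(\mO_R)}\to 0$ it kills the pressure term $\frac{C}{R}\|p-p_R\|_{L^2}\|v\|_{L^2}$ immediately. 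The Bogovski\u{\i} route is local (no global pressure representation, no Green's function for the slab, no analysis of a non-decaying 2D harmonic part) and needs only $v\in L^4$, which you already have by interpolation between $L^2$ and $L^6$; your more elaborate $L^3$ derivation via $G(x_h)$ is correct but unnecessary.

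In short: your plan is right and would ultimately work, but the missing pressure step is not a detail, and the paper's Bogovski\u{\i} trick is both the standard and the shortest way to fill it.
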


 Comparing with the full space case, one can show by Poincar\'e inequality, with the help of Dirichlet boundary and the finite integral condition \eqref{d3d}, that the velocity $v$ belongs to $L^2$, which indicates that the decay rate of $v=v(x)$ is like $1/|x|$  in the integral sense. However one does not have a good knowledge of the pressure $p$. The Dirichlet boundary condition is known to induce complications on the vorticity and pressure. The main difficulty is to deal with the pressure term.
 Earlier, Pileckas and Specovius-Neugebauer \cite{PS:1}  studied the asymptotic decay of solutions of the Navier-Stokes equation in a slab. They prove, under certain weighted integral assumption on the velocity $v=v(x)$ and 3rd order derivatives, $v=v(x)$ decays like $1/|x|$ or $1/|x|^3$. See Theorem 3.1 in \cite{PS:1}. Then the vanishing of $v$ in the homogeneous case follows easily. However, these authors required that $(1+|x|)^{2+\beta} |v^3(x)|+(1+|x|)^{3+\beta} |\partial_{x_1} v^3(x)| $ with $\beta \in (-2, -1)$ is in $L^2$ in addition to further integral decay conditions of the first, second and third order derivatives of $v$, and consequently restriction on the pressure.  These conditions are not available to us. Furthermore
in the periodic case, which will be dealt with later, it is not even known that $v$ is $L^2$.

Besides if the Dirichlet energy is infinite, the vanishing property may be false. An example is
$v=(x_3(1-x_3), 0, 0), \, p=-2 x_1$.

In the paper \cite{CPZ2018-1}, there is an extra assumption that $v$ is bounded, which turns out to be
unnecessary.  Also it seems interesting  to study the decay property of nonhomogeneous solutions in a slab.

Now if the domain is the whole $\bR^3$, one can show that if the positive part of the radial component of D solutions decays at order $-1$ of the distance in spherical coordinates, then the D-solution vanishes.

\begin{theorem}[\cite{CPZ2018-1}]
\label{thgs}
Let $v_\rho=v_\rho(x)$ be the radial component of 3 dimensional D-solutions in spherical coordinates. If
\be\label{1.13}
{v_\rho(x)}\leq \f{C}{|x|}, \qquad x \in \bR^3,
\ee
for some positive constant $C$, then $v\equiv 0$.
\end{theorem}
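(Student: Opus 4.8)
The plan is to derive a global energy identity on the balls $B_R=B_R(0)$ and to show the Dirichlet integral is carried entirely by spherical boundary terms that can be sent to zero along a good sequence of radii. Multiplying the momentum equation $(v\cdot\na)v+\na p-\Dl v=0$ by $v$, integrating over $B_R$, and using $\na\cdot v=0$ together with integration by parts, one arrives at
\bes
\int_{B_R}|\na v|^2\,dx=\int_{\p B_R}(\p_n v)\cdot v\,dS-\int_{\p B_R}Q\,v_\rho\,dS,
\ees
where $n=x/|x|$, $v_\rho=v\cdot n$ is the radial velocity, and $Q=\f12|v|^2+p$ is the head pressure. The left-hand side increases to the finite number $\int_{\bR^3}|\na v|^2\,dx$ as $R\to\i$, so it suffices to exhibit a sequence $R_k\to\i$ along which both boundary integrals vanish.

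The sign of $Q$ is the decisive input and is exactly what makes the one-sided hypothesis usable. As recalled above (Chae's observation), $Q$ solves $\Dl Q-v\cdot\na Q=|\o|^2\ge0$, so it is a subsolution of the drift-diffusion operator $\Dl-v\cdot\na$; since the coefficients are smooth and $Q\to0$ at infinity, the maximum principle yields $Q\le0$ on $\bR^3$. Here I use that the solution tends to $0$ at infinity: a homogeneous D-solution tends to a constant vector $v_\i$, and the bound $v_\rho=v\cdot\hat x\le C/|x|\to0$ applied to both $\hat x$ and $-\hat x$ (for every $\hat x\in\mathbb{S}^2$) gives $v_\i=0$, whence $|v|\to0$ and, after the usual normalization, $p\to0$. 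With $Q\le0$, writing $-Q=|Q|$ and $v_\rho\le v_\rho^+\le C/R$ on $\p B_R$,
\bes
-\int_{\p B_R}Q\,v_\rho\,dS=\int_{\p B_R}|Q|\,v_\rho\,dS\le\f{C}{R}\int_{\p B_R}|Q|\,dS,
\ees
so that no lower bound on $v_\rho$ is ever needed.

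For the quantitative decay I use the standard integrability of a whole-space D-solution: Sobolev embedding gives $v\in L^6(\bR^3)$ (from $\na v\in L^2$ and $|v|\to0$), and the pressure equation $-\Dl p=\p_i\p_j(v_iv_j)$ with Calder\'on--Zygmund gives $p\in L^3(\bR^3)$, hence $Q\in L^3(\bR^3)$. H\"older's inequality on the sphere $\p B_R$ (of area $\sim R^2$) then yields $\int_{\p B_R}|Q|\,dS\ls R^{4/3}(\int_{\p B_R}|Q|^3\,dS)^{1/3}$ and $\int_{\p B_R}|v|^2\,dS\ls R^{4/3}(\int_{\p B_R}|v|^6\,dS)^{1/3}$, so that
\bes
-\int_{\p B_R}Q\,v_\rho\,dS\ls R^{1/3}\Big(\int_{\p B_R}|Q|^3\,dS\Big)^{1/3},\qquad\Big|\int_{\p B_R}(\p_n v)\cdot v\,dS\Big|\ls R^{2/3}\Big(\int_{\p B_R}|\na v|^2\,dS\Big)^{1/2}\Big(\int_{\p B_R}|v|^6\,dS\Big)^{1/6}.
\ees
Since $\int_0^\i\big(\int_{\p B_R}(|\na v|^2+|v|^6+|Q|^3)\,dS\big)\,dR=\int_{\bR^3}(|\na v|^2+|v|^6+|Q|^3)\,dx<\i$, the spherical integrand $g(R)$ satisfies $\liminf_{R\to\i}R\,g(R)=0$, so there is a sequence $R_k\to\i$ with $R_k\int_{\p B_{R_k}}|\na v|^2$, $R_k\int_{\p B_{R_k}}|v|^6$ and $R_k\int_{\p B_{R_k}}|Q|^3$ all tending to $0$. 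Checking exponents ($\f13-\f13=0$ and $\f23-\f12-\f16=0$) shows both boundary terms vanish along $R_k$, hence $\int_{\bR^3}|\na v|^2\,dx=0$; thus $v$ is constant and the decay forces $v\equiv0$.

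The step I expect to be most delicate is the pressure: both the pointwise decay $p\to0$ (needed for $Q\le0$ via the maximum principle) and the integrability $p\in L^3$ (needed for the boundary estimates) hinge on controlling $p$, which for general D-solutions is the usual sticking point. Once $Q\le0$ and $Q\in L^3$ are secured, the energy identity and the good-radius selection are routine, and the essential mechanism is simply that the non-positivity of the head pressure converts the one-sided radial bound $v_\rho\le C/|x|$ into decay of the boundary flux.
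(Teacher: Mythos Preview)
The survey itself does not prove this theorem; it only states the result and cites \cite{CPZ2018-1}. So there is no paper-proof to compare against here. That said, your argument is correct and self-contained, and it is in the spirit of the head-pressure method the paper recalls just above (Chae's observation that $Q=\tfrac12|v|^2+p\le 0$).

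The mechanism you isolate is exactly the right one: once $Q\le 0$, the flux term becomes $-\int_{\p B_R}Q\,v_\rho\,dS=\int_{\p B_R}|Q|\,v_\rho\,dS$, and since $|Q|\ge 0$ the one-sided bound $v_\rho\le C/R$ gives an \emph{upper} bound $\tfrac{C}{R}\int_{\p B_R}|Q|\,dS$. No lower bound on $v_\rho$ is needed because $\int_{B_R}|\na v|^2\ge 0$ is monotone in $R$; an upper bound on the boundary terms tending to zero along a sequence $R_k\to\infty$ already forces the full Dirichlet energy to vanish. Your good-radius selection from $\na v\in L^2$, $v\in L^6$, $Q\in L^3$ and the exponent bookkeeping are fine.

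Your own caveat about the pressure is well placed but is not a genuine gap on $\bR^3$. The Calder\'on--Zygmund representation $p=R_iR_j(v_iv_j)$ with $v\in L^6$ immediately gives $p\in L^3$, hence $Q\in L^3$. For the pointwise decay, interior Stokes estimates give $\operatorname{osc}_{B_1(x)}p\to 0$ as $|x|\to\infty$; combined with $p\in L^3$ (so the local means $\dashint_{B_1(x)}p\to 0$), this yields $p(x)\to 0$, and the maximum principle then gives $Q\le 0$. Your antipodal argument that $v_\infty\cdot\hat x\le 0$ and $v_\infty\cdot(-\hat x)\le 0$ force $v_\infty=0$ from the one-sided hypothesis alone is a nice touch, making the proof apply even if the D-solution is not assumed a priori to vanish at infinity.
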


 We should compare with the result in \cite{Kozono2017} where the authors prove, if  the weak $L^{9/2}$  norm of $v$ is small, then $v$ vanishes. This includes the case $|v(x)| \le c |x|^{-2/3}$ for certain small constant $c$. In contrast,  our assumption here is worse on the order of the distance function. However we only impose the condition on the positive part of the radial component of the solution and there is no restriction on the other two components.

 Next we concentrate on  axially symmetric homogeneous D solutions, for which the vanishing problem is also wide open. However as we shall see, a priori decay of the solutions $v$ and vorticity $\o$ in the $e_r$ direction is available, albeit insufficient for vanishing result in the whole space case. The set up of equations have been given in \eqref{eqasns} for the velocity and \eqref{eqvort} for the vorticity, except that we are happy to drop the time variable all together. The relation between $v$ and $\o$ are given in \eqref{w-v}.

 For the decay of $v$ and $\o$, the combined results of Chae-Jin  and S. K. Weng spanning 8 years can be condensed into:

 \begin{theorem} (\cite{CJ:1} and \cite{We:1})
 \lab{thCJWe} Let $v$ be a homogeneous D solution of ASNS in $\bR^3$.
 For $x \in \bR^3$,
\be
\lab{cjw}
|v(x)| \le C \left(\log r/r \right)^{1/2}, \q |\o^\th(x)| \le C r^{-(19/16)^-}, \q
|\o^r(x)| + |\o^3(x)| \le C r^{-(67/64)^-}.
\ee Here $C$ is a positive constant and for a positive number $a$, $a^-$ represents a number which is smaller than but close to $a$.
\end{theorem}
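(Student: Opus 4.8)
The plan is to split the argument into two stages: first a logarithmic pointwise decay of the velocity obtained almost entirely from the finite Dirichlet energy \eqref{edcondition} together with the effective two-dimensional structure of axially symmetric fields, and then an iterative bootstrap on the vorticity system \eqref{eqvort}--\eqref{eqjoo} that converts velocity decay into vorticity decay and sharpens it to the stated exponents.

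First I would record the elementary consequences of \eqref{edcondition}: since $|v|\to 0$ at infinity and $\nabla v \in L^2(\bR^3)$, the Sobolev embedding $\dot H^1(\bR^3)\hookrightarrow L^6(\bR^3)$ gives $v\in L^6$, and the energy tail $\int_{r\ge R}|\nabla v|^2\,dx\to 0$ as $R\to\infty$. The decisive point for the velocity bound is that an axially symmetric $v$ depends only on $(r,z)$, so a pointwise estimate reduces to a two-dimensional one, but carried out against the cylindrical weight $r\,dr\,dz$. On a dyadic annular box $Q_R=\{R\le r\le 2R,\ |z-z_0|\le R\}$ the weight is comparable to $R$, so the \emph{unweighted} planar Dirichlet integral of $v$ over $Q_R$ is comparable to $R^{-1}\int_{Q_R}|\nabla v|^2\,r\,dr\,dz$. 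Because $\dot H^1(\bR^2)$ fails to embed into $L^\infty$ only logarithmically, a Brezis--Gallouet / Ladyzhenskaya type inequality of the schematic form $\|v\|_{L^\infty(Q_R)}^2\le C\,\|\nabla v\|_{L^2(Q_R)}^2\,\log(e+\cdots)$, where the second factor is controlled through a local $H^2$ bound obtained by elliptic regularity applied to \eqref{eqasns}, produces exactly the factor $\log r$, while the weight produces the factor $1/r$. This yields $|v(x)|\le C(\log r/r)^{1/2}$.

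With the velocity decay in hand I would pass to the vorticity formulation, where the pressure is absent. The natural first target is $\Omega=\o^\theta/r$, whose equation in \eqref{eqjoo} carries the favorable drift $\frac{2}{r}\p_r\Omega$ and the zeroth order coupling $-\frac{2v^\theta}{r}J$; multiplying the $\Omega$, $J$ and $\o^3$ equations by suitable powers of $r$ times the unknowns and integrating against cut-offs on dyadic annuli, and feeding in the velocity decay, gives a first, non-sharp algebraic decay rate for each component. One then recovers $\nabla v$ from $\o$ through the Biot--Savart law, re-estimates the stretching and lower-order terms $\o^r\p_r v^3+\o^3\p_{x_3}v^3$, $(\o^r\p_r+\o^3\p_{x_3})\frac{v^r}{r}$ and $\frac{2v^\theta}{r}J$, and iterates. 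The exponents $(19/16)^-$ and $(67/64)^-$ should appear as the endpoints of this finite iteration, the faster rate belonging to $\o^\theta$ because $\Omega$ is governed by a maximum-principle-friendly equation, and the slower common rate of $\o^r,\o^3$ reflecting that they are reached only through the coupled stretching terms rather than directly.

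I expect the main obstacle to be precisely this vortex-stretching coupling: the terms $\o^r\p_r v^3+\o^3\p_{x_3}v^3$ in the $\o^3$ equation, $(\o^r\p_r+\o^3\p_{x_3})\frac{v^r}{r}$ in the $J$ equation, and $-\frac{2v^\theta}{r}J$ in the $\Omega$ equation mix all three components and forbid closing a decay estimate for any single component in isolation, so the bootstrap must be run on the full coupled system with carefully matched weights; it is the balance of these weights that pins down the exact numbers $19/16$ and $67/64$. A secondary technical difficulty is justifying the $H^2$ bound needed in the logarithmic Sobolev step in the presence of the pressure in \eqref{eqasns}; this I would circumvent by working throughout with the vorticity equations and reconstructing $\nabla v$ from $\o$ via Biot--Savart, so that the pressure never enters the estimates.
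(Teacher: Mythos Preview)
Your outline is coherent and would, with care, produce decay estimates of the stated type, but it is \emph{not} the route the paper attributes to \cite{CJ:1} and \cite{We:1}. The paper says explicitly that their proof is based on the \emph{line integral techniques} of Gilbarg and Weinberger \cite{GW:1}: one integrates the equations along rays and curves in the $(r,z)$-half-plane, exploiting the finite Dirichlet integral and the structure of the head pressure, to obtain the velocity decay $|v|\le C(\log r/r)^{1/2}$ directly, and then feeds this into integral estimates on the vorticity equations to obtain the stated exponents. No Brezis--Gallouet inequality enters the Chae--Jin/Weng argument.

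What you have written is essentially the proof strategy of the paper's own improved Theorem~\ref{thfullwdecay}, not of Theorem~\ref{thCJWe}: the dimension-reduction observation that far from the axis ASNS becomes effectively two-dimensional, followed by the Brezis--Gallouet limiting Sobolev inequality to extract $|v|\le C(\log r/r)^{1/2}$, and then scaled energy estimates on the vorticity equations \eqref{eqvort}--\eqref{eqjoo} together with Biot--Savart to get vorticity decay. In the paper's hands this method yields the \emph{better} exponents $r^{-5/4}(\ln r)^{3/4}$ for $\o^\theta$ and $r^{-9/8}(\ln r)^{11/8}$ for $\o^r,\o^3$, and the paper remarks that this argument is shorter than the line-integral route. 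So your approach is the more efficient one; it simply is not the one used in the cited references, and if you run it through you should expect to land on the sharper exponents of Theorem~\ref{thfullwdecay} rather than $(19/16)^-$ and $(67/64)^-$, which are artefacts of the specific iteration in \cite{We:1}.

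One technical caution on your second paragraph: the Brezis--Gallouet step needs a local $H^2$ bound on $v$, and you propose to avoid the pressure by working with vorticity and Biot--Savart. That is indeed how the paper proceeds for Theorem~\ref{thfullwdecay}, but note it only needs the crude fact that $\Delta v$ is bounded (by global boundedness of $v$ and standard regularity), not a full quantitative $H^2$ estimate; the logarithm swallows any polynomial growth of $\|\Delta v\|_{L^2}$ on the scaled box.
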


 Their proof is based on line integral techniques from Gilbarg and Weinberger \cite{GW:1}. In our recent work \cite{CPZ2018}, the decay estimate on the vorticity $\o$ is improved and a short proof for the decay of the velocity $v$ is found under a slightly more general condition.  Let us mention that in both the previous and following theorems, one can add a fast decaying forcing term and still obtain the decay estimates. The proof is the same.

\begin{theorem}[\cite{CPZ2018} a priori decay $v$ and $\o$]
\lab{thfullwdecay}

Let $u$ be a smooth axially symmetric solution to the problem
\be
\left\{
\begin{aligned}
&(v\cdot\nabla) v +\nabla p-\Delta v=0,\quad  \nabla\cdot v=0, \quad \text{in} \quad  \bR^3,\\
&\lim\limits_{|x|\rightarrow \i}v(x)=0,
\end{aligned}
\rt.
\ee
such that the Dirichlet integral satisfies the condition: for a constant $C$, and all $R \ge 1$,
\be\label{dr2r1}
\int^\i_{-\i}\int_{R \le |x'| \le 2 R}\big(|\nabla v(x)|^2 dx+ |v(x)|^6\big)dx'dx_3<C<\infty.
\ee
 Then the velocity and vorticity satisfy the following a priori bound. For a constant $C_0>0$, depending only on the constant $C$ in (\ref{dr2r1}) such that
 \[
 |v(x)| \le C_0 \frac{(\ln r)^{1/2}}{\sqrt{r}};
 \]
\[
\bali
&|\o^\th(x)| \le C_0  \f{(\ln r)^{3/4}}{r^{5/4}}, \quad |\o^r(x)|+|\o^3(x)| \le C \f{(\ln r)^{11/8}}{r^{9/8}}, \q r \ge e.
\eali
\]

\end{theorem}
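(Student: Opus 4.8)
\emph{Plan.} Throughout write $v$ for the solution (the $u$ in the statement). The first move is to pass to the meridian half-plane $\Pi=\{(r,z):r>0\}$ and rewrite the hypothesis \eqref{dr2r1}: since $v$ is axially symmetric, $\int_{\bR^3}|\nabla v|^2\,dx = 2\pi\int_\Pi |\nabla v|^2\, r\,dr\,dz$, so \eqref{dr2r1} says that on each dyadic strip $S_R=\{R\le r\le 2R\}\subset\Pi$ one has $\int_{S_R}(|\nabla v|^2+|v|^6)\,r\,dr\,dz\le C$, whence $\int_{S_R}(|\nabla v|^2+|v|^6)\,dr\,dz\lesssim C/R$ because $r\sim R$ there. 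All later estimates are run strip by strip, and the weight $r\,dr\,dz$ is what manufactures the $r^{-1/2}$ gain.

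\emph{Velocity bound.} For the pointwise decay of $v$ I would follow the Gilbarg--Weinberger line-integral method \cite{GW:1}, as adapted to the axially symmetric setting by Chae--Jin \cite{CJ:1} and Weng \cite{We:1}, but streamlined using the extra $L^6$ control now present in \eqref{dr2r1}. The idea is to integrate $\nabla v$ along curves in $\Pi$ running out to infinity (where $v\to 0$) and to apply Cauchy--Schwarz against the weighted Dirichlet integral: a factor $\int ds/s$ over a dyadic range of scales produces $(\ln r)^{1/2}$, while the weight $r\,dr\,dz$ produces $r^{-1/2}$, giving $(\ln r/r)^{1/2}$ for suitable averages of $v$. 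To make the maximum principle available and to dispose of the pressure, it is convenient to work with the head pressure $\Phi=\frac12|v|^2+p$, which for \eqref{eqasns} satisfies $\Delta\Phi-b\cdot\nabla\Phi=|\o|^2\ge0$ with the divergence-free drift $b=v^re_r+v^3e_3$; since $\Phi\to0$ at infinity it is a subsolution and is controlled from above. The delicate point is to upgrade an averaged bound to a genuine pointwise one: in two dimensions $H^1$ does not embed into $L^\infty$, so the oscillation of $v$ on each strip cannot be absorbed by the Dirichlet integral alone. This is exactly where the $|v|^6$ term in \eqref{dr2r1} is used, together with interior elliptic/De Giorgi estimates for the stationary system, to bound $\sup_{S_R}|v|$ by the strip integrals and close with the stated rate $|v|\le C_0(\ln r)^{1/2}/\sqrt r$.

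\emph{Vorticity bootstrap.} With the velocity decay in hand I would feed it into the vorticity system. The cleanest object is $\O=\o^\theta/r$, which (dropping $\p_t$) satisfies $\Delta\O+\frac2r\p_r\O-b\cdot\nabla\O-\frac{2v^\theta}{r}J=0$ from \eqref{eqjoo}, a favorable equation whose zeroth-order coupling to $J=\o^r/r$ carries the small factor $v^\theta/r$. Running a weighted energy estimate (or a Moser iteration) for $\O$ on the strips $S_R$, with the velocity bound supplying the coefficient decay and $\nabla\cdot b=0$ taming the drift, propagates a quantitative decay to $\o^\theta$; tracking the powers of $R$ and the accumulated logarithms yields $|\o^\theta|\le C_0(\ln r)^{3/4}/r^{5/4}$. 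One then returns to $J$ and to the $\o^3$ equation in \eqref{eqvort}; using the relations \eqref{w-v} and the improved velocity and $\o^\theta$ bounds in the vortex-stretching terms, a second iteration gives the slightly weaker $|\o^r|+|\o^3|\le C(\ln r)^{11/8}/r^{9/8}$. The exponents $5/4=1+\tfrac14$ and $9/8=1+\tfrac18$ reflect the partial gain that each application of the weighted estimate extracts from the $r^{-1/2}$ velocity decay.

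\emph{Main obstacle.} I expect two steps to carry the real difficulty. The first is the passage from integral to pointwise control of $v$: because the scale-invariant Dirichlet energy over a unit cell does not become small as $R\to\infty$, no $\epsilon$-regularity or self-similar shortcut is available, and one must genuinely combine the $L^6$ bound with the structure of the stationary equations while keeping track of the single logarithm. The second is the vorticity bootstrap, where the vortex-stretching coupling among $J$, $\O$ and $\o^3$ must be closed without losing the gains; controlling this interplay, rather than any individual estimate, is what fixes the final exponents and the powers of $\ln r$.
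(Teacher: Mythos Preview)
Your plan diverges from the paper's in two places, one a difference of method and one a genuine gap.

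For the velocity bound, the paper does \emph{not} run the Gilbarg--Weinberger line integral together with the head pressure maximum principle and De Giorgi estimates. Instead it uses a much shorter device: fix a point with $|x_0'|=r_0$ large, rescale the nearby dyadic piece of the meridian plane to unit size, and apply the two-dimensional Brezis--Gallouet inequality $\|f\|_{L^\infty}\le C(1+\|\nabla f\|_{L^2})\log^{1/2}(e+\|\Delta f\|_{L^2})$ to the scaled $v$; after scaling back, the weight $r\,dr\,dz$ manufactures the $r^{-1/2}$ and the log comes from Brezis--Gallouet. The paper stresses that ASNS enters only through boundedness of $\Delta v$; no head pressure, no maximum principle, no De Giorgi. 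Your route (closer to the older Chae--Jin/Weng argument) could in principle be pushed through, but it is considerably heavier, and the head pressure detour is unnecessary here: $\Delta\Phi - b\cdot\nabla\Phi=|\o|^2\ge0$ gives $\Phi\le 0$, which is one-sided and does not by itself yield the pointwise decay you want.

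For the vorticity you miss a structural step. The paper again uses scaling plus Brezis--Gallouet, applied to local energy estimates for the vorticity equations, to obtain the pointwise bounds
\[
|\o^\theta|\lesssim r^{-1}(\ln r)^{1/2}\|v\|_{L^\infty}^{1/2},\qquad
|\o^r|+|\o^3|\lesssim r^{-1}(\ln r)^{1/2}\|(v^r,v^3)\|_{L^\infty}^{1/2}+r^{-1/2}(\ln r)^{1/2}\|(\nabla v^r,\nabla v^3)\|_{L^\infty}^{1/2}.
\]
Inserting the velocity decay immediately gives $|\o^\theta|\lesssim r^{-5/4}(\ln r)^{3/4}$. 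But the $(\o^r,\o^3)$ bound requires \emph{pointwise} decay of $\nabla v^r,\nabla v^3$, and this is obtained via the Biot--Savart law $-\Delta(v^re_r+v^3e_3)=\nabla\times(\o^\theta e_\theta)$: one writes $\nabla v^r,\nabla v^3$ as Calder\'on--Zygmund integrals of $\o^\theta$ and transfers the $\o^\theta$ decay to them. Your plan skips this entirely, proposing instead to feed ``the improved velocity and $\o^\theta$ bounds'' back into the $J$ and $\o^3$ equations through \eqref{w-v}. That does not close: the stretching terms in those equations (e.g.\ $\o^r\partial_r v^3$, $(\o^r\partial_r+\o^3\partial_{x_3})\frac{v^r}{r}$) involve $\nabla v^r,\nabla v^3$, not $\o^\theta$ directly, and \eqref{w-v} goes the wrong way. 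Without the Biot--Savart step (or an equivalent mechanism for pointwise decay of $\nabla v^r,\nabla v^3$), the $(\o^r,\o^3)$ exponent $9/8$ cannot be reached.
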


Now we outline the proof of the decay result in Theorem \ref{thfullwdecay} briefly. We start with the observation that in a large dyadic ball, far away from the $x_3$ axis, after scaling, the axially symmetric Navier-Stokes equation resembles a 2 dimensional one. Then the 2 dimensional {\it Brezis-Gallouet} inequality introduced in \cite{BG:1} implies that a smooth vector field with finite Dirichlet energy is almost bounded. After returning to the original scale, one can show that $v$ is bounded by $C \big(\f{\ln r}{r}\big)^{1/2}$ for large $r$. It is curious that the ASNS hardly plays a role in the proof, except for guaranteeing $\Delta v$ is bounded.

 Next by combining the energy estimates of the equations for $\o$ in the stationary \eqref{eqvort} , {\it Brezis-Gallouet} inequality and scaling technique, we will show that, with $x_3$ taken as $0$ for convenience,
\be\label{evtheta}
|\o^\th(r,0)|\leq Cr^{-1}(\ln r)^{1/2}\|(v^r,v^\th,v^3)\|^{1/2}_{L^\i([\f{3}{4}r,\f{5}{4}r]\times[-r,r])},
\ee
and
\be\label{evrz}
\begin{aligned}
|\o^r(r,0)|+|{\blue \o^3} (r,0)|&\leq Cr^{-1}(\ln r)^{1/2}\|(v^r,v^3)\|^{1/2}_{L^\i([\f{3}{4}r,\f{5}{4}r]\times[-r,r])}\\
&\q\ +Cr^{-1/2}(\ln r)^{1/2}\|(\na v^r,\na v^3)\|^{1/2}_{L^\i([\f{3}{4}r,\f{5}{4}r]\times[-r,r])}.
\end{aligned}
\ee
The details can be founded  in \cite{CPZ2018}. Then using the decay of $v$ and \eqref{evtheta}, we can deduce that the decay rate of $\o^\th$ is $r^{-5/4}(\ln r)^{3/4}$.

In order to obtain decay of $\o^r$ and $\o^3$ from \eqref{evrz}, we need the decay of $\na v^r,\na v^3$ which can be connected with $\o^\th$ by the {\it Biot-Savart} law

\bes
-\Dl (v^re_r+v^3 e_3)=\na\times(\o^\th e_\th).
\ees
 Then $\na v^r,\na v^3$ can be written as integral representations of $\o^\th$ in the form of $\int_{\bR^3} K(x,y)\o^\th(y)dy$, where $K(x,y)$ are Calderon-Zygmund kernels. The decay relations between $\na v^r,\na v^3$ and $\o^\th$ are shown in Lemma 3.2 of \cite{CPZ2018}. At last, a combination of \eqref{evrz}, decay of $v$ and $\na v^r,\na v^3$ imply the decay of $\o^r,\o^3$ in Theorem \ref{thfullwdecay}.

 Clearly, if the Dirichlet integral is finite i.e. $\Vert \nabla v \Vert_{L^2(\bR^3)} < \infty$, then \eqref{dr2r1} is satisfied. In \cite{CPZ2018}, we also proved a vanishing result when D solutions are periodic in $x_3$ variable under the additional assumption that $v^\th$ and $v^3$ have zero mean in the $x_3$ direction. With some extra work, one can also reach the vanishing result assuming the integral in \eqref{dr2r1} grows at a certain positive  power of $R$.

In a subsequent paper \cite{CPZ2018-1}, the extra condition that $ v^\th, v^3$ have zero mean in the $x_3$ direction has been removed, under the stronger assumption that the Dirichlet integral is finite. We state it as the following theorem, with ASHDS standing for axially symmetric homogeneous D solutions.

%In that paper,  In a recent paper \cite{LRZ:1}, for solutions with infinite energy, Liouville property for  bounded,  axially symmetric solutions of the Navier-Stokes equation were studied under the natural assumption {\color{blue}$r u^\theta$} is bounded.
%Assuming in addition that $r u^\theta$ is bounded and $u$ is periodic in  $z$ variable, then it was shown that $u \equiv 0$. We mention that periodic solutions are also studied intensely in connection to the Kolmogorov flow.

%Earlier, the authors of the papers \cite{CSTY:1} and \cite{KNSS:1} proved Liouville theorems for ASNS under the assumption that $|u(x)| \le C/r$. See also an extension to $BMO^{-1}$ space in \cite{LZ:1}.

% The next result of this paper is an improvement of our main result in \cite{CPZ:1} in the case that the Dirichlet integral is finite. We will prove vanishing result when D-solutions are periodic in the third variable without any other assumption. Then we turn to axially symmetric solutions in a slab with Dirichlet boundary condition even if the Dirichlet integral has some growth. We will show that actually the  angular component of the solution vanishes.
%
% In the next theorem the flow is periodic in the $z$ direction with period $2 \pi$, a number chosen for convenience. Any other positive period also works. We will always take the forcing term $f=0$ throughout the paper.

\begin{theorem}(Vanishing of Periodic ASHDS)
\lab{thzp}
Let $v$ be a smooth axially symmetric solution to the problem
\be\label{1.7}
\left\{
\begin{aligned}
&(v\cdot\nabla)v +\nabla p-\Delta v=0, \quad \nabla\cdot v=0, \quad \text{in} \quad  \bR^2 \times \bS=\bR^2 \times [-\pi, \pi],\\
&v(x_1,x_2, x_3)=v(x_1,x_2,x_3+2\pi),\\
&\lim\limits_{|x'|\rightarrow \i} v(x)=0,
\end{aligned}
\rt.
\ee
with finite Dirichlet integral:
$
\int^\pi_{-\pi}\int_{\bR^2}|\nabla v(x)|^2dx<+\infty.
$
   Then $v=0$.
\end{theorem}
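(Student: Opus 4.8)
The plan is to prove $\int_{\bR^2\times\bS}|\na v|^2\,dx=0$ by a cut-off energy identity, after first gathering enough decay and pressure information to kill the boundary flux at horizontal infinity. Testing the momentum equation in \eqref{1.7} against $v\phi_R^2$, where $\phi_R=\phi_R(r)$ is a radial cut-off supported in $r\le 2R$ and equal to $1$ for $r\le R$, and integrating over the slab, the divergence-free condition turns the convective and pressure terms into commutators carrying a factor $\na\phi_R$ together with a boundary flux of the form $\int_{R\le r\le 2R}(\tfrac12|v|^2+p)\,v\cdot\na\phi_R\,dx$. Thus everything reduces to showing that, along a sequence $R\to\infty$, the quantity $\int_{R\le r\le 2R}(|v|^3+|p|\,|v|)\,dx\to 0$. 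Since a finite Dirichlet integral implies condition \eqref{dr2r1}, Theorem \ref{thfullwdecay} (whose proof is insensitive to replacing $\bR^3$ by the slab, the relevant dyadic annuli being genuinely two-dimensional) already supplies the pointwise decay $|v(x)|\le C(\ln r)^{1/2}r^{-1/2}$ and the corresponding decay of $\o$ and of $\na v$. The two genuine difficulties are the pressure and the $x_3$-means of $v^\th,v^3$, which the earlier result of \cite{CPZ2018} avoided by hypothesis.

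To handle the means, decompose every component along the compact $x_3$-direction as $f=\bar f+\tilde f$ with $\bar f(r)=\frac{1}{2\pi}\int_{-\pi}^{\pi}f\,dx_3$ and $\int_{-\pi}^{\pi}\tilde f\,dx_3=0$. Averaging the incompressibility relation $\frac1r\p_r(rv^r)+\p_{x_3}v^3=0$ in $x_3$ annihilates the second term by periodicity, so $r\,\bar{v^r}$ is constant in $r$; regularity on the axis then forces $\bar{v^r}\equiv0$. Averaging the $v^\th$- and $v^3$-equations of \eqref{eqasns} (with $\p_t\equiv0$) likewise removes all pure $x_3$-derivatives and the mean pressure gradient in $x_3$, leaving second-order ODEs in $r$ for $\bar{v^\th}$ and $\bar{v^3}$ whose right-hand sides are bilinear in the fluctuations, schematically $\overline{\tilde{v^r}\,\p_r\tilde{v^3}}$ and the like. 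Because $\tilde f$ has zero $x_3$-mean, the one-dimensional Poincar\'e inequality on $\bS$ bounds $\|\tilde f\|_{L^2_{x_3}}$ by $\|\p_{x_3}\tilde f\|_{L^2_{x_3}}$, so these bilinear sources are dominated by the local Dirichlet energy; combined with the a priori decay this makes the sources integrable in $r$. Integrating the ODEs twice, using axis regularity and $\bar{v^\th},\bar{v^3}\to0$ as $r\to\infty$, yields quantitative decay of the mean parts, comparable to that of the fluctuations.

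The pressure is split the same way. The fluctuating part $\tilde p$ is recovered from the velocity through the pressure Poisson equation and is controlled, modulo the Poincar\'e inequality in $x_3$, by quadratic expressions in $v$ with the decay already in hand. The mean pressure $\bar p(r)$ is read off from the averaged radial ($v^r$) momentum balance: since $\bar{v^r}\equiv0$, $\p_r\bar p$ equals an average of $(v^\th)^2/r$ minus convective and second-order terms, all of which decay; integrating shows $\bar p(r)$ tends to a finite limit, which we normalize to $0$. This gives $|p(x)|\to0$ with a rate, and in particular control of the head pressure $Q=\tfrac12|v|^2+p$ uniformly in $x_3$.

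With the decay of $v$, $\na v$ and $p$ in hand, the flux $\int_{R\le r\le 2R}(|v|^3+|p|\,|v|)\,dx$ over an annulus with two-dimensional cross-section is $o(1)$ along a suitable sequence $R\to\infty$ (a pigeonhole choice of $R$ absorbs the logarithms). Passing to the limit in the cut-off energy identity gives $\int_{\bR^2\times\bS}|\na v|^2\,dx=0$, so $v$ is constant, and $\lim_{|x'|\to\infty}v=0$ forces $v\equiv0$. The main obstacle is precisely the removal of the zero-mean hypothesis: the means $\bar{v^\th},\bar{v^3},\bar p$ are genuine functions of $r$ for which no Poincar\'e gain is available, so their decay must be extracted from the coupled ODEs above, and the scheme closes only because the \emph{finite} Dirichlet integral, rather than the weaker growth condition \eqref{dr2r1}, renders the bilinear fluctuation sources integrable against the a priori decay.
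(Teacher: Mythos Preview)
Your overall strategy---test against $v\phi_R$ and show the boundary flux vanishes---is the same as the paper's, but you miss the single observation that makes the paper's argument short, and as a result you embark on a much more elaborate programme (ODE analysis of $\bar{v^\th},\bar{v^3}$, pointwise decay of $p$) that is not needed and whose details you only sketch.

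The key point you overlook is that $\phi_R$ depends only on $r$, so $v\cdot\nabla\phi_R = v^r\,\partial_r\phi_R$. Hence the flux terms carry \emph{only} $v^r$, not $|v|$. Averaging the divergence-free condition in $x_3$ gives $\bar{v^r}\equiv 0$ (you note this), so Poincar\'e in $x_3$ immediately yields $v^r\in L^2(\bR^2\times\bS)$ globally. That one fact, together with a bound on the oscillation $\|P-P_R\|_{L^2(\mC_R\times\bS)}\lesssim R$ on dyadic annuli (obtained by Bogovski\u{\i}'s lemma, or by the line-integral technique of Gilbarg--Weinberger adapted to three dimensions), is all that is required: the flux splits as
\[
I_2\lesssim \frac{\|v\|_\infty^2}{R}\|v^r\|_{L^2(\mC_R\times\bS)}\|1\|_{L^2(\mC_R\times\bS)}\lesssim \|v^r\|_{L^2(\mC_R\times\bS)}\to 0,
\]
\[
I_3\lesssim \frac{1}{R}\|P-P_R\|_{L^2(\mC_R\times\bS)}\|v^r\|_{L^2(\mC_R\times\bS)}\lesssim \|v^r\|_{L^2(\mC_R\times\bS)}\to 0.
\]
No information on $\bar{v^\th}$, $\bar{v^3}$, or pointwise decay of $p$ is ever used; the removal of the zero-mean hypothesis on $v^\th,v^3$ is thus almost free once one notices the flux involves only $v^r$.

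By contrast, your reduction to ``$\int_{R\le r\le 2R}(|v|^3+|p|\,|v|)\,dx\to 0$'' throws away this structure by replacing $v\cdot\nabla\phi_R$ with $|v|\,|\nabla\phi_R|$. Worse, as written the claim is false: with $|v|\lesssim (\ln r)^{1/2}r^{-1/2}$ and annulus volume $\sim R^2$, one has $\int|v|^3\,dx\sim R^{1/2}(\ln R)^{3/2}\not\to 0$. What you actually need (and what suffices) is $\frac{1}{R}\int(|v|^3+|p|\,|v|)\,dx\to 0$, since $|\nabla\phi_R|\sim R^{-1}$; this weaker statement does follow once you establish $|p|=o(r^{-1/2})$. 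Your ODE route to that pressure decay is plausible but left largely unverified, whereas the paper sidesteps pointwise pressure estimates entirely via the $L^2$-oscillation bound on annuli.
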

%\begin{remark}
%Assuming finiteness of the Dirichlet integral, Theorem\ref{thzp} is an improvement of our previous result Theorem 1.2 in \cite{CPZ:1} by removing the extra assumption that $u^\th$, $u^z$ have zero mean in the $z$ direction there. However,  the method in \cite{CPZ:1} is much different from the current one. Besides, the proof there can be applied to the case where the local Dirichlet integral has some growth, which means we can prove the vanishing result under the assumption $\int^\pi_{-\pi}\int_{|x'|\leq r}|\nabla u(x)|^2dx<(1+r)^\al$ for some suitable and positive $\al$. This method potentially allows application for flows with infinite Dirichlet energy such as Kolmogorov flows.
%\end{remark}

We outline the proof of the above results briefly. We start with the observation that in $x_3-$periodic case with $\O=\bR^2\times \bS$, the horizontal radial component of the solution $v^r$ satisfies $\int^\pi_{-\pi} v^r dx_3=0$. Poincar\'{e} inequality and the finite Dirichlet integral condition indicate that $v^r\in L^2(\O)$. Then using of $x_3$-periodicity again, we can actually prove that the $L^\i$ oscillation of the pressure $p$ is bounded in a dyadic annulus. The method is similar to the line integral technique in \cite{GW:1} except it is carried out in a 3 dimensional domain. At last by testing the vector equation \eqref{1.7} with $v \phi^2(|x'|)$, where $\phi(x')$ is supported in $\{x'|\, |x'|<2R\}$ and equal to $1$ in $\{x'| \, |x'|<R\}$, and making $R$ approach  $\i$, we can prove that $v\equiv 0$.
This result seems to add an extra weight in the belief that ASHDS in $\bR^3$ is $0$. The reason is that in the periodic case, finiteness of the Dirichlet integral of $v$ does not imply any decay of $v$. For example
the function $f= \ln \ln (2+r)$ has finite Dirichlet integral.  Yet $v$ still vanishes.

The next theorem treats the case with Dirichlet boundary condition (DBC) in a slab, even allowing the Dirichlet integral to  be log divergent. Recall from Theorem \ref{ths2} that 3 dimensional D solutions in a slab
with DBC is $0$. Why do we come back to the slab case? One reason is that infinite Dirichlet energy may induce non-uniqueness/non-vanishing as shown in the example after Theorem \ref{ths2}. So one would like to know,  under what rate of divergence of the energy,   vanishing of D solutions and especially ASHDS is preserved. In addition, solutions with infinite Dirichlet energy is also of interest in turbulence theory.
These include the study of Kolmogorov flows such as flows in a channel with periodic forcing terms. See \cite{Fri} e.g.

\begin{theorem} (\cite{CPZ2018-1} \lab{thsd})
Let ${\blue v}$ be a smooth, axially symmetric solution to the problem
\be\label{e1.8}
\left\{
\begin{aligned}
&({\blue v}\cdot\nabla)v+\nabla p-\Delta v=0, \quad \nabla\cdot v=0,\quad \text{in} \quad  \bR^2\times[0,1],\\
&\lim\limits_{|x'|\rightarrow \i} v=0,\q v(x)|_{x_3=0}=v(x)|_{x_3=1}=0,
\end{aligned}
\rt.
\ee
such that the Dirichlet integral satisfies the condition: for a constant $C$, and all $R \ge 1$,
\be
\lab{dr2r}
\int^1_{0}\int_{R \le |x'| \le 2 R}|\nabla v(x)|^2 dx<C<\infty.
\ee
Then $v^\th = 0$. Moreover, there exists a positive constant $C_0$, depending only on the constant $C$ in (\ref{dr2r})  such that
\be\label{edecay}
|v^r(x)| + |v^3(x)| \le C_0 \left(\f{\ln r}{r}\right)^{1/2}.
\ee
\end{theorem}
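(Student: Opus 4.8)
The plan is to separate the statement into two tasks: proving $v^\theta\equiv0$, which collapses the problem to the swirl-free case, and then deducing the pointwise bound \eqref{edecay} for $v^r,v^3$ by the scheme already used in Theorem \ref{thfullwdecay}. For the first task I would study the scaling invariant quantity $\Gamma=rv^\theta$. In the stationary regime it solves the equation obtained from \eqref{eqvth} by deleting $\p_t$, namely
\be
\Delta\Gamma-b\cdot\na\Gamma-\f{2}{r}\p_r\Gamma=0,\qquad b=v^re_r+v^3e_3.
\ee
The crucial structural fact is that this equation has no zeroth order term, so it is a candidate for the maximum principle; the boundedness of $b$ (which holds because $v$ is smooth and $v\to0$ at infinity) is what makes that principle usable.

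Viewing $\Gamma$ as a function of $(r,x_3)$ on the half-strip $\{r>0,\,0<x_3<1\}$, I note that $\Gamma$ vanishes on all three finite edges: on $x_3=0$ and $x_3=1$ by the no-slip condition on $v^\theta$, and on the axis $r=0$ because $v^\theta$ is smooth and bounded there. Since $v^\theta$ is bounded, $\Gamma=O(r)$, far below the exponential rate permitted in a strip of finite width. I would therefore run a Phragm\'en--Lindel\"of comparison, testing $\Gamma$ against barriers of the type $\pm\ve\,\cosh(\alpha(x_3-\tfrac12))\,e^{\beta r}$ with $\alpha,\beta$ small, absorbing the lower order terms $-\f{2}{r}\p_r$ and $b\cdot\na$ into the elliptic part, and letting $\ve\to0$ to conclude $\Gamma\equiv0$, i.e.\ $v^\theta\equiv0$. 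It is worth stressing why this succeeds here although, as noted after Theorem \ref{thgudai2}, the isolated $\Gamma$-equation carries no Liouville property in the whole space: the two no-slip plates supply the extra Dirichlet data $\Gamma|_{x_3=0,1}=0$, and passing to the limit $r\to\infty$ in the equation (where the drift and the $\f{1}{r}\p_r$ term disappear) reduces any putative nonzero profile $g(x_3)$ to $\p_{x_3}^2g=0$ with $g(0)=g(1)=0$, forcing $g\equiv0$.

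With the swirl removed, the system becomes the swirl-free axially symmetric one and \eqref{edecay} should follow by transplanting the Brezis--Gallouet-plus-scaling argument of Theorem \ref{thfullwdecay} to the slab. On a dyadic box $\{R\le|x'|\le2R\}\times[0,1]$ lying away from the axis I would rescale the horizontal variable $x'=Ry'$ to unit size and, using axial symmetry, treat $v$ as a two-variable field. Here the Dirichlet condition is precisely what furnishes the $L^2$ control the method requires: the one-dimensional Poincar\'e inequality in $x_3$ gives $\int_0^1|v|^2dx_3\le C\int_0^1|\na v|^2dx_3$, so the uniform dyadic energy bound \eqref{dr2r} yields a uniform $H^1$ bound on the rescaled box. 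Inserting a crude, polynomially-in-$R$ growing bound for $\|v\|_{H^2}$ (all that the logarithm in the Brezis--Gallouet inequality can feel) then produces $\|v\|_{L^\infty}\les(\ln R/R)^{1/2}$ on the box once one returns to the original scale, which is exactly \eqref{edecay}.

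The hard part will be the $H^2$-type input for Brezis--Gallouet, because it drags in the pressure: from $\Delta v=v\cdot\na v+\na p$ one needs a bound on $\na p$, and it is exactly the Dirichlet boundary condition that makes pressure estimates in a slab delicate, as already flagged after Theorem \ref{ths2}. I expect, however, that the extreme tolerance of the logarithm means only a rough local Calder\'on--Zygmund estimate for $\na p$, with constants allowed to grow polynomially in $R$, is needed; securing such a bound uniformly across dyadic scales is the step I would be most careful about. A lesser technical point is to make the Phragm\'en--Lindel\"of comparison of the first step fully rigorous in the presence of the variable drift $b$ and the singular coefficient $-\f{2}{r}\p_r$, but since $b$ is bounded and decaying and $\Gamma$ grows at most linearly, this should be routine once the barrier is chosen.
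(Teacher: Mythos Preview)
Your treatment of the decay estimate \eqref{edecay} via Brezis--Gallouet plus dyadic rescaling is essentially the paper's argument; your concern about the $H^2$ input is legitimate but harmless, since the solution is assumed smooth and bounded, so local regularity furnishes polynomially-in-$R$ bounds on second derivatives, and that is all the logarithm sees.

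For $v^\theta\equiv0$, your route is genuinely different from the paper's and, once repaired, more direct. The paper does \emph{not} run a Phragm\'en--Lindel\"of argument. Instead it first establishes the $(\ln r/r)^{1/2}$ decay for all of $v$, then uses a refined Brezis--Gallouet inequality and energy estimates on the vorticity equations to obtain $|(\omega^r,\omega^3)|\lesssim r^{-1}\ln r$ and $|(\p_{x_3}\omega^r,\p_r\omega^3)|\lesssim r^{-3/2}(\ln r)^{3/2}$, and finally invokes the Biot--Savart law together with the exponentially decaying Dirichlet Green's function of the slab to deduce $|v^\theta|\lesssim r^{-3/2}(\ln r)^{3/2}$. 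Only after $\Gamma=rv^\theta\to0$ at infinity is the maximum principle applied. Your barrier approach skips the vorticity analysis and the slab Green's function entirely; the paper's longer route, on the other hand, delivers quantitative vorticity decay as a byproduct.

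There is, however, a concrete slip in your barrier. With $\phi=\cosh(\alpha(x_3-\tfrac12))e^{\beta r}$ one has $\p_{x_3}^2\phi=+\alpha^2\phi$, so the full operator gives $(\Delta-\tfrac{2}{r}\p_r-b\cdot\nabla)\phi=(\beta^2+\alpha^2-\tfrac{\beta}{r}+O(|b|))\phi$, which is \emph{positive} for large $r$: your $\phi$ is a subsolution, not a supersolution, and the comparison fails. The fix is to take $\phi=\cos(\alpha(x_3-\tfrac12))e^{\beta r}$ with $0<\beta<\alpha<\pi$, so that $\p_{x_3}^2\phi=-\alpha^2\phi$ and $L\phi\le(\beta^2-\alpha^2+O(|b|))\phi<0$ once $r\ge R_0$ with $R_0$ large enough that $|b|$ is small there. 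This yields $\sup_{r\ge R_0}|\Gamma|\le\max_{r=R_0}|\Gamma|$; the ordinary maximum principle on the bounded cylinder $\{r\le R_0\}$ gives the same bound on the inside; hence $\sup|\Gamma|$ is attained at an interior point, and the strong maximum principle forces $\Gamma\equiv0$. Note that your hope of running the barrier on the \emph{entire} half-strip in one shot would require $\alpha^2-\beta^2>\|b\|_\infty(\beta+\alpha\tan(\alpha/2))$, which need not hold if $\|v\|_\infty$ is large; splitting at $r=R_0$ is the clean way around this.
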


Since $v^\th = 0$ i.e. the flow is swirl free, the Navier-Stokes system reduces to
 \be\label{e1.9}
\lt\{
\begin{aligned}
&(v^r\p_r+v^3\p_{x_3})v^r+\p_r p=(\Dl-r^{-2})v^r,\\
&(v^r\p_r+v^3\p_{x_3})v^3+\p_{x_3} p=\Dl v^3,\\
&\p_rv^r+ r^{-1} v^r +\p_{x_3} v^3=0,\\
&\lim\limits_{r\rightarrow \i}(v^r,v^3)=0,\q (v^r,v^3)(r,x_3)|_{x_3=0,1}=0.
\end{aligned}
\rt.
\ee
\qed

So our vanishing problem now is much like a two dimensional problem. But we do not know any vanishing result for swirl free case in a slab with Dirichlet boundary condition and
(\ref{dr2r}).

The decay estimate in the $e_r$ direction still holds if there is an inhomogeneous term of sufficiently fast decay. However, we are not aware any decay estimate in the $e_3$ direction, except in the swirl free case, c.f. Theorem 1.2 \cite{We:1}.
If one works a little harder, then one can reach the same conclusion as the theorem assuming the integral in (\ref{dr2r}) grows at certain power of $R$.

Let us give a description of the proof of Theorem \ref{thsd}.   From \eqref{eqvth}, the quantity $\G:=rv^\th$ satisfies
 \be\label{1.9}
(v^r\p_r+v^3\p_{x_3})\G-(\Dl-\f{2}{r} \p_{r})\G=0.
\ee
So $\G$ enjoys maximum principle, which means, for bounded open sets $\O \subset \bR^3$,
\be\label{1.10}
\sup\limits_{x\in\O}|\G|\leq \sup\limits_{x\in\p\O}|\G|.
\ee We will show that the decay rate of $v^\th$ is at least $r^{-(\f{3}{2})^-}$ for large $r$. Take this decay for granted at the moment.
By using the above maximum principle and a sliding argument along the $x_3$ axis together with the fact that $\G(x) \to 0$ as $r \to \infty$, one can prove $v^\th\equiv 0$.

We are left to prove the above decay property of $v^\th$, which take three steps.

In step one: the Laplace Green's function $\mathcal{G}$ on $\bR^2\times [0,1]$ with homogeneous Dirichlet boundary condition will be introduced and a number of properties of $\mathcal{G}$ explained. The point is that $\mathcal{G}$ has fast decay near infinity. See \cite{MeMe:1} e.g.

In step two:  we obtain the following decay of $\o^r,\o^3$ by using a refined {\it Brezis-Gallouet} inequality, energy methods and scaling techniques:
\bes
|(\o^r(x),\o^3(x))|\ls r^{-1}\ln r,
\ees
 for large $r$. Furthermore, by the same procedure, we can show that
 \be
 |(\p_{x_3} \o^r(x),\p_r \o^3(x))|\ls r^{-3/2}(\ln r)^{3/2}.\\
 \ee

In step three, we use the {\it Biot-Savart} law to get the representation of $v^\th$ by integrals involving $\mathcal{G}$ and $(\p_{x_3} \o^r,\p_r \o^3)$ which implies that $v^\th$  decays in the same rate as $\p_{x_3} \o^r$ and $\p_r \o^3$. Now that we know $v^\theta$ decays faster than order $1$,
 as mentioned one can apply the maximum principle on the function $\Gamma = r v^\th$ to conclude $v^\th=0$.

When deriving the results in \cite{CPZ2018, CPZ2018-1}, one uses the standard tools such as energy estimate, vorticity equation and inter-play of velocity and vorticity. The new input is dimension reduction and the use of two
additional tools: Brezis-Gallouet inequality and Bogovskii's estimate, which are given a brief overview here. Some refinements of them are also given for a class of domains.

The Brezis-Gallouet inequality is a limiting case of the Sobolev inequality in 2 dimensions for $H^2$ extension domains. An analogous inequality also holds in higher dimensions by Brezis-Wainger \cite{BreWai}.  Recall that a domain $\O \subset \bR^n$ is called a $H^2$ extension domain if the
following properties hold.  There exists an extension operator $P: H^2(\O) \to H^2 (\bR^n )$ such that
$P$ is a bounded operator from $H^i( \O) $ to  $H^i (\bR^n)$, $i=1, 2$ and $ P f |_{\O}=f$ for all
$f \in H^2(\O)$. For instance, Lipschitz domains are $H^2$ extension domains. See \cite{Cald} e.g.

\begin{lemma}[\cite{BG:1}] \label{bg}
 Let $\O \subset \bR^2$ be a bounded open domain with $H^2$ extension property,  its complement or $\bR^2$. Let $f\in H^2(\O)$.  Then there exists a constant $C_{\O}$, depending only on $\O$, such that
\be
\label{BGinq}
\|f\|_{L^\i(\O)}\leq C_{\O}\|f\|_{H^1(\O)}\log^{1/2} \big(e+\f{\|\Dl f\|_{L^2(\O)}}{\|f\|_{H^1(\O)}}\big).
\ee
\end{lemma}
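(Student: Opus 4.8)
The plan is to reduce the estimate to the whole plane $\bR^2$ by the extension operator, and then to run a Fourier frequency‑splitting argument in which the logarithm arises from the borderline (logarithmic) failure of the embedding $H^1(\bR^2)\hookrightarrow L^\i$ at the cutoff frequency.

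First, using the $H^2$ extension property I would set $g=Pf\in H^2(\bR^2)$, so that $g=f$ on $\O$ and $\|g\|_{H^i(\bR^2)}\le C_\O\|f\|_{H^i(\O)}$ for $i=1,2$. Since $\|f\|_{L^\i(\O)}\le\|g\|_{L^\i(\bR^2)}$, it suffices to establish the inequality for $g$ on $\bR^2$. For the core estimate I would use Fourier inversion and Plancherel: for almost every $x$,
\[
|g(x)|\le\f{1}{2\pi}\int_{\bR^2}|\hat g(\xi)|\,d\xi .
\]
I split this integral at a frequency $N\ge1$ to be optimized. On the low frequencies, Cauchy--Schwarz against the weight $(1+|\xi|^2)^{-1}$ gives
\[
\int_{|\xi|\le N}|\hat g|\,d\xi\le\Big(\int_{|\xi|\le N}\f{d\xi}{1+|\xi|^2}\Big)^{1/2}\Big(\int_{\bR^2}(1+|\xi|^2)|\hat g|^2\,d\xi\Big)^{1/2}\le C\,\log^{1/2}(e+N)\,\|g\|_{H^1},
\]
the decisive two‑dimensional feature being that $\int_{|\xi|\le N}(1+|\xi|^2)^{-1}d\xi\approx\log N$ is only logarithmically divergent. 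On the high frequencies I would write $1=|\xi|^{-2}|\xi|^2$ and use $\||\xi|^2\hat g\|_{L^2}=\|\Dl g\|_{L^2}$,
\[
\int_{|\xi|>N}|\hat g|\,d\xi\le\Big(\int_{|\xi|>N}\f{d\xi}{|\xi|^4}\Big)^{1/2}\Big(\int_{\bR^2}|\xi|^4|\hat g|^2\,d\xi\Big)^{1/2}\le\f{C}{N}\,\|\Dl g\|_{L^2}.
\]
Adding the two pieces and choosing $N=e+\|\Dl g\|_{L^2}/\|g\|_{H^1}$ to balance them yields
\[
\|g\|_{L^\i(\bR^2)}\le C\,\|g\|_{H^1(\bR^2)}\,\log^{1/2}\Big(e+\f{\|\Dl g\|_{L^2(\bR^2)}}{\|g\|_{H^1(\bR^2)}}\Big),
\]
which is precisely the claimed inequality on $\bR^2$; note that it is the $L^2$ norm of the Laplacian, rather than of the full Hessian, that enters naturally. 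For $\O=\bR^2$ this already finishes the proof.

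The main obstacle is the passage back to a bounded extension domain $\O$ (and, similarly, its complement) while keeping the sharp quantity $\|\Dl f\|_{L^2(\O)}$ inside the logarithm. The crude bound $\|\Dl g\|_{L^2(\bR^2)}\le\|g\|_{H^2(\bR^2)}\le C_\O\|f\|_{H^2(\O)}$ produces only the full norm $\|f\|_{H^2(\O)}$, which on a bounded domain is genuinely larger than $\|f\|_{H^1(\O)}+\|\Dl f\|_{L^2(\O)}$; harmonic polynomials such as $\operatorname{Re}(x_1+ix_2)^n$ exhibit the gap, so this replacement cannot simply be reversed. To recover the Laplacian form I would instead split $f=w+h$, where $w\in H^2(\O)\cap H^1_0(\O)$ solves the Dirichlet problem $\Dl w=\Dl f$ in $\O$ with $w=0$ on $\p\O$, so that elliptic regularity gives $\|w\|_{H^2(\O)}\le C_\O\|\Dl f\|_{L^2(\O)}$; the piece $w$ is then controlled by the $\bR^2$ argument applied to $Pw$. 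The harmonic remainder $h=f-w\in H^2(\O)$ is the delicate part: one must bound $\|h\|_{L^\i(\O)}$ in terms of $\|f\|_{H^1(\O)}$ and the same logarithm, which reduces to a trace version of the Brezis--Gallouet inequality on $\p\O$ and exploits that the $H^2$ regularity of $h$ suppresses the high boundary frequencies that would otherwise force a $\log\log$ blow‑up. Carrying out this boundary analysis, with all constants depending only on $\O$ through the extension and elliptic‑regularity constants, is where the real work lies.
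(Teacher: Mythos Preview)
Your approach---$H^2$ extension to $\bR^2$ followed by Fourier frequency splitting---is exactly what the paper indicates: it gives no proof, only the sentence ``The proof uses extension properties of $H^2$ functions and Fourier transform.'' So at the level of strategy there is nothing to compare; you have simply written out what the paper merely alludes to.

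The issue you raise about $\|\Dl f\|_{L^2(\O)}$ versus $\|f\|_{H^2(\O)}$ inside the logarithm is perceptive and, as a matter of proof, real: the extension--Fourier route naturally delivers $\|g\|_{H^2(\bR^2)}\le C_\O\|f\|_{H^2(\O)}$, and for general $f\in H^2(\O)$ on a bounded domain without boundary conditions one cannot control $\|f\|_{H^2(\O)}$ by $\|f\|_{H^1(\O)}+\|\Dl f\|_{L^2(\O)}$. However, this is a looseness in the survey's formulation rather than a gap that calls for your Dirichlet--harmonic decomposition. The original Brezis--Gallouet inequality is stated with the full $H^2$ norm (and in a setting with zero boundary data, where the two are equivalent by elliptic regularity), and every downstream use in this paper---Lemma~\ref{leRBG} and its application in Proposition~\ref{propdu}---goes through unchanged with $\|f\|_{H^2(\O)}$ in place of $\|\Dl f\|_{L^2(\O)}$, since the functions involved are rescaled Navier--Stokes solutions with bounded second derivatives. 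Your proposed boundary analysis is therefore unnecessary for the paper's purposes; simply read \eqref{BGinq} with $\|f\|_{H^2(\O)}$ inside the logarithm and proceed.
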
 The proof uses extension properties of $H^2$ functions and Fourier transform. A variant of it can be found in \cite{HL2} with a proof using the Green's function.

It is easy to see that the above inequality implies the next one, which was used in \cite{CPZ2018, CPZ2018-1}.
\be\label{BGinq1}
\|f\|_{L^\i(\O)}\leq C_{\O}(1+\|f\|_{H^1(\O)})\log^{1/2} \big(e+\|\Dl f\|_{L^2(\O)}\big).
\ee
The constant $C_{\O}$ depends on the domain in an implicit way. In applications, it is convenient to have an estimate on $C_{\O}$. Next we  prove the following refined $Brezis-Gallouet$ inequality for a class of domains in the $(r, x_3)$ plane, whose constant is independent of the thinness of the domains.
A price to pay is that the functions need to have zero boundary value in the $x_3$ direction or
mean zero in the $x_3$ direction.

\begin{lemma}
\label{leRBG}
For $R\gg 1$ and $0\leq\al\leq 1$, set
\bes
\bar{\mathcal{D}}_0=\big\{(r,x_3): 1-\f{1}{2}R^{\al-1}\leq r\leq1+ \f{1}{2}R^{\al-1},\ |x_3|\leq R^{\al-1}r^{\al}\big\}.
\ees
Then if $f \in H^2(\bar{\mathcal{D}}_0)$ satisfies
\be\label{bc}
f|_{|x_3|= R^{\al-1}r^{\al}}=0,
\ee
we have
\be\label{Bg3}
\|f\|_{L^\i(\bar{\mathcal{D}}_0)}\leq C_0(1+\|\na f\|_{L^2(\bar{\mathcal{D}}_0)})\log^{1/2}(e+R^{\al-1}\|\Dl f\|_{L^2(\bar{\mathcal{D}}_0)}),
\ee
where $C_0$ is independent of $R$. Here $\nabla=e_r \p_r  + e_3 \p_{x_3}$ and $\Delta=\p^2_r + \p^2_{x_3}$
are the two dimensional gradient and Laplacian in the $(r, x_3)$ plane, respectively.
\end{lemma}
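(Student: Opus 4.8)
The plan is to reduce \eqref{Bg3} to the standard Brezis--Gallouet inequality \eqref{BGinq1} on a fixed reference domain by a dilation, the entire difficulty being to show that the resulting constant does not deteriorate as $R \to \i$ (equivalently, as the domain $\bar{\mathcal{D}}_0$ shrinks to scale $R^{\al-1}$). Write $h = R^{\al-1}$ and introduce the dilated variables $\rho = (r-1)/h$, $\tau = x_3/h$, so that $\bar{\mathcal{D}}_0$ is mapped onto $\mathcal{D}_h = \{|\rho| \le 1/2,\ |\tau| \le (1+h\rho)^\al\}$, and set $\tilde f(\rho,\tau) = f(1+h\rho, h\tau)$. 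Under this pure dilation the two-dimensional norms transform cleanly: $\|\tilde f\|_{L^\i(\mathcal{D}_h)} = \|f\|_{L^\i(\bar{\mathcal{D}}_0)}$, the Dirichlet energy is scale invariant, $\|\na \tilde f\|_{L^2(\mathcal{D}_h)} = \|\na f\|_{L^2(\bar{\mathcal{D}}_0)}$, while $\|\tilde f\|_{L^2(\mathcal{D}_h)} = h^{-1}\|f\|_{L^2(\bar{\mathcal{D}}_0)}$ and, crucially, $\|\Dl \tilde f\|_{L^2(\mathcal{D}_h)} = h\,\|\Dl f\|_{L^2(\bar{\mathcal{D}}_0)}$. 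This last relation already produces the factor $R^{\al-1}$ in front of $\|\Dl f\|_{L^2}$ inside the logarithm, exactly as \eqref{Bg3} requires.

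Next I would apply \eqref{BGinq1} on $\mathcal{D}_h$. The key point is that the family $\{\mathcal{D}_h\}_{0<h\le h_0}$ consists of uniformly Lipschitz domains: their upper and lower boundaries are the graphs $\tau = \pm(1+h\rho)^\al$, whose slopes are $O(h)$, and for $h_0$ small each $\mathcal{D}_h$ is trapped between two fixed rectangles comparable to $[-\f12,\f12]\times[-1,1]$. Consequently their $H^2$-extension constants are bounded independently of $h$, so the Brezis--Gallouet constant $C_{\mathcal{D}_h}$ in \eqref{BGinq1} is bounded by some $C_0$ independent of $R$. Combined with the scaling relations above this gives
\[
\|f\|_{L^\i(\bar{\mathcal{D}}_0)} = \|\tilde f\|_{L^\i(\mathcal{D}_h)} \le C_0\big(1+\|\tilde f\|_{H^1(\mathcal{D}_h)}\big)\log^{1/2}\big(e+ h\,\|\Dl f\|_{L^2(\bar{\mathcal{D}}_0)}\big).
\]

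The remaining obstacle, which is the heart of the lemma, is that
\[
\|\tilde f\|_{H^1(\mathcal{D}_h)}^2 = h^{-2}\|f\|_{L^2(\bar{\mathcal{D}}_0)}^2 + \|\na f\|_{L^2(\bar{\mathcal{D}}_0)}^2
\]
carries the singular factor $h^{-2}$ on the genuine $L^2$-mass of $f$, which would destroy all uniformity in $R$. This is exactly where the boundary hypothesis \eqref{bc} must be used. Since $f(r,\cdot)$ vanishes at both endpoints $x_3 = \pm h r^\al$ of an interval of length $2hr^\al \le 4h$, a one-dimensional Poincaré inequality yields $\int |f|^2 dx_3 \le C h^2 \int |\p_{x_3} f|^2 dx_3$ for each fixed $r$; integrating in $r$ gives $\|f\|_{L^2(\bar{\mathcal{D}}_0)} \le C h \|\na f\|_{L^2(\bar{\mathcal{D}}_0)}$. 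Substituting this bound absorbs the dangerous term, leaving $\|\tilde f\|_{H^1(\mathcal{D}_h)} \le C\|\na f\|_{L^2(\bar{\mathcal{D}}_0)}$, and the displayed estimate becomes precisely \eqref{Bg3} after renaming constants.

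In summary, the three ingredients that secure an $R$-independent constant are the scale invariance of the planar Dirichlet energy, the clean $h$-scaling of $\|\Dl f\|_{L^2}$, and the Poincaré absorption powered by \eqref{bc}; the third is the decisive one and explains the stated ``price to pay.'' The mean-zero variant in the $x_3$ direction would be handled in exactly the same way, replacing the Poincaré inequality by the Poincaré--Wirtinger inequality on each $x_3$-fibre.
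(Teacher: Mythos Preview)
Your argument is correct and is essentially the same as the paper's: both dilate $\bar{\mathcal{D}}_0$ by the factor $R^{1-\al}$ to a unit-scale domain (you additionally translate $r\mapsto r-1$, which is purely cosmetic), invoke the Brezis--Gallouet inequality \eqref{BGinq1} there with a constant controlled by the uniform $H^2$-extension property of the rescaled domains, and then kill the dangerous $R^{1-\al}\|f\|_{L^2}$ term via the one-dimensional Poincar\'e inequality afforded by \eqref{bc}. The scaling relations you list and the resulting factor $R^{\al-1}$ inside the logarithm match the paper's computation exactly.
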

\pf
 Note that we can not simply make zero extension for $f$ outside of the domain and apply the
 regular Brezis-Gallouet inequality. The reason is that the extended function may not be in $H^2$.

Define with scaled function $\t{f}(\t{r},\t{x_3})=f(R^{\al-1}\t{r},R^{\al-1}\t{x_3})$ where $(\t{r},\t{x_3})\in \t{\mathcal{U}}_{0}$ and
\bes
\t{\mathcal{U}}_{0}=\big\{(\t{r},\t{x_3}):R^{1-\al}-1/2<\t{r}<R^{1-\al}+1/2,\ |\t{x_3}|\leq |R^{\al-1}\t{r}|^{\al}\big\}.
\ees Observe that $\t{\mathcal{U}}_{0}$ is almost a square for large $R$.

Using \eqref{BGinq1}, we know
\be\label{Bg2}
\bali
&\|f(r,x_3)\|_{L^\i(\bar{\mathcal{D}}_{0})}
=\|\t{f}(\t{r},\t{x_3})\|_{L^\i(\t{\mathcal{U}}_{0})}\\
\leq& C_0(1+\|\t{f}\|_{H^1(\t{\mathcal{U}}_{0})})\log^{1/2} \big(e+\|\Dl \t{f}\|_{L^2(\t{\mathcal{U}}_{0})}\big)\\
=& C_0(1+\|\na\t{f}\|_{L^2(\t{\mathcal{U}}_{0})}+\|\t{f}\|_{L^2(\t{\mathcal{U}}_{0})})\log^{1/2} \big(e+\|\Dl \t{f}\|_{L^2(\t{\mathcal{U}}_{0})}\big).
\eali
\ee

 The point is that the constant $C_0$ is independent of $R$. This is because we can first extend the function $\t{f}$ to be a $H^2$ function in the whole $(\t{r}, \t{x_3})$ plane.  From the proof of the original
Brezis-Gallouet inequality, we know the constant relies only on the $H^2$ extension property of functions in a domain. The extension property only depends on the thickness of the original domain, which is scaled to $1$.

By the change of variables and relationship between $f$ and $\t{f}$, we deduce
\bes
\|\na\t{f}\|_{L^2(\t{\mathcal{U}}_{0})}=\|\na{f}\|_{L^2(\bar{\mathcal{D}}_{0})}, \|\t{f}\|_{L^2(\t{\mathcal{U}}_{0})}=R^{1-\al}\|{f}\|_{L^2(\bar{\mathcal{D}}_{0})},
\|\Dl \t{f}\|_{L^2(\t{\mathcal{U}}_{0})}=R^{\al-1}\|\Dl f \|_{L^2(\bar{\mathcal{D}}_{0})}.
\ees
Inserting the above equalities into \eqref{Bg2}, we find
\be\label{2.22}
\bali
&\|f(r,x_3)\|_{L^\i(\bar{\mathcal{D}}_0)}
\le C_0(1+\|\na{f}\|_{L^2(\bar{\mathcal{D}}_{0})}+R^{1-\al}\|{f}\|_{L^2(\bar{\mathcal{D}}_{0})})\log^{1/2} \big(e+R^{\al-1}\|\Dl f \|_{L^2(\bar{\mathcal{D}}_{0})}\big).
\eali
\ee

Now if $f$ satisfies \eqref{bc}, the Poincar\'e inequality implies
\bes
R^{1-\al}\|{f}\|_{L^2(\bar{\mathcal{D}}_{0})}\leq C\|\na{f}\|_{L^2(\bar{\mathcal{D}}_{0})},
\ees
where $C$ is independent of $R$. At last, combination of the above inequality and \eqref{2.22} infers \eqref{Bg3}. \qed

Now let us introduce Bogovski\u{i}'s \cite{Bme:1,Bme:2} work on solving the divergence equation on a bounded domain with $W^{1, p}_0$ functions \eqref{e4.3e}. We only present a special case written
 as Lemma III.3.1 of \cite{Galdi2011}. More general results can be found in Chapter III of  the same book.  See also a later paper by Brezis and Bourgain
 \cite{BrBo} for further results in the special case $p=n$ on torus.

\begin{lemma}
\label{L4.1L}
Let $\mO \subset \bR^n$, $n \ge 2$, be a bounded domain which is star shaped with respect to every point in a ball $B(x_0, R) \subset \mO$. Then for any $f \in L^2(\mO)$, satisfying
\bes
f \in L^p(\mO),  1<p<\infty, \qq {\rm with}\qq \int_{\mO} f=0,
\ees
  there exists a constant $C=C(\mO, p, n) $ and at least one vector field $V:\mO\rightarrow \bR^n$ such that
  \be\label{e4.3e}
\na\cdot V= f,\q V \in W^{1,p}_0(\mO),\qq \|\na V\|_{L^p} \leq C \|f\|_{L^p}.
\ee Furthermore, let $diam(\mO)$ be the diameter of $\mO$, there is a positive constant $c_0(n, p)$, depending only on $n, p$ such that  the following estimate holds:
\[
C \le c_0(n, p) \left[diam(\mO)/R\right]^n \left(1+ diam(\mO)/R \right).
\]
\end{lemma}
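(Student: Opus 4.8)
The plan is to reconstruct Bogovski\u{i}'s explicit solution operator and then estimate it by Calder\'on--Zygmund theory, tracking the geometric constants at the end. First I would normalize the averaging profile: fix once and for all a profile $\omega_0 \in C^\infty_0(B(0,1))$ with $\int \omega_0 = 1$, and set $\omega(y) = R^{-n}\omega_0\big((y-x_0)/R\big)$, so that $\omega \in C^\infty_0(B(x_0, R))$ and $\int_{\mO} \omega = 1$. With $f \in L^p(\mO)$ of zero mean, I would define the candidate $V = (V_1, \dots, V_n)$ by the Bogovski\u{i} kernel
\be
\lab{bogkernel}
V_i(x) = \int_{\mO} f(y) N_i(x, y) \, dy, \q N_i(x, y) = \f{x_i - y_i}{|x-y|^n} \int_{|x-y|}^{\i} \omega\lt( y + \xi \f{x-y}{|x-y|} \rt) \xi^{n-1} \, d\xi.
\ee
The hypothesis that $\mO$ is star-shaped with respect to every point of $B(x_0, R)$ is exactly what guarantees that the rays $y + \xi\,(x-y)/|x-y|$ appearing above stay inside $\mO$ whenever the integrand is nonzero, so that $V$ is well defined and supported in $\mO$.

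The next step is to verify the algebraic identity $\na \cdot V = f - \omega \int_{\mO} f \, dy$, first for smooth $f$ by differentiating \eqref{bogkernel} under the integral sign and then for general $f \in L^p$ by density; since $\int_{\mO} f = 0$ by hypothesis, this collapses to $\na \cdot V = f$. The same representation, together with the compact support of $\omega$ and the decay $|N_i(x,y)| \ls |x-y|^{-(n-1)}$ of the kernel, shows that $V$ has zero trace on $\p \mO$, so that $V \in W^{1,p}_0(\mO)$ once the derivative bound is in hand.

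The main work, and the principal obstacle, is the estimate $\|\na V\|_{L^p} \le C \|f\|_{L^p}$. Here I would differentiate $N_i$ and split $\p_{x_j} N_i(x,y) = K_{ij}(x, y) + H_{ij}(x,y)$, where the leading part $K_{ij}$ is homogeneous of degree $-n$ in $x - y$ and carries the cancellation $\int_{|z|=1} K_{ij}(z)\, d\sigma = 0$ needed for a principal-value singular integral, while $H_{ij}$ is only weakly singular. The operator with kernel $K_{ij}$ is then bounded on $L^p(\mO)$ for $1 < p < \infty$ by the Calder\'on--Zygmund theorem, after checking the H\"ormander regularity condition, and the remainder $H_{ij}$ is handled by Young's inequality for integral operators with an integrable kernel. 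Verifying the cancellation and the regularity of $K_{ij}$ is the delicate point, since the kernel inherits the derivatives of $\omega$.

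Finally, for the explicit constant I would rescale $x \mapsto x/R$ to reduce to the case $R = 1$, so that all the $R$-dependence is transferred into the normalization factor $R^{-n}$ of $\omega$ and into the length of the radial integration in $\xi$, which runs out to distances of order $\mathrm{diam}(\mO)$. Book-keeping the Calder\'on--Zygmund norm of $K_{ij}$ and the $L^1$-norm of $H_{ij}$ against these two scales produces the two geometric factors: the $[\mathrm{diam}(\mO)/R]^n$ comes from the volume normalization of $\omega$, while the extra $(1 + \mathrm{diam}(\mO)/R)$ comes from the length of the radial integration. This yields $C \le c_0(n,p)\, [\mathrm{diam}(\mO)/R]^n \,(1 + \mathrm{diam}(\mO)/R)$. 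I expect this constant-tracking to be routine but tedious once the Calder\'on--Zygmund step is secured.
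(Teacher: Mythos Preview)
Your proposal is a correct outline of the standard proof via Bogovski\u{i}'s explicit operator and Calder\'on--Zygmund theory. However, you should know that the paper does not actually prove this lemma: it is stated as a known result, quoted verbatim as Lemma III.3.1 of Galdi's book \cite{Galdi2011}, with Bogovski\u{i}'s original papers \cite{Bme:1,Bme:2} cited for the construction. The paper's only contribution here is to record the explicit integral formula
\[
V(x) =\int_\mO N(x, y) f(y)\, dy, \quad N(x, y) =\frac{x-y}{|x-y|^n}
\int^\infty_{|x-y|} \phi\left(y+  r\, \frac{x-y}{|x-y|}\right) r^{n-1}\, dr,
\]
which is exactly the kernel \eqref{bogkernel} in your proposal (with your $\omega$ playing the role of $\phi$). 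So there is no discrepancy of method to report: your plan reconstructs the argument that the paper simply imports from the literature.

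One small correction in your write-up: the star-shapedness hypothesis is not really needed for the rays to ``stay inside $\mO$'' so that $V$ is well defined --- the inner $\xi$-integral is already finite because $\omega$ has compact support in $B(x_0,R)$, regardless of the geometry of $\mO$. What star-shapedness with respect to every point of $B(x_0,R)$ actually buys you is that $V$ is supported in $\overline{\mO}$ (and hence lies in $W^{1,p}_0(\mO)$): if $x \notin \mO$ and $y \in \mO$, then the half-line $\{y + \xi(x-y)/|x-y| : \xi \ge |x-y|\}$ cannot meet $B(x_0,R)$, since otherwise the segment from that point of $B(x_0,R)$ back to $y$ would lie in $\mO$ and would contain $x$. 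This is a minor point of exposition, not a gap in the argument.
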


The proof is based on an explicit integral formula found by Bogovski\u{i}.
\be
\lab{Bogform}
V(x) =\int_\mO N(x, y) f(y) dy, \quad N(x, y) =\frac{x-y}{|x-y|^n}
\int^\infty_{|x-y|} \phi(y+  r \frac{x-y}{|x-y|}) r^{n-1} dr,
\ee where $\phi \in C^\infty_0(B_R(x_0))$ with $\int \phi =1$ is a fixed function.

 Sometimes the constant $C$ in \eqref{e4.3e} can be improved for some domains, as indicated in the  following:

\begin{proposition}\label{P4.1} Consider the domains $\mO_R=\big\{x \, | \,  R\leq |x_h|\leq 2R,\ |x_3|\leq |x_h|^\al\big\} \subset \bR^3$, where $\al\in [0,1]$ and $R\geq 1$.
For any
$
f\in L^2(\mO_R)$ with  $\int_{\mO_R} f=0
$, Problem \eqref{e4.3e} with $p=2$ has a solution such that
\be\label{e4.4e}
\|\na V\|_{L^2(\mO_R)}\leq  C_\al R^{1-\al}\|f\|_{L^2(\mO_R)},
\ee
where $C_\al$ is independent of $R$.
\end{proposition}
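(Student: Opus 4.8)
The plan is to reduce the thin (anisotropic) domain $\mO_R$ to an isotropic one by an anisotropic dilation in the $x_3$ direction, apply the scale invariant Bogovski\u{i} estimate there, and then carefully track the anisotropic factors when transforming back. First I would introduce the linear map $\Phi(x)=(x_1,x_2,R^{1-\al}x_3)=:y$, which sends $\mO_R$ onto $\mO_R^\ast=\{y : R\le|y_h|\le 2R,\ |y_3|\le R^{1-\al}|y_h|^\al\}$. Since $|y_h|\sim R$ on $\mO_R^\ast$, the vertical half-width there is $\sim R^{1-\al}R^\al=R$, so $\mO_R^\ast$ is isotropic of scale $R$; after dividing by $R$ it becomes the fixed domain $\{1\le|z_h|\le2,\ |z_3|\le|z_h|^\al\}$, whose Lipschitz character is bounded uniformly for $\al\in[0,1]$ and is independent of $R$.

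Given $f\in L^2(\mO_R)$ with $\int_{\mO_R}f=0$, I would set $\tilde f=f\circ\Phi^{-1}$ on $\mO_R^\ast$, so that $\int_{\mO_R^\ast}\tilde f=R^{1-\al}\int_{\mO_R}f=0$, and then solve $\na\cdot W=\tilde f$ on $\mO_R^\ast$ with $W\in W^{1,2}_0(\mO_R^\ast)$. Because $\mO_R^\ast$ is an annular region it is not star shaped, so Lemma \ref{L4.1L} does not apply verbatim; I would cover the fixed normalized annulus by a bounded, $R$-independent number of overlapping subdomains, each star shaped with respect to a ball of comparable radius, apply Lemma \ref{L4.1L} on each piece, and patch them together after redistributing the finitely many nonzero means through bump functions supported in the overlaps (a bounded chain, hence only an $O(1)$ loss). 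The resulting estimate $\|\na W\|_{L^2(\mO_R^\ast)}\le C_\al\|\tilde f\|_{L^2(\mO_R^\ast)}$ carries an $R$-independent constant, since the bound $\|\na W\|_{L^2}\le C\|\tilde f\|_{L^2}$ is invariant under isotropic dilations and the normalized domain is fixed.

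Finally I would transform back. Defining $V=(W_1,W_2,R^{-(1-\al)}W_3)\circ\Phi$ gives $\na\cdot V=f$ on $\mO_R$ and $V\in W^{1,2}_0(\mO_R)$, the linear bi-Lipschitz change of variables preserving the zero trace. Tracking the Jacobian $dy=R^{1-\al}dx$ yields $\|\tilde f\|_{L^2(\mO_R^\ast)}=R^{(1-\al)/2}\|f\|_{L^2(\mO_R)}$, while the chain rule $\p_{x_3}=R^{1-\al}\p_{y_3}$ shows that the largest gradient component, namely the vertical derivative of the horizontal part of $V$, contributes a factor $R^{1-\al}$ under the integral, so that $\|\na V\|_{L^2(\mO_R)}\le C_\al R^{(1-\al)/2}\|\na W\|_{L^2(\mO_R^\ast)}$; the remaining components carry only smaller or equal powers of $R$ (the factors $R^{-(1-\al)}$ on $\p_{y_h}W_3$ and the balanced factor on $\p_{x_3}V_3$). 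Combining the three displays gives $\|\na V\|_{L^2(\mO_R)}\le C_\al R^{1-\al}\|f\|_{L^2(\mO_R)}$, which is \eqref{e4.4e}.

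The main obstacle is twofold. First, $\mO_R^\ast$ is an annulus and hence not star shaped, so one must replace the single star shaped piece demanded by Lemma \ref{L4.1L} with a bounded, $R$-independent covering and handle the associated redistribution of finitely many means. Second, one must bookkeep the anisotropic dilation so that the genuinely worst factor, $R^{1-\al}$ arising when $\p_{x_3}$ lands on $W_1,W_2$, is \emph{exactly} $R^{1-\al}$ and none of the other scalings produce a larger power. I expect the gradient bookkeeping to be the delicate computational step, but no single contribution exceeds $R^{1-\al}$, which is what pins down the stated constant.
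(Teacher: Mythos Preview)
Your argument is correct and is essentially the paper's proof: both reduce $\mO_R$ via the anisotropic dilation $(x_1,x_2,x_3)\mapsto(x_1/R,x_2/R,x_3/R^\al)$ to the fixed unit annulus $\{1\le|\bar x_h|\le 2,\ |\bar x_3|\le|\bar x_h|^\al\}$, solve Bogovski\u{i} there, and transfer back with the anisotropic weights on the components of $V$; you merely factor the map as a vertical stretch by $R^{1-\al}$ followed by an isotropic dilation, which is the same thing. Your bookkeeping of the nine gradient entries and the Jacobian $dy=R^{1-\al}dx$ matches the paper's computation line by line (their $R^{4-\al}$ versus $R^{2+\al}$ on the squared norms gives exactly your $R^{1-\al}$). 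One point where you are actually more careful than the paper: the reference domain is an annulus and hence not star shaped, so Lemma~\ref{L4.1L} does not apply directly; the paper invokes it without comment, while you correctly note that one needs a finite $R$-independent star-shaped covering with mean redistribution (or the standard extension of Bogovski\u{i} to bounded Lipschitz domains).
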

\pf The existence of $V$ is already known by Lemma \ref{L4.1L}. So we just need to prove (\ref{e4.4e}).
For $\bar{x}=(\bar{x}_1,\bar{x}_2,\bar{x}_3)\in \mO$, define
\[
\bar{f}(\bar{x}_1,\bar{x}_2,\bar{x}_3):=f(R\bar{x}_1,R\bar{x}_2,R^\al\bar{x}_3)= f(x_1, x_2, x_3).
\]Note $x_1=R\bar{x}_1$, $x_2=R\bar{x}_2$ but $x_3=R^\al\bar{x}_3$.

It is easy to see that $\bar{f}$ satisfies the assumption in Lemma \ref{L4.1L}. So by Lemma \ref{L4.1L}, there exists a vector function $\bar{V}:\mO\rightarrow \bR^3$ satisfying \eqref{e4.3e}. Then for $x\in\mO_R$, define
\be
\bali
&\q V(x_1,x_2,x_3)=(V^1(x_1,x_2,x_3),V^2(x_1,x_2,x_3),V^3(x_1,x_2,x_3))\\
&=(R\bar{V}^1(\f{x_1}{R},\f{x_2}{R},\f{x_3}{R^\al}),R\bar{V}^2(\f{x_1}{R},\f{x_2}{R},\f{x_3}{R^\al}),R^\al\bar{V}^3(\f{x_1}{R},\f{x_1}{R},\f{x_3}{R^\al}))\\
&=(R\bar{V}^1(\bar{x}_1,\bar{x}_2,\bar{x}_3),R\bar{V}^2(\bar{x}_1,\bar{x}_2,\bar{x}_3),R^\al\bar{V}^3(\bar{x}_1,\bar{x}_2,\bar{x}_3)).
\eali
\ee
By a direct computation, we have
\bes
\na\cdot {V}={f},\q {V}\in W^{1,2}_0(\mO_R),\qquad \text{in} \quad {x} \quad \text{variables}
\ees
\bes
\na\cdot {\bar V}= \bar{f},\q \bar{V}\in W^{1,2}_0(\mO), \qquad \text{in} \quad \bar{x} \quad \text{variables},
\ees
 where $\bar{V} = (\bar{V}^1(\bar{x}), \bar{V}^2(\bar{x}), \bar{V}^3(\bar{x}))$.
Now we  estimate the $L^2$ norm of $\na V$. We use $\alpha,\beta$ to take values only on $1,2$ and $i,j$ to take values on $1,2,3$. So we have
\bes
\bali
&\|\na V\|^2_{L^2(\mO_R)}
=\sum\limits^3_{i,j=1}\int_{|x_3|\leq |x_h|^\al}\int_{R\leq |x_h|\leq 2R}|\frac{\p V^j}{\p x_i}|^2dx_hdx_3\\
=&\int_{|x_3|\leq |x_h|^\al}\int_{R\leq |x_h|\leq 2R}\big(\sum\limits^2_{\alpha,\beta=1}| \frac{\p V^\beta}{\p x_\alpha}|^2+\sum\limits^2_{\beta=1}|\frac{\p V^\beta}{\p x_3}|^2+\sum\limits^2_{\alpha=1}|\frac{\p V^3}{\p x_\alpha}|^2+|\frac{\p V^3}{\p x_3}|^2\big)dx_hdx_3\\
=&\int_{|x_3|\leq |x_h|^\al}\int_{R\leq |x_h|\leq 2R}\Big(\sum\limits^2_{\alpha,\beta=1}|\frac{\p \bar{V}^\beta}{\p \bar{x}_\alpha}|^2({\f{x_h}{R}},\f{x_3}{R^\al})+R^{2-2\al}\sum\limits^2_{\beta=1}|\frac{\p \bar{V}^\beta}{\p \bar{x}^3}|^2({\f{x_h}{R}},\f{x_3}{R^\al})\\
&+\f{1}{R^{2(1-\al)}}\sum\limits^2_{\alpha=1}|\frac{\p \bar{V}^3}{\p \bar{x}^\alpha}|^2({\f{x_h}{R}},\f{x_3}{R^\al})+|\frac{\p \bar{V}^3}{\p \bar{x}_3}|^2({\f{x_h}{R}},\f{x_3}{R^\al})\Big)dx_hdx_3.
\eali
\ees Therefore
\be
\label{e4.6e}
\bali
&\|\na V\|^2_{L^2(\mO_R)}=R^{2+\al}\int_{|\bar{x_3}|\leq |\bar{x}_h|^\al}\int_{1\leq |\bar{x}_h|\leq 2}\Big(\sum\limits^2_{\alpha,\beta=1}|\frac{\p \bar{V}^\beta}{\p \bar{x}^\alpha}|^2(\bar{x}_h, \bar{x}_3)+R^{2(1-\al)}\sum\limits^2_{\beta=1}|\frac{\p \bar{V}^\beta}{\p \bar{x}_3}|^2(\bar{x}_h,\bar{x}_3)\\
&+\f{1}{R^{2(1-\al)}}\sum\limits^2_{\alpha=1}|\frac{\p \bar{V}^3}{\p \bar{x}^\alpha}|^2(\bar{x}_h,\bar{x}_3)+|\frac{\p \bar{V}^3}{\p \bar{x}_3}|^2(\bar{x}_h, \bar{x}_3)\Big)d\bar{x}_hd \bar{x}_3\\
\leq&CR^{4-\al}\|\na \bar{V}\|^2_{L^2(\mO)}.
\eali
\ee
Also it is easy to see
\be\label{e4.7e}
\|f\|^2_{L^2(\mO_R)}=R^{2+\al}\|\bar{f}\|^{2}_{L^2(\mO)}.
\ee
Combining \eqref{e4.6e}, \eqref{e4.7e} and \eqref{e4.3e}, we have
\bes
\|\na V\|^2_{L^2(\mO_R)}\leq CR^{4-\al}\|\na \bar{V}\|^2_{L^2(\mO)}\leq CR^{4-\al}\|\bar{f}\|^2_{L^2(\mO)}=CR^{2(1-\al)}\|f\|^2_{L^2(\mO_R)}.
\ees
This finishes the proof of Proposition \ref{P4.1}. \qed

In Section \ref{subextra}, we will see some applications of Lemma \ref{L4.1L}  and Proposition \ref{P4.1} to flows in a slab (channels of fixed finite depth) or aperture domains.

\subsection{Self-similar solutions}

Another useful class of special solutions to the NS are self-similar solutions and their variants such as discretely self-similar solutions etc. A solution $v$ to the NS is called self-similar if it is invariant under the natural scaling \eqref{nsscal} for all parameters $\lam>0$, namely $v(x, t)=v_\lam(x, t)
\equiv \lam v(\lam x, \lam^2 t), \forall \lam>0, x \in \bR^3$ and all $t>0$ or all $t<0$. In general, if $v=v_\lam$ for one particular $\lam$, then $v$ is called a discretely self-similar solution with factor $\lam$. Given a self-similar $v$, if it is independent of time, it is called  stationary self-similar; otherwise,
if $v$ is defined for $t<0$, then it is called backward self-similar; if $v$ is defined for $t>0$, then it is called forward self-similar.
Stationary self-similar solutions can serve as one model of stationary solutions of NS near a singularity or spatial infinity. A backward one serves as a model of possible type I singularity of solutions of NS;
and a forward self-similar solution can be used to describe long time behavior of solutions of the NS.
Detailed study can be found in \cite{Tsai} Chapter 8.

Self-similar solutions are determined by their profile $U$. In the stationary case
\[
v(x) = |x|^{-1} U(x), \qquad U(x) \equiv v(x/|x|).
\]Here $U$ can be regarded as a vector defined on the unit sphere or a homogeneous vector field on $\bR^3-\{0 \}$ of degree $0$. In the time dependent case
\[
v(x, t) = \lam(t) U(\lam(t) x), \qquad U(x) = v(x, \sgn t), \quad \lam(t)=(t \, \sgn t)^{-1/2}.
\]Let the number $\textbf{m}=0, 1, -1$ for the stationary, forward and backward cases respectively. Then the profiles $U$ satisfy the following stationary equations in $\bR^3$ if $\textbf{m} \neq 0$ and in $\bR^3-\{0\}$ if $\textbf{m}=0$.
\be
\lab{eqselfs}
-\Delta U + {\blue (U \cdot\nabla U)}+\nabla P - \frac{\textbf{m}}{2} U -\frac{\textbf{m}}{2} x  \nabla U =0, \qquad div \,  U=0,
\ee where, as usual $x=(x_1, x_2, x_3)$, $U=(U^1, U^2, U^3)$ and $x \nabla U = \sum^3_{i=1} x_i  \p_{i} U$.

There is a family of stationary self-similar solutions, called Landau solutions \cite{Lan44}, which, in the spherical coordinates,  are explicitly given by
\be
\lab{lansol}
\bali
U^b&= curl  ( L \, e_\theta)=\frac{1}{\rho \sin \phi} \p_{\phi} (L \, \sin \phi) e_\rho -
\frac{1}{\rho} \p_{\rho} (\rho L) e_{\phi}, \\
 &\quad L =  \frac{2 \sin \phi}{(a - \cos \phi)}, \quad b=16 \pi \left(a+\frac{a^2}{2} \ln \frac{a-1}{a+1} + \frac{4a}{3(a^2-1)}\right) e_3, \quad a>1.
\eali
\ee With the parameter $b$, $U^b$ actually solves the nonhomogeneous problem in $\bR^3$:
\[
-\Delta U^b + {\blue ( U^b\cdot \nabla)} U^b+\nabla P^b  =b \delta, \qquad div \,  U^b=0,
\]where $\delta=\delta(x)$ is the Dirac delta function centered at $0$.
 The spherical system we take is the standard one
\[
x=(x_1, x_2, x_3)= (\rho \sin \phi \cos \th, \rho \sin \phi \sin \th, \rho \cos \phi).
\]Here $\rho=|x|$, $\phi$ is the polar angle between $x$ and $e_3$, $\th$ is the azimuthal angle.
This system is a convenient one for studying self-similar solutions.

The Landau solutions are axially symmetric without swirl.  The following result due to Sverak \cite{Sv11}
shows that they are the only (-1)-homogenous ones. Earlier Tian and Xin proved in \cite{TX}  that all (-1)-homogeneous, axially symmetric  solutions  in $C^2(\bR^3 \backslash \{0\})$ are Landau solutions.
Here we are using the notion of "(-$\textbf{m}$)-homogeneous" interchangeably with "homogeneous of degree
$-\textbf{m}$".

\begin{theorem}
If a homogeneous vector field $v$ of degree $-1$ is a solution to the stationary NS in $\bR^3 \backslash 0$,
then $v=U^b$ for some $b \in \bR^3$.
\end{theorem}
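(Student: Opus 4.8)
The plan is to promote $v$ from a solution on $\bR^3 \setminus \{0\}$ to a distributional solution on all of $\bR^3$ driven by a point force, and then to classify such forced solutions. First I would record the homogeneity bookkeeping. A $(-1)$-homogeneous field satisfies $|v(x)| \le C/|x|$, so $v \in L^1_{loc}(\bR^3)$; the tensor $v\otimes v$ and the pressure $P$, which is determined from $v$ by $-\Delta P = \partial_i\partial_j(v_i v_j)$ and may be taken $(-2)$-homogeneous, satisfy $|v\otimes v| + |P| \le C/|x|^2 \in L^1_{loc}(\bR^3)$. Hence $v$, $v\otimes v$ and $P$ all define distributions on $\bR^3$, and since $v$ solves the stationary equations classically off the origin, the system
\[
-\Delta v + \operatorname{div}(v\otimes v) + \nabla P = F, \qquad \operatorname{div} v = G
\]
holds distributionally on $\bR^3$ with $F$ and $G$ supported at $\{0\}$.

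Second, I would pin down $F$ and $G$ purely by scaling. Every distribution supported at a point is a finite sum $\sum_\alpha c_\alpha \partial^\alpha \delta_0$, and $\partial^\alpha\delta_0$ is homogeneous of degree $-3-|\alpha|$. Since $-\Delta v$, $\operatorname{div}(v\otimes v)$ and $\nabla P$ are each $(-3)$-homogeneous, $F$ must be $(-3)$-homogeneous, forcing $|\alpha|=0$ and $F = b\,\delta_0$ for a constant vector $b \in \bR^3$. Likewise $G$ is $(-2)$-homogeneous and supported at the origin, but no nonzero distribution supported at a point has homogeneity $-2$; hence $G=0$ and $v$ is genuinely divergence free on $\bR^3$. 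This identifies $v$ as a $(-1)$-homogeneous distributional solution of the Navier--Stokes system with the single point force $b\,\delta_0$, i.e. exactly the equation solved by the Landau profile $U^b$ of \eqref{lansol}.

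Third, I would classify these forced solutions by a continuity/degree argument in the parameter $b$. Elliptic regularity for the stationary system shows $v$ is smooth (indeed real-analytic) on $\bR^3\setminus\{0\}$, so it is encoded by a $0$-homogeneous profile on $\bS^2$, and the classification becomes a rigidity statement for a nonlinear elliptic system on the sphere parametrized by $b$. For small $|b|$ an a priori bound on the scale-invariant norm (with bound tending to $0$ as $|b|\to0$) should force every solution to be small, so the implicit function theorem around the Stokes operator at $v=0$ gives a unique small branch, the Landau branch. To reach all $b$, I would show that the map $v \mapsto b$ on the solution set is a local diffeomorphism (by non-degeneracy of the linearization at each $U^b$) and proper (by the a priori bound over compact ranges of $b$), hence a covering map of the simply connected target $\bR^3\setminus\{0\}$ that admits the Landau family as a global section; this forces injectivity and therefore $v=U^b$. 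To compare two solutions with the same $b$ concretely, I would set $w=v_1-v_2$, which solves the force-free linear system $-\Delta w + \operatorname{div}(v_1\otimes w + w\otimes v_2) + \nabla q = 0$, $\operatorname{div} w = 0$ on all of $\bR^3$ with $(-1)$-homogeneous coefficients, and try to conclude $w\equiv 0$.

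The hard part will be exactly this non-degeneracy/coercivity input: proving that the linearized Stokes-type operator $w \mapsto -\Delta w + \operatorname{div}(U^b\otimes w + w\otimes U^b) + \nabla q$ has no nontrivial $(-1)$-homogeneous kernel on $\bS^2$ (modulo the symmetries already built into the Landau family). A naive energy estimate is useless here because $\int_{\bR^3}|\nabla w|^2$ is scale-invariant and borderline divergent, so any estimate must be localized to the sphere and must exploit the sign and algebraic structure of the vortex-stretching terms together with the explicit axially symmetric, swirl-free form \eqref{lansol} of $U^b$. Once this spectral non-degeneracy and the scale-invariant a priori bound are established, the continuity method closes and every $(-1)$-homogeneous solution is shown to lie on the Landau branch, i.e. $v=U^b$ for some $b\in\bR^3$.
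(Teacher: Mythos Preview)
The paper does not give its own proof of this theorem; it merely cites Sver\'ak \cite{Sv11} and, for the axially symmetric case, Tian--Xin \cite{TX}. So the comparison is between your outline and Sver\'ak's actual argument.

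Your first two steps are correct and do match the beginning of Sver\'ak's proof: the homogeneity bookkeeping, the distributional extension across the origin, and the identification $F=b\,\delta_0$, $G=0$ by scaling are exactly how one reduces to the forced problem $-\Delta v+\operatorname{div}(v\otimes v)+\nabla P=b\,\delta_0$ on $\bR^3$.

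Your third step, however, takes a different route and contains a genuine gap. You propose a continuity/degree argument over $b$, resting on (a) a scale-invariant a priori bound on all $(-1)$-homogeneous solutions uniform over compact $b$-sets, and (b) triviality of the kernel of the linearization at each $U^b$. Neither of these is established, and (b) in particular is a hard spectral statement about a non-self-adjoint Stokes-type operator on $\bS^2$ with critically scaling coefficients; you acknowledge this yourself. Without (a) and (b) the covering-map argument does not close, so as written the proof is incomplete.

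Sver\'ak's proof avoids both obstacles entirely. He works on $\bS^2$ and exploits the head-pressure $Q=\tfrac12|v|^2+P$, which satisfies a scalar elliptic inequality with a sign (the right-hand side is $-|\omega|^2\le 0$); a maximum-principle analysis of $Q$ and of the associated surface flow forces the solution to be axially symmetric (and swirl-free). Once axial symmetry is known, the problem collapses to an ODE system on $(0,\pi)$ already classified by Tian--Xin \cite{TX}, yielding $v=U^b$. The missing idea in your outline is precisely this symmetry-reduction step: rather than linearizing around the Landau branch, one should first prove that any $(-1)$-homogeneous solution is automatically axisymmetric, after which uniqueness is an ODE fact, not a PDE non-degeneracy statement.
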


Extension of the theorem to (-1)-homogenous solutions with singularities at the poles of ${\mathbb S}^2$
can be found in recent papers by Li, Li and Yan \cite{LLY18, LLY18-2}.

Backward self-similar solutions of the form
\be
\lab{le-bss}
v(x, t) = \frac{1}{\sqrt{T-t}} U\left(\frac{x}{\sqrt{T-t}}\right), \quad t \in [t_0, T),  x \in \bR^3,
\ee was proposed by Leray as a candidate for possible type I singularity of the time dependent NS at time $T$. Here $t_0$ is the initial time. Leray asked whether there is a self-similar solution of the NS in above form with finite energy, i.e.
\eqref{enineq} holds. For such solutions $U \in L^2(\bR^3)$. In 1996, Necas, Ruzicka and Sverak \cite{NRS}  proved that the only such solution is $0$. In \cite{Tsai98}, T. P. Tsai generalized the result to very weak self-similar solutions satisfying the local finite energy condition. For a definition of very weak solutions, see \cite{Tsai} p53 e.g.

\begin{theorem}
Suppose $v$ is a self-similar very weak solution of the NS with zero force in the cylinder $B_1  \times (-1, 0) \subset \bR^3 \times (-1, 0)$. It is zero if it has finite local energy
\[
\sup_{-1<t<0} \int_{B_1} |v(x, t)|^2 dx + 2 \int^0_{-1} \int |\nabla v(x, t)|^2 dxdt  < \infty,
\]or if the profile $U \in L^q(\bR^3)$ for some $q \in [3, \infty)$. Also $U$ is constant if $U \in L^\infty(\bR^3)$.
\end{theorem}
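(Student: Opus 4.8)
The plan is to pass to the self-similar profile and reduce all three assertions to a Liouville statement for the Leray profile equation, exploiting the confining drift generated by backward self-similar scaling. Writing $v(x,t)=(-t)^{-1/2}U(x/\s{-t})$ (so the pressure scales as $(-t)^{-1}$), the profile $(U,P)$ solves \eqref{eqselfs} with $\textbf{m}=-1$, that is
\be
-\Dl U+(U\cdot\na)U+\f12 U+\f12 (x\cdot\na)U+\na P=0,\q \na\cdot U=0 \q\text{in}\q \bR^3.
\ee
First I would upgrade each hypothesis to smoothness of $U$. In the finite local energy case the space-time singular set of $v$ is scaling invariant, hence a union of rays through the origin, which carries positive one dimensional parabolic Hausdorff measure; by the Caffarelli-Kohn-Nirenberg $\e$-regularity theorem recalled in Section 1 this is impossible, so $v$ is smooth away from the origin and the profile equation then gives smoothness everywhere. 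In the case $U\in L^q$, $3\le q<\i$, smoothness instead follows from elliptic bootstrapping for the profile equation. Along the way one records the rescaled form of the hypotheses, e.g. finite local energy translates into $\int_{B_R}|U|^2=O(R)$ with $\int_{B_R}|\na U|^2$ growing at most logarithmically.

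The core of the argument is decay. The linear part $-\Dl+\f12 x\cdot\na$ is the Ornstein-Uhlenbeck operator, self-adjoint with respect to the Gaussian weight $\rho=e^{-|x|^2/4}$ because $-\Dl u+\f12 x\cdot\na u=-\rho^{-1}\na\cdot(\rho\,\na u)$, and its drift $\f12 x$ points outward. I would use this confining structure, together with the smoothness and mild growth bounds from the first step, to prove that $U$, $\na U$ and $P$ decay rapidly (essentially at a Gaussian rate) in the two integrable cases. In particular this yields $U\in L^2(\bR^3)$ and $P(x)\to0$ as $|x|\to\i$, which is what makes all the weighted integrals below converge and legitimizes the integrations by parts.

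With decay in hand I would run the head-pressure argument of Necas-Ruzicka-Sverak. Put $Q=\f12|U|^2+P$. Taking the divergence of the momentum equation gives $\Dl P=-(\p_iU_j)(\p_jU_i)$, and a direct computation then yields
\be
\Dl Q-(U\cdot\na)Q=|\o|^2+\f12|U|^2+\f14\,x\cdot\na|U|^2.
\ee
In the stationary problem the right side is just $|\o|^2\ge0$, so $Q$ is a subsolution and the maximum principle together with $Q\to0$ at infinity gives $Q\le0$ at once (this is exactly the mechanism behind Chae's theorem in Section 4). Here the extra transport term $\f14 x\cdot\na|U|^2$, equivalently $-\f12 x\cdot\na P$ after absorbing the drift into $b=U+\f{x}{2}$, is not sign definite. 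The plan is to absorb it using the decay: on a large ball the term is a controlled lower-order perturbation, the outward drift drives the maximum of $Q$ to the boundary where $Q\to0$, and exhausting $\bR^3$ yields $Q\le0$, hence $P\le-\f12|U|^2\le0$ everywhere.

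To finish the integrable cases I would convert the sign $Q\le0$ into vanishing. One clean route is the Gaussian-weighted energy identity obtained by testing the profile equation with $U\rho$,
\be
\int_{\bR^3}\rho\,|\na U|^2 dx+\f12\int_{\bR^3}\rho\,|U|^2 dx+\f12\int_{\bR^3}(U\cdot x)\,Q\,\rho\, dx=0,
\ee
whose first two terms are nonnegative; combined with the companion unweighted identity $\int|\na U|^2=\f14\int|U|^2$ and with $Q\le0$, the head-pressure mechanism (a Chae-type test function supported where $Q<0$) forces $\int_{\bR^3}|\na U|^2=0$, so $U\equiv0$ and $v\equiv0$. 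For the case $U\in L^\i$ there is no decay to produce $Q\le0$, but $U$ bounded makes $Q$ bounded, and the same identity read as a Liouville statement for the confining operator forces $\o\equiv0$ and $Q$ constant, whence $\na U\equiv0$ and $U$ is a constant vector. The principal obstacle throughout is precisely the non-sign-definite transport terms created by the self-similar scaling, which destroy the clean subsolution structure of the stationary case: establishing the sharp (Gaussian) decay and using it to rescue the maximum principle is the technical heart, with the scaling-critical endpoint $q=3$ and the implication from finite local energy to global integrability of $U$ being the most delicate points.
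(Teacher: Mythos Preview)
Your overall architecture---pass to the profile, establish smoothness via Caffarelli--Kohn--Nirenberg in the finite local energy case and by elliptic bootstrap in the $L^q$ case, then run a head-pressure maximum-principle argument---matches the actual proof. But you miss the one algebraic observation that makes the whole thing work cleanly, and as a result you invent a much harder program than is needed.

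The correct head pressure for the backward self-similar profile is not $Q=\tfrac12|U|^2+P$ but
\[
Q=\tfrac12|U|^2+P+\tfrac12\,x\cdot U.
\]
With this choice a direct computation gives
\[
-\Delta Q+\Big(U+\tfrac12 x\Big)\cdot\nabla Q=-|\omega|^2\le 0,
\]
i.e.\ a clean elliptic inequality with drift $b=U+\tfrac12 x$ and no leftover terms. The ``non-sign-definite transport terms created by the self-similar scaling'' that you identify as the principal obstacle are an artifact of the wrong definition of $Q$: the extra $\tfrac12\,x\cdot U$ absorbs exactly the pieces $\tfrac12|U|^2+\tfrac14\,x\cdot\nabla|U|^2$ that spoil your equation. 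Consequently the elaborate plan to first establish Gaussian decay from the Ornstein--Uhlenbeck structure and then use it to ``rescue the maximum principle'' is unnecessary, and in fact establishing Gaussian decay \emph{before} you have any sign information is a genuinely hard step that the real proof never attempts.

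Once the clean inequality is in hand, the argument is short: show $Q$ behaves well at infinity (this is where the $L^q$ or finite-local-energy hypothesis enters, via control of $P$ through $-\Delta P=\partial_iU_j\,\partial_jU_i$), apply the maximum principle to conclude $Q$ is constant, read off $\omega\equiv 0$, so $U$ is harmonic and divergence free; then $U\in L^q$ with $q<\infty$ forces $U\equiv 0$, while $U\in L^\infty$ gives $U$ constant. Your Gaussian-weighted energy identity and the ``companion unweighted identity'' are correct but are not what closes the argument.
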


The proof is based on the fact that the total head pressure
\[
Q= \frac{1}{2} | U(x)|^2 + P(x) + \frac{1}{2} x \cdot U(x)
\]satisfies the equation
\[
-\Delta Q + (U(x) +\frac{1}{2} x) \cdot \nabla Q(x) = - | curl \, U(x)|^2 \le 0.
\]If one can prove that $Q$ behaves well at infinity, then the maximum principle and finite energy assumption will force $U=0$.

In the forward self-similar case, Jia and Sverak proved the first  existence result
for large self-similar initial values in $C^\alpha(\bR^3\backslash \{0\})$.

\begin{theorem}[\cite{JiSv}]
Assume $v_0$ is scale-invariant and locally H\"older continuous in $\bR^3\backslash \{0\}$  with $div \, v_0= 0$ in $\bR^3$. Then the Cauchy problem \eqref{nse} has at least one scale-invariant solution $v$ which is smooth in $\bR^3 \times (0,\infty)$ and locally H\"older continuous in $(\bR^3 \times ([0,\infty))\backslash \{(0,0)\}$.
\end{theorem}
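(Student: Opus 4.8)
The plan is to reduce the Cauchy problem to a stationary elliptic problem for the self-similar profile and then to solve that problem by a topological (degree-theoretic) argument supported by scale-invariant a priori estimates. First I would record that a scale-invariant solution must have the form $v(x,t)=\frac{1}{\sqrt t}U(x/\sqrt t)$, so that by the discussion around \eqref{eqselfs} the profile $U$ satisfies the forward self-similar profile system ($\textbf{m}=1$)
\[
-\Delta U+U\nabla U+\nabla P-\tfrac12 U-\tfrac12 x\nabla U=0,\qquad \Div U=0,
\]
on $\bR^3$. The initial condition $v(\cdot,t)\to v_0$ as $t\to0^+$ translates, after fixing $x\neq0$ and letting $t\to0$ so that $x/\sqrt t\to\infty$, into the requirement that $U(y)$ be asymptotic at spatial infinity to the $(-1)$-homogeneous field $v_0$. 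Thus the task becomes: produce a divergence-free profile $U$ solving this system whose far-field behavior matches the prescribed homogeneous, locally H\"older datum.

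Next I would set up the problem as a perturbation of the caloric flow. Let $\bar v=\int_{\bR^3}G(\cdot,t,y)v_0(y)\,dy$; since $v_0$ is $(-1)$-homogeneous this heat flow is itself scale-invariant with a smooth profile $U_0(y)=\bar v(y,1)$ carrying the correct asymptotics at infinity and inheriting the regularity of $v_0$ away from the origin. Writing $U=U_0+W$, the unknown $W$ solves a nonlinear elliptic system with a forcing built from $U_0$, and $W$ is expected to decay at infinity. For small data one could close a contraction in a critical space, but for arbitrary (large) $v_0$ this fails, so the solution must be obtained by a fixed-point/degree argument rather than by iteration.

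The heart of the proof, and the main obstacle, is the scale-invariant a priori bound. Because $v_0$ is homogeneous of degree $-1$ it is not globally square integrable, so the classical Leray--Hopf theory does not apply; instead I would work in the local Leray (Lemari\'e-Rieusset) class and exploit two facts. First, localized energy estimates give uniform control of $v$ in $L^\infty_tL^2_x\cap L^2_t\dot H^1_x$ on each parabolic cube of unit size; second, scale-invariance propagates any such bound from one scale to all scales, so a single normalized local estimate controls the solution everywhere. Feeding these integral bounds into the Caffarelli--Kohn--Nirenberg $\epsilon$-regularity theorem \cite{CKN:1} upgrades them to pointwise and H\"older bounds away from the spatial origin, which simultaneously yields the asserted smoothness in $\bR^3\times(0,\infty)$ and the local H\"older continuity up to $t=0$ off the origin. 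Establishing that these bounds are uniform over the family of approximate solutions is precisely where the difficulty concentrates: one must combine the critical nature of the data with the partial-regularity machinery in a way that is stable under approximation.

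Finally, with the a priori bounds in hand I would construct the solution by a Leray--Schauder degree (or homotopy/continuity) argument: embed the profile system in a family interpolating between the linear problem, whose unique nondegenerate solution is $W=0$, and the full nonlinear one; use the compactness supplied by the $\epsilon$-regularity estimates to show the solution operator is compact; and invoke the uniform a priori bound to keep the degree well defined and nonzero along the homotopy. This produces a profile $W$, hence $U$ and $v(x,t)=t^{-1/2}U(x/\sqrt t)$; scale-invariance of $v$ is automatic from the construction, and a bootstrap using the local representation formula \eqref{vgongshi} together with parabolic regularity promotes the solution to the stated smoothness and boundary H\"older continuity. The delicate points that remain are the verification of compactness of the solution operator and the matching of $U$ to $v_0$ at infinity in a norm strong enough to feed the degree argument.
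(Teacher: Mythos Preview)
Your proposal is correct and follows essentially the same route as the paper's sketch: reduce to the forward profile equation \eqref{eqselfs} with $\textbf{m}=1$, write $U=e^{\Delta}v_0+W$, and run a Leray--Schauder argument along a homotopy, with the a priori bounds supplied by local Leray theory and $\epsilon$-regularity. The one point the paper makes more explicit is the quantitative decay of the correction, namely $|\nabla^\alpha(U_\mu-\mu e^{\Delta}v_0)(x)|\le C(\alpha,v_0)(1+|x|)^{-2-\alpha}$ for $\alpha=0,1$, and that the compactness driving the Leray--Schauder step comes specifically from the local-in-space H\"older estimates near $t=0$ for local Leray solutions developed in \cite{JiSv} (building on \cite{Leri1}), which is a sharper input than a direct appeal to \cite{CKN:1}.
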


The proof uses Leray-Schrauder fixed point theorem to solve equation \eqref{eqselfs}
with $\textbf{m}=1$ in a weighted neighborhood around $\mu e^{\Delta} v_0= \mu \int_{\bR^3} G(x, 1, y) v_0(y) dy$. Here $\mu \in (0, 1]$ is a parameter and
$G$ is the standard heat kernel on $\bR^3$.  More specifically, one looks for $U_\mu$ solving \eqref{eqselfs} such that
\[
\left|\nabla^\alpha \left( U_\mu(x)- \mu e^{\Delta} v_0(x) \right)\right| \le \frac{C(\alpha, v_0)}{(1 +|x|)^{2+\alpha}}, \qquad
\alpha=0, 1.
\]One can convert this to a fixed point problem of an integral operator involving the Stokes kernel, which is easy to solve if $\mu$ is small. If one has compactness for the integral operator, then Leray Schauder theorem ensures the solution exists for $\mu=1$. The compactness is based on a priori H\"older estimates
near the initial time for the local Leray solutions from the book \cite{Leri1}. Then $v= t^{-1/2}U_1( t^{-1/2} x)$ is a solution in the above theorem.

These solutions potentially have application in searching non-unique Leray Hopf solutions of NS. Denote by $v_\mu=t^{-1/2}U_\mu( t^{-1/2} x)$ the unique scale-invariant solution with initial data $\mu v_0$ and $\mu$ sufficiently close to $0$. Note for large $\mu$ the uniqueness is unknown.
In the paper \cite{JiSv15}, Jia and Sverak consider solutions of NS in the form $v_\mu(x,t)+t^{-1/2} \phi(t^{-1/2} x,t)$. The linearization of the equation for $\phi$ takes the form $t \partial_t \phi
=L_\mu \phi$. For small enough $\mu$ the eigenvalues of $L_\mu$ are away from the imaginary axis with real part less than zero. They suggest two potential scenarios under which the solution curve can be continued as a regular function of $\mu$ and the eigenvalues of $L_\mu$ can cross the imaginary axis. Under these hypotheses two solutions of the NS are obtained with initial data $\mu v_0$; Although these have infinite energy but by analysis on certain critical singularities in some lower order terms, the authors are then able to localize these solutions to obtain possible non-uniqueness for Leray-Hopf solutions.

\subsection{Addendum: Extra results on D solutions, by Xinghong Pan and Q. S. Zhang}
\lab{subextra}
$$
$$

In this subsection, we present some new vanishing results on D solutions which extend those in \cite{CPZ2018-1}. The improvements are on wider domains and relaxation of symmetry or growth condition on the solutions. These results seem to be new and are not presented elsewhere in the literature. The general idea of the proof is similar to that of the previous results.

 For Theorem \ref{thzp}, we can weaken the axially symmetric assumption on the three components of $v^r,v^\th, v^3$ to the case that only $v^\th$ is axially symmetric.
  Denote $x_h=(x_1,x_2)$ and $\bS$ the 1 dimensional periodic domain with period being $1$. We have the following result.

%let $\overline{\O}$ be a two dimensional smooth exterior domain, whose complementary set contains a neighbourhood of the origin.

\begin{theorem}[vanishing of periodic HDS with just axial symmetry of $v^\th$]
\label{thutheta}
Let $(v, P)$ be a bounded smooth solution to the problem
\begin{equation}\label{NSP1}
\left\{
\begin{array}{ll}
{\blue (v\cdot\nabla)} v+\nabla P-\Delta v=0,\quad \nabla\cdot v=0, &\text{ in }\quad \bR^2\times\bS, \\
v(x_h,x_3)=v(x_h,x_3+1), &\text{ in}\q\ \bR^2\times\bS,
\end{array}
\right.
\end{equation}
with finite Dirichlet integral
\be\label{D-COND}
\int_{\bR^2\times\bS}|\nabla v(x)|^2dx<+\i.
\ee
If just $v^{\th}$ is axially symmetric (independent of $\th$), then we have $v \equiv c e_3$.
\end{theorem}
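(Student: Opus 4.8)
The plan is to follow the skeleton of the proof of Theorem \ref{thzp}, with one new ingredient — a $\th$-average of the swirl equation — that exploits the sole hypothesis, axial symmetry of $v^\th$. First I would record the structural reductions. Since $v^\th$ is independent of $\th$, the incompressibility condition reads $\f1r\p_r(rv^r)+\f1r\p_\th v^\th+\p_{x_3}v^3=\f1r\p_r(rv^r)+\p_{x_3}v^3=0$ pointwise; integrating over the period in $x_3$, using periodicity of $v^3$ and regularity at the axis, gives $\int_{\bS}v^r\,dx_3=0$ for every $(r,\th)$, and the Poincar\'e inequality in $x_3$ then yields $v^r\in L^2(\bR^2\times\bS)$ exactly as in Theorem \ref{thzp}. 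Next I would average the $\th$-component of the momentum equation over $\th\in[0,2\pi]$: because $v^\th$ is $\th$-independent, the term $\f1r\p_\th P$ and the term $\f{2}{r^2}\p_\th v^r$ integrate to zero and the advection collapses to $\bar v^r\p_r v^\th+\bar v^3\p_{x_3}v^\th+\f{\bar v^r v^\th}{r}$, where $\bar v^r,\bar v^3$ are the $\th$-averages. Hence $\G=rv^\th$ satisfies \eqref{eqvth} with drift $\bar b=\bar v^r e_r+\bar v^3 e_3$, and averaging the incompressibility relation shows $\bar b$ is axially symmetric and divergence free.

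The equation for $\G$ now has exactly the structure of the genuinely axisymmetric case: an elliptic operator with no zeroth-order term and a bounded, axisymmetric, \emph{divergence-free} drift $\bar b$. I would therefore apply the maximum principle \eqref{1.10} together with the energy argument underlying Theorem \ref{thzp}. Combining the finite Dirichlet integral, which forces $\int|v^\th|^2/r^2\,dx<\i$, with an energy estimate for $\G$ in which the drift is integrated out using $\na\cdot\bar b=0$, and letting the cut-off tend to infinity, I expect to conclude $\G\equiv0$, i.e.\ $v^\th\equiv0$. This is the one place where axial symmetry of $v^\th$ is essential: it is precisely what makes the $\th$-average of the nonlinear swirl equation close on the single scalar $\G$, so that $\bar b$ (rather than the full, $\th$-dependent $b$) drives the equation.

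With $v^\th=0$ the flow is $v=v^r e_r+v^3 e_3$ (still possibly $\th$-dependent), with $v^r\in L^2$, $v$ bounded and $\na v\in L^2(\bR^2\times\bS)$. I would fix a constant $c$ (a mean value of $v^3$) and test the momentum equation with $(v-ce_3)\phi^2$, where $\phi$ is a dyadic cut-off supported in the annulus $A_R=\{R\le|x_h|\le2R\}\times\bS$. Using $\na\cdot v=0$ and $\na(ce_3)=0$ gives $\int|\na v|^2\phi^2=\f12\int|v-ce_3|^2\,v\cdot\na\phi^2+\int P\,(v-ce_3)\cdot\na\phi^2-\int\na v:\big((v-ce_3)\otimes\na\phi^2\big)$. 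Since $\na v\in L^2$ over the whole slab, along a dyadic sequence $R_j\to\i$ one has $\int_{A_{R_j}}|\na v|^2\to0$, and on these annuli the dimension-reduction plus the refined Brezis--Gallouet bound (Lemma \ref{leRBG}) control the oscillation of $v$ and of $P$, so that the cubic and the pressure boundary integrals tend to zero. This forces $\na v\equiv0$, and a constant field whose swirl $v^\th$ is $\th$-independent must have vanishing horizontal part, giving $v=ce_3$.

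The main obstacle is the pressure integral $\int_{A_R}P\,(v-ce_3)\cdot\na\phi^2$. In the axisymmetric setting of Theorem \ref{thzp} this was tamed by a Gilbarg--Weinberger line-integral bounding the oscillation of $P$ on a dyadic annulus, but that device is intrinsically two-dimensional and the $\th$-dependence of $P$, $v^r$ and $v^3$ breaks it. The repair I would attempt is a Fourier decomposition in $\th$: the zeroth (axisymmetric) mode is handled by the old line-integral estimate, while the mean-zero higher modes are absorbed into the Dirichlet energy after using Bogovski\u{\i}'s operator (Lemma \ref{L4.1L} and Proposition \ref{P4.1}) to represent $v^r$ — which has zero $x_3$-mean — as a controlled divergence, thereby converting the pressure–velocity coupling into gradient terms that vanish along $R_j\to\i$. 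Obtaining a bound on the pressure that is uniform in $\th$ on wide annuli, with constants independent of $R$, is the delicate technical point on which the whole argument rests.
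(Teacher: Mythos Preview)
Your opening step---using $\th$-independence of $v^\th$ to deduce $\int_{\bS} v^r\,dx_3=0$ and hence $v^r\in L^2(\bR^2\times\bS)$---is correct and matches the paper. But from that point on you take a substantial detour that the paper avoids entirely.

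The paper never first proves $v^\th=0$; the $\th$-averaging of the swirl equation and the maximum-principle argument for $\G$ are unnecessary here. Instead the paper goes directly to the energy identity: test the full momentum equation with $v\phi_n$, where $\phi_n=\phi(|x_h|/n)$ is a \emph{radial} cut-off. The point you underuse is that $\nabla\phi_n$ is purely in the $e_r$ direction, so every boundary term carries a factor of $v^r$:
\[
v\cdot\nabla\phi_n = v^r\,\p_r\phi_n.
\]
Hence both the cubic term $\int|v|^2\,v\cdot\nabla\phi_n$ and the pressure term $\int(P-P_n)\,v\cdot\nabla\phi_n$ are controlled by $\|v^r\|_{L^2(\mC_n\times\bS)}\to 0$. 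There is no need to subtract $ce_3$, to kill $v^\th$ first, or to split into Fourier modes in $\th$.

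On the pressure: you are right that the Gilbarg--Weinberger line integral of Theorem~\ref{thzp} does not survive $\th$-dependence, but your proposed repair (Fourier in $\th$, line integral on the zero mode, Bogovski\u{\i} applied to $v^r$) is both more elaborate than needed and aimed at the wrong object. The paper applies Bogovski\u{\i}'s operator on the 3D block $\O_n=\{n\le|x_h|\le2n,\ |x_3|\le 2n\}$ to solve $\nabla\cdot V=P-P_n$ with $V\in W^{1,2}_0(\O_n)$; testing the momentum equation against $V$ and using periodicity in $x_3$ gives the coarse but sufficient bound $\|P-P_n\|_{L^2(\mC_n\times\bS)}\le C_\ast n$. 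Combined with the factor $1/n$ from $|\nabla\phi_n|$ and the decay of $\|v^r\|_{L^2(\mC_n\times\bS)}$, the pressure integral vanishes as $n\to\infty$. No Brezis--Gallouet and no $\th$-decomposition enter.

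Finally, your $\G$-equation step is not clearly justified as written: you invoke the maximum principle together with an unspecified ``energy estimate'' to force $\G\equiv0$, but on the periodic slab $\G=rv^\th$ need not decay at infinity (finite Dirichlet energy does not give pointwise decay of $rv^\th$), and an energy identity for $\G$ would bound $\nabla\G$ rather than make it vanish. That step would require real additional work---and, as the paper shows, it is entirely avoidable.
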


%\begin{remark}
%We emphasize here that our theorem \ref{thm1.1} can also be applied to the case $\oO=\bR^2$ with $\eqref{NSP1}_4$ replaced by
%\bes
%\lim\limits_{|x_h|\rightarrow+\i} u=0.
%\ees
%\end{remark}
The next theorem deals with ASHDS in an aperture domain $\O:=\{x | x_h\in\bR^2,  |x_3|\leq \lt(\max(1,r)\rt)^\al\}$ for some $\al \ge 0$. Study of flows in aperture domains is also of usefulness, as mentioned in \cite{Galdi2011} Chapter XIII.

\begin{theorem}[ASHDS on aperture domains]
\label{thmexpanding}
Let $(v, P)$ be an axially symmetric bounded smooth solution to the problem
\begin{equation}
\label{NSD}
{\blue (v\cdot\nabla)} v+\nabla P-\Delta v=0, \quad
\nabla\cdot v=0 \quad \text{in } \, \O; \quad
v(x)=0  \quad \text{on}\q \p\O
\end{equation}
with finite Dirichlet integral \eqref{edcondition}.
Then we have
(i) $v \equiv 0$ if $0\leq \al<1/2$;
(ii) $v^\th \equiv 0$ if $0\leq \al< 3/4$.
\end{theorem}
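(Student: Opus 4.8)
The plan is to extend the method used for the slab case (Theorem \ref{thsd}, which is $\al=0$) to the widening aperture geometry, leaning on the two refined tools that were deliberately tailored to these domains: the refined Brezis--Gallouet inequality of Lemma \ref{leRBG} to produce decay, and the refined Bogovski\u{i} estimate of Proposition \ref{P4.1}, with its gain factor $R^{1-\al}$, to control the pressure. Write $A_R:=\{R\leq |x_h|\leq 2R\}\cap\O$ for a dyadic annular piece; by finiteness of \eqref{edcondition} the local energy $\epsilon_R:=\int_{A_R}|\nabla v|^2\,dx\to 0$ as $R\to\i$. Since $v=0$ on $\p\O$ and $A_R$ has $x_3$-thickness $\sim R^\al$, the Poincar\'e inequality in $x_3$ gives $\|v\|_{L^2(A_R)}\ls R^\al\epsilon_R^{1/2}$. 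First I would establish a velocity decay bound: scaling each annular box of thickness $\sim R^\al$ to unit thickness and applying Lemma \ref{leRBG} (whose constant is uniform in $R$), with $\Dl v$ controlled through the momentum equation, one gets an almost-bounded estimate that on returning to the original scale reads $|v(x)|\le C(\ln r)^{\beta}r^{-\gamma}$ for exponents $\beta,\gamma>0$ depending on $\al$, generalizing the bound $(\ln r/r)^{1/2}$ of Theorem \ref{thsd}.

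For part (i) I would first invoke part (ii) to get $v^\th\equiv 0$ (legitimate, since $1/2<3/4$), reducing the system to the swirl-free form \eqref{e1.9}. Testing the momentum equation with $v\phi^2$, where $\phi=\phi(|x_h|)$ is radial, equals $1$ on $\{|x_h|\le R\}$, is supported in $\{|x_h|\le 2R\}$ and satisfies $|\nabla\phi|\ls 1/R$, integration by parts (the zeroth-order $-v^r/r^2$ term having a favorable sign) gives
\[
\int_\O |\nabla v|^2\phi^2\,dx = -\int_\O \nabla v:(v\otimes\nabla\phi^2)\,dx + \tfrac12\int_\O |v|^2\,(v\cdot\nabla\phi^2)\,dx + \int_\O (P-\bar P_R)\,(v\cdot\nabla\phi^2)\,dx,
\]
where $\bar P_R$ is the mean of $P$ over $A_R$; subtracting $\bar P_R$ is allowed because $\int_\O v\cdot\nabla\phi^2\,dx=0$ by the boundary condition. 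The first two terms are handled directly by the decay of $v$, the Poincar\'e bound, and $\epsilon_R\to 0$. The pressure term is the crux: I would solve $\nabla\cdot V=P-\bar P_R$ on $A_R$ by Bogovski\u{i}'s operator, use Proposition \ref{P4.1} to get $\|\nabla V\|_{L^2(A_R)}\ls R^{1-\al}\|P-\bar P_R\|_{L^2(A_R)}$, then test the momentum equation against $V\in W^{1,2}_0(A_R)$ to obtain $\|P-\bar P_R\|_{L^2(A_R)}\ls \epsilon_R^{1/2}\big(R^{1-\al}+\|v\|_{L^\i}R\big)$. Feeding this back, the pressure contribution is $\ls \epsilon_R$ up to logarithms precisely when $\al<1/2$ (the term $R^{1-\al}$ must dominate $\|v\|_{L^\i}R\sim R^{1/2}$), so letting $R\to\i$ forces $\int_\O|\nabla v|^2=0$ and hence $v\equiv 0$.

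For part (ii) I would follow the three steps of Theorem \ref{thsd}, none of which involve the pressure. Construct the Dirichlet Green's function $\mathcal{G}$ of the Laplacian on the aperture domain and record its fast decay; run energy estimates on the stationary vorticity system \eqref{eqvort}, combined with the refined Brezis--Gallouet inequality and scaling, to obtain decay of $\o^r,\o^3$ and of $\p_{x_3}\o^r,\p_r\o^3$; and finally use the Biot--Savart law to represent $v^\th$ as an integral of $(\p_{x_3}\o^r,\p_r\o^3)$ against $\mathcal{G}$, transferring the decay to $v^\th$ (the relations \eqref{w-v} linking $\o^r,\o^3$ to $v^\th$ are used throughout). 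As $\al$ grows the domain widens and these rates degrade, and $\al<3/4$ is exactly the range in which $v^\th$ still decays faster than order $-1$, i.e. $\Gamma=rv^\th\to 0$ as $r\to\i$. Since $\Gamma$ satisfies the maximum principle \eqref{1.9}--\eqref{1.10} and vanishes at infinity, a sliding argument along the $x_3$-axis gives $\Gamma\equiv 0$, hence $v^\th\equiv 0$.

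The main obstacle is the pressure in part (i): it carries no a priori decay, and only the gain $R^{1-\al}$ in Proposition \ref{P4.1} makes the estimate close, which is also the origin of the sharp threshold $\al<1/2$. For part (ii) the analogous difficulty is the careful bookkeeping of how the decay exponents produced by the Brezis--Gallouet and scaling scheme depend on $\al$, together with verifying that $\al=3/4$ is exactly where the decay of $v^\th$ crosses the critical rate $r^{-1}$ so that the maximum-principle argument still applies.
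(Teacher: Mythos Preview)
Your treatment of part (i) is essentially the paper's argument: a priori decay $|v|\ls(\ln r/r)^{1/2}$ via the refined Brezis--Gallouet inequality, Poincar\'e in $x_3$ to bound $\|v\|_{L^2(A_R)}$, Bogovski\u{i} with the gain $R^{1-\al}$ to control the pressure oscillation, and then testing against $v\phi_R$. The paper does not first invoke part (ii) to kill the swirl --- the radial cut-off already makes $v\cdot\nabla\phi=v^r\p_r\phi$ regardless of $v^\th$ --- but your preliminary reduction is harmless.

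For part (ii) your route diverges from the paper's, and the divergence is a genuine gap. You propose to mimic the slab proof of Theorem \ref{thsd}: build the Dirichlet Green's function $\mathcal G$ on $\O$, run energy estimates on the vorticity system, and use Biot--Savart to transfer decay from $(\p_{x_3}\o^r,\p_r\o^3)$ to $v^\th$. That machinery worked for the slab because the Green's function on $\bR^2\times[0,1]$ is explicit and decays exponentially at horizontal infinity; for the aperture domain $\{|x_3|\le r^\al\}$ with $\al>0$ no such Green's function is available, its decay would in any event be only polynomial, and you give no indication of how the threshold $\al<3/4$ would emerge from this scheme. The sliding argument you invoke also relies on $x_3$-translation invariance, which the aperture domain lacks.

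The paper instead works directly on the equation \eqref{1.9} for $\Gamma=rv^\th$: it proves a Caccioppoli inequality
\[
\int_{D(\sigma_1R)}|\nabla(|\Gamma|^q\psi_R)|^2 \ls \frac{(1+\|v^r\|_{L^\i}R^\al)^2}{(\sigma_1-\sigma_2)^2R^2}\int_{D(\sigma_1R,\sigma_2R)}|\Gamma|^{2q},
\]
then runs Moser iteration (with Sobolev embedding and the $x_3$-Poincar\'e inequality supplying the gain) to obtain
\[
\sup_{D(R/2)}|\Gamma| \ls R^{(\al-1)\frac{2}{\beta}+\frac{\al}{2}}\big(1+\|v^r\|_{L^\i}R^\al\big)^{\frac{2+\beta}{\beta}}\|\nabla v\|_{L^2(D(R,R/2))}.
\]
Inserting $\|v^r\|_{L^\i}\ls R^{-1/2}\ln^{1/2}R$ and choosing $\beta$ small, the exponent becomes $(2\al-\tfrac32)\tfrac{2}{\beta}+\tfrac32\al-\tfrac12$, which is negative precisely when $\al<3/4$; hence $\Gamma\to 0$ at infinity and the maximum principle (together with $\Gamma=0$ on $\p\O$) forces $\Gamma\equiv 0$. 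This approach needs no Green's function, no vorticity system, and no Biot--Savart, and it makes the $3/4$ threshold transparent.
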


When $\al=0$, we can remove the axial symmetry assumption of the solution, which goes back to Theorem \ref{ths2}.

\subsubsection{Proof of Theorem \ref{thutheta}, one component axially symmetric periodic solutions}\ \\

The proof is divided into two steps.

\noindent {\it Step 1.  ${L^2}$ boundedness of ${v^r}$ and $L^2$ mean oscillation of $P$.}

\begin{lemma}\label{Lemma2.1}
For $n\in\bN/\{0\}$, denote $\mC_n=\{x_h | \, n\leq |x_h|\leq 2n\}$. Let $(v,P)$ be the solution of \eqref{NSP1}. Under the assumptions of Theorem \ref{thutheta}, we have
\be\label{evrL2}
 \|v^r\|_{L^2({\bR^2\times\bS})}<C_\ast,
\ee
and
\be\label{epl2}
\|P-P_n\|_{L^2(\mC_n\times\bS)}\leq C_\ast n,
\ee
where $C_\ast=C(\|v\|_{L^\i},\|\na v\|_{L^2})$ and $P_n:=\f{1}{|\mC_n\times\bS|}\int_{\mC_n\times\bS}Pdx$ is the average of $P$ on $\mC_n\times \bS$.

\end{lemma}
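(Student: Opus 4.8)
The plan is to establish the two bounds separately, treating the $L^2$ control of $v^r$ first and the pressure oscillation second, the latter being the more delicate.

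For \eqref{evrL2}, I would first show that the period-mean of $v^r$ vanishes, i.e. $\int_\bS v^r(x_h,x_3)\,dx_3=0$ for every $x_h\neq 0$. Writing the incompressibility condition in cylindrical coordinates as $\f1r\p_r(rv^r)+\f1r\p_\th v^\th+\p_{x_3}v^3=0$ and integrating over one period in $x_3$, the term $\p_{x_3}v^3$ integrates to zero by periodicity, while $\int_\bS\p_\th v^\th\,dx_3=\p_\th\int_\bS v^\th\,dx_3=0$ precisely because $v^\th$ is axially symmetric. Hence $\p_r\big(r\int_\bS v^r\,dx_3\big)=0$, so $r\int_\bS v^r\,dx_3$ depends on $\th$ alone; since $v$ is bounded, this quantity is bounded by $r\,\|v\|_{L^\i}|\bS|\to0$ as $r\to0$, forcing it to vanish identically. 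With the $x_3$-mean of $v^r$ equal to zero, the one–dimensional Poincar\'e inequality in $x_3$ gives $\int_\bS|v^r|^2\,dx_3\le C\int_\bS|\p_{x_3}v^r|^2\,dx_3$, and integrating in $x_h$, using $|\p_{x_3}v^r|\le|\na v|$ and \eqref{D-COND}, yields \eqref{evrL2}. This is exactly where the hypothesis that $v^\th$ (and only $v^\th$) is axially symmetric enters.

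For \eqref{epl2}, I would bound the pressure oscillation by duality against the divergence equation. Set $D=\mC_n\times\bS$ and $\pi=P-P_n$, which has zero mean on $D$, so that $\|\pi\|_{L^2(D)}=\sup\{\int_D\pi\,g:\ \|g\|_{L^2(D)}\le1,\ \int_D g=0\}$. For such $g$, I would adapt Proposition \ref{P4.1} (with $\al=0$) to the $x_3$-periodic annular slab, covering it by finitely many star-shaped sectors as in Lemma \ref{L4.1L}, to produce $V\in W^{1,2}_0(D)$, periodic in $x_3$ and vanishing on the cylinders $|x_h|=n,2n$, with $\na\cdot V=g$ and $\|\na V\|_{L^2(D)}\le C_0 n\|g\|_{L^2(D)}$; the same anisotropic scaling that proves Proposition \ref{P4.1} also yields the companion bound $\|V\|_{L^2(D)}\le C_0 n\|g\|_{L^2(D)}$. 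Pairing the momentum equation with $V$ and integrating by parts (all boundary terms vanish by the boundary/periodicity properties of $V$) gives $\int_D\pi\,g=\int_D\na v:\na V+\int_D(v\cdot\na v)\cdot V$.

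The crux, and the step I expect to be the main obstacle, is to control the nonlinear contribution without losing the extra factor of $n$ carried by the Bogovski\u{\i} constant. The naive route, writing the term as $\int_D(v\otimes v):\na V$ and using $\|v\otimes v\|_{L^2(D)}\le\|v\|_{L^\i}^2|D|^{1/2}\sim\|v\|_{L^\i}^2 n$ against $\|\na V\|_{L^2}\sim n\|g\|$, produces $n^2$ and is too lossy. Instead I would keep the term as $\int_D(v\cdot\na v)\cdot V$ and pair the factor $v\cdot\na v$, which lies in $L^2$ globally with $\|v\cdot\na v\|_{L^2(D)}\le\|v\|_{L^\i}\|\na v\|_{L^2}$ (no volume factor, since $\na v\in L^2$), against $\|V\|_{L^2(D)}\le C_0 n\|g\|_{L^2}$; the viscous term is handled by $\|\na v\|_{L^2}\|\na V\|_{L^2}\le C_0 n\|\na v\|_{L^2}\|g\|_{L^2}$. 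Collecting, $\int_D\pi\,g\le C_0 n\,\|\na v\|_{L^2}(1+\|v\|_{L^\i})\|g\|_{L^2}$, and taking the supremum gives \eqref{epl2} with $C_\ast=C_\ast(\|v\|_{L^\i},\|\na v\|_{L^2})$. The only remaining technical points are the adaptation of the Bogovski\u{\i} estimate to the $x_3$-periodic annulus and the verification of the $\|V\|_{L^2}$ scaling, both routine consequences of Proposition \ref{P4.1} and Lemma \ref{L4.1L}.
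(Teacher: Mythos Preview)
Your proof of \eqref{evrL2} is essentially the same as the paper's, with a bit more care on why $\int_\bS v^r\,dx_3=0$ (the paper simply asserts this after integrating the divergence condition).

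For \eqref{epl2}, your argument is correct but the paper takes a different and somewhat slicker route. Rather than working on the thin periodic annulus $D=\mC_n\times\bS$ and adapting Bogovski\u{\i} to the periodic setting (which forces the constant $C_0 n$ and then obliges you to handle the nonlinear term via the $\|V\|_{L^2}\le C_0 n\|g\|$ bound), the paper ``unfolds'' the periodicity and works on the \emph{fat} solid annulus $\O_n=\{n\le|x_h|\le 2n,\ |x_3|\le 2n\}$. On this balanced domain Proposition~\ref{P4.1} applies with $\al=1$, so the Bogovski\u{\i} constant is $O(1)$, independent of $n$. The paper then takes $f=P-P_n$ itself (rather than a dual $g$), solves $\nabla\cdot V=f$ on $\O_n$, and bounds the nonlinear term by the very route you call ``naive'': integrating by parts to $-\int v^i v^j\,\p_iV^j$ and estimating by $\|v\|_{L^\infty}^2\,|\O_n|^{1/2}\,\|\nabla V\|_{L^2}\sim \|v\|_{L^\infty}^2\,n^{3/2}$. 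This yields $\|P-P_n\|_{L^2(\O_n)}\lesssim \|\nabla v\|_{L^2(\O_n)}+\|v\|_{L^\infty}^2 n^{3/2}$; finally the $x_3$-periodicity converts $L^2(\O_n)$ norms to $L^2(\mC_n\times\bS)$ norms, costing a factor $n^{1/2}$ on both sides, and the desired $\|P-P_n\|_{L^2(\mC_n\times\bS)}\lesssim C_\ast n$ drops out.

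The trade-off: the paper's approach uses off-the-shelf Bogovski\u{\i} on a bounded Lipschitz domain (no periodic adaptation needed) and does not require the companion $L^2$ bound on $V$, at the expense of a detour through the unfolded domain. Your approach stays on the target domain and is conceptually straightforward, but you must verify the periodic Bogovski\u{\i} estimate and the $\|V\|_{L^2}$ scaling---both are routine, as you note, but they are genuine extra steps.
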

\begin{proof}
In cylindrical coordinates, if $v^\th$ is independent of $\th$, the divergence free condition $\eqref{NSP1}_2$ is translated as
\bes
\nabla\cdot v=\f{1}{r}\p_r(rv^r)+\frac{1}{r}\p_\th v^\th+\p_{x_3}v^3=\f{1}{r}\p_r(rv^r)+\p_{x_3}v^3=0.
\ees
Integrating the above inequality in $\bS$ about $x_3$ and by using the periodic boundary condition, we can deduce
$
\int_{\bS} v^r dx_3=0.
$

Then the one dimensional Poincar\'e inequality indicates that
\bes
\bali
\int_{\bR^2\times\bS} |v^r|^2dx=\int_{\bR^2\times \bS} \lt|v^r-\f{1}{|\bS|}\int v^rdx_3\rt|^2dx_3dx_h
\ls \int_{\bR^2}\int^1_0 |\p_{x_3}v^r|^2dx_3dx_h<\infty.
\eali
\ees
This proves \eqref{evrL2}.

In Proposition \ref{P4.1}, if $\al=1$ and the domain $\mO_R$ is replaced by $\O_R=\big\{x| R\leq |x_h|\leq 2R,\ |x_3|\leq 2R\big\}$, it is not hard to see the proof is still valid. Now set $R=n$ with $n\in\bN\backslash\{0\}$. From Proposition \ref{P4.1}, there exists a $V \in H^1_0(\O_n)$ satisfying $\nabla \cdot V =f$  and \eqref{e4.4e} with $f=P-P_n$ and $\mO_R$ replaced by $\O_n$.

 Now multiplying $\eqref{NSP1}_1$ with $V$ and integration on $\O_n$, we get
\bes
\bali
\int_{\O_n}\na(P-P_n)\cdot V dx=&\int_{\O_n}(\Dl v-v\cdot\na v)\cdot V dx.
                               \eali
\ees
Integration by parts and \eqref{e4.4e} indicate that
\be\label{e2.9}
\bali
\int_{\O_n}(P-P_n)^2dx=&\int_{\O_n}(P-P_n)\na\cdot V dx=-\int_{\O_n}\big(\Dl v-v\cdot\na v\big)\cdot V dx\\
=&\int_{\O_n}\sum^3_{i,j=1}\p_iv^j\p_iV^j+\p_i(v^i v^j) V^j dx= \int_{\O_n}\sum^3_{i,j=1}\big(\p_iv^j-v^iv^j\big)\p_iV^j dx\\
%\leq&\|\na V\|_{L^2(\O_n)}\big(\|\na u\|_{L^2(\O_n)}+\||u|^2\|_{L^2(\O_n)}\big)\\
\leq&\|\na V\|_{L^2(\O_n)}\big(\|\na v\|_{L^2(\O_n)}+\|v\|^2_{L^\i}\|1\|_{L^2(\O_n)}\big)\\
\leq& C \|P-P_n\|_{L^2(\O_n)}\big(\|\na v\|_{L^2(\O_n)}+\|v\|^2_{L^\i}\|1\|_{L^2(\O_n)}\big)
\eali
\ee
%Using Cauchy-schwartz inequality, we have
%\be
%\bali
%&\int_{\O_n}(p-p_n)^2dx\\
%\leq&\|\na V\|_{L^2(\O_n)}\big(\|\na u\|_{L^2(\O_n)}+n^{3/2}\|u\|^2_{L^\i(\O_n)}\big)\\
%\leq& \ve\|\na V\|^2_{L^2(\O_n)}+C_\ve \big(\|\na u\|^2_{L^2(\O_n)}+\|u\|^4_{L^\i(\O_n)}n^3\big)\\
%\leq& C\ve\|p-p_n\|^2_{L^2(\O_n)}+C_\ve \big(\|\na u\|^2_{L^2(\O_n)}+\|u\|^4_{L^\i(\O_n)}n^3\big).
%\eali
%\ee
Then we can obtain
 \bes
\|P-P_n\|_{L^2(\O_n)}\leq C \big(\|\na v\|_{L^2(\O_n)}+\|v\|^2_{L^\i(\O_n)}n^{3/2}\big).
 \ees
Remembering that $(v,P)$ is periodic in the $x_3$ direction, the above inequality can be rewritten as
\bes
n^{1/2}\|P-P_n\|_{L^2(\mC_n\times\bS)}\leq C\big(n^{1/2}\|\na v\|_{L^2(\mC_n\times\bS)}+\|v\|^2_{L^\i(\mC_n\times\bS)}n^{3/2}\big).
 \ees
This implies \eqref{epl2}.
\end{proof}

\medskip

\noindent{\it Step 2. Trivialness of $v$}

 Let $\phi(s)$ be a smooth cut-off function satisfying
\be\label{testf}
\phi(s)=
1\q s\in [0,1];  \quad
 \phi(s) = 0 \q   s\geq 2,
\ee with the usual property that $\phi$, $ \phi'$ and $\phi''$ are bounded.
Set $\phi_n(y_h)=\phi(\f{|y_h|}{n})$. Testing the NS in $\eqref{NSP1}$
with $v \phi_n$, we find that
\bes
\begin{split}
&\int_{\2O\times\bS}|\nabla v|^2\phi_n dx
=-\sum\limits^3_{i=1}\int_{\2O\times\bS}v^i\na v^i\cdot\na\phi_ndx+\int_{\2O\times\bS}\left(\frac{1}{2}|v|^2+(P-P_{n})\right) v \cdot\nabla\phi_ndx.
\end{split}
\ees Here $\2O$ is the 2 dimensional ball $\{ x_h \, | \, |x_h|<2n \}$.
It follows that
\be\label{e4.4}
\begin{split}
&\int_{\2O\times\bS}|\nabla v|^2\phi_ndx
\ls \int_{\mC_{n}\times\bS}|v||\na v||\na\phi_n|dx
+\int_{\mC_{n}\times\bS}|v \cdot\na \phi_n| \, |v|^2dx+\int_{\mC_{n}\times\bS}|P-P_{n}| \, |v \cdot\na\phi_n|dx\\
:=&I_1+I_2+I_3.
\end{split}
\ee
Observe that, as $n \to \infty$,
\[\label{e4.6}
\begin{split}
I_1 \lesssim\f{\|v\|_{L^\i(\mC_{n}\times\bS)}}{n}\|\na v \|_{L^2(\mC_{n}\times\bS)}\|1\|_{L^2((\mC_{n}\times\bS)}
\ls\|v\|_{L^\i(\mC_{n}\times\bS)}\|\na v \|_{L^2(\mC_{n}\times\bS)}\to 0;
\end{split}
\]

\[\label{e4.7}
\begin{split}
I_2\lesssim \| v \|^2_{L^\i(\mC_{n}\times\bS)}\int_{\mC_{n}\times\bS}|v^r\p_r \phi_n|dx
\ls\f{\|v\|^2_{L^\i(\mC_{n}\times\bS)}}{n}\| v^r\|_{L^2(\mC_{n}\times\bS)}\|1\|_{L^2(\mC_{n}\times\bS)}
\ls\| v \|^2_{L^\i(\mC_{n}\times\bS)}\| v^r\|_{L^2(\mC_{n}\times\bS)}\to 0;
\end{split}
\]

\[\label{e4.8}
\begin{split}
I_3\lesssim \int_{\mC_{n}\times\bS}|P-P_{n}| \, |v^r\p_r\phi_n|dx
\ls\f{1}{n}\|P-P_{n}\|_{L^2(\mC_{n}\times\bS)}\|v^r\|_{L^2(\mC_{n}\times\bS)}
\ls C_0\| v^r\|_{L^2(\mC_{n}\times\bS)}\to 0.
\end{split}
\]

Here we have applied Cauchy-Schwarz inequality, the boundedness of the oscillation of $P$ in dyadic annulus from Lemma \ref{Lemma2.1} and $v^r\in L^2(\2O\times\bS)$.
Combining those estimates of $I_1$, $I_2$ and $I_3$, \eqref{e4.4} yields
$
\int_{\2O\times\bS}|\nabla v|^2dx=0,
$
 which means $v$ is a constant vector. The $v^r=0$ by Lemma \ref{Lemma2.1} and $v^{\th}=0$ by axial symmetry. So actually we have
$
v \equiv c e_3.
$ \qed

\subsubsection{Proof of Theorem \ref{thmexpanding},  axially symmetric solutions on aperture domains}
$ $

\medskip

First we show an a priori decay of ${v}$.

\begin{proposition}\label{propdu}
Under the assumption of Theorem \ref{thmexpanding}, we have
\be\label{du}
v= O\Big(\f{\ln^{1/2} r}{r^{1/2}}\Big)\q \text{as}\ r\rightarrow +\i.
\ee
\proof
\end{proposition}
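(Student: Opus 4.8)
The plan is to follow the dimension-reduction strategy used for the a priori velocity bound in Theorem \ref{thfullwdecay}, replacing the plain two-dimensional Brezis--Gallouet inequality by its thinness-uniform refinement in Lemma \ref{leRBG}. Fix a large dyadic radius $R$ and work on the slice of the aperture domain
\[
A_R=\{x\in\O \,|\, R\le r\le 2R\},
\]
which, since $|x_3|\le r^\al$ with $\al<1$, is a thin neighbourhood of the plane $x_3=0$ far from the $x_3$-axis. First I would rescale by the stationary analogue of \eqref{nsscal}: set $\hat v(\xi)=R\,v(R\xi)$ and $\hat P(\xi)=R^2P(R\xi)$, so that $(\hat v,\hat P)$ again solves the axially symmetric stationary system, now on $\hat A=\{1\le r_\xi\le 2,\ |\zeta|\le R^{\al-1}r_\xi^\al\}$, where $\xi=(r_\xi,\zeta)$ are the rescaled cylindrical variables. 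The Dirichlet boundary condition $v|_{\p\O}=0$ becomes $\hat v=0$ on $|\zeta|=R^{\al-1}r_\xi^\al$, which is exactly the boundary hypothesis \eqref{bc} of Lemma \ref{leRBG}.

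Next I would tile the $r_\xi$-interval $[1,2]$ into $\sim R^{1-\al}$ subintervals of length $R^{\al-1}$; over each one $\hat A$ restricts to a translate of the square-like domain $\bar{\mathcal D}_0$ of Lemma \ref{leRBG}. Applying that lemma componentwise to $\hat v$ on each tile gives
\[
\|\hat v\|_{L^\i(\text{tile})}\le C_0\big(1+\|\na_\xi\hat v\|_{L^2(\text{tile})}\big)\log^{1/2}\!\big(e+R^{\al-1}\|\Dl_\xi\hat v\|_{L^2(\text{tile})}\big),
\]
with $C_0$ independent of $R$ --- this uniformity in the thinness parameter $R^{\al-1}$ is the whole point of Lemma \ref{leRBG}. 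The crucial bookkeeping concerns the two norms on the right. A direct change of variables, using that the three-dimensional Dirichlet integral \eqref{edcondition} restricted to $A_R$ is bounded by a constant and that $r\approx R$ there, shows that the two-dimensional gradient norm of the rescaled field obeys $\|\na_\xi\hat v\|_{L^2(\hat A)}^2\les R$, hence $\|\na_\xi\hat v\|_{L^2(\text{tile})}\les R^{1/2}$. For the Laplacian term only a crude polynomial-in-$R$ bound is needed, since it sits inside the logarithm; this is supplied by the boundedness of $v$ (assumed in Theorem \ref{thmexpanding}) together with interior and boundary elliptic regularity for the stationary system --- via $\Dl v=v\cdot\na v+\na P$ and the smoothness of $\p\O$ --- which yields $\|\Dl_\xi\hat v\|_{L^2(\text{tile})}\les R^{N}$ for some fixed $N$, so that $\log^{1/2}(e+R^{\al-1}\|\Dl_\xi\hat v\|_{L^2})\les(\ln R)^{1/2}$.

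Combining these gives $\|\hat v\|_{L^\i(\hat A)}\les R^{1/2}(\ln R)^{1/2}$, and undoing the scaling, $\|v\|_{L^\i(A_R)}=R^{-1}\|\hat v\|_{L^\i(\hat A)}\les R^{-1/2}(\ln R)^{1/2}$. Since $r\approx R$ on $A_R$ and $R$ is an arbitrary large dyadic radius, this is exactly the claimed bound \eqref{du}.

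The main obstacle I anticipate is the scaling bookkeeping in the third step: one must see that finiteness of the genuinely three-dimensional Dirichlet energy forces the rescaled two-dimensional gradient norm to grow precisely like $R^{1/2}$ --- neither bounded nor worse --- and that this growth, after dividing by the scaling factor $R$, is what produces the sharp $r^{-1/2}$ rate rather than a spurious $r^{-1}$ or a divergent bound. A secondary technical point is justifying the uniform-in-$R$ polynomial control of $\|\Dl v\|_{L^2}$ on the thin dyadic slices up to the Dirichlet boundary; here the assumed boundedness of $v$ does the essential work, so that, as in Theorem \ref{thfullwdecay}, the axially symmetric structure of the equation enters only through guaranteeing that $\Dl v$ is under control.
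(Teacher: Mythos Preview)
Your proposal is correct and follows essentially the same route as the paper. The paper fixes an arbitrary point $x_0$ with $|x_0'|=r_0$ large, rescales by $r_0$, and applies the refined Brezis--Gallouet inequality of Lemma~\ref{leRBG} to a single thin domain $\bar{\mathcal D}_0$ centred at the rescaled point; you instead rescale a full dyadic shell and tile it by translates of $\bar{\mathcal D}_0$, but the two organisations are equivalent and your scaling bookkeeping (in particular the $R^{1/2}$ growth of the rescaled two-dimensional gradient norm coming from the weight $r\approx R$ in the three-dimensional Dirichlet integral) matches the paper's exactly.
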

The proof is based on $Brezis-Gallouet$ inequality \cite{BG:1} and its refinement, together with scaling and dimension reduction techniques, which is similar to \cite[part 5.2, p. 1403-1406]{CPZ2018-1}.

Fixing $x_0 \in \O:=\{x||x_h|\in\2O,|x_3|\leq \max \{1, r^\al\}$ such that $|x'_0|=  r_0$ is large. Without loss of generality, we can assume, in the cylindrical coordinates, that $x_0=(r_0, 0, 0)$, i.e. $(x_3)_0=0$, $\theta_0=0$.
Consider the scaled  solution
$
\t {v} ( \t{x}) = r_0 v( r_0 \t{x})
$ which is also axially symmetric. Hence $\t {v}$ can be regarded as a two variable function  of the scaled variables $\t{r}, \t{x_3}$. Consider the two dimensional domain
\bes
\bar{\mathcal{D}}_0=\big\{(\t{r},\t{x_3}): 1-\f{1}{2}r_0^{\al-1}\leq \t{r}\leq1+ \f{1}{2}r_0^{\al-1},\ |\t{x_3}|\leq r_0^{\al-1}\t{r}^{\al}\big\}.
\ees

Then for $\t{v}=\t{v}(\t{r}, \t{x_3})$, we have $\t{v}(1, 0)=r_0 v(x_0)$.
Recall that $\t{v}$ satisfies the Dirichlet boundary condition.
Applying the refined Brezis-Gallouet inequality (Lemma \ref{leRBG})
 on $\bar{\mathcal{D}}_0$, after a simple adjustment on constants, we can find an absolute constant $C$ such that
\[
\bali
| \t{v}(1, 0) | \leq C \left[ \left( \int_{\bar{\mathcal{D}}_0} |\t{\nabla} \t{v} |^2 d\t{r}d\t{x_3} \right)^{1/2}
 +1  \right] \times
\log^{1/2}\left[ r_0^{\al-1}\left( \int_{\bar{\mathcal{D}}_0} |\t{\Delta} \t{v} |^2 d\t{r}d\t{x_3} \right)^{1/2} + e  \right],
\eali
\]where $\t \nabla = (\partial_{\t{r}}, \partial_{\t{x_3}})$ and
$\t \Delta = \partial^2_{\t{r}} + \partial^2_{\t{x_3}}$. From this and the assumption that
$1/2 \le \t{r} \le 2$, we see that
\[
\bali
| \t{v}(1, 0) | \le C \left[ \left( \int_{\bar{\mathcal{D}}_0} |\t{\nabla} \t{v} |^2 \t{r} d\t{r}d\t{x_3} \right)^{1/2}  +1  \right] \times
 \log^{1/2}\left[ r_0^{\al-1}\left( \int_{\bar{\mathcal{D}}_0} |\t{\Delta} \t{v} |^2 \t{r} d\t{r}d\t{x_3} \right)^{1/2} + e  \right].
\eali
\]Now we can scale this inequality back to the original solution $u$ and variables $r=r_0 \t{r}$ and $z=r_0 \t{x_3}$ to get
\[
\bali
r_0 |v(x_0)| \le C  \left[ \sqrt{r_0} \left( \int_{D_0} |\nabla v |^2 r drdx_3 \right)^{1/2}  +1  \right] \times
 \log^{1/2}\left[ r^{\al+1/2}_0 \left( \int_{D_0} (|\p^2_r v|^2 + |\p^2_{x_3} v|^2) r drdx_3 \right)^{1/2}+ e  \right],
\eali
\]where
$
D_0=\big\{({r},{x_3}): r_0-\f{1}{2}r_0^ {\al}\leq {r}\leq r_0+ \f{1}{2}r_0^{\al},\ |{x_3}|\leq {r}^{\al}\big\}.
$
By condition \eqref{edcondition}, this proves the claimed decay of velocity. Note that by our assumption in the theorem, the solution $u$ is globally bounded and then it is not hard to prove that the first and second derivative of $v$ are also bounded. \qed

The rest of the proof of Theorem \ref{thmexpanding} is divided into two subsections.

\subsubsection{\bf Proof of case (i): $0\leq \al <1/2$.}

First we will give a $L^2$ estimate of $v$ and mean oscillation of the pressure $P$ by using the preceding Proposition \ref{P4.1}.

\begin{lemma}
\label{leupl3}
Let $(v, P)$ be the solution of \eqref{NSD} and  $\mO_R=\big\{x \, | \, R\leq |x_h|\leq 2R,\ |x_3|\leq |x_h|^\al\big\}$, then we have
\be
\label{ul2}
\int_{\mO_R} |v|^2dx\leq  o( R^{2\alpha}),
\ee

\be\label{e4.9e}
\|P-P_R\|_{L^2(\mO_R)}\leq o(1) R^{(1-\al)}(1+R^{\al-1/2}\ln^{1/2} R),
\ee
as $R\rightarrow\i$. Here $P_R:=\f{1}{|\mO_R|}\int_{\mO_R} P dx$ is the average of $P$ on $\mO_R$.
\end{lemma}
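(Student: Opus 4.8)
The plan is to establish the two bounds in turn, proving the $L^2$ estimate \eqref{ul2} first, since it is precisely what upgrades the pressure estimate to the claimed rate. For \eqref{ul2} neither the pressure nor the momentum equation is needed: the entire content is the Dirichlet boundary condition together with the finiteness of the Dirichlet integral \eqref{edcondition}. First I would freeze $x_h$ with $R\le |x_h|\le 2R$ and regard $v(x_h,\cdot)$ as a function of $x_3$ on the interval $[-|x_h|^\al,|x_h|^\al]$. Since $\mO_R\subset\O$ and, for $r=|x_h|\ge R\ge 1$, the top and bottom faces $|x_3|=|x_h|^\al$ lie on $\p\O$, the boundary condition $v=0$ forces $v$ to vanish at both endpoints of this interval. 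The one-dimensional Poincar\'e inequality then gives
\[
\int_{-|x_h|^\al}^{|x_h|^\al}|v|^2\,dx_3\le C|x_h|^{2\al}\int_{-|x_h|^\al}^{|x_h|^\al}|\p_{x_3}v|^2\,dx_3\le CR^{2\al}\int_{-|x_h|^\al}^{|x_h|^\al}|\na v|^2\,dx_3 ,
\]
and integrating over the annulus $R\le |x_h|\le 2R$ yields $\int_{\mO_R}|v|^2\,dx\le CR^{2\al}\int_{\mO_R}|\na v|^2\,dx$. Because $\int_{\mO_R}|\na v|^2\,dx$ is a tail of the convergent integral \eqref{edcondition}, it is $o(1)$ as $R\to\i$, and \eqref{ul2} follows.

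For the pressure estimate \eqref{e4.9e} I would run the same duality scheme as in Lemma \ref{Lemma2.1}, but with a sharper treatment of the nonlinear term. Applying Proposition \ref{P4.1} to $f=P-P_R$ (which has zero average on $\mO_R$) produces a vector field $V\in W^{1,2}_0(\mO_R)$ with $\na\cdot V=P-P_R$ and $\|\na V\|_{L^2(\mO_R)}\le C_\al R^{1-\al}\|P-P_R\|_{L^2(\mO_R)}$. Testing the momentum equation in \eqref{NSD} against $V$ and integrating by parts, the boundary terms vanishing since $V\in W^{1,2}_0$ and $\na\cdot v=0$, gives, exactly as in \eqref{e2.9},
\[
\int_{\mO_R}(P-P_R)^2\,dx=\sum_{i,j=1}^3\int_{\mO_R}\big(\p_iv^j-v^iv^j\big)\p_iV^j\,dx\le \|\na V\|_{L^2(\mO_R)}\big(\|\na v\|_{L^2(\mO_R)}+\||v|^2\|_{L^2(\mO_R)}\big).
\]
Cancelling one factor of $\|P-P_R\|_{L^2(\mO_R)}$ reduces this to $\|P-P_R\|_{L^2(\mO_R)}\le C_\al R^{1-\al}\big(\|\na v\|_{L^2(\mO_R)}+\||v|^2\|_{L^2(\mO_R)}\big)$.

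The decisive point, and the one I expect to be the main obstacle, is the estimate of the quadratic term $\||v|^2\|_{L^2(\mO_R)}=\big(\int_{\mO_R}|v|^4\,dx\big)^{1/2}$. A crude pointwise bound alone is too lossy, producing only a rate like $R^{1-\al/2}\ln R$, which is weaker than claimed. Instead I would interpolate, combining the a priori decay $\|v\|_{L^\i(\mO_R)}^2\le C\,\ln R/R$ from Proposition \ref{propdu} with the just-proved \eqref{ul2}:
\[
\int_{\mO_R}|v|^4\,dx\le \|v\|_{L^\i(\mO_R)}^2\int_{\mO_R}|v|^2\,dx\le C\frac{\ln R}{R}\,o(R^{2\al})=o\big(R^{2\al-1}\ln R\big),
\]
so that $\||v|^2\|_{L^2(\mO_R)}=o(R^{\al-1/2}\ln^{1/2}R)$. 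Since $\|\na v\|_{L^2(\mO_R)}=o(1)$ as above, substitution gives $\|P-P_R\|_{L^2(\mO_R)}\le o(1)R^{1-\al}+o(1)R^{1-\al}\cdot R^{\al-1/2}\ln^{1/2}R=o(1)R^{1-\al}(1+R^{\al-1/2}\ln^{1/2}R)$, which is precisely \eqref{e4.9e}. The only genuinely delicate matter is matching the exponents: it is the use of the improved bound \eqref{ul2} rather than a bare $L^\i$ bound in the quadratic term that produces the sharper power $R^{\al-1/2}$ stated in the lemma.
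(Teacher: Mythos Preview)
Your proof is correct and follows essentially the same approach as the paper. The only cosmetic difference is that for the quadratic term the paper bounds $\|v^iv^j\|_{L^2(\mO_R)}\le \|v\|_{L^\infty(\mO_R)}\|v\|_{L^2(\mO_R)}$ directly, while you pass through $\||v|^2\|_{L^2}=\|v\|_{L^4}^2$ and then use $\int|v|^4\le\|v\|_{L^\infty}^2\int|v|^2$; the two routes yield the identical bound $o(R^{\al-1/2}\ln^{1/2}R)$.
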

\pf
Since we have zero boundary, the one dimensional Poincar\'e inequality indicates that
\bes
\bali
\int_{\mO_R} |v|^2dx&=\int_{R\leq |x_h|\leq 2R}\int_{|x_3|\leq r^\al} |v|^2dx_3dx_h
\ls R^{2\al} \int_{R\leq |x_h|\leq 2R}\int_{|x_3|\leq r^\al}|\p_{x_3} v|^2dx_3dx_h
\ls o(R^{2\al}).
\eali
\ees
 by the definition of D solutions. This proves (\ref{ul2}).

From Proposition \ref{P4.1}, there exists a $V \in H^1_0(\mO_R)$ satisfying $ \nabla \cdot V =f$ and \eqref{e4.4e} with $f=P-P_R$. Now multiplying $\eqref{NSD}_1$ with $V$ and integrating on $\mO_R$, we get
\bes
\int_{\mO_R}\na(P-P_R)\cdot V dx=\int_{\mO_R}(\Dl v-v\cdot\na v)\cdot V dx.
\ees
The same derivation as \eqref{e2.9} indicates that
\bes
\bali
\int_{\mO_R}(P-P_R)^2dx=&\int_{\mO_R}(P-P_R)\na\cdot V dx\\
%=&\int_{\mO_R}\sum^3_{i,j=1}\p_iu^j\p_iV^j+\na\cdot(u\otimes u)\cdot V dx\\
%=& \int_{\mO_R}\sum^3_{i,j=1}(\p_iu^j-u^iu^j)\p_iV^j dx\\
\leq&\|\na V\|_{L^2(\mO_R)}\big(\|\na v\|_{L^2(\mO_R)}+\|v\|_{L^\i(\mO_R)}\| v\|_{L^2(\mO_R)}\big)\\
\leq& CR^{1-\al}\|P-P_R\|_{L^2(\mO_R)}\big(\|\na v\|_{L^2(\mO_R)}+\|v\|_{L^\i(\mO_R)}\| v\|_{L^2(\mO_R)}\big).
%\leq& \f{\ve}{R^{2(1-\al)}}\|\na V\|^2_{L^2(\mO_R)}+C_\ve R^{2(1-\al)}\big(\|\na u\|_{L^2(\mO_R)}+\|u\|_{L^\i(\mO_R)}\| u\|_{L^2(\mO_R)}\big)^2\\
%\leq& C\ve\|p-p_R\|^2_{L^2(\mO_R)}+C_\ve R^{2(1-\al)}\big(\|\na u\|_{L^2(\mO_R)}+\|u\|_{L^\i(\mO_R)}\| u\|_{L^2(\mO_R)}\big)^2.
\eali
\ees
 Here to reach the last line, we used \eqref{e4.4e}. Then we can obtain

\bes
\bali
\|{\blue P-P_R}\|_{L^2(\mO_R)}\leq& C R^{(1-\al)}\big(\|\na v\|_{L^2(\O_R)}+\|v\|_{L^\i(\O_R)}\| v\|_{L^2(\O_R)}\big)\\
= & o(1) R^{(1-\al)}(1+R^{\al-1/2}\ln^{1/2} R),
\eali
\ees
which is \eqref{e4.9e}. \qed

\noindent\textbf{Vanishing of ${v}$}

Now we are in a position to complete the proof of case (i) of Theorem \ref{thmexpanding}. Let $\phi(s)$ be the smooth cut-off function defined in \eqref{testf}.
Set $\phi_R(y')=\phi(\f{|y'|}{R})$ where $R$ is a large positive number.
 Now testing the NS in \eqref{NSD}
with $v\phi_R$, we obtain
\bes
\int_{\O}-\Dl v (v\phi_R)dx=\int_{\O}-\lt( {\blue (v\cdot\nabla)v}+\na (P-P_R)\rt ) (v\phi_R)dx.
\ees
Integration by parts yields that
\bes
\begin{aligned}
&\q\int_{\O}|\na v|^2 \phi_R dx-\f{1}{2}\int_{\O}|v|^2\Dl\phi_R dx
=\f{1}{2}\int_{\O} |v|^2v\cdot\na\phi_R dx+\int_{\O} (P-P_R)v\cdot\na \phi_R dx.
\end{aligned}
\ees
 Then we have, since
$\phi_R$ depends only on $r$, that
\bes
\begin{aligned}
&\q\int_{\O}|\na v|^2 \phi_Rdx\\
&\ls \f{1}{R^2}\int_{\mO_R}|v|^2dx+
\f{1}{R}\int_{\mO_R} |v^r| \, |v|^2dx+ \f{1}{R}\int_{\mO_R}|P-P_R|\, |v^r|dx\\
&\ls \f{1}{R^2} \int_{\mO_R} |v|^2 dx +
\f{\Vert v^r \Vert_\infty}{R}
\int_{\mO_R} |v|^2 dx+\f{1}{R} \left(\int_{\mO_R} (v^r)^2 dx \right)^{1/2}
\left(\int_{\mO_R}|P-P_R|^2dx \right)^{1/2}\\
&\ls\big(R^{2\al-2}+R^{2\al-3/2}\ln^{1/2} R+ o(1)+R^{\al-1/2}\ln^{1/2} R\big).
\end{aligned}
\ees
When $0\leq\al<1/2$, let $R\rightarrow +\i$, we arrive at
$
\int_{\O}|\na v|^2dx=0,
$
which shows that $v\equiv c.$ Besides, recall $v=0$ at the boundary, then at last we deduce $v\equiv 0.$ This completes the proof of case (i) of Theorem \ref{thmexpanding}.
\medskip

\subsubsection{\bf Proof of case (ii): $1/2\leq \al <3/4$.}

The proof is divided into 2 steps.

\noindent{\it Step 1. Caccioppoli inequality for ${\G=r v^\th}$} \label{ssuth1}

First for $R>2$, pick the domains
$
D(R)=\O\cap (\underline{B}_R\times\bR),
$ and
$
D(R_1,R_2)=D(R_1)\backslash D(R_2),\ \ R_1>R_2>2.
$ Here $\underline{B}_R$ is the 2 dimensional ball centered at the origin in $\bR^2$, with radius $R$.
We first derive a Caccioppoli type energy estimates of $\G$ by equation \eqref{eqvth}. Consider a standard test function $\psi(r)$ satisfying
$
\operatorname{supp} \psi \subset \underline{B}_{\sigma_1}$, $\psi=1 \ \textrm{in}\  \underline{B}_{\sigma_2}, 0\leq \psi \leq 1; 0<\sigma_2<\sigma_1 \le 1;
$
$
|\nabla^k\psi| \leq  \frac{C}{(\sigma_1-\sigma_2)^k}\ \  \text{for}\ \ k=1,2.              \\
$
\begin{lemma}
Let $\frac{1}{2}\leq \sigma_2<\sigma_1\leq 1$, $R\geq 2 $ and $\psi_R(x)=\psi(\f{r}{R})$. Denote $f:=|\G|^q$ for $q>1$. Then we have
\bes
\int_{D(\sigma_1R)}|\nabla (f\psi_R)|^2\lesssim\f{(1+\|v^r\|_{L^\i} R^\alpha)^2 }{(\sigma_1-\sigma_2)^2R^2} \int_{D(\sigma_1R,\sigma_2R)}f^2,
\ees here and in the proof $\|v^r\|_{L^\i}=\|v^r\|_{L^\i(D(\sigma_1R,\sigma_2R))}$.
\end{lemma}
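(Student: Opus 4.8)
The plan is to run a weighted Caccioppoli (energy) estimate for $\G=rv^\th$, exploiting that every convex function of $\G$ is a subsolution of the drift--diffusion operator. Writing $b=v^re_r+v^3e_3$, the drift--diffusion equation \eqref{eqvth} for $\G$ (with $\p_t$ dropped in the stationary setting) reads $(\Dl-\f2r\p_r)\G=b\cdot\na\G$, and for $\Phi(s)=|s|^q$ with $q>1$ (which is $C^1$ and convex) a direct computation gives
\[
(\Dl-\f2r\p_r)\Phi(\G)-b\cdot\na\Phi(\G)=\Phi''(\G)|\na\G|^2\ge0,
\]
so that $f=|\G|^q$ is a nonnegative subsolution. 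First I would record the energy identity obtained by multiplying the $\G$-equation by $|\G|^{2q-2}\G\,\psi_R^2$ and integrating over $D(\sigma_1R)$; after integration by parts this yields $\int\psi_R^2|\na f|^2$ on the left (up to the harmless constant $\tfrac{2q-1}{q}$) and three terms on the right in which the derivative has been transferred onto $\psi_R^2$.

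The decisive structural point is that all three right-hand terms localize to the annulus $\mathcal A:=D(\sigma_1R,\sigma_2R)$, where $\na\psi_R\neq0$. For the convection term I would use the incompressibility $\na\cdot b=0$ to write $\int(b\cdot\na f)f\psi_R^2=-\tfrac12\int f^2\,b\cdot\na\psi_R^2$; since $\psi_R$ depends only on $r$, this equals $-\tfrac12\int f^2 v^r\,\p_r\psi_R^2$, so $v^3$ drops out entirely and only $\|v^r\|_{L^\i(\mathcal A)}$ survives. The singular first-order term $\f2r\p_r f$ is treated the same way: the radial field $\f2re_r$ is divergence free away from the axis, and its distributional divergence is supported on the axis where it is annihilated by $f^2$ (which vanishes there, indeed $\G=O(r^2)$ near the axis); on $\mathcal A$ one has $\f2r\le\f4R$. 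The Dirichlet cut-off terms are controlled by $|\na\psi_R|\ls\f{1}{R(\sigma_1-\sigma_2)}$ and $|\Dl\psi_R^2|\ls\f{1}{R^2(\sigma_1-\sigma_2)^2}$, and the assembly $\int|\na(f\psi_R)|^2\le 2\int\psi_R^2|\na f|^2+2\int f^2|\na\psi_R|^2$ then produces a bound of the form $\big(\tfrac{1}{R^2(\sigma_1-\sigma_2)^2}+\tfrac{\|v^r\|_{L^\i}}{R(\sigma_1-\sigma_2)}\big)\int_{\mathcal A}f^2$.

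The passage from this raw estimate to the sharp factor $(1+\|v^r\|_{L^\i}R^\al)^2$ is where the aperture geometry enters, and this is the step I expect to be the main obstacle. Because $v$ (hence $\G$, hence $f$) vanishes on the walls $\{|x_3|=r^\al\}$, on each radial slice the one-dimensional Poincar\'e inequality gives $\int_{|x_3|\le r^\al}f^2\,dx_3\ls r^{2\al}\int_{|x_3|\le r^\al}|\p_{x_3}f|^2\,dx_3$, and on $\mathcal A$ where $r\sim R$ this supplies the transverse gain $R^{2\al}$. Balancing the convection contribution against the diffusion through this thin-slab Poincar\'e inequality, and using Young's inequality so as to absorb the emerging gradient, is what converts the first-order dependence on $\|v^r\|_{L^\i}$ into the dimensionless aperture P\'eclet number $\|v^r\|_{L^\i}R^\al$ and hence into the claimed quadratic factor. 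The delicate point to watch is precisely this transverse absorption: $\psi_R$ is not bounded below on $\mathcal A$, so the Poincar\'e/Young balancing cannot be done naively against $\int\psi_R^2|\na f|^2$ but must be arranged on full $x_3$-slices before the radial cut-off is applied; together with the axis regularity, this is what forces the hypotheses $q>1$ and $v|_{\p\O}=0$.
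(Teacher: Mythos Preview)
Your proposal is correct and follows essentially the same route as the paper: test \eqref{eqvth} by $q|\G|^{2q-2}\G\psi_R^2$, integrate by parts, use $\na\cdot b=0$ so only $v^r\p_r\psi_R$ survives in the drift term, and then close via the thin-slab Poincar\'e inequality in $x_3$ together with Young's inequality to manufacture the factor $(1+\|v^r\|_{L^\i}R^\al)^2$.

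Two minor organizational differences are worth noting. First, the paper expands the diffusion term directly in terms of $\na(f\psi_R)$ rather than $\psi_R\na f$, obtaining
\[
q\int \Dl\G\,|\G|^{2q-2}\G\,\psi_R^2 \;\ls\; -\int|\na(f\psi_R)|^2 + C\int f^2|\na\psi_R|^2,
\]
so that $\int|\na(f\psi_R)|^2$ sits on the left from the outset. Second, and this addresses the ``delicate point'' you flag at the end: the paper applies the one-dimensional Poincar\'e inequality to $f\psi_R$ on the \emph{whole} domain $D(\sigma_1R)$ (it vanishes on the aperture walls because $f$ does), yielding $R^{-2\al}\|f\psi_R\|_{L^2(D(\sigma_1R))}^2\ls\|\na(f\psi_R)\|_{L^2(D(\sigma_1R))}^2$, which is then absorbed directly into the already-established left-hand side. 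Organized this way, the fact that $\psi_R$ is not bounded below on the annulus never enters, and no slice-by-slice balancing before cut-off is needed. Your route via $\int\psi_R^2|\na f|^2$ and the assembly inequality also works, but is slightly less clean.
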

\begin{proof} Recall \eqref{eqvth} without time variable is:
\[
\Delta \Gamma - b \nabla \Gamma- \frac{2}{r} \p_r
\Gamma=0.
\]with $b=v^r e_r + v^3 e_3$.
Testing this by $q|\G|^{2q-2}\G\psi^2_R$ in $\O$ gives
\begin{equation}\label{e5.2}
\begin{aligned}
\q\frac{1}{2}\int_{\O} (b\cdot\nabla f^2+\frac{2}{r}\p_r f^2)\psi^2_Rdx
=q\int_{\O}\Delta \G |\G|^{2q-2}\G\psi^2_Rdx.
\end{aligned}
\end{equation}
Later the integral variables $dx$ will not be written out for simplicity unless there is confusion. Using Cauchy-Schwartz's inequality and integration by parts, we have
\be\label{e5.3}
\bali
&q\int_{\O}\Delta \G |\G|^{2q-2}\G\psi^2_R=q\int_{D(\sigma_1R)}\Delta |\G| |\G|^{2q-1}\psi^2_R     \\
=&-q\int_{D(\sigma_1R)} (2q-1)|\nabla |\G| |^2\G^{2q-2}\psi^2_R+\nabla|\G||\G|^{2q-1}\cdot\nabla\psi^2_R       \\
=&-\int_{D(\sigma_1R)}(2-\frac{1}{q})|\nabla(f\psi_R)|^2-(2-\frac{2}{q})f\nabla \psi_R\cdot\nabla(f\psi_R)-\frac{1}{q}f^2|\nabla\psi_R|^2   \\
\lesssim& -\int_{D(\sigma_1R)}|\nabla(f\psi_R)|^2+C \int_{D(\sigma_1R)} f^2|\nabla \psi_R|^2.
\eali
\ee
Also we have
\be\label{e5.5}
\bali
-\int_{\O}&\frac{1}{r}\p_r f^2\psi^2_R=-2\pi\int^{\sigma_1R}_{0}\int_{|x_3|\leq (\max\{1,r\})^\al}\p_r f^2\psi^2_R dx_3 dr\\
\leq & 2\pi\int^{\sigma_1R}_{0}\int_{|x_3|\leq (\max\{1,r\})^\al} f^2\p_r\psi^2_R dx_3 dr
\lesssim \int_{D(\sigma_1R,\sigma_2R)} f^2(|\nabla\psi_R|^2)
\eali
\ee
Consequently, using \eqref{e5.2} and combining \eqref{e5.3} and \eqref{e5.5}, we get
\be\label{e5.5e}
\bali
&\int_{D(\sigma_1R)}|\nabla(f\psi_R)|^2
\leq C\int_{D(\sigma_1 R,\sigma_2 R)} f^2 \, |\nabla\psi_R|^2 -\frac{1}{2}\int_{D(\sigma_1R)} b\cdot\nabla f^2\psi^2_R  \\
\leq&\frac{C}{(\sigma_1-\sigma_2)^2R^2}\int_{D(\sigma_1R,\sigma_2R)} f^2-\frac{1}{2}\int_{D(\sigma_1R)} b\cdot\nabla f^2\psi^2_R.
\eali
\ee
By the divergence-free property of the drift term $b$, we have
\be\label{e5.6e}
\bali
&-\frac{1}{2}\int_{\O} b\cdot\nabla f^2\psi^2_R
=\int_{D(\sigma_1R)} f^2\psi_Rb\cdot\nabla\psi_R=\int_{D(\sigma_1R,\sigma_2R)} (f\psi_R)f v^r\p_r\psi_R     \\
\leq&C\f{\|v^r\|_{L^\i}}{(\sigma_1-\sigma_2)R} \|f\psi_R\|_{L^2(D(\sigma_1R,\sigma_2R))}\|f\|_{L^2(D(\sigma_1R, \sigma_2R))}\\
\leq& C_\ve\f{\|v^r\|^2_{L^\i}R^{2\al}}{(\sigma_1-\sigma_2)^2R^2} \|f\|^2_{L^2(D(\sigma_1R, \sigma_2R))}+\ve R^{-2\al}\|f\psi_R\|^2_{L^2(D(\sigma_1 R, \sigma_2 R))}\\
\leq& C_\ve\f{\|v^r\|^2_{L^\i}R^{2\al}}{(\sigma_1-\sigma_2)^2R^2} \|f\|^2_{L^2(D(\sigma_1R,\sigma_2R))}+C\ve\|\na(f\psi_R)\|^2_{L^2(D(\sigma_1 R, \sigma_2 R))};
\eali
\ee
where at the third line of \eqref{e5.6e}, we have used the fact $f=0$ on the boundary of $\O$, and the following 1-dimensional Poincar\'e inequality.
\be\label{epoincare}
\bali
&\q\|f\psi_R\|^2_{L^2(D(\sigma_1 R, \sigma_2 R))}
=\int^{2\pi}_0 \int^{\sigma_1R}_{\sigma_2 R}\int^{r^\al}_{-r^\al}|f\psi_R|^2 dx_3 r drd\th\\
&\leq CR^{2\al}\int^{2\pi}_0\int^{\sigma_1R}_{0}\int^{r^\al}_{-r^\al}|\p_{x_3}(f\psi_R)|^2 dx_3 rdrd\th\leq CR^{2\al}\|\na(f\psi_R)\|^2_{L^2(D(\sigma_1R))}.
\eali
\ee
Combining \eqref{e5.5e} and \eqref{e5.6e}, by choosing small $\ve$, we get
\be\label{e5.9}
\int_{D(\sigma_1R)}|\nabla(f\psi_R)|^2\lesssim\f{(1+\|v^r\|_{L^\i}R^\al)^2}{(\sigma_1-\sigma_2)^2R^2} \int_{D(\sigma_1R,\sigma_2R)}f^2.
\ee
\end{proof}

\noindent{\it  Step 2. Vanishing of $v^\th$}

We will show that $\Gamma$ decays to $0$ as $r \to \infty$. Then the maximum principle implies $\Gamma$ and hence $v^\th=0$.
We use Moser's iteration.

 By H\"{o}lder inequality and Sobolev imbedding theorem, for any $0<\beta<4$, we have
\be\label{ebeta}
\bali
&\|f\psi_R\|^{2+\beta}_{L^{2+\beta}(D(\sigma_1R))}=\int_{D(\sigma_1R)}|f\psi_R|^{2+\beta}=\int_{D(\sigma_1R)}|f\psi_R|^{2-\f{\beta}{2}}|f\psi_R|^{\f{3}{2}\beta} \\
\ls&\big(\int_{D(\sigma_1R)} |f\psi_R|^{2}dy\big)^{1-\f{\beta}{4}}\big(\int_{D(\sigma_1R)}|f\psi_R|^{6}dy\big)^{\beta/4}
\ls\|f\psi_R\|^{2-\f{\beta}{2}}_{L^2(D(\sigma_1R))}\|\na(f\psi_R)\|^{\f{3}{2}\beta}_{L^2(D(\sigma_1R))}\\
&\ls R^{\al(2-\f{\beta}{2})}\|\na(f\psi_R)\|^{2-\f{\beta}{2}}_{L^2(D(\sigma_1R))}
\|\na(f\psi_R)\|^{\f{3}{2}\beta}_{L^2(D(\sigma_1R))}
\leq R^{\al(2-\f{\beta}{2})}\|\na(f\psi_R)\|^{2+\beta}_{L^2(D(\sigma_1R))},
\eali
\ee
where at the last line we have used \eqref{epoincare}. Using \eqref{e5.9} and \eqref{ebeta}, we deduce

\bes
\bali
&\int_{D(\sigma_2R)}f^{2+\beta}
\ls R^{\al(2-\f{\beta}{2})}\Big[\f{(1+\|v^r\|_{L^\i}R^\al)^2}{(\sigma_1-\sigma_2)^2R^2}\|f\|^2_{L^2(D(\sigma_1R,\sigma_2R))}\Big]^{1+\f{\beta}{2}}.
\eali
\ees

Remember $f=|\G|^q$ and set $\kappa=1+\f{\beta}{2}$, then we obtain
\bes
\bali
&\int_{D(\sigma_2R)}|\G|^{2q\kappa}
\lesssim R^{\al(2-\f{\beta}{2})}\Big[\f{1+\|v^r\|_{L^\i}R^\al}{(\sigma_1-\sigma_2)R}\Big]^{2\kappa}\Big(\int_{D(\sigma_1R,\sigma_2R)}|\G|^{2q}\Big)^{\kappa}.
\eali
\ees
For integer $j\geq 0$ and a constant $\sigma=\frac{1}{2}$, set $\sigma_2=\frac{1}{2}(1+\sigma^{j+1})$ and $\sigma_1=\frac{1}{2}(1+\sigma^j)$. Let $q=\kappa^j$, then we arrive at
\bes
\bali
&\left(\int_{D(\frac{R}{2}(1+\sigma^{j+1}))} \G^{{2}\kappa^{j+1}}dy \right)^{\frac{1}{2}(\f{1}{\kappa})^{j+1}}
\lesssim R^{\al(1-\f{\beta}{4})\lt(\f{1}{\kappa}\rt)^{j+1}} \Big[\f{1+\|v^r\|_{L^\i}R^\al}{\sigma^{j+2} R}\Big]^{(\f{1}{\kappa})^j}\left(\int_{D(\frac{R}{2}(1+\sigma^j),\frac{R}{2}(1+\sigma^{j+1}))} \G^{{2}\kappa^j}dy \right)^{\frac{1}{2}(\f{1}{\kappa})^j}.
\eali
\ees
By iterating $j$, the above inequality gives
\bes
\bali
&\left(\int_{D(\frac{R}{2}(1+\sigma^{j+1}))} \G^{{2}\kappa^{j+1}}dy \right)^{\frac{1}{2}(\frac{1}{\kappa})^{j+1}}
\lesssim R^{\al(1-\f{\beta}{4})\sum\limits^j_{i=0}\lt(\f{1}{\kappa}\rt)^{i+1}}\frac{(1+\|v^r\|_{L^\i}R^\al)^{\sum^j_{i=0}(\frac{1}{\kappa})^{i}}}
{\sigma^{{\sum^j_{i=0}(i+2)(\frac{1}{\kappa})^{i}}}R^{{\sum^j_{i=0}(\frac{1}{\kappa})^{i}}}}\left(\int_{D(R,\f{3}{4}R)}\G^{2}dy\right)^{\frac{1}{2}}.
\eali
\ees
Letting $j\rightarrow \infty$ yields that
\be\label{e2.14}
\bali
\sup\limits_{x\in D(\frac{R}{2})}|\G|&\lesssim& R^{\al(\f{2}{\beta}-\f{1}{2})}\frac{(1+\|v^r\|_{L^\i}R^\al)^{\f{2+\beta}{\beta}}}{R^{\f{2+\beta}{\beta}}}\left(\int_{D(R,R/2)}\G^{{2}}dy \right)^{\frac{1}{2}}.
\eali
\ee
Next we use the 1-dimensional Poincar\'e inequality to see that
\be\label{egamma}
\bali
&\int_{D(R,R/2)}\G^{{2}}dy=\int^{2\pi}_0\int^R_{R/2} \int^{r^\al}_{-r^\al}r^2 (v^\th)^2 dx_3 rdrd\th
\leq R^2 \int^{2\pi}_0\int^R_{R/2} \int^{r^\al}_{-r^\al}(v^\th)^2dx_3 rdrd\th \\
& \leq CR^{2+2\al}\int^{2\pi}_0\int^R_{R/2}  \int^{r^\al}_{-r^\al} (\p_{x_3} v^\th)^2dx_3 rdrd\th
\leq CR^{2+2\al}\|\na v\|^2_{L^2(\O\cap \{R/2\leq |x'|\leq R\})}.
\eali
\ee
Inserting \eqref{egamma} into \eqref{e2.14}, we can get
\bes
\sup\limits_{x\in D(\frac{R}{2})}|\G|\lesssim R^{(\al-1)\f{2}{\beta}+\f{\al}{2}}(1+\|v^r\|_{L^\i}R^\al)^{\f{2+\beta}{\beta}}\|\na v\|_{L^2(\O\cap \{R/2\leq |x'|\leq R\})}.
\ees

Using the decay estimates for $v^r$, we have
\be\label{egamma1}
\sup\limits_{x\in D(\frac{R}{2})}|\G|\lesssim R^{(\al-1)\f{2}{\beta}+\f{\al}{2}}(1+R^{\al-1/2}\ln^{1/2} R)^{\f{2+\beta}{\beta}}\|\na v\|_{L^2(\O\cap \{R/2\leq |x'|\leq R\})}.
\ee

When $1/2\leq \al< 3/4$, from \eqref{egamma1}, we find
\bes
\bali
\sup\limits_{x\in D(\frac{R}{2})}|\G|&\leq CR^{(\al-1)\f{2}{\beta}+\f{\al}{2}}(R^{\al-1/2}\ln^{1/2}R )^{\f{2+\beta}{\beta}}\\
 &\leq CR^{(2\al-\f{3}{2})\f{2}{\beta}+\f{3}{2}\al-\f{1}{2}}(\ln R)^{\f{2+\beta}{2\beta}}\rightarrow 0,\q \text{as}\ R\rightarrow\i
 \eali
\ees
by choosing sufficiently small $\beta$ such that $(2\al-\f{3}{2})\f{2}{\beta}+\f{3}{2}\al-\f{1}{2}<0,$ since $2\al-\f{3}{2}<0$. This indicates that $\G\equiv 0$ in $\O$.

So at last we have proved that $\G\equiv 0$ when $0\leq\al<3/4$, which shows that $v^\th\equiv 0$. \qed

\section*{Acknowledgments} We wish to thank the following people and organizations for their supports.  Prof. Yanyan Li for the invitation to write the review paper and for his encouragement; Prof. Hongjie Dong and Prof. Zhen Lei for the discussion and
collaboration over the years; Professors B. Carrillo, Zijin Li and Dr. Na Zhao for the recent collaboration ; Prof. Xin Yang and Mr. Chulan Zeng for going over the paper and making suggestions; Natural Science Foundation of Jiangsu Province for grant No. BK20180414; National Natural Science Foundation of China for grant No. 11801268; Simons Foundation for grant 710364.

\bibliographystyle{plain}

%\bibliography{/Users/dykim/Dropbox/01Research/oralpaper}

%%%%%%%%%%%%%%%%%%%%%%%%%%%%%%%%%%%%
%\begin{comment}

\def\cprime{$'$}

\end{document}